\newcommand{\reals}{\mathbb{R}}
\newcommand{\naturals}{\mathbb{N}}
\newcommand{\integers}{\mathbb{Z}}
\newcommand{\Expect}{\mathbb{E}}
\newcommand{\Poi}{\mathsf{Poi}}
\newcommand{\supp}{\ensuremath{\mathrm{supp}}}
\newcommand{\PML}{\mathsf{PML}}
\newcommand{\NPMLE}{\mathsf{NPMLE}}
\newcommand{\pth}[1]{\left( #1 \right)}
\newcommand{\qth}[1]{\left[ #1 \right]}
\newcommand{\sth}[1]{\left\{ #1 \right\}}
\newcommand{\Bern}{\text{Bern}}
\newcommand{\indc}[1]{{\mathbf{1}\left\{{#1}\right\}}}
\newcommand{\Indc}{\mathbf{1}}
\newcommand{\calN}{{\mathcal{N}}}
\newcommand{\calP}{{\mathcal{P}}}
\def\R{\mathbb{R}}
\def\Z{\mathbb{Z}}
\def\P{\mathbb{P}}
\def\E{\mathbb{E}}
\def\cE{\mathcal{E}}
\def\eps{\varepsilon}
\def\d{\mathrm{d}}
\def\1{\mathbf{1}}
\newcommand{\equald}{\stackrel{d}{=}}
\newcommand{\pnorm}[2]{\lVert #1\rVert_{#2}}
\DeclareMathOperator{\Var}{Var}
\DeclareMathOperator{\var}{Var}
\DeclareMathOperator{\DKL}{D_{\mathrm{KL}}}
\DeclareMathOperator{\reg}{\mathsf{Reg}}
\DeclareMathOperator{\Sym}{\mathrm{S}}
\DeclareMathOperator{\PI}{\mathrm{PI}}
\DeclareMathOperator{\poi}{\mathsf{Poi}}
\DeclareMathOperator{\MGT}{MGT}
\newcommand{\pnatural}{\hat p^{\mathrm{nat}}}
\newcommand{\pNPMLE}{\widehat{p}^{\rm NPMLE}}
\newcommand{\UNPML}{\mathsf{unNPMLE}}
\def\argmin{\mathop{\rm argmin}}
\def\argmax{\mathop{\rm argmax}}
\def\Prob{\mathbb{P}}
\newtheorem{theorem}{Theorem}[section]
\newtheorem{lemma}[theorem]{Lemma}
\newtheorem{proposition}[theorem]{Proposition}
\theoremstyle{definition}
\newtheorem{remark}[theorem]{Remark}
\newtheorem{example}[theorem]{Example}
\crefname{lemma}{Lemma}{Lemmas}
\Crefname{lemma}{Lemma}{Lemmas}
\crefname{thm}{Theorem}{Theorems}
\Crefname{thm}{Theorem}{Theorems}
\newcommand{\stepa}[1]{\overset{\rm (a)}{#1}}
\newcommand{\stepb}[1]{\overset{\rm (b)}{#1}}
\newcommand{\stepc}[1]{\overset{\rm (c)}{#1}}
\newcommand{\stepd}[1]{\overset{\rm (d)}{#1}}
\def\hp{\tau} 
\newcommand{\eqbr}[1]{(\ref{#1})}
\begin{document}

\title{Besting Good--Turing: Optimality of Non-Parametric Maximum Likelihood for Distribution Estimation}
\author{Yanjun Han$^*$, Jonathan Niles-Weed\thanks{
Courant Institute of Mathematical Sciences and Center for Data Science, New York University, \texttt{yanjunhan@nyu.edu}, \texttt{jnw@cims.nyu.edu}.} , Yandi Shen\thanks{Department of Statistics and Data Science, Carnegie Mellon University, \texttt{yandis@andrew.cmu.edu}.} , and Yihong Wu\thanks{Department of Statistics and Data Science, Yale University, \texttt{yihong.wu@yale.edu}.}
}

\date{}

\maketitle

\begin{abstract}
When faced with a small sample from a large universe of possible outcomes, scientists often turn to the venerable Good--Turing estimator. Despite its pedigree, however, this estimator comes with considerable drawbacks, such as the need to hand-tune smoothing parameters and the lack of a precise optimality guarantee. We introduce a parameter-free estimator that bests Good--Turing in both theory and practice. Our method marries two classic ideas, namely Robbins's empirical Bayes and Kiefer--Wolfowitz non-parametric maximum likelihood estimation (NPMLE), to learn an implicit prior from data and then convert it into probability estimates. We prove that the resulting estimator attains the optimal instance-wise risk up to logarithmic factors in the competitive framework of Orlitsky and Suresh, and that the Good--Turing estimator is strictly suboptimal in the same framework. Our simulations on  synthetic data and experiments with English corpora and U.S. Census data show that our estimator consistently outperforms both the Good--Turing estimator and explicit Bayes procedures.
\end{abstract}

\setcounter{tocdepth}{3}
\tableofcontents


\ifthenelse{\boolean{arxiv}}{\section{Introduction}}{}
\ifthenelse{\boolean{arxiv}}{E}{\dropcap{E}}stimating a probability distribution from a sample of data is a fundamental problem across the sciences.
An early version arose in ecology, where Fisher, Corbet, and Williams \cite{fisher1943relation} studied the distribution of species abundances in samples of Malayan butterflies.
In the subsequent decades, the same question has appeared in genomics, when estimating the distribution of k-mers in DNA sequences~\cite{marcais2011fast}; in immunology, when estimating the diversity of the human T-cell receptor repertoire~\cite{laydon2015estimating}; and in linguistics, when assessing the statistical properties of natural language~\cite{sparck1972statistical}.
A challenge common to these applications is that the set of possible observations---whether butterfly species or English words---is large compared with the sample size.

For example, natural language modeling requires accurately estimating the probability that bigrams such as \textit{new car} or \textit{gut feeling} appear in English text.
The most natural method of estimating a probability distribution is by empirical frequencies: counting the occurrence of each bigram in a large corpus, and dividing by the total number of bigrams.
In statistical terms, these empirical frequencies are maximum likelihood estimates (MLEs) of the unknown probabilities, and they clearly perform well when the corpus is large enough to cover all possible pairs of words.
However, the obvious drawback of this approach is that it assigns zero probability to rare but possible bigrams---like \textit{possible bigrams}---which happen not to occur in the corpus.
The enormous number of possible bigrams in English means that this situation is the norm rather than the exception.
To avoid assigning zero probability to unseen symbols, various techniques have been developed to adjust empirical frequencies by redistributing probability mass from high-count to low-count symbols, resulting in a smoother, less jagged distribution. Notable examples include the add-$c$ estimators that assign probabilities proportional to the empirical count plus a constant $c$, such as the Laplace's rule of succession ($c=1$) and the Krichevsky--Trofimov estimator ($c=\frac{1}{2}$), which have been widely used in natural language processing \cite{JurafskyMartin,chen1999empirical} and data compression \cite{krichevsky1981performance,cover2012elements}.

In 1953, Good \cite{good1953population}, reporting on a collaboration with Alan Turing at Bletchley Park motivated by natural language processing for Allied code breaking, proposed a more sophisticated method for estimating probability distributions.
This approach, since known as the Good--Turing estimator, evinces much better performance than the empirical frequency estimator in practice, and has become a standard tool~\cite{gale1995good}.
In fact, its widespread adoption preceded by several decades a satisfying theoretical explanation for its benefits.
Only in the last 25 years has a series of papers~\cite{mcallester2000convergence,orlitsky2003always,orlitsky2015competitive} obtained evidence for its theoretical superiority over other approaches as well.

The most convincing argument was given by Orlitsky and Suresh~\cite{orlitsky2015competitive}, who developed a \textit{competitive optimality} guarantee for the Good--Turing estimator.
This result shows that the Good--Turing estimator always performs nearly as well as an ``oracle'' estimator that has prior information about the true unknown probabilities.
As the sample size increases, Orlitsky and Suresh show that the performance of a version of the Good--Turing estimator approaches that of a hypothetical estimator that is given knowledge of the multiset of probabilities in the distribution, but not their assignment to individual items.
This argument establishes not only that the Good--Turing estimator successfully estimates the unknown probabilities, but also that it performs nearly as well \textit{on each problem instance} as an estimator that knows information about the unknown distribution in advance.

Despite these results, current theoretical analyses fall short of showing that the Good--Turing estimator is unimprovable.
For instance, though Orlitsky and Suresh show that the Good--Turing estimator approaches the performance of an oracle-aided estimator, they do not show that it approaches at an optimal \textit{rate}.
Their work leaves open the possibility that there exist other estimators that are much more competitive.

Moreover, from a practical perspective, the Good--Turing estimator suffers from several deficiencies.
First, as was already noted by Good, the benefits of the Good--Turing estimator are most pronounced for rare items, which occur infrequently in the data set.
In fact, the estimator \textit{underperforms} the empirical frequency estimate for items whose observed frequencies are large. 
In practice, therefore, use of the Good--Turing estimator requires the choice of an additional tuning parameter, which acts as a threshold: for items whose observed counts are below this threshold, the Good--Turing estimate is used, above it, the empirical frequency estimate.

More generally, as with the empirical frequency estimator, Good's original proposal involved ``smoothing'' the vector of observed frequencies prior to applying the estimator, to obtain a less jagged estimate.
Though Good proposes several strategies for smoothing, none of them have significant theoretical justification; moreover, the empirical and theoretical performance of the Good--Turing estimator is unfortunately sensitive to the choice of smoothing procedure~\cite{chen1999empirical,orlitsky2003always}.
The practitioner who wishes to employ the Good--Turing approach is faced with the unenviable task of experimenting with different, somewhat arbitrary variations of the estimator, none of which is guaranteed to perform well on the data at hand.

In this work, we propose a different approach to distribution estimation that bests the Good--Turing estimator. Our approach is based on two cornerstones of 20th century statistical theory: \emph{non-parametric} maximum likelihood estimation (NPMLE), introduced by Kiefer and Wolfowitz~\cite{kiefer1956consistency}, and empirical Bayes (EB), introduced by Robbins~\cite{Robbins56}.
Unlike the vanilla MLE, which estimates each probability individually by the empirical frequency, the NPMLE jointly estimates the probabilities simultaneously, allowing our estimator to adapt to global features of the unknown distribution to achieve better performance.
Moreover, unlike the Laplace and Krichevsky--Trofimov estimators, which can be viewed as Bayes estimators under the Dirichlet prior, 
our empirical Bayes approach learns a prior from data as opposed to adopting a pre-specified choice. 
Finally, unlike the Good--Turing estimator, our estimator does not require the adoption of \textit{ad hoc} smoothing techniques or optimization of tuning parameters.
Our main theoretical result is that the new estimator is, in the competitive sense of Orlitsky and Suresh~\cite{orlitsky2015competitive}, \textit{optimal} for the distribution estimation problem, and we show in experiments that it significantly outperforms the Good--Turing approach in a number of challenging settings.
Our approach offers a computationally efficient, parameter-free solution to the distribution estimation problem and a redemption of maximum likelihood estimation techniques for this important statistical task.

\ifthenelse{\boolean{arxiv}}{\subsection{Preliminaries}}{\section*{Preliminaries}}
To describe our estimator and theoretical results, we fix some notation.
Consider a vector $p^\star = (p_1^\star, \dots, p_k^\star) \in \Delta_k$, where $\Delta_k$ denotes the set of probability vectors  of length $k$.
We observe a sample consisting of independent draws from $p^\star$.
For $i \in [k]:=\{1, \dots, k\}$, we denote the number of times symbol $i$ is observed by $N_i$.
Following prior theoretical work on the probability estimation problem~\cite{AchJafOrl13,orlitsky2015competitive}, we adopt the Poisson sampling model where the sample size is a Poisson random variable with mean $n$, denoted by $\poi(n)$.
This choice simplifies the mathematical analysis and does not materially affect our practical conclusions, since when $n$ is large, a $\poi(n)$ random variable concentrates tightly around $n$.
With this choice,  the observed counts 
$N=(N_1,\ldots,N_k)$ are independent and distributed as $N_i \sim \poi(n p_i^\star)$.

We evaluate the performance of a candidate estimator $\widehat p \equiv \widehat{p}(N)$ of $p^\star$ by the Kullback--Leibler (KL) divergence~\cite{kullback1951information}, a commonly used loss function for probability distributions:
\begin{equation*}
	\DKL(p^\star \| \widehat{p}) := \sum_{i=1}^k p^\star_i \log \frac{p^\star_i}{\widehat p_i}\,.
\end{equation*}
The risk of an estimator $\widehat p$ based on a sample of size $\poi(n)$ drawn from $p^\star$ is given by
\begin{equation*}
	r_n(p^\star, \widehat p) = \E_{p^\star} \DKL(p^\star \| \widehat{p}).
\end{equation*}

The classical statistical approach to distribution estimation is to seek an estimator $\widehat p$ that minimizes 
its worst-case risk $\sup_{p^\star\in\Delta_k} r_n(p^\star, \widehat p)$. This minimax risk is well-known
to be on the order of $\frac{k-1}{n} \wedge \log k$, which any add-constant estimator can achieve. 
However, such a worst-case criterion is often too pessimistic. In practice (and as we will show in this paper), 
 data-driven estimators that adapt to the latent structure in the data can significantly outperform minimax strategies. As such, it is desirable to adopt an instance-dependent notion of optimality that goes beyond worst-case analysis.
 




To this end, we adopt the competitive framework pioneered by Orlitsky and Suresh~\cite{orlitsky2015competitive} and dating back to the compound decision theory framework of Robbins \cite{robbins1951asymptotically} by measuring the performance of our estimator against that of a \textit{permutation-invariant} (PI) oracle estimator. 
Let $\pi$ be a permutation on $[k]$. For any length-$k$ vector $y=(y_1,\ldots,y_k)$, we denote its permuted version by $\pi(y) := (y_{\pi(1)}, \ldots,y_{\pi(k)})$.
An estimator $\widehat{p}=\widehat{p}(N)$ is called permutation invariant if 
\begin{align*}
\widehat{p}(\pi(N)) = \pi(\widehat{p}(N))
\end{align*}
for any $N=(N_1,\dots,N_k)$ and any permutation $\pi$; in other words, 
the estimated probabilities only depend on the counts of each symbol rather than how they are labeled.
For a given $p^\star$, the \textit{PI oracle} $\widehat{p}^{\PI}$ is defined to be the PI estimator that minimizes the average KL risk $r_n(p^\star, \widehat p)$.
Note that $\widehat{p}^{\PI}$ is an ``oracle'' estimator because it is allowed to depend on the ground truth $p^\star$ provided it satisfies the symmetry requirement of permutation invariance. Equivalently, the PI oracle is the best estimator among all possible estimators (not necessarily permutation-invariant) which knows the values of $p^\star$ \emph{up to permutation} \cite{orlitsky2015competitive}. Furthermore, \cite{greenshtein2009asymptotic} shows that the PI oracle coincides with the Bayes estimator under a Bayesian setting where the symbols are randomly relabeled; we defer the details to \eqbr{eq:PI_oracle_intro}  and \prettyref{eq:PI-oracle-full} for the exact expression, and \ifthenelse{\boolean{arxiv}}{Appendix \ref{sec:PI_equivalence}}{SI Appendix, Sec. \ref{sec:PI_equivalence}} for detailed justifications.



Finally, for any estimator $\widehat{p}$,
define its \textit{regret} as the worst-case excess risk over the PI oracle risk:
\begin{align}\label{def:intro_regret}
	\reg(\widehat{p}) := \sup_{p^\star \in \Delta_k} [r_n(p^\star, \widehat p) - r_n(p^\star, \widehat p^{\PI})].
\end{align}
Our objective is to find an estimator $\widehat{p}$ with the smallest possible regret. While it is possible to adopt other benchmarks, in this work we focus on competing with the PI oracle, which is also the classical notion put forward by Robbins in the framework of compound decision theory \cite{robbins1951asymptotically,zhang2003compound,greenshtein2009asymptotic}.

\ifthenelse{\boolean{arxiv}}{\subsection{NPMLE-based estimator}}{\section*{NPMLE-based estimator}}
The starting point of our estimation procedure is the empirical Bayes approach~\cite{Robbins56}, which relies on the following thought experiment.
Write $\theta^\star_i = n p^\star_i$, so that $N_i \sim \poi(\theta_i^\star)$ independently for each $i$.
Suppose that, instead of being fixed, the coordinates of the vector $\theta^\star$ had been drawn i.i.d.\ from some prior distribution $G$ on $\R_+$.
Then the marginal distribution of each count $N_i$ would be a Poisson mixture with probability mass function $f_G$ on the nonnegative integers given by
\begin{equation*}
	f_G(y) = \int \frac{\theta^y e^{-\theta}}{y!} G(d\theta), \quad y\geq 0\,.
\end{equation*}
The Bayes estimator minimizing the expected squared error is the posterior mean, given by
\begin{equation}
	 \theta_G(y) := \E_G[\theta_i \mid N_i = y] = (y+1) \frac{f_G(y+1)}{f_G(y)},
\label{eq:bayes}
\end{equation}
where $\E_G$ indicates that the expectation is taken with respect to the prior $G$.

Following Robbins, we define an estimator by acting as though the above thought experiment were actually valid and estimating the fictitious prior $G$ from the data directly.
We propose to estimate $G$ via maximum likelihood:
\begin{align}\label{def:npmle}
	\widehat{G} := \argmax_G \sum_{i=1}^k \log f_G(N_i),
\end{align}
where the maximization is taken over all probability distributions on $\R_+$.
Since this space is infinite dimensional, $\widehat G$ is a \textit{non-parametric} maximum likelihood estimator~\cite{kiefer1956consistency}.
The theoretical properties of the NPMLE in the Poisson model are by now well understood. Notably, it is known that the convex optimization problem \eqbr{def:npmle} has a unique solution $\widehat{G}$ which is a discrete measure with at most $k$ atoms. Furthermore, with high probability its support size is at most $O(\sqrt{n})$,  independent of  the domain size $k$ (see \ifthenelse{\boolean{arxiv}}{Appendix \ref{sec:NPMLE-basics}}{SI Appendix, \prettyref{sec:NPMLE-basics}}).
Computationally, $\widehat G$ can be found using various efficient algorithms \cite{simar1976maximum,lindsay1995mixture,koenker2014convex}.
(See \ifthenelse{\boolean{arxiv}}{Appendix \ref{sec:NPMLE-computation}}{SI Appendix, \prettyref{sec:NPMLE-computation} for an overview}.)

Combining these two ideas gives rise to a fully data-driven estimation procedure: Having observed counts $N_1, \dots, N_k$, compute the NPMLE $\widehat G$ as in~\eqbr{def:npmle}, and 
apply the learned Bayes rule 
$\theta_{\widehat G}$ in \eqbr{eq:bayes} to estimate the probability vector $p^\star$ by 
\begin{align}\label{def:npmle_est}
	\pNPMLE_i \propto \theta_{\widehat G}(N_i) + \hp \indc{N_i = 0}
\end{align}
where $\hp > 0$ is a regularization parameter.
(Here and below, the notation $\propto$ means that the left-hand side equals the right-hand side scaled by a normalizing constant so that $\pNPMLE$ is a valid probability vector.)
We remark that the regularization parameter $\tau$ in \eqref{def:npmle_est} 
\textit{only} applies to unseen symbols, which require special treatment. 
As we shall see below, the choice of $\tau$ is rather flexible for achieving the optimal competitive regret ($\tau = k^{-C_0}$ for any $C_0\ge 1$ suffices) and has little effect on empirical performance. On the other hand, choosing a non-zero $\tau$ is also essential, for otherwise the estimated probability for unseen symbols can be exponentially small, leading to a significant regret (see \ifthenelse{\boolean{arxiv}}{Appendix \ref{sec:notau}}{SI Appendix, \prettyref{sec:notau}}).


Our main result is a competitive optimality guarantee for the proposed NPMLE estimator \eqbr{def:npmle_est}. The following theorem bounds its regret over the PI oracle.
\begin{theorem}\label{thm:main}
	With $\hp = k^{-C_0}$ for any $C_0\ge 1$, there exists an absolute constant $C > 0$ such that 
	\begin{align*}
		\reg(\pNPMLE) \le C\pth{\pth{n^{-2/3}\wedge \frac{k}{n}}\log^{14}(nk) }. 
	\end{align*}
\end{theorem}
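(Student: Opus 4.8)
The plan is to compare $\pNPMLE$ directly against the PI oracle and reduce the regret to the empirical-Bayes accuracy of the NPMLE $\widehat G$. Write $\theta^\star_i=np^\star_i$, let $\bar G:=\frac1k\sum_{i=1}^k\delta_{\theta^\star_i}$ be the empirical mixing distribution and $\theta_{\bar G}(y):=(y+1)f_{\bar G}(y+1)/f_{\bar G}(y)$ its Bayes rule. Using the characterization of the PI oracle as the Bayes estimator under a uniformly random relabeling $\sigma$ of $[k]$, one has the clean identity $\widehat p^{\PI}_i=\frac1n\,\E[\theta^\star_{\sigma(i)}\mid N]$, which needs no further normalization since $\E[\sum_i\theta^\star_{\sigma(i)}\mid N]=n$. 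Writing $g(y):=\theta_{\widehat G}(y)+\hp\indc{y=0}$ and $Z:=\sum_j g(N_j)$, so that $\pNPMLE_i=g(N_i)/Z$, one obtains the exact decomposition
\[
	\DKL(p^\star\|\pNPMLE)-\DKL(p^\star\|\widehat p^{\PI})=\sum_{i=1}^k p^\star_i\log\frac{\E[\theta^\star_{\sigma(i)}\mid N]}{g(N_i)}+\log\frac{Z}{n}\,.
\]
I would take expectations and bound the ``shape'' term and the ``normalization'' term separately, after first replacing $\E[\theta^\star_{\sigma(i)}\mid N]$ by $\theta_{\bar G}(N_i)$ inside the shape term, a leave-one-out / with-versus-without-replacement comparison whose error is absorbed into the final bound.

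The shape term carries essentially all the difficulty. Upper bounding $\log$ by $|\log|$, using the Poisson moment identity $\sum_i\E[\theta^\star_i\indc{N_i=y}]=k(y+1)f_{\bar G}(y+1)$ together with a leave-one-out stabilization of the NPMLE (so $\theta_{\widehat G}$ may be treated as independent of $N_i$) and the Robbins identity $(y+1)f_{\bar G}(y+1)=\theta_{\bar G}(y)f_{\bar G}(y)$, the expected shape term reduces (up to lower-order terms) to $\E\big[\sum_{y\ge 0}|\log\theta_{\widehat G}(y)-\log\theta_{\bar G}(y)|\,\mu(y)\big]$, where $\mu(y)\propto(y+1)f_{\bar G}(y+1)$ is a probability measure (its unnormalized mass is $\E_{\bar G}[\theta]=n/k$) and the outer expectation is over the sample. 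I would then partition $\{y\ge 0\}$ into three regions. (i) Unseen symbols $y=0$: the contribution is $M_0\log(\theta_{\bar G}(0)/(\theta_{\widehat G}(0)+\hp))$ with $M_0:=\sum_{i:N_i=0}p^\star_i$ the missing mass; here $\hp=k^{-C_0}$ floors the denominator so the per-unit-mass cost is $O(\log k)$, and the point is that $\theta_{\bar G}(0)=f_{\bar G}(1)/f_{\bar G}(0)$ is estimated well precisely when the missing mass is non-negligible, quantified by the identity $\E[nM_0]=\E\,\#\{i:N_i=1\}$. (ii) Light counts $1\le y\le\widetilde O(\sqrt n)$: I would bound $|\log\theta_{\widehat G}(y)-\log\theta_{\bar G}(y)|$ by combining the Hellinger accuracy of the NPMLE for Poisson mixtures supported on $[0,\widetilde O(\sqrt n)]$ (via metric entropy), pointwise lower bounds on $f_{\widehat G}(y),f_{\bar G}(y)$ on the bulk of the count distribution and the monotonicity and boundedness (``self-regularization'') of $y\mapsto\theta_{\widehat G}(y)$ — which together upgrade Hellinger control to control of the ratio $\theta_{\widehat G}(y)/\theta_{\bar G}(y)$ — and an approximation-theoretic bias bound (that $\bar G$ is well approximated, in the relevant weighted log-scale, by mixtures with $\widetilde O(\sqrt n)$ atoms). (iii) Heavy counts $y\gtrsim\widetilde O(\sqrt n)$: a symbol with $\theta^\star_i\gg\sqrt n$ is ``resolved'' — its own count pins down its rate to relative precision $\widetilde O(1/\sqrt{\theta^\star_i})$ — so its per-symbol regret is only $\widetilde O(1/n)$; decomposing the heavy symbols dyadically by frequency scale $B$, the number that actually contribute at this level is at most $\min(n/B,\sqrt B)$ (the first from the budget $\sum_i\theta^\star_i\le n$, the second because within a scale-$B$ band only $\widetilde O(\sqrt B)$ symbols can be pairwise separated by the $\sqrt B$ needed for resolvability), and summing $\min(n/B,\sqrt B)\cdot\widetilde O(1/n)$ over dyadic $B\in[\sqrt n,n]$ is dominated by the balance point $B\asymp n^{2/3}$ and gives $\widetilde O(n^{-2/3})$. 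This dyadic budget-versus-separation accounting is exactly where the exponent $2/3$ is produced.

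The normalization term is essentially free: perturbing the NPMLE along the scaling direction $G\mapsto\mathrm{law}((1+\epsilon)\theta)$ and using first-order optimality yields the exact identity $\sum_j\theta_{\widehat G}(N_j)=\sum_j N_j$, hence $Z=\sum_j N_j+\hp\,\#\{j:N_j=0\}$; since $\sum_j N_j\sim\poi(n)$ and $\hp k\le 1$, we get $\E\log(Z/n)=O(1/n)$, dominated by the shape-term bound.

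Combining the three regions and taking the supremum over $p^\star\in\Delta_k$, the reduction bounds $\reg(\pNPMLE)$ by $\mathrm{polylog}(nk)$ times an empirical-Bayes excess-risk quantity for $\widehat G$, which admits two bounds — a ``parametric'' bound of order $k/n$ (since $\bar G$ has at most $k$ atoms, relevant when $k\lesssim n^{1/3}$) and the dimension-free bias/variance bound of order $n^{-2/3}$ — yielding $\reg(\pNPMLE)\lesssim\mathrm{polylog}(nk)\cdot(n^{-2/3}\wedge k/n)$; tracking the logarithmic losses through the Hellinger bound, the Hellinger-to-ratio conversions, the leave-one-out stabilization, the histogram-concentration steps and the dyadic heavy-symbol bound yields the stated exponent $14$. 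I expect the main obstacle to be regions (ii)--(iii) of the shape term: the control available on $\widehat G$ is in an $L^2$/Hellinger sense, whereas the KL loss forces control of the \emph{logarithm} of the ratio of the induced posterior means, weighted in the $p^\star$-sense that, after the Poisson reweighting, up-weights exactly the large counts where the NPMLE is least accurate; reconciling the non-Lipschitzness of $\log$ near zero with this weighting — through density lower bounds on the bulk and the self-regularizing monotonicity of $\theta_{\widehat G}$ — and carrying out the resolvability/budget accounting carefully enough to beat the naive $\widetilde O(n^{-1/2})$ bound down to $\widetilde O(n^{-2/3})$ is where essentially all of the work lies.
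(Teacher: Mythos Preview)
Your decomposition into a ``shape'' term plus a ``normalization'' term matches the paper's starting point, and your observation that the scaling-direction optimality condition forces $\sum_j\theta_{\widehat G}(N_j)=\sum_jN_j$ (hence $\E\log(Z/n)=O(1/n)$) is correct and elegant. The paper handles normalization slightly differently (via $\log x\le x-1$), but this is immaterial.

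The genuine gap is the step you describe as ``replacing $\E[\theta^\star_{\sigma(i)}\mid N]$ by $\theta_{\bar G}(N_i)$, a leave-one-out / with-versus-without-replacement comparison whose error is absorbed into the final bound.'' This is not a leave-one-out correction, and it is not absorbed --- it is one of the two main technical contributions of the paper. The quantity $\E[\theta_{\sigma(i)}\mid N]$ conditions on \emph{all} the counts jointly through the random permutation $\sigma$, whereas $\theta_{\bar G}(N_i)=\E[\theta_{\sigma(i)}\mid N_i]$ conditions only on the $i$th count. The gap between them is precisely the PI-oracle versus separable-oracle comparison (the paper's term (II)), and the paper devotes an entire section and a new theorem to bounding it by $\widetilde O((k\wedge n^{1/3})/n)$. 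Prior techniques (Greenshtein--Ritov) give bounds that blow up exponentially in the parameter range; naive with-vs-without-replacement heuristics give nothing useful because the joint posterior under a permutation prior is a permanent and does not factorize. The paper's argument requires (a) a stochastic interpolation $Z$ between $\theta$ and $N$ via Poisson thinning, (b) an information-theoretic reduction to $\DKL(P_Z\|\prod_iP_{Z_i})$ for a permutation mixture, and (c) a recent dimension-free bound on such divergences combined with a clustering of the Poisson rates into $m_0=O(k\wedge n^{1/3})$ groups of bounded $\chi^2$-diameter. Your proposal has no mechanism to produce this bound.

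A secondary issue: your mechanism for extracting the $n^{-2/3}$ rate from the shape term is not the one the paper uses and is not obviously correct as stated. The paper does not split at $y\asymp\sqrt n$ but at $y_0\asymp\log^2(nk)$; the $n^{-2/3}$ there comes from the Hellinger rate $\varepsilon^2\asymp n^{1/3}/k$ for the NPMLE, plugged into a bound of the form $k\varepsilon^2/n$ obtained via an $\varepsilon$-net over regularized Bayes rules. Your dyadic ``budget-versus-separation'' heuristic (at scale $B$, at most $\min(n/B,\sqrt B)$ resolvable symbols, each contributing $\widetilde O(1/n)$) is a \emph{lower-bound} counting argument --- it limits how many distinguishable configurations exist --- and does not by itself upper-bound the error of $\widehat G$ on those symbols; you would still need to show that the NPMLE actually resolves them, which is the Hellinger/entropy argument you allude to in region (ii) but do not connect to the $n^{-2/3}$ exponent.
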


A matching lower bound is given by \cite[Theorem 3]{orlitsky2015competitive}, which shows that the regret of \textit{any} estimator $\widehat p$ must satisfy
\[
\reg(\widehat{p}) \ge c\pth{\pth{n^{-2/3}\wedge \frac{k}{n}}\log^{-c'}(n) }
\]
for some positive constants $c$ and $c'$. 
Theorem~\ref{thm:main} therefore establishes the \textit{competitive optimality} of the NPMLE up to logarithmic factors.


In contrast, the Good--Turing estimator fails to achieve the optimal regret. As mentioned in the Introduction, the original Good--Turing estimator, defined by
\begin{equation}
\widehat{p}_{i}^{\mathrm{GT}} \propto
(N_i+1) \frac{\Phi_{N_i+1}}{\Phi_{N_i}},
    \label{eq:GT-original}
\end{equation}
requires additional modification (such as smoothing) to be useful; otherwise its KL risk is not even finite because the estimated probability can be zero. Here $\Phi_y := \sum_{i=1}^k \indc{N_i = y}$, known as the \textit{profile} \cite{PML,acharya2017unified}, is the number of symbols that appear exactly $y$ times. 
To prove a stronger negative result, we therefore allow the estimator the extra freedom to switch between Good--Turing and empirical frequency, which, as mentioned in the Introduction, is commonly used in practice. Specifically, we consider the following estimator analyzed by Orlitsky and Suresh~\cite{orlitsky2015competitive}, which uses Good--Turing for low counts and the empirical frequency for high counts by comparing with a threshold  
 $y_0$:
\begin{align}\label{eq:modified-Good--Turing}
    \widehat{p}_{i}^{\mathrm{MGT}} \propto
    \begin{cases}
		\frac{N_i+1}{n}\cdot \frac{\Phi_{N_i+1}+1}{\Phi_{N_i}} &\text{if } N_i \le y_0, \\
		\frac{N_i}{n} & \text{otherwise.}
	\end{cases}
\end{align}
Similar to the Laplace rule, \prettyref{eq:modified-Good--Turing} applies additive smoothing to the original Good-Turing \prettyref{eq:GT-original} by adding a constant $1$ to 
the numerator so that the estimated probabilities are never zero.


The following theorem (proved in \ifthenelse{\boolean{arxiv}}{Section \ref{sec:GT-proof}}{SI Appendix \prettyref{sec:GT-proof}}) shows that, regardless of the choice of the threshold $y_0$, the modified Good--Turing estimator\footnote{In comparison, \cite[Theorem 1]{orlitsky2015competitive} showed that the regret of the modified Good--Turing in \prettyref{eq:regGT-lb} is $\widetilde O(n^{-1/3})$ by choosing the threshold $y_0$ appropriately. Additionally, a much more involved modification of the Good--Turing estimator is shown to attain $\widetilde O(n^{-1/2})$
\cite[Theorem 2]{orlitsky2015competitive}.
Neither of these results achieves the optimal regret.
} 
fails to attain the optimal $\widetilde{O}(n^{-2/3})$ regret for large $k$.\footnote{Throughout the paper we use $\widetilde{O}(\cdot)$ and $\widetilde{\Omega}(\cdot)$ to denote upper and lower bound in order up to polynomial factors in $\log(nk)$.}
\begin{theorem}\label{thm:Good--Turing-LB}
	For any $C > 0$, there exists an absolute constant $c > 0$ such that for any $k \in [\sqrt n, n^C]$ and any $y_0\in\naturals$,
	\begin{align}
        \reg\pth{\widehat{p}^{\mathrm{MGT}}} \ge \frac{c}{\sqrt{n\log n}}. 
        \label{eq:regGT-lb}
	\end{align}
\end{theorem}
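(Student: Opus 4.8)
Since $\reg(\widehat{p}^{\mathrm{MGT}})$ is a supremum over $p^\star\in\Delta_k$, it suffices to exhibit, for each admissible triple $(n,k,y_0)$, a single hard instance with $r_n(p^\star,\widehat{p}^{\mathrm{MGT}})-r_n(p^\star,\widehat{p}^{\PI})\ge c/\sqrt{n\log n}$. All our instances are of a common ``two-level'' form: $m$ symbols (with $m$ chosen so that $m\lambda=n$) carry equal probability $\lambda/n$, and the remaining $k-m$ symbols share a negligible total mass ($0$, or $n^{-100}$ if $\Delta_k$ is taken to exclude its boundary). In every case $\lambda\ge C\log n$ for a large absolute constant $C$, so $m\,e^{-\lambda}\le n^{C_0}e^{-C\log n}$ is super-polynomially small and, with overwhelming probability, every ``heavy'' symbol is observed at least once while every light symbol is unobserved. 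Consequently the permutation-invariant estimator outputting $\lambda/n$ on observed symbols and $n^{-100}$ on unobserved ones has risk $\le n^{-10}$, so $r_n(p^\star,\widehat{p}^{\PI})\le n^{-10}=o(1/\sqrt{n\log n})$ uniformly, and it remains only to lower bound $r_n(p^\star,\widehat{p}^{\mathrm{MGT}})$. We split on the size of $y_0$.

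\textbf{Case 1: $y_0\le\sqrt{n\log n}$.} Take $\lambda=\max(n/k,\ 2y_0+C\log n)$ and $m=\lfloor n/\lambda\rfloor\le k$. Each heavy count is $\poi(\lambda)$ with $\lambda\ge2y_0$, so a Poisson tail bound gives $\P(N_i\le y_0)\le e^{-\lambda/10}$, and with overwhelming probability every heavy symbol lands in the ``$N_i>y_0$'' branch, receiving $\widehat{p}^{\mathrm{MGT}}_i=N_i/(nZ)$; one checks $Z=1+O(1/n)$ (the smoothing terms affect only light symbols). Hence
\[
\E\,\DKL(p^\star\|\widehat{p}^{\mathrm{MGT}})=\big(1+o(1)\big)\,\E\!\left[\log\frac{\lambda}{N}\,\indc{N>y_0}\right],\qquad N\sim\poi(\lambda),
\]
and since $\E[\log(\lambda/N)\,\indc{N\ge1}]=\tfrac1{2\lambda}(1+O(1/\lambda))$ while $\P(N\le y_0)$ is super-polynomially small, the right-hand side equals $\tfrac1{2\lambda}(1+o(1))$. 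As $\lambda\le 6\sqrt{n\log n}$ for large $n$, this yields $r_n(p^\star,\widehat{p}^{\mathrm{MGT}})\ge c/\sqrt{n\log n}$, and subtracting the oracle risk finishes this case. The one subtle point is that the ``$N_i>y_0$'' branch never applies to $N_i=0$ (as $y_0\ge0$), which is exactly what keeps the loss finite; had MGT used the empirical frequency for small counts, its KL risk would be infinite.

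\textbf{Case 2: $y_0>\sqrt{n\log n}$.} Take $\lambda=\max(n/k,\ n^{1/3}(\log n)^{-2/3})$ and $m=\lfloor n/\lambda\rfloor$; then $\log n\ll\lambda\le\sqrt n$, $m\le k$, $m\gtrsim\sqrt n$, and since $y_0/\lambda\ge\sqrt{\log n}$ every heavy symbol lies in the ``$N_i\le y_0$'' branch with overwhelming probability, receiving $\widehat{p}^{\mathrm{MGT}}_i=\tfrac1Z\cdot\tfrac{N_i+1}{n}\cdot\tfrac{\Phi_{N_i+1}+1}{\Phi_{N_i}}$. Let $\epsilon_i:=Z\,\widehat{p}^{\mathrm{MGT}}_i/p^\star_i-1$ be the relative error of the \emph{unnormalized} estimate, so $Z=1+\sum_i p^\star_i\epsilon_i+O(1/n)$. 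The elementary inequality $-\log x\ge-(x-1)+(x-1)^2/(2\max(1,x))$, applied to $x_i=\widehat{p}^{\mathrm{MGT}}_i/p^\star_i$ and combined with $\sum_i\widehat{p}^{\mathrm{MGT}}_i=1$ (which makes the first-order term nonnegative), gives on the overwhelmingly likely event $\{Z\le2,\ \widehat{p}^{\mathrm{MGT}}_i\le2p^\star_i\ \forall i\}$
\[
\DKL(p^\star\|\widehat{p}^{\mathrm{MGT}})\ \ge\ \frac14\!\!\sum_{i\ \mathrm{heavy}}\!\!p^\star_i(x_i-1)^2\ \ge\ \frac1{16}\,\Var_{p^\star}(\epsilon),\qquad \Var_{p^\star}(\epsilon):=\sum_i p^\star_i\epsilon_i^2-\Big(\sum_i p^\star_i\epsilon_i\Big)^2,
\]
the second inequality expressing that normalization rescales all estimates by the same $Z$, which cannot shrink the spread of the $\epsilon_i$. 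It thus suffices to prove $\E[\Var_{p^\star}(\epsilon)]\gtrsim\lambda^{3/2}\sqrt{\log m}/n$. Grouping heavy symbols by count and using $m\lambda=n$,
\[
\sum_{i\ \mathrm{heavy}}p^\star_i\,\epsilon_i^2=\frac{\lambda}{n}\sum_{y\,:\,\Phi_y\ge1}\frac{(A_y-\Phi_y)^2}{\Phi_y},\qquad A_y:=\frac{y+1}{\lambda}(\Phi_{y+1}+1).
\]
For every $y$ with $\mu_y:=\E\Phi_y\ge1$, a moment computation ($A_y$ and $\Phi_y$ are nearly independent with $\E A_y\approx\mu_y$, $\Var A_y\approx\mu_y$, $\Var\Phi_y\approx\mu_y$) gives $\E[(A_y-\Phi_y)^2/\Phi_y\cdot\indc{\Phi_y\ge1}]\ge c$; and the ``effective support'' $\{y:\mu_y\ge1\}=\{y:m\,\P(\poi(\lambda)=y)\ge1\}$ of the profile of $m$ i.i.d.\ $\poi(\lambda)$ variables is a window of width $\gtrsim\sqrt{\lambda\log m}$ around $\lambda$. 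Hence $\E[\sum_i p^\star_i\epsilon_i^2]\gtrsim\tfrac{\lambda}{n}\sqrt{\lambda\log m}$, whereas $\E[(\sum_i p^\star_i\epsilon_i)^2]=\E[(Z-1-O(1/n))^2]=o(\lambda^{3/2}\sqrt{\log m}/n)$ since $|Z-1|=O(\lambda^{3/2}\sqrt{\log m}/n)\le n^{-1/4}\sqrt{\log n}$. With the chosen $\lambda$ and $m\gtrsim\sqrt n$ this gives $r_n(p^\star,\widehat{p}^{\mathrm{MGT}})\gtrsim\lambda^{3/2}\sqrt{\log m}/n\gtrsim1/\sqrt{n\log n}$, completing the proof.

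\textbf{Main obstacle.} The heart of the argument is the Case 2 estimate $\E[\Var_{p^\star}(\epsilon)]\gtrsim\lambda^{3/2}\sqrt{\log m}/n$, which requires, uniformly in $n$: (i) that the profile of $m$ i.i.d.\ $\poi(\lambda)$ variables occupies $\Omega(\sqrt{\lambda\log m})$ distinct values with $\mu_y\ge1$ and fills that window without gaps (so the displayed sums over $y$ behave as claimed) — a Poisson order-statistics / support-size fact; and (ii) the per-value lower bound $\E[(A_y-\Phi_y)^2/\Phi_y\cdot\indc{\Phi_y\ge1}]\ge c$ \emph{down to $\mu_y$ of constant order}, where Gaussian/delta-method heuristics break down and one must argue directly from $\E[(A_y-b)^2\mid\Phi_y=b]\ge\tfrac12\mu_y$ and a lower bound on $\E[\Phi_y^{-1}\indc{\Phi_y\ge1}]$. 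Throughout one must also verify that the super-polynomially unlikely events on which $Z$ or some $\epsilon_i$ is atypically large contribute negligibly; since all quantities are crudely $\mathrm{poly}(n)$-bounded this is routine. The analogous but much lighter point in Case 1 is pinning down the sharp constant $\tfrac1{2\lambda}$ in the Poisson MLE risk.
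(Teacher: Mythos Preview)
Your Case 1 and the paper's Case I ($y_0 \le C_0\sqrt{n\log n}$) are essentially the same idea: a single heavy level with $m\asymp\sqrt{n/\log n}$ symbols of mass $\lambda/n\asymp\sqrt{\log n/n}$ forces MGT onto the empirical-frequency branch, whose per-symbol KL risk is $\asymp 1/\lambda$. The paper reaches the $\Omega(1/\sqrt{n\log n})$ bound via Pinsker and TV rather than the direct Poisson log computation, but the content is the same. (One cosmetic point: your ``$Z=1+O(1/n)$'' is not literally true since $Z$ fluctuates by $O_p(n^{-1/2})$; what you need, and what holds, is $\E[\log Z]=O(1/n)$.)

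Your Case 2 is a genuinely different route from the paper's Case II, and it has a gap. The paper does \emph{not} use a single-level instance; it builds a multi-level distribution with $\ell$ copies at mass $x_\ell\asymp\ell^2(\log n)/n$ for $\ell$ in a range around $(n/\log n)^{1/4}$. The levels are spaced so that the $\poi(nx_\ell)$ windows are well separated both across and within levels, which forces $\Phi_{N_j}=1$ and $\Phi_{N_j+1}=0$ with constant probability for each heavy symbol. On that event MGT outputs exactly $(N_j+1)/(nZ)$, whose error is $\Theta(\sqrt{x_\ell/n})$, and summing the termwise-nonnegative KL contributions over levels gives $\Omega(1/\sqrt{n\log n})$. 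This sidesteps any analysis of the ratio $\Phi_{y+1}/\Phi_y$.

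Your single-level approach instead needs that ratio to be well behaved simultaneously at every occupied $y$, and this is where it breaks. The event $\{\widehat p^{\mathrm{MGT}}_i\le 2p^\star_i\ \forall i\}$ is \emph{not} overwhelmingly likely: at the right edge of the profile (say $y$ with $\mu_y=\E\Phi_y$ of constant order), with probability bounded away from $0$ one has $\Phi_y=1$ while $\Phi_{y+1}\ge 2$, giving $(\Phi_{y+1}+1)/\Phi_y\ge 3$ and hence $x_i>2$ for that symbol. So the step $\DKL\ge\tfrac14\sum_i p^\star_i(x_i-1)^2$ fails as written. One can try to repair this using termwise nonnegativity of $a\log(a/b)-a+b$ to drop the edge symbols, but then the passage to $\tfrac1{16}\Var_{p^\star}(\epsilon)$ no longer follows: you are summing $(\epsilon_i-(Z-1))^2$ over a \emph{data-dependent} subset rather than all heavy $i$, and you would need a separate argument that the excluded edge symbols do not carry a constant fraction of the variance (their $\epsilon_i$ are not uniformly $O(1)$, only sub-exponential). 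This is fixable in principle, but it is real additional work that your sketch does not supply; the paper's multi-level construction was designed precisely to avoid it.
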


Together, the above two theorems establish a strong separation between the Good--Turing estimator and the NPMLE.
Despite its popularity in practice, the Good--Turing estimator cannot match the performance of the NPMLE.
Moreover, our numerical results show that this difference is not merely a theoretical result.
The NPMLE consistently and significantly outperforms the Good--Turing estimator in both simulations and real datasets including examples from corpus linguistics and census data.

\ifthenelse{\boolean{arxiv}}{\subsection{Experiments}\label{subsec:experiment}}{\section*{Experiments}}
In this section we demonstrate the efficacy of the NPMLE-based distribution estimator in a variety of synthetic and real datasets by comparing with existing approaches and oracle estimators.


To compute the NPMLE, we use the Frank--Wolfe algorithm \cite{frank1956algorithm,jaggi2013revisiting} to solve the convex optimization problem in \eqbr{def:npmle},   also known as the vertex direction method  in the literature on mixture models  \cite{simar1976maximum,lindsay1995mixture,jana2022poisson} -- see \ifthenelse{\boolean{arxiv}}{Appendix \ref{sec:NPMLE-computation}}{SI Appendix \prettyref{sec:NPMLE-computation}}. 
We also refrain from using the regularization parameter $\tau$ in \eqref{def:npmle_est} for  unseen symbols, so that the estimator is  \textit{completely tuning parameter-free}. While theoretically this regularization is necessary for attaining the optimal regret (see \ifthenelse{\boolean{arxiv}}{Appendix \ref{sec:notau}}{SI Appendix, \prettyref{sec:notau}}), empirically it has little effect on the performance.

We compare the NPMLE with the Laplace (add-$1$), Krichevsky--Trofimov (add-$\frac{1}{2}$), 
and Good--Turing estimators. Specifically, we consider the version of modified Good--Turing  used by \cite{orlitsky2015competitive} in their experiment, which replaces the pre-specified threshold $y_0$ by the profile $\Phi_{N_i+1}$:
\begin{align}\label{eq:MGT-exp}
	\widehat{p}_{i} \propto
    \begin{cases}
		\frac{N_i+1}{n}\cdot \frac{\Phi_{N_i+1}+1}{\Phi_{N_i}} &\text{if } N_i \le \Phi_{N_i+1}, \\
		\frac{N_i}{n} & \text{otherwise}. 
	\end{cases}
\end{align}
We also compare with two oracle estimators: the separable oracle (the normalized version of 
\prettyref{eq:separable_oracle_intro} below), and an even stronger oracle called the natural oracle introduced by \cite{orlitsky2015competitive}. We omit the PI oracle, the computation of which requires averaging over all permutations of symbols and is therefore intractable. Our theoretical analysis (see 
\prettyref{eq:reg_decomp_main} below) shows that the regret between the separable oracle and the PI oracle is uniformly bounded by the optimal rate $\widetilde O(n^{-2/3})$. As such we use the separable oracle as a proxy for the PI oracle in the experiment.

We adopt the Poisson sampling model. (The empirical performance is almost identical to that for fixed sample size.) All experiments are based on 200 independent trials, except for the 2010 Census surname experiment, for which we run 20 trials due to the sheer size of the dataset.

\paragraph{Synthetic data.}
We fix the domain size $k=10000$ and vary the sample size $n$ from $1000$ to $50000$, averaging over $200$ independent trials. For distributions drawn from a prior (such as Dirichlet), the true distribution is redrawn in each trial.

\begin{figure}
\centering
\centering
     \begin{subfigure}[b]{0.6\columnwidth}
         \includegraphics[width=\textwidth]{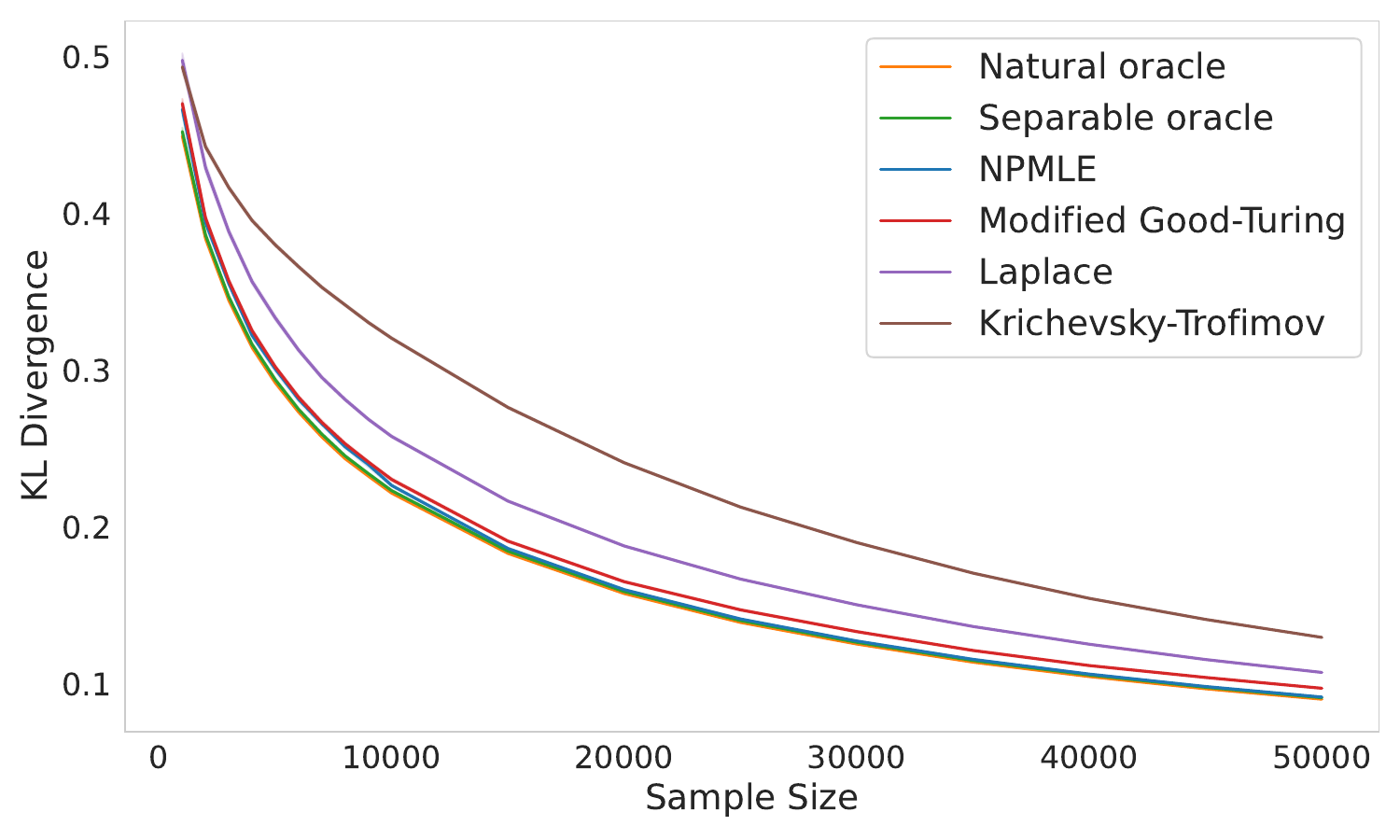}
         \caption{KL risk.}
         \label{fig:sqrtcauchy-risk}
     \end{subfigure}
     \begin{subfigure}[b]{0.6\columnwidth}
         \includegraphics[width=\textwidth]{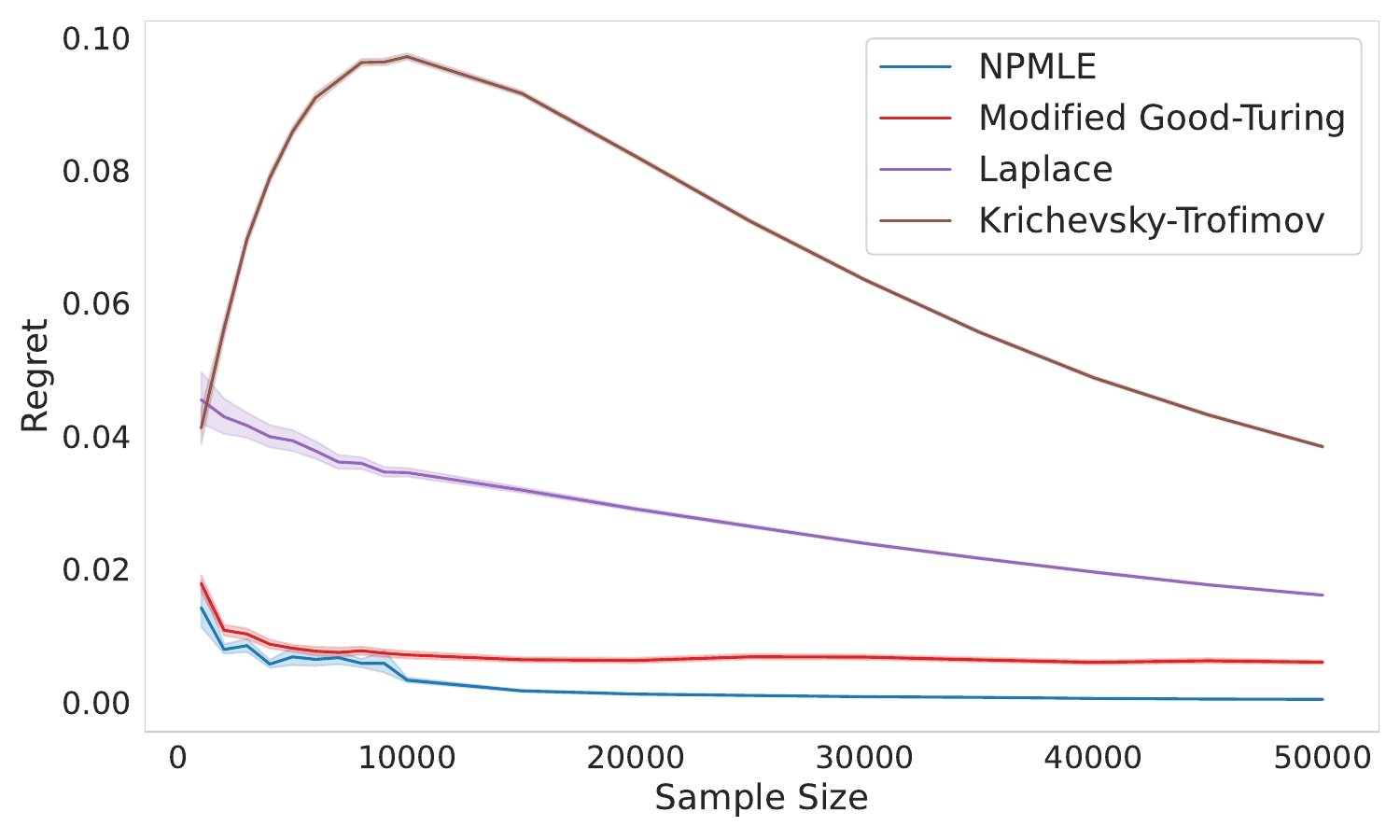}
         \caption{Regret over the separable oracle.}
         \label{fig:sqrtcauchy-regret}
     \end{subfigure}
\caption{KL risk and regret of various estimators and oracles for heavy-tailed distribution $p^\star$ (sqrt-Cauchy).}
\label{fig:sqrtcauchy}
\end{figure}

In \prettyref{fig:sqrtcauchy}, we plot the KL risks of various estimators and oracles as well as their regrets with respect to the separable oracle. The true distribution $p^\star$, referred to as sqrt-Cauchy, is randomly generated with $p^\star_i \propto \sqrt{|z_i|}$ and $z_i$ being iid Cauchy. This is a challenging setting because the heavy tail of the Cauchy distribution leads to an abundance of symbols that are relatively rare. As evident from \prettyref{fig:sqrtcauchy}, the NPMLE attains near-oracle performance, with lower regret than Good--Turing; both methods significantly outperform the Laplace or K--T estimators.
For clarity in subsequent plots, we focus on regret over the separable oracle and omit Laplace and K--T, which generally perform far worse. Full plots of the KL risks are provided in \ifthenelse{\boolean{arxiv}}{Appendix \ref{sec:all-figs}}{SI Appendix \prettyref{sec:all-figs}}.

\begin{figure}[t]
     \centering
     \begin{subfigure}[b]{0.38\columnwidth}
         \centering
         \includegraphics[width=\textwidth]{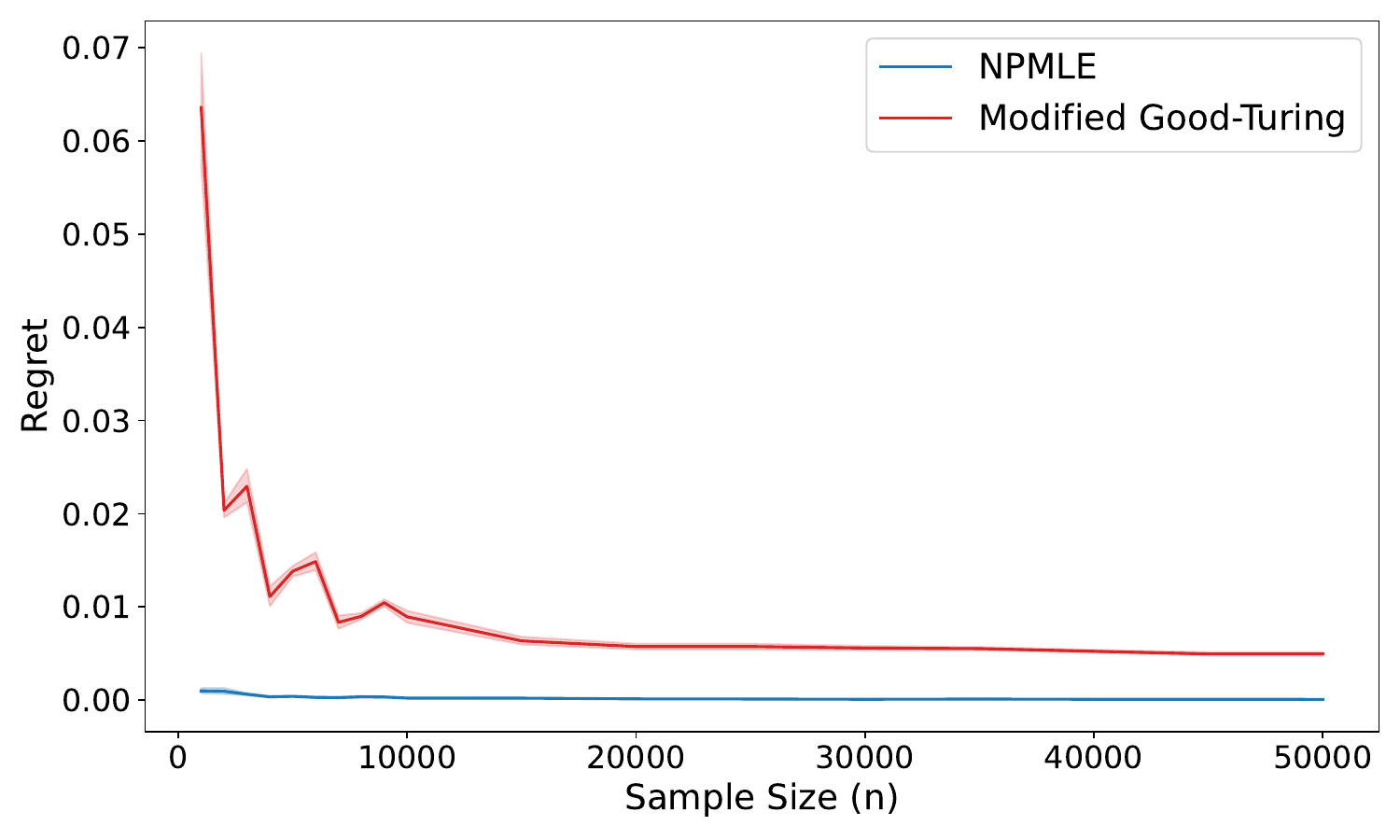}
         \caption{Uniform}
         \label{fig:uniform}
     \end{subfigure}
     \begin{subfigure}[b]{0.38\columnwidth}
         \centering
         \includegraphics[width=\textwidth]{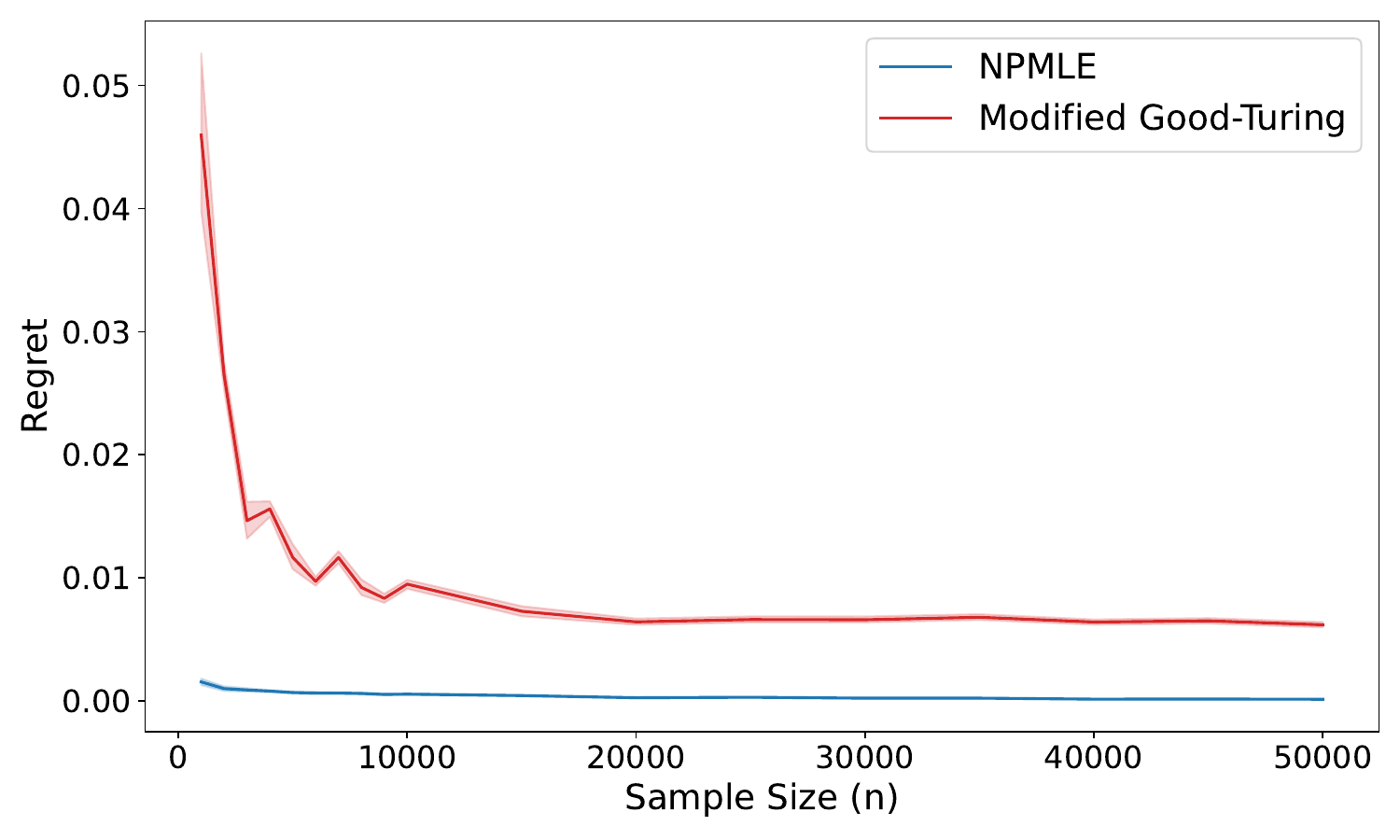}
         \caption{Step.}
         \label{fig:step}
     \end{subfigure}
     \begin{subfigure}[b]{0.38\columnwidth}
         \centering
         \includegraphics[width=\textwidth]{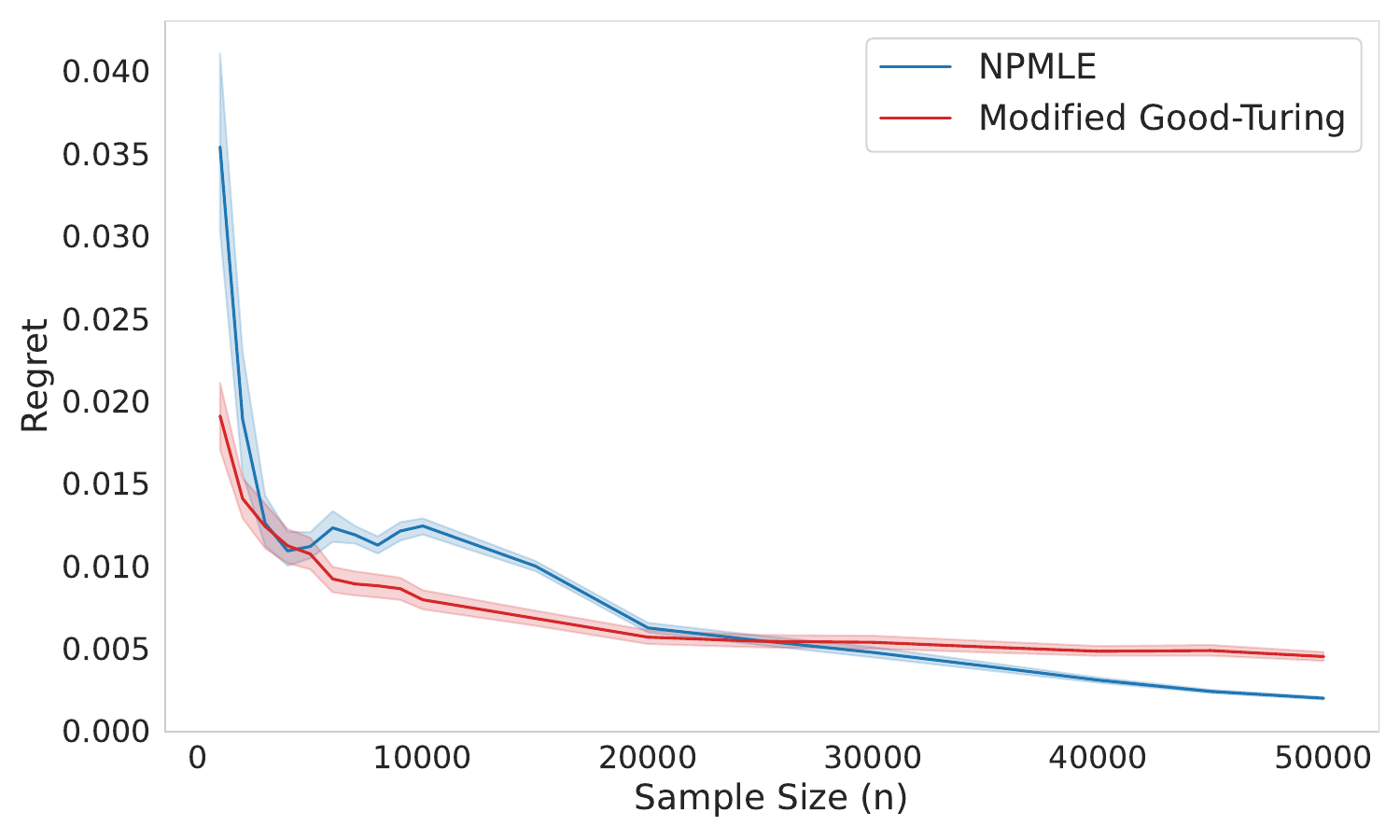}
         \caption{Zipf ($\alpha=1$).}
         \label{fig:zipf1}
     \end{subfigure}
     \begin{subfigure}[b]{0.38\columnwidth}
         \centering
         \includegraphics[width=\textwidth]{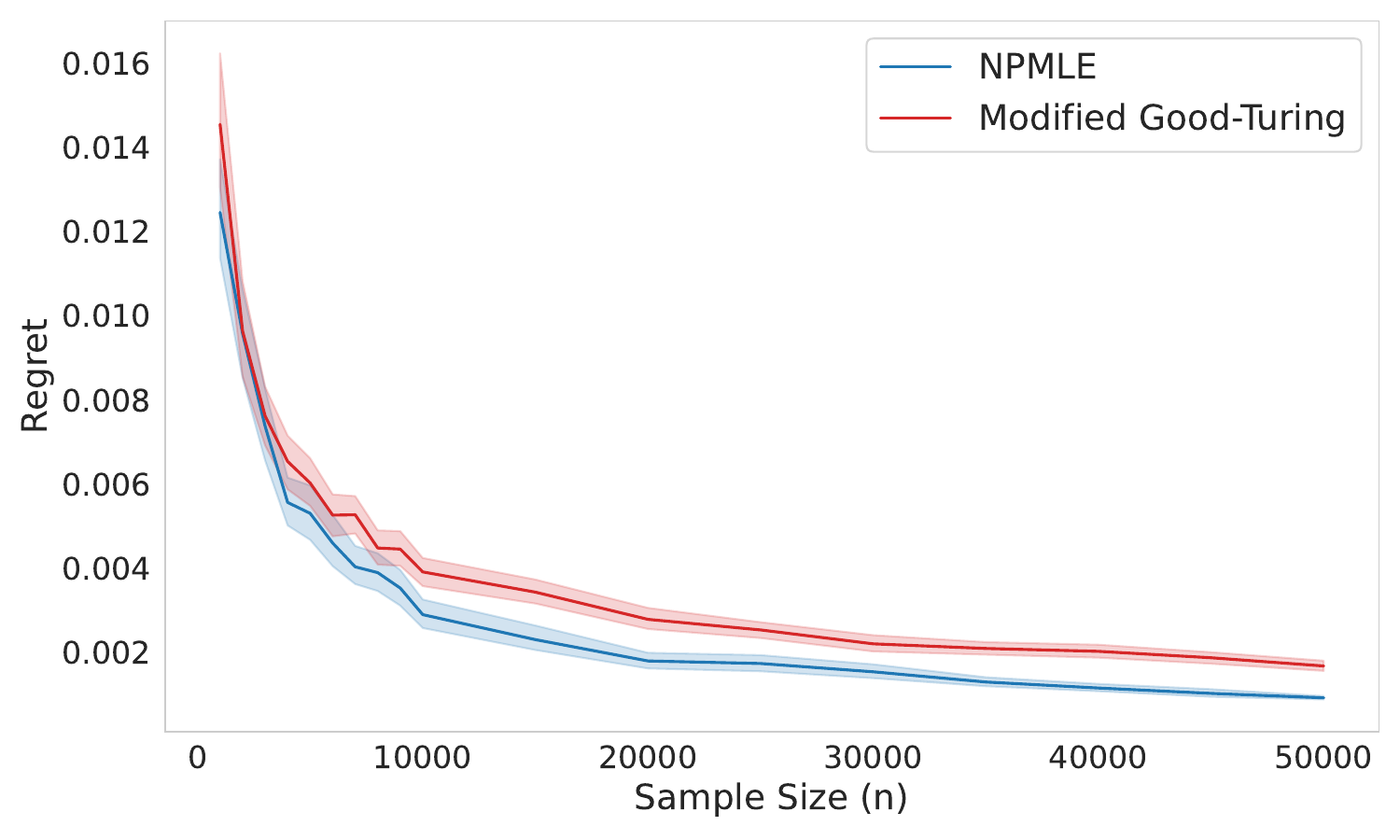}
         \caption{Zipf ($\alpha=1.5$).}
         \label{fig:zipf15}
     \end{subfigure}
     \begin{subfigure}[b]{0.38\columnwidth}
         \centering
         \includegraphics[width=\textwidth]{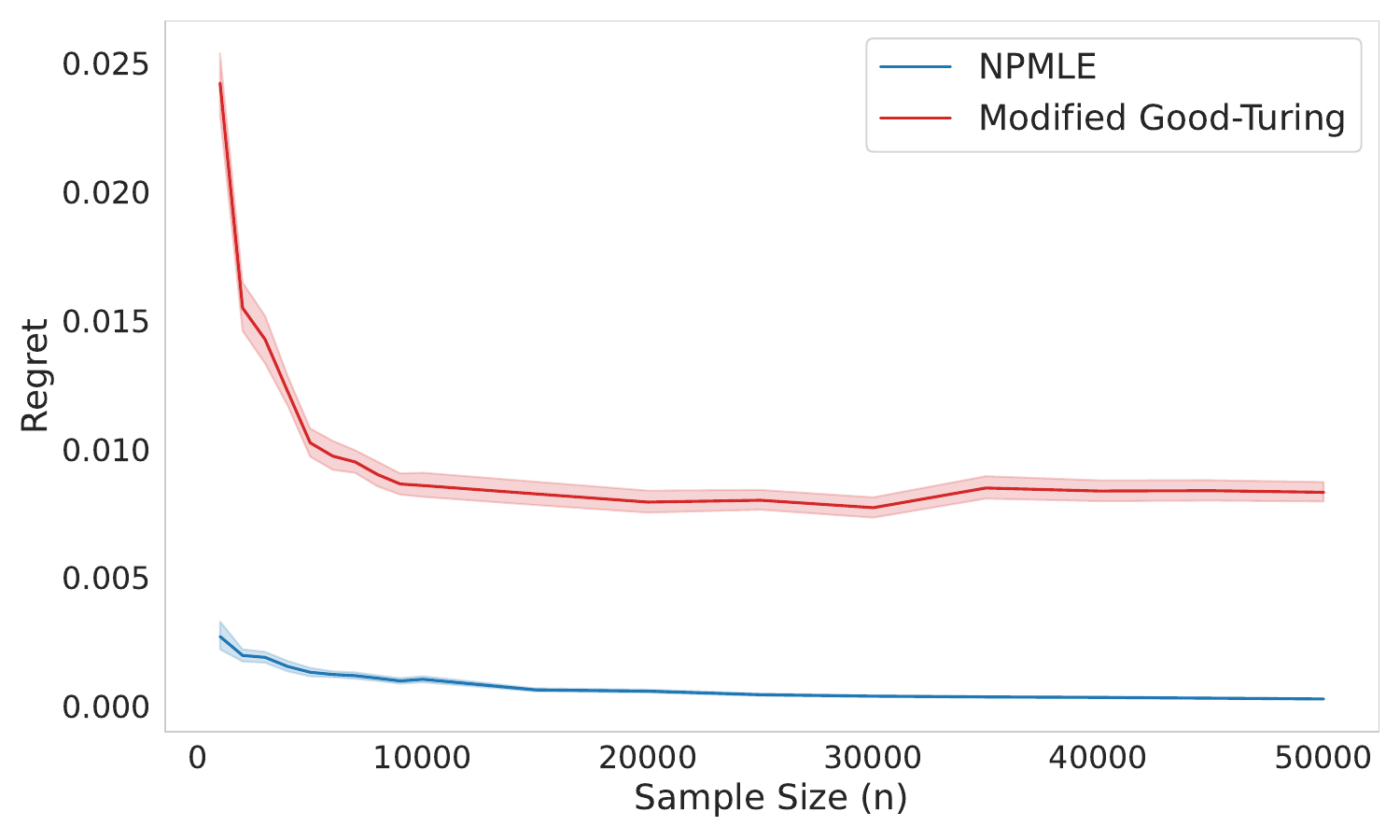}
         \caption{Dirichlet $(c=1)$}
         \label{fig:dirichlet1}
     \end{subfigure}
     \begin{subfigure}[b]{0.38\columnwidth}
         \centering
         \includegraphics[width=\textwidth]{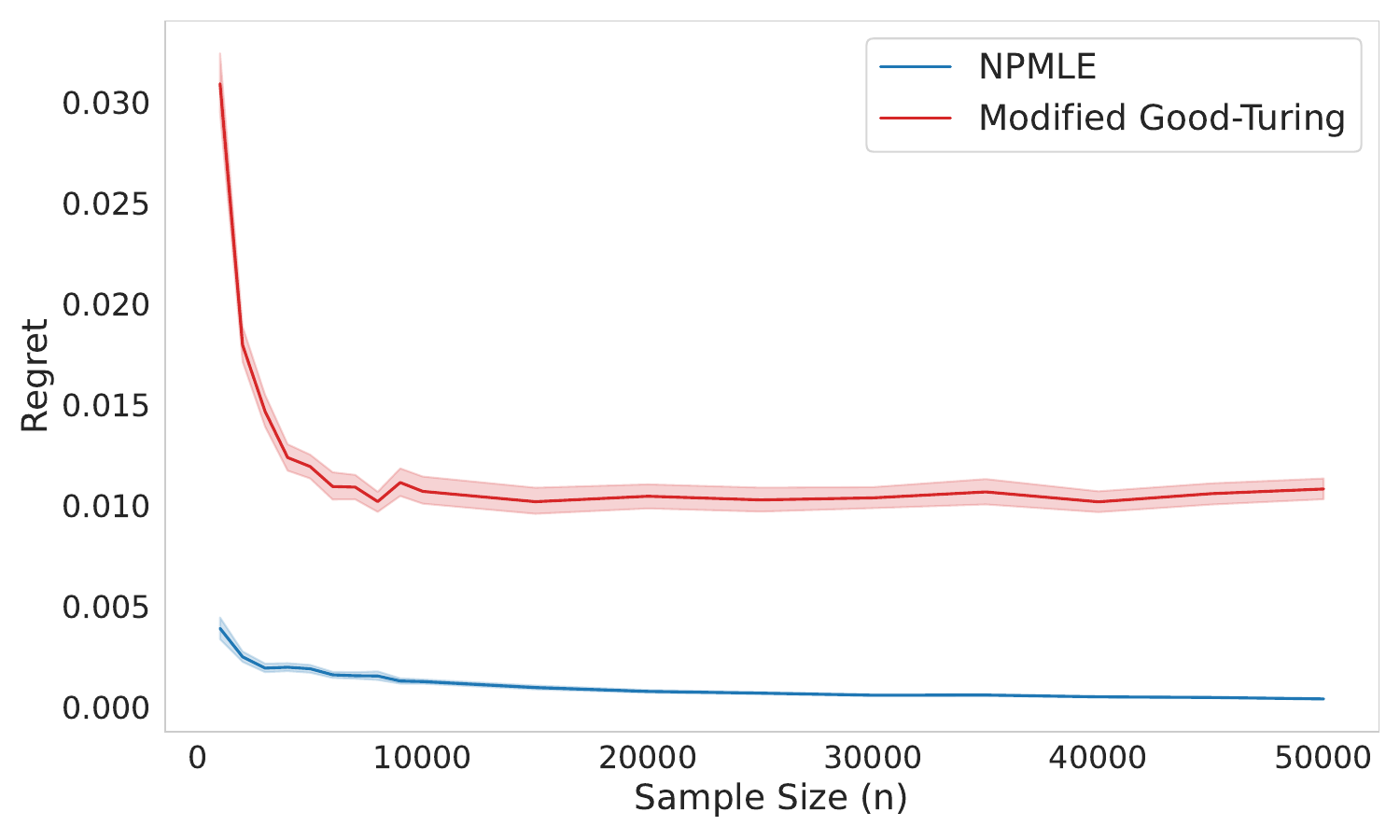}
         \caption{Dirichlet $(c=0.5)$}
         \label{fig:dirichlet05}
     \end{subfigure}
     
        \caption{KL regret over the separable oracle for various distributions over $k=10000$ elements.}
        \label{fig:KLregrets}
\end{figure}

\prettyref{fig:KLregrets} presents the regret of the NPMLE and Good--Turing estimators under several benchmark distributions used in the prior work \cite[Fig.~2]{orlitsky2015competitive}. These include the uniform distribution, step distribution ($\frac{1}{2k}$ for the first $\frac{k}{2}$ symbols and $\frac{3}{2k}$ for the remaining half), Zipf distribution with parameter $\alpha$ (given by the power law $p^\star_i \propto i^{-\alpha}, i=1,\ldots,k$), and Dirichlet prior with parameters $c$, with $c=1$ corresponding to the uniform prior over the probability simplex.
For uniform, step, and Dirchlet, the NPMLE consistently outperforms
Good--Turing. For the heavy-tailed Zipf distributions, the regret of Good--Turing starts off better but is eventually surpassed by the NPMLE as the sample size increases.

\begin{figure}[t]
     \centering
     \begin{subfigure}[b]{0.4\columnwidth}
         \centering
         \includegraphics[width=\textwidth]{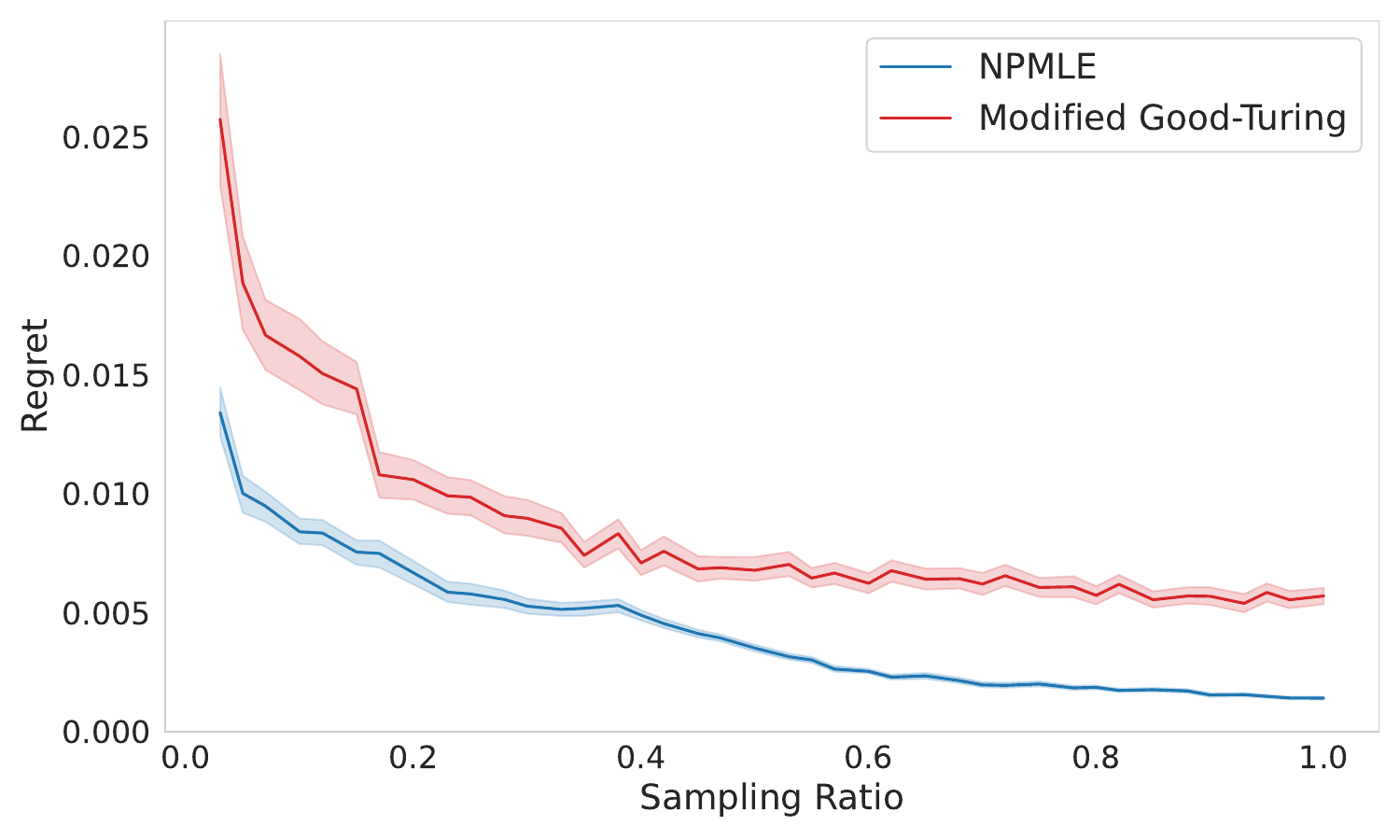}
         \caption{Hamlet (random).}       \label{fig:hamlet-random}
     \end{subfigure}
     \begin{subfigure}[b]{0.4\columnwidth}
         \centering
         \includegraphics[width=\textwidth]{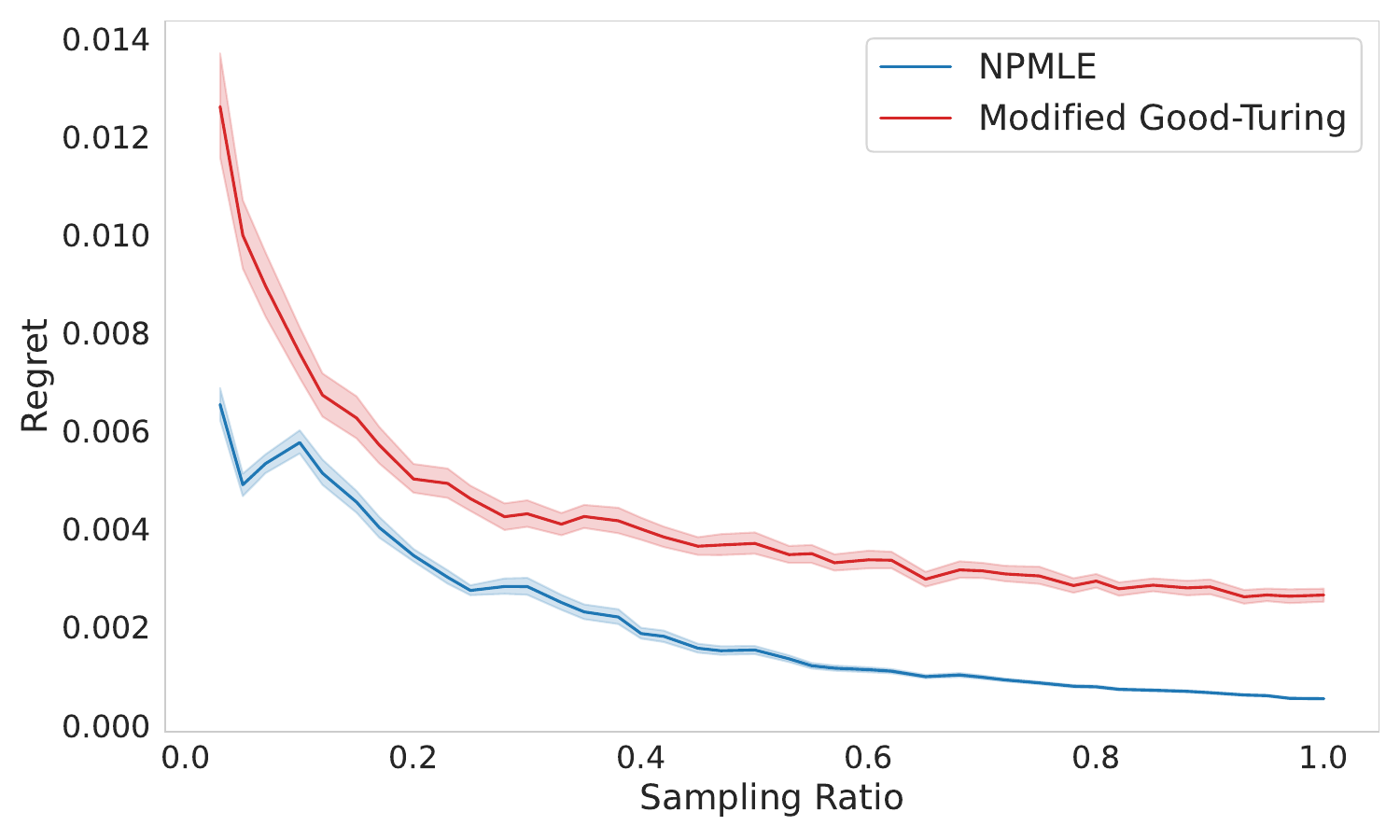}
        \caption{LOTR (random).}       \label{fig:lotr-random}
     \end{subfigure}     
     \begin{subfigure}[b]{0.4\columnwidth}
         \centering
         \includegraphics[width=\textwidth]{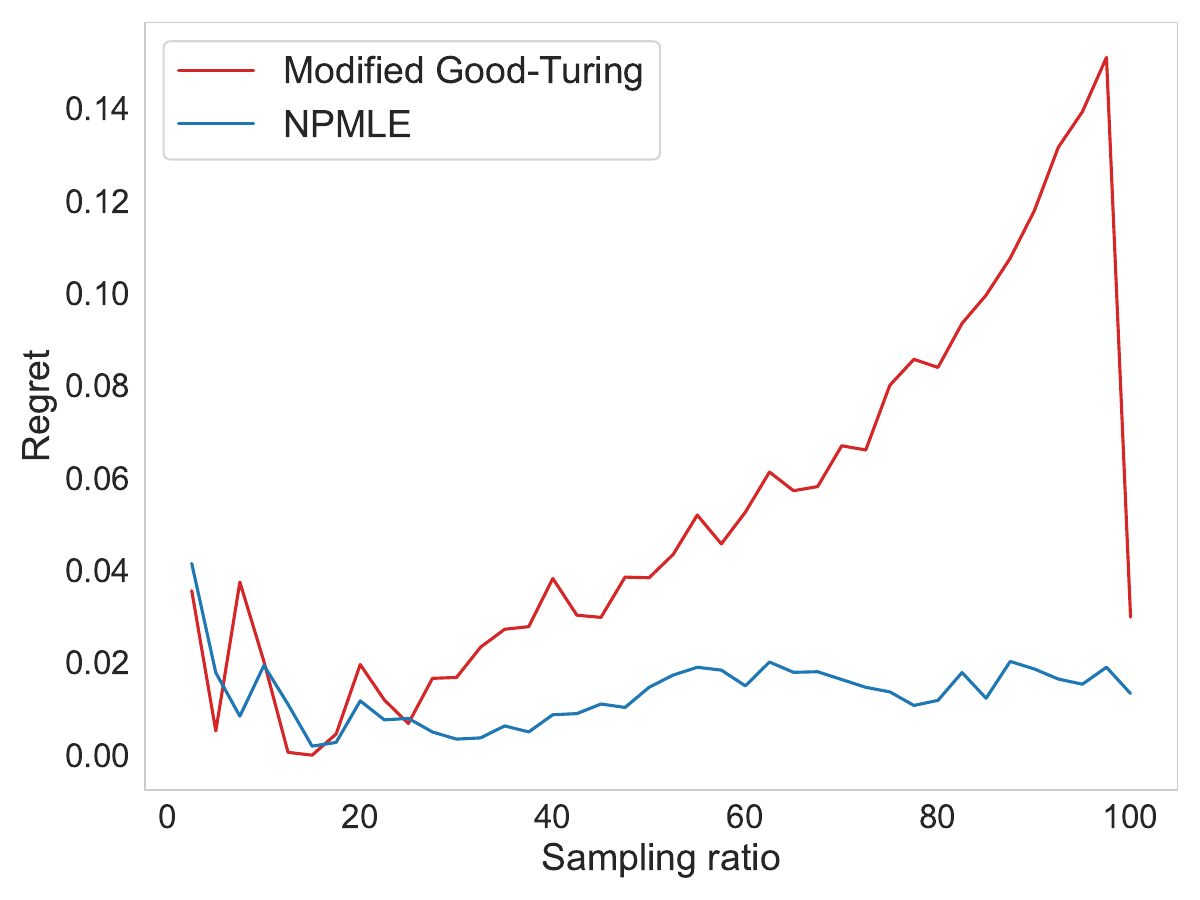}
         \caption{Hamlet (consecutive).}       \label{fig:hamlet-consec}
     \end{subfigure}
     \begin{subfigure}[b]{0.4\columnwidth}
         \centering
         \includegraphics[width=\textwidth]{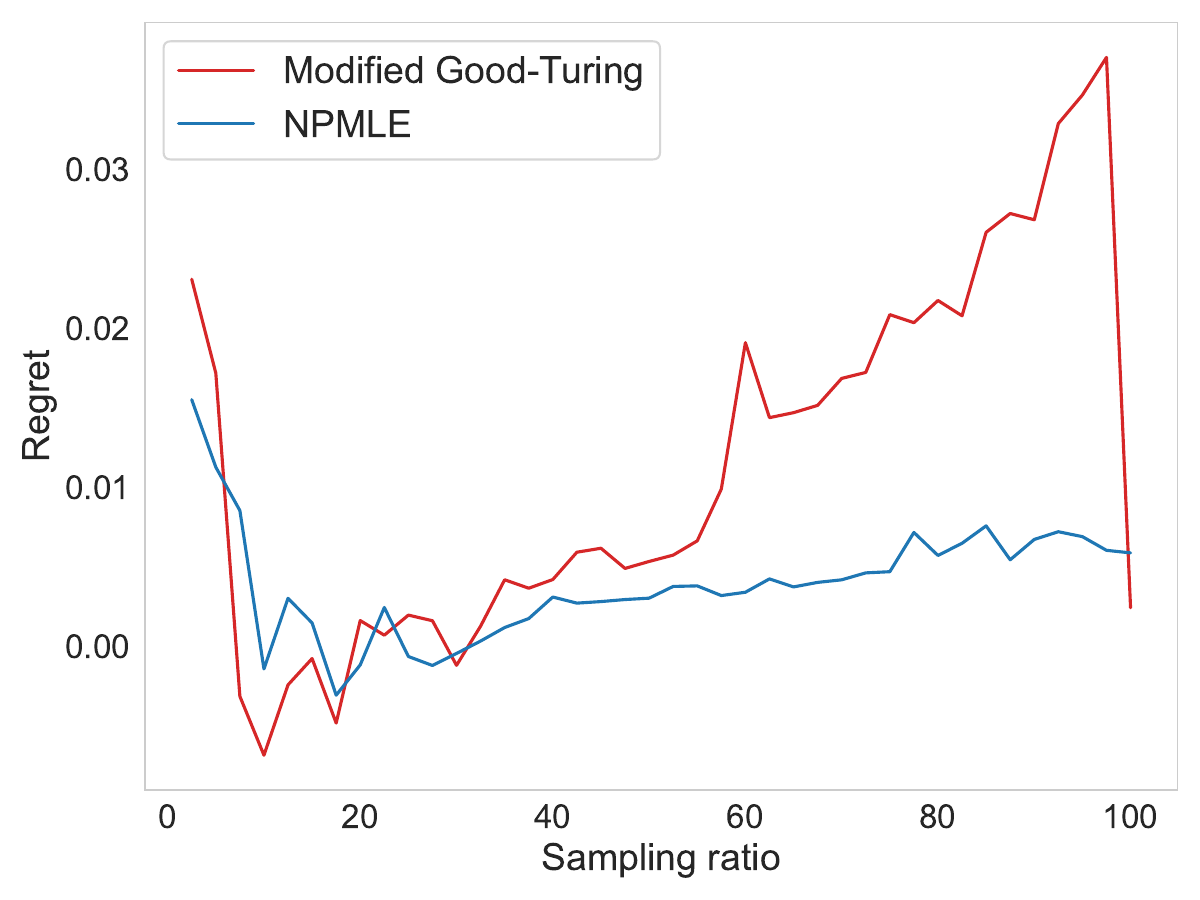}
         \caption{LOTR (consecutive).}       \label{fig:lotr-consec}
     \end{subfigure}     
     \begin{subfigure}[b]{0.4\columnwidth}
         \centering
         \includegraphics[width=\textwidth]{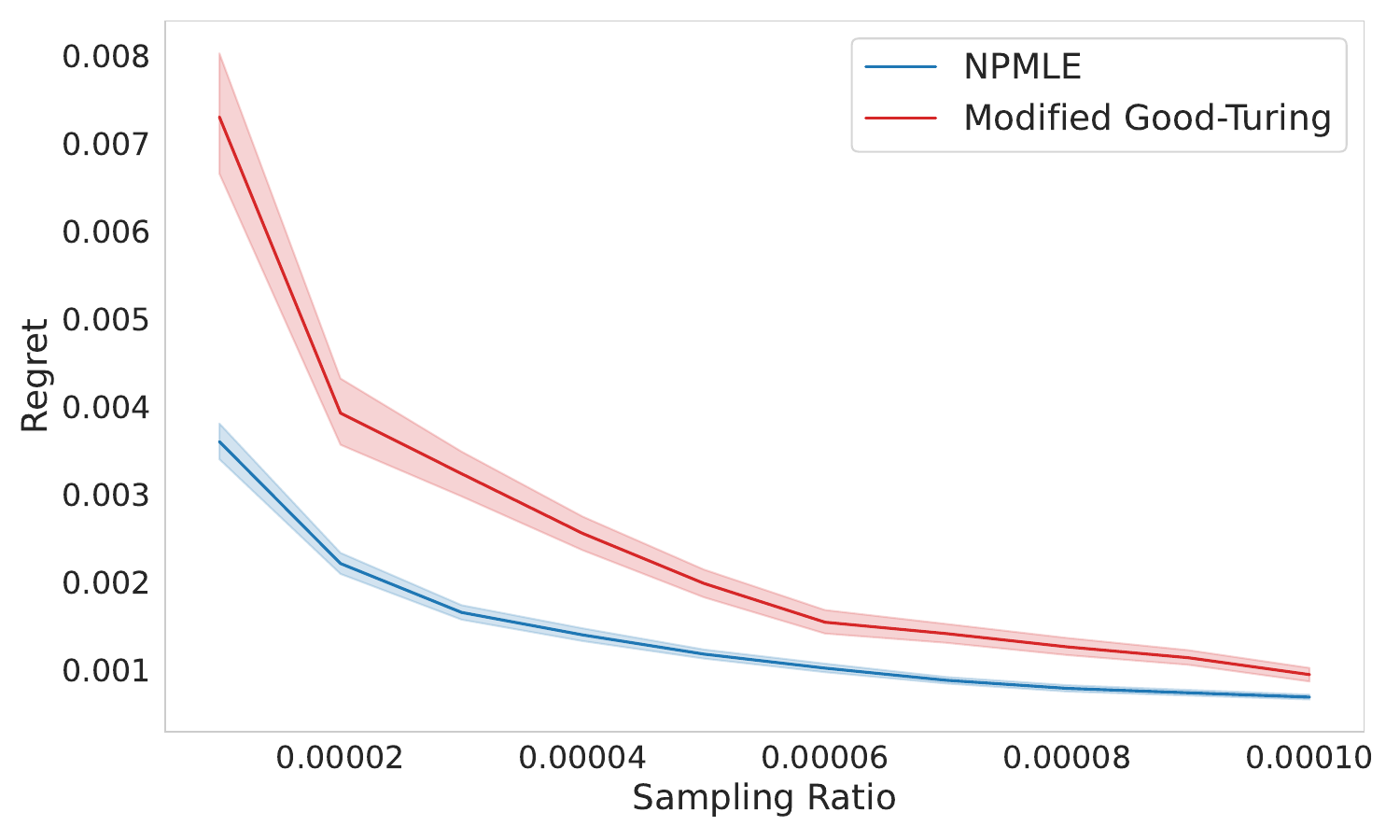}
         \caption{2020 Census Detailed DHC-A.}       \label{fig:census-group}
     \end{subfigure}
     \begin{subfigure}[b]{0.4\columnwidth}
         \centering         \includegraphics[width=\textwidth]{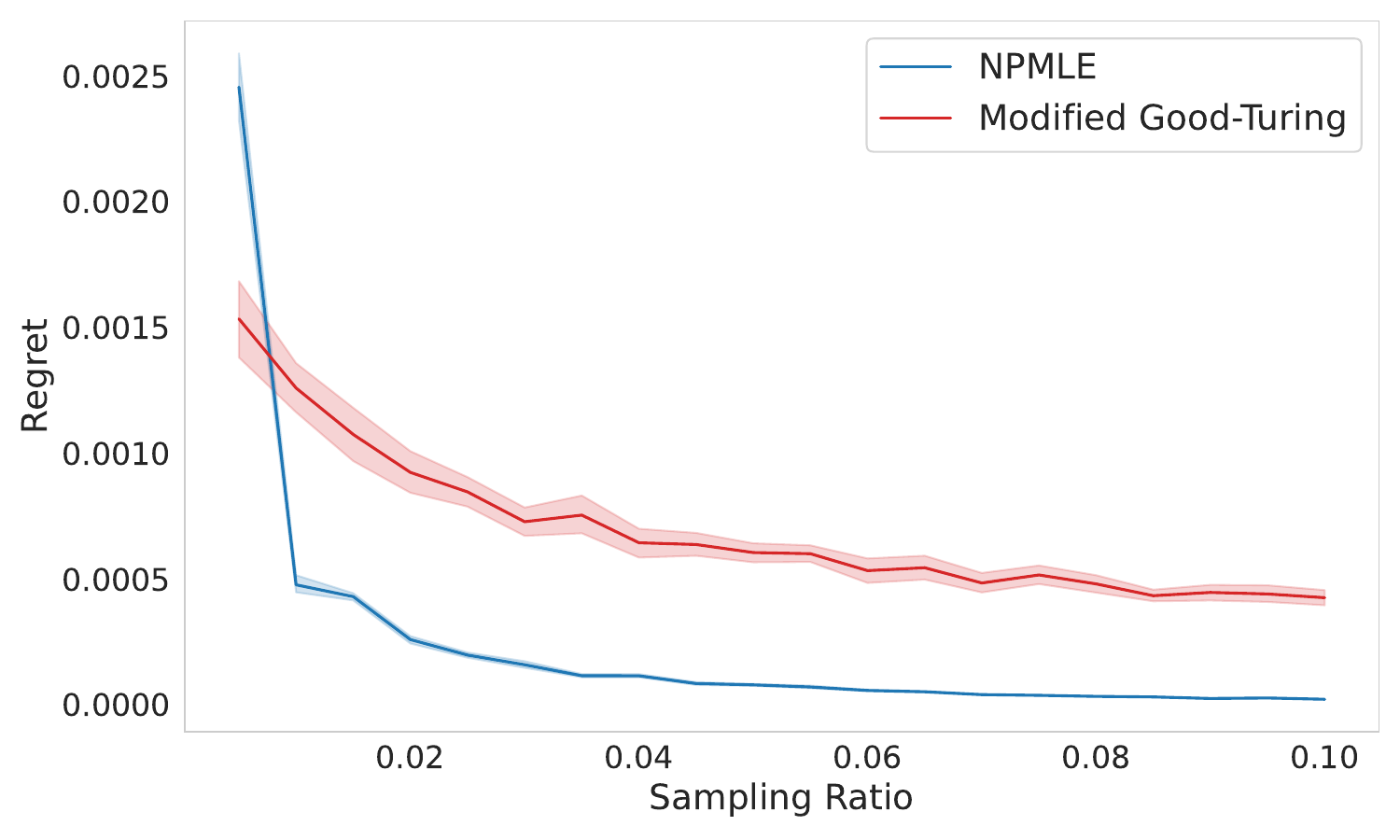}
        \caption{2010 Census surname.}       \label{fig:census-surname}
     \end{subfigure}
     \caption{KL regret of 
     NPMLE and modified Good-Turing on real data experiments.}
\end{figure}

\paragraph{Real data.}
A central task in corpus linguistics is estimating word frequencies from text.
To test the efficacy of the proposed methodology, we consider two corpora: 
\textit{Hamlet}  by William Shakespeare  (28,799 words in total and 4,804 distinct), and 
\textit{The Lord of the Rings: The Fellowship of the Ring} \cite{lotr} by J.~R.~R.~Tolkien (187,716 words in total and 10,552 distinct).
We randomly sample (with replacement) a fraction of the text and use it to estimate the word frequency in the entire corpus; Figs.~\ref{fig:hamlet-random}--\ref{fig:lotr-random} plot the KL regret of NPMLE and Good--Turing against the sampling ratio. Figs.~\ref{fig:hamlet-consec}--\ref{fig:lotr-consec} repeat the same experiment 
by taking consecutive (rather than randomly sampled) words 
from the beginning of the text as the input. 
In both sampling scenarios, the NPMLE outperforms Good--Turing over a wide range of sample ratios.\footnote{The steep decline of the regret for modified Good--Turing when the sampling ratio approaches 1 is an artifact of the form \eqbr{eq:MGT-exp}: 
When 
nearly 100\% of the text is consumed, \eqbr{eq:MGT-exp} is close to the empirical frequency which 
is defined as the ground truth in this experiment.}

Next, we evaluate the proposed methods on two large-scale U.S.~census datasets:
\begin{itemize}
    \item 2020 Census Detailed Demographic and Housing Characteristics File A (Detailed DHC-A) \cite{DHC}, 
    which, after pre-processing to remove overlapping categories (see \ifthenelse{\boolean{arxiv}}{Appendix \ref{sec:dataproc}}{SI Appendix \prettyref{sec:dataproc}}), contains the population size of 1,446 detailed race and ethnicity groups.
\item Frequently occurring surnames (appearing at least 100 times) in the 2010 census \cite{USCensus2010Surnames}.
    This represents the largest experiment carried out in this paper, with $162{,}254$ 
    distinct names and a total population size of $294$ million. For numerical stability and computational efficiency, we apply the NPMLE estimator \textit{conditionally} to counts below a certain threshold and combine it with the empirical frequency to produce  estimates of all surnames. (See \ifthenelse{\boolean{arxiv}}{Appendix \ref{sec:experiment-details-conditional}}{SI Appendix \prettyref{sec:experiment-details-conditional}} for details).
\end{itemize}
Figs.~\ref{fig:census-group} and \ref{fig:census-surname}
present the KL regret of estimating the proportion of population groups and surnames, respectively. 
In both experiments, the NPMLE proves superior to the Good--Turing estimator.

Finally, we conduct an \textit{out-of-sample} experiment in which a Bayes estimator is trained based on the \textit{Hamlet} and then applied to other Shakespearean plays to estimate word frequency; we call this strategy a \textit{pretrained Bayes estimator}.
Specifically, in the training stage, we apply the NPMLE estimator to the entire \textit{Hamlet} to obtain $\widehat G$. In the inference stage, we apply the Bayes estimator with prior $\widehat G$ to a sampled portion (20\%) of a new play, adjusting for sample size differences.

\prettyref{fig:shakespeare-oos} compares the performance of this pretrained Bayes estimator with the NPMLE and modified Good--Turing. The results are striking: Out of the 38 Shakespearean canonical plays other than the \textit{Hamlet}, the pretrained Bayes estimator outperforms the modified Good--Turing 32 times, and even surpasses the in-sample NPMLE 18 times.
One possible explanation is as follows: If we view the entire Shakespearean canon as being sampled from a common distribution, then the NPMLE trained on the full \textit{Hamlet} may better approximate the empirical distribution of the true probabilities than one learned on the fly from, say, 20\% of \textit{King Lear}. As a result, the pretrained estimator can be more competitive despite the domain mismatch. In contrast, the Bayes estimator trained on \textit{Hamlet} does not generalize to the \textit{Lord of the Rings} (see \ifthenelse{\boolean{arxiv}}{Appendix \ref{tab:oos}}{SI Appendix \prettyref{tab:oos}}), which suggests that the learned prior 
captures useful stylistic information (e.g.~the so-called \textit{vocabulary profile} \cite{stamatatos2009survey}) specific to a corpus.



\begin{figure}[t]
    \centering
    \ifthenelse{\boolean{arxiv}}{\includegraphics[width=0.8\linewidth]{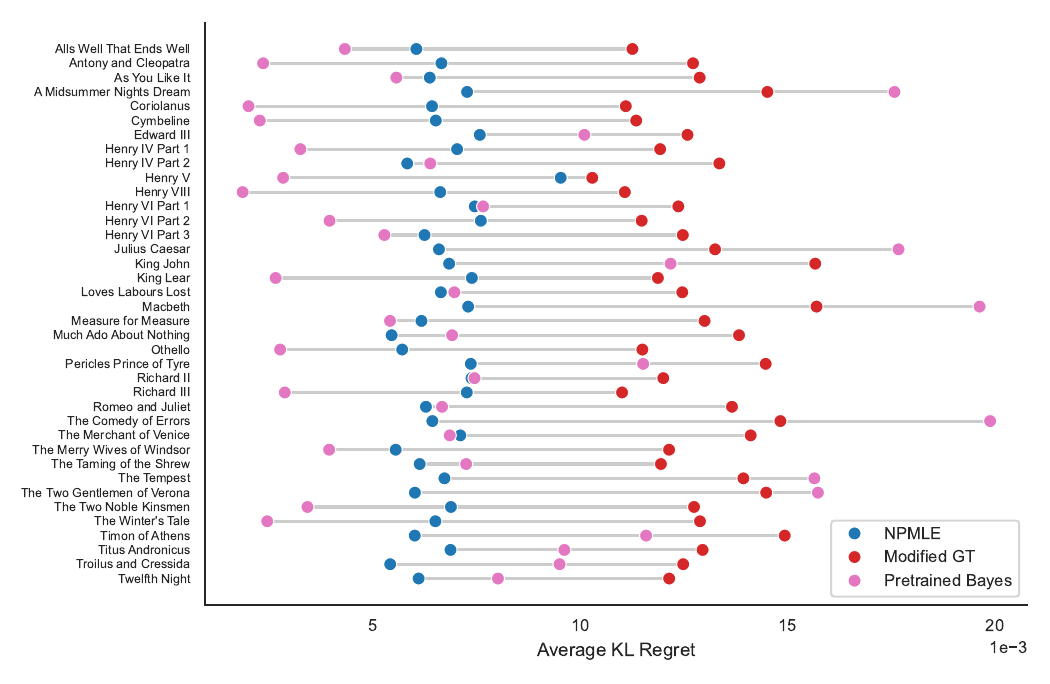}}{\includegraphics[width=1.0\linewidth]{figs_final/shakespeare_oos_regret.pdf}}
    \caption{Out of sample experiment on the Shakespearean canon (sampling ratio 20\%). The 
   NPMLE and the modified Good--Turing are the same as in \prettyref{fig:hamlet-random}. The pretrained Bayes estimator applies an estimated prior learned from \textit{Hamlet} only.}
    \label{fig:shakespeare-oos}
\end{figure}

\ifthenelse{\boolean{arxiv}}{\subsection{Discussion}}{\section*{Discussion}}
In this paper we proposed a computationally efficient  distribution estimator with both theoretical optimality and strong empirical performance. Compared with the Bayesian estimators of Laplace and Krichevsky--Trofimov using fixed priors, we take an empirical Bayesian approach with data-driven priors. Compared with Good--Turing which stems from the ``$f$-modeling'' approach to empirical Bayes \cite{efron2014two} focusing solely on the marginal mixture distribution, we adopt the ``$g$-modeling'' strategy by estimating the mixing distribution (prior) using the NPMLE.


The practical effectiveness of NPMLE owes much to its Bayesian structure, which yields estimates that are more stable, accurate, and interpretable than existing methodologies. By construction, Bayes (and hence empirical Bayes) estimators always assign strictly positive probabilities, eliminating the need for ad hoc adjustments such as additive smoothing. Furthermore, the NPMLE estimator has a desirable \textit{monotonicity} property: Symbols appearing more frequently are always assigned higher probabilities. This is inherited from the monotonicity of the Bayes estimator $y \mapsto \theta_G(y)$ for any prior $G$ \cite{koenker2014convex}. In contrast, as illustrated in  \prettyref{fig:smooth}, 
the modified Good--Turing estimator lacks this property and the unmodified version produces wildly oscillating estimates that are unusable in practice.

\begin{figure}[t]
\centering
\ifthenelse{\boolean{arxiv}}
{\includegraphics[width=0.8\linewidth]{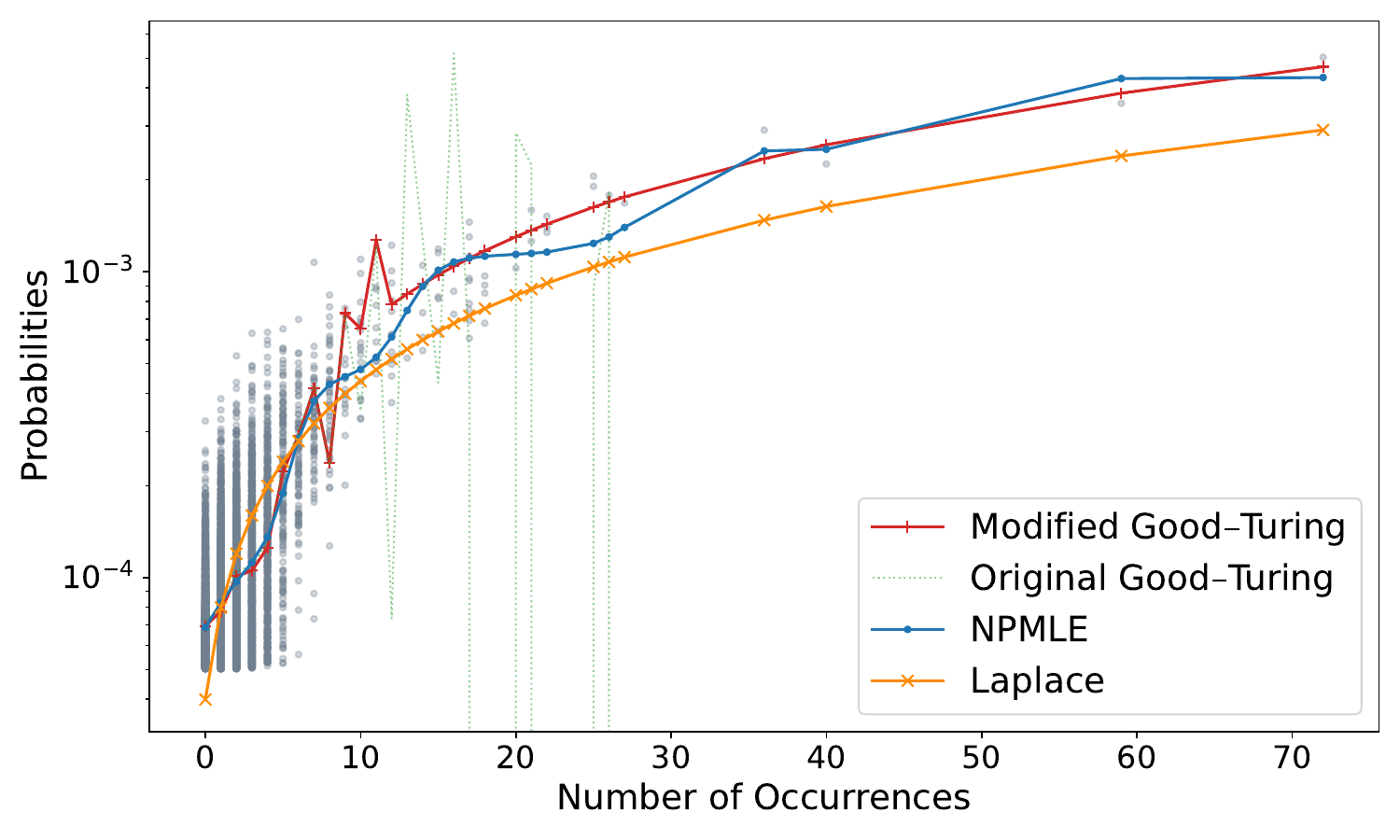}}
{\includegraphics[width=\linewidth]{figs_final/smoothing_zipf.pdf}}
\caption[\texorpdfstring{Smoothing effect of distribution estimators}{Smoothing effect of distribution estimators}]{Smoothing effect of distribution estimators. Data are drawn from the Zipf distribution with  $\alpha=1/2, k=10000$, and $n=15000$ in a single trial. (See \ifthenelse{\boolean{arxiv}}{Figure \ref{fig:smooth-full} in the Appendix}{SI Appendix \prettyref{fig:smooth-full}} for \textit{Hamlet} data.)
Estimated probabilities $\widehat{p}_i$ are plotted in solid against the empirical count $N_i$, while gray dots represent scatter plot of true probabilities $p^\star_i$ versus $N_i$. 
Unlike the Good--Turing estimators, the estimated probabilities by the NPMLE and Laplace estimators are  always positive and monotonically increasing in the empirical count, thanks to their Bayes form.}
\label{fig:smooth}
\end{figure}



We conclude this section by discussing several connections and open problems.
\begin{itemize}
\item In addition to sequence MLE (empirical frequency) and the nonparametric MLE (this paper), another MLE for distribution estimation is the the \emph{profile maximum likelihood} (PML) \cite{PML}, which, despite being computationally challenging, is provably  competitively optimal for estimating distribution properties such as entropy or support size. 
Interestingly, the NPMLE can be related to the PML via convex relaxation, and the latter can be shown to also achieve near-optimal KL regret as a corollary to our \prettyref{thm:main} (see \ifthenelse{\boolean{arxiv}}{Appendix \ref{sec:PML}}{SI Appendix \prettyref{sec:PML}}.)


\item In this paper we applied  the proposed NPMLE only to estimate word frequency (unigrams). Stronger language models requires estimating N-grams, such as Katz smoothing, which applies the 
Good--Turing estimator with ``back-off'' \cite{JurafskyMartin}. It is an interesting direction to develop better N-gram estimators based on NPMLE in a similar competitive framework  \cite{falahatgar2020towards}.

    \item Strictly speaking, the regret bound $\widetilde O(n^{-2/3})$ in the main \prettyref{thm:main} is not entirely dimension-free as the logarithmic factors depend on the alphabet size $k$. In principle, it is possible to bound the regret uniformly in $k$, even for infinite alphabet. Indeed, \cite{orlitsky2015competitive} showed the suboptimal regret bound $\widetilde O(n^{-1/3})$ for modified Good--Turing that does not depend on $k$. Obtaining a truly dimension-free regret bound at the optimal rate remains an open problem.

\item The present paper, along with prior works, assumes Poisson sampling for the technical reason that under this model, 
the symbol counts are independent.
Empirically we observed little to no difference between the Poisson and the 
multinomial (fixed sample size) model. Nevertheless,
extending our theory and methodology to the multinomial model, which belongs to the broader class of ``permutation‑invariant decision problems'' studied by \cite{weinstein2021minimum}, is a challenging future direction.
Note that for estimating \textit{functionals} of probability distributions (such as entropy or support size \cite{jiao2015minimax,WY14,WY15}), it is known that the order of the optimal sample complexity is unchanged with or without Poissonization.



\end{itemize}

\ifthenelse{\boolean{arxiv}}{\subsection{Proof techniques}}{\section*{Proof techniques}}
In this section we explain the main idea of \prettyref{thm:main}, with details deferred to \ifthenelse{\boolean{arxiv}}{Section \ref{sec:main_proof}}{SI Appendix \prettyref{sec:main_proof}}--
\ref{sec:main_proof_PI}.
The key challenge in proving a competitive regret bound for the NPMLE is that the permutation invariant oracle $\widehat p^{\PI}$ lacks an explicit form.
Orlitsky and Suresh~\cite{orlitsky2015competitive} sidestepped this difficulty by defining a \textit{stronger} oracle---in their terminology, the optimal ``natural'' estimator---and proving a regret bound for the Good--Turing estimator with respect to this natural oracle\footnote{Specifically, an estimator $\hat{p}$ is called natural by \cite{orlitsky2015competitive} 
if $\hat p_i = \hat p_j$ whenever $N_i=N_j$; that is, for two symbols occurring the same number of times, their estimated probabilities are identical. 
The natural oracle attains  the smallest KL risk among all natural estimators, which redistributes the true total probability equally to all symbols occurring the same number of times, namely, $\pnatural_i = \Big(\sum_{j=1}^k p^\star_j \Indc\{N_j=N_i\}\Big)/\Big(\sum_{j=1}^k \Indc\{N_j=N_i\}\Big)$.} instead.
This approach avoids the need to analyze $\widehat p^{\PI}$, but it is necessarily loose: any estimator has regret at least $\widetilde{\Omega}(n^{-1/2})$ with respect to the natural oracle~\cite{AchJafOrl13}, so this strategy is too crude to establish the optimal rate $\widetilde O(n^{-2/3})$ in Theorem~\ref{thm:main}.

We therefore adopt a different strategy. The key idea is to introduce a \textit{weaker} (rather than stronger) oracle, called the separable oracle. 
We first show that the NPMLE can compete optimally with the separable oracle with a regret of $\widetilde O(n^{-2/3})$. Then we show that the separable oracle itself incurs the same $\widetilde O(n^{-2/3})$ regret 
over the PI oracle.



To this end, we first re-interpret $\widehat p^{\PI}$ as the optimal estimator in a Bayesian version of the studied model.
This alternative interpretation places the PI oracle into an established framework for the analysis of empirical Bayes procedures~\cite{greenshtein2009asymptotic}, and opens the door to a tighter analysis.
More precisely, for this Bayesian analogue we first define $p = \pi( p^\star)$ to be a random permutation of $p^\star$, with 
$\pi$ uniformly distributed on $S_k$, the set of all permutations on $[k]$, and draw conditionally independent observations $N_i \sim \poi(\theta_i)$ conditioned on $(p_1,\dots,p_k)$, where $\theta_i := np_i$. 
One can show that (see \ifthenelse{\boolean{arxiv}}{Appendix \ref{sec:PI_equivalence}}{SI Appendix, \prettyref{sec:PI_equivalence}})
the PI oracle is given by the posterior mean under this model:
\begin{align}\label{eq:PI_oracle_intro}
	\widehat p^{\PI}_i = \E[p_i | N]\,,
\end{align}
which is allowed to depend on the full counts $N = (N_1,\ldots,N_k)$. 
Here and below, the notation $\E$ without subscript denotes expectation with respect to the joint distribution of $(p_i,N_i)_{i=1}^k$. We compare $\widehat p^{\PI}$ to a weaker oracle, defined by
\begin{align}\label{eq:separable_oracle_intro}
	\bar{p}_i^{\Sym} := \E[p_i | N_i]\,,
\end{align}
which is only allowed to depend on the individual count $N_i$. 
Following the terminology in compound estimation \cite{greenshtein2009asymptotic}, we call $\bar p^{\Sym} = (\bar p^{\Sym}_1,\ldots,\bar p_k^{\Sym})$ the \textit{separable oracle}, since each $\bar{p}_i^{\Sym}$ only depends on $N_i$. 
Note that the separable oracle is simply given by Bayesian estimator \prettyref{eq:bayes} (scaled by $1/n$), with the empirical distribution of $\theta^\star$ playing the role of the prior. In comparison, 
the PI oracle \prettyref{eq:PI_oracle_intro} is 
\begin{equation}
\widehat p^{\PI}_i = 
\frac{
\sum_{\pi \in S_k} p^\star_{\pi(i)} \prod_{j=1}^k 
(p^{\star}_{\pi(j)})^{N_j}
}{\sum_{\pi \in S_k}  \prod_{j=1}^k 
(p^{\star}_{\pi(j)})^{N_j} }.
    \label{eq:PI-oracle-full}
\end{equation}
This complicated form presents difficulties for both theoretical analysis and computation.

Using the separable oracle as an intermediary, our proof starts by decomposing the regret into two parts (see \eqbr{eq:reg_decomp} in \ifthenelse{\boolean{arxiv}}{\prettyref{sec:main_proof}}{SI Appendix} for details):
\begin{align}\label{eq:reg_decomp_main}
\notag &\reg(\pNPMLE) =\E \qth{\DKL(p^\star \| \pNPMLE)} - \E\qth{\DKL(p^\star \| \widehat{p}^{\PI})}\\
&\leq \E \qth{\sum_{i=1}^k p_i^\star \log \frac{\bar{p}^{\Sym}_i}{\bar{p}_i} - \bar{p}^{\Sym}_i + \bar{p}_i} + \E \big[\DKL(\widehat{p}^{\PI} \| \bar{p}^{\Sym})\big],
\end{align}
where 
$\bar p$ is the unnormalized version of the NPMLE \eqbr{def:npmle_est}:
\begin{equation}
	\bar p_i := \frac{ \theta_{\widehat G}(N_i) + \hp \indc{N_i = 0}}{n}.
    \label{eq:NPMLE-unnormalized}
\end{equation}
Note that unlike the PI oracle \prettyref{eq:PI_oracle_intro}, the separable oracle
$\bar{p}^{\Sym}$ in \prettyref{eq:separable_oracle_intro}
is not necessarily normalized so the KL divergence in the second term is understood more generally as 
$\DKL(\widehat{p}^{\PI} \| \bar{p}^{\Sym}) :=\sum_{i=1}^k (\widehat{p}^{\PI}_i \log({\widehat{p}^{\PI}_i}/{\bar{p}^{\Sym}_i}) - \widehat{p}^{\PI}_i + \bar{p}^{\Sym}_i)$.
The two terms in \eqbr{eq:reg_decomp_main} have the following interpretation:
The first term is analogous to a KL divergence between $\bar p$ and $\bar{p}^{\Sym}$, and can be viewed as the regret of $\pNPMLE$ over the separable oracle $\bar{p}^{\Sym}$. 
The second term 
is  the regret of the separable oracle $\bar{p}^{\Sym}$ over the PI oracle $\widehat{p}^{\PI}$.
We show that both terms are $\widetilde{O}(n^{-2/3}\wedge \frac{k}{n})$.

\ifthenelse{\boolean{arxiv}}{\subsubsection{Competing against the separable oracle}}{\subsection*{Competing against the separable oracle}} In analogy to the ``Bayesian" interpretation for $\widehat p^{\PI}$ above, we consider a slightly different Bayesian experiment for the analysis of $\bar p^{\Sym}$. For any distribution $G$ on $\R_+$, consider the pair $(\theta, Y)\in \R_+\times \mathbb{N}$ generated by $\theta \sim G$, and $Y|\theta \sim \Poi(\theta)$. Then the current data generating scheme for $(N_1,\ldots,N_k)$ under the true means $\theta^\star = (np^\star_1,\ldots,np^\star_k)$ (with Poissonized sample size $\Poi(n)$) is related to the above experiment via the so-called \emph{fundamental theorem of compound estimation} \cite{zhang2003compound}: for any measurable function $f: \R_+\times \mathbb{N} \rightarrow \R$,
\begin{align}\label{eq:compound_fundamental}
\E\Big[\frac{1}{k}\sum_{i=1}^k f(\theta_i^\star, N_i)\Big] = \E_{G_k}[f(\theta,Y)],
\end{align}
where $G_k := k^{-1}\sum_{i=1}^k \delta_{\theta^\star_i}$ denotes the empirical distribution of the true means. Suppose for now that the NPMLE $\widehat{G}$ in \eqbr{def:npmle_est} is a deterministic and data-independent distribution and is close to $G_k$ in some appropriate sense to be specified below. Then both $\bar p^{\Sym}_i = n^{-1}\E_{G_k}[\theta_i|N_i]$ and $\bar p_i = n^{-1}(\E_{\widehat{G}}[\theta_i|N_i]+ \hp \indc{N_i = 0})$ only depend on $N_i$, so that  \eqbr{eq:compound_fundamental} applies. Drawing on the recent result on Poisson empirical Bayes by a subset of the authors \cite{shen2022empirical}, we obtain the following comparison with $\bar{p}^{\Sym}$ (in a related metric to the KL divergence):
\ifthenelse{\boolean{arxiv}}{\begin{equation}\label{eq:sep_quantity}
\begin{split}
&\E\sum_{i=1}^k\Big[p^\star_i \log \frac{\bar p^{\Sym}_i}{\bar p_i} - \bar p^{\Sym}_i + \bar p_i\Big] \\
&= \frac{k}{n} \E_{G_k}\Big[ \theta \log \frac{\E_{G_k}[\theta | Y]}{\E_{\widehat{G}}[\theta | Y] + \tau\bm{1}\{Y=0\}} - \E_{G_k}[\theta | Y] + \E_{\widehat{G}}[\theta | Y] + \tau\bm{1}\{Y = 0\}\Big]\\
&\leq C\frac{k}{n}\eps^2 \log^5(nk) + \text{additional negligible terms}, 
\end{split}
\end{equation}}{
\begin{equation}\label{eq:sep_quantity}
\begin{split}
&\E\sum_{i=1}^k\Big[p^\star_i \log \frac{\bar p^{\Sym}_i}{\bar p_i} - \bar p^{\Sym}_i + \bar p_i\Big] \\
&= \frac{k}{n} \E_{G_k}\Big[ \theta \log \frac{\E_{G_k}[\theta | Y]}{\E_{\widehat{G}}[\theta | Y] + \tau\bm{1}\{Y=0\}} - \E_{G_k}[\theta | Y]\\
&\quad + \E_{\widehat{G}}[\theta | Y] + \tau\bm{1}\{Y = 0\}\Big]\\
&\leq C\frac{k}{n}\eps^2 \log^5(nk) + \text{additional negligible terms}, 
\end{split}
\end{equation}}
where $\eps^2$ is the squared \textit{Hellinger distance} between  $f_{G_k}$ and $f_{\widehat{G}}$. In words, with $k/n$ being the natural rescaling factor in applying \eqbr{eq:compound_fundamental}, the above bound relates the regret over the separable oracle $\bar p^{\Sym}$ to the Poisson mixture density estimation error.
By showing that the NPMLE in \eqbr{def:npmle} achieves the near-optimal rate of $\eps^2 = \widetilde{O}(\frac{n^{1/3}}{k} \wedge 1)$, the above bound leads to the $\widetilde{O}(n^{-2/3}\wedge \frac{k}{n})$ regret against the separable oracle. 

A technical caveat in the above program is that $\widehat{G}$ depends on the entire sample $(N_1,\dots,N_k)$ so strictly speaking $\bar p_i$ is not separable and \prettyref{eq:compound_fundamental} cannot be applied. To overcome this challenge, we apply additional arguments to first round $\widehat G$ to a discretization of the probability simplex ($\varepsilon$-net), and then apply the above argument to each fixed distributions in this net with a careful union bound. 

\ifthenelse{\boolean{arxiv}}{\subsubsection{Comparing the PI oracle to the separable oracle}}{\subsection*{Comparing the PI oracle to the separable oracle}}
A key ingredient of our analysis is a \textit{mean-field} view that approximates the PI oracle $\widehat{p}^{\PI}$, a complicated object both mathematically and computationally, by the much simpler separable oracle $\bar{p}^{\Sym}$. Concretely, the joint distribution of $(p_1,N_1,\dots,p_k,N_k)$ is given by
\begin{align*}
\frac{1}{k!}\sum_{\pi\in S_k} \prod_{i=1}^k \indc{p_i = p_{\pi(i)}^\star} \Poi(N_i; np_i), 
\end{align*}
called a ``permutation mixture'' in the language of \cite{han2024approximate}. The mean-field approximation proceeds \emph{as if} $(p_1,N_1), \dots, (p_k,N_k)$ were mutually independent, so that by \eqbr{eq:PI_oracle_intro} and \eqbr{eq:separable_oracle_intro},
\begin{align*}
\widehat{p}^{\PI}_i = \E[p_i | N] \approx \E[p_i | N_i] = \bar{p}^{\Sym}_i. 
\end{align*}
The key technical challenge is to prove a quantitative bound on the difference between these two oracles. To this end, the empirical Bayes literature contains several ad hoc techniques \cite{hannan1955asymptotic, greenshtein2009asymptotic}, but these results are too loose in our problem (see \ifthenelse{\boolean{arxiv}}{\prettyref{sec:main_proof_PI}}{SI Appendix \prettyref{sec:main_proof_PI}} for details).

The following theorem quantifies the mean-field approximation of the PI oracle $\widehat{p}^{\PI}$, a critical step in establishing the regret upper bound in Theorem \ref{thm:main}. 
\begin{theorem}\label{thm:PI-Poisson}
There is an universal constant $C>0$ such that the PI oracle \eqbr{eq:PI_oracle_intro} and the separable oracle \eqbr{eq:separable_oracle_intro} satisfy
\begin{equation*}
\begin{split}
    \E \DKL(\widehat{p}^{\PI} \| \bar{p}^{\Sym}) \le C\pth{\frac{k\wedge n^{1/3}}{n}\log^2 n + \frac{\log^3 n}{n}}. 
\end{split}
\end{equation*}
\end{theorem}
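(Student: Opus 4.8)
The plan is to reduce \prettyref{thm:PI-Poisson} to a quantitative ``approximate independence'' estimate for the permutation mixture (plus routine one-dimensional Poisson bounds), the former being controlled by the statistical complexity of Poisson mixtures. The first step is a clean identity: by the tower property,
\[
\bar p^{\Sym}_i \;=\; \E[p_i\mid N_i] \;=\; \E\big[\,\E[p_i\mid N]\,\big|\,N_i\,\big] \;=\; \E[\widehat p^{\PI}_i\mid N_i],
\]
so the separable oracle is the conditional expectation of the PI oracle given the individual count. Combining this with the elementary inequality $a\log(a/b)-a+b\le (a-b)^2/b$ (valid for all $a,b>0$) and the $N_i$-measurability of $\bar p^{\Sym}_i$ yields
\begin{align}\label{eq:pi-plan-chisq}
\E\DKL(\widehat p^{\PI}\|\bar p^{\Sym}) \;\le\; \E\sum_{i=1}^k\frac{(\widehat p^{\PI}_i-\bar p^{\Sym}_i)^2}{\bar p^{\Sym}_i} \;=\; \E\sum_{i=1}^k\frac{\Var(\widehat p^{\PI}_i\mid N_i)}{\bar p^{\Sym}_i},
\end{align}
so it suffices to bound this ``$\chi^2$-type'' discrepancy between the two oracles.

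To analyze $\widehat p^{\PI}_i-\bar p^{\Sym}_i$ I would use the leave-one-out decomposition of the permutation-mixture posterior,
\begin{align}\label{eq:pi-plan-loo}
\Prob[p_i=p^\star_a\mid N] \;\propto\; \poi(N_i;np^\star_a)\,R_a(N_{-i}),
\end{align}
where $R_a(N_{-i})$ is the likelihood of $N_{-i}$ under the value-pool $\{p^\star_b:b\ne a\}$, normalized by its average, and the common $N_i$-posterior on the values is $\tilde w_a\propto\poi(N_i;np^\star_a)$. This gives the exact identity $\widehat p^{\PI}_i-\bar p^{\Sym}_i=\mathrm{Cov}_{\tilde w}(p^\star,R)/\E_{\tilde w}[R]$, whence, by Cauchy--Schwarz,
\[
\frac{(\widehat p^{\PI}_i-\bar p^{\Sym}_i)^2}{\bar p^{\Sym}_i}\;\le\;\frac{\E_{\tilde w}[(p^\star)^2]}{\E_{\tilde w}[p^\star]}\cdot\frac{\Var_{\tilde w}(R)}{(\E_{\tilde w}R)^2}.
\]
Summed over $i$ and passed through the fundamental theorem of compound estimation \eqref{eq:compound_fundamental}, the first factor becomes $\tfrac kn\,\E_{G_k}\big[\E[\theta\mid Y+1]\big]$, which is $\widetilde O(1)$ uniformly over priors on $[0,n]$ (it is the size-biased analogue of $\sum_i p^\star_i=1$, up to polylogarithmic factors and Poisson-tail truncation, and uses only $\E_{G_k}[\theta]=n/k$). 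The decisive ingredient is therefore a high-probability bound on the second factor,
\[
\frac{\Var_{\tilde w}(R)}{(\E_{\tilde w}R)^2}\;=\;\widetilde O\!\left(\tfrac{k}{n}\wedge n^{-2/3}\right),
\]
i.e.\ an \emph{approximate independence} statement for the Poisson permutation mixture in the spirit of \cite{han2024approximate}, asserting that perturbing one element of the size-$k$ value-pool moves the $(k-1)$-sample likelihood only slightly in the $N_i$-posterior-weighted sense. Plugging this into \eqref{eq:pi-plan-chisq} gives the claimed bound on the good event; on its low-probability complement the ratio $\widehat p^{\PI}_i/\bar p^{\Sym}_i$ is at most $\mathrm{poly}(n,k)$, and that contribution is absorbed into the $\frac{\log^3 n}{n}$ term (which also covers the $k=O(1)$ case, where the whole discrepancy is $\widetilde O(1/n)$).

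I expect the main obstacle to be the approximate-independence estimate with the sharp exponents. The generic empirical-Bayes stability arguments of \cite{hannan1955asymptotic,greenshtein2009asymptotic} are far too lossy here, so one must genuinely exploit the structure of Poisson mixtures: the key fact is that the family of Poisson mixtures with mean in $[0,n]$ has effective statistical complexity only $\widetilde\Theta(n^{1/3})$ — the same phenomenon underlying the $\widetilde O(n^{1/3}/k)$ Hellinger rate of the NPMLE cited after \prettyref{thm:main}, provable via polynomial/Chebyshev approximation — which caps the sensitivity of the $(k-1)$-sample likelihood to a single-value perturbation at the $n^{-1/3}$ scale rather than the naive $n^{-1/2}$. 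Carrying this out quantitatively, including the delicate interaction with the permutation coupling and the bookkeeping of the rare configurations where $\widehat p^{\PI}$ and $\bar p^{\Sym}$ differ substantially (chiefly the low-count symbols, whose aggregate probability mass is small), is the technical heart of \prettyref{thm:PI-Poisson} and is where most of its polylogarithmic overhead is spent.
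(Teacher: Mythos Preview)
Your reduction via the tower property and the $\chi^2$ inequality is sound, and the leave-one-out identity $\widehat p^{\PI}_i-\bar p^{\Sym}_i=\mathrm{Cov}_{\tilde w}(p^\star,R)/\E_{\tilde w}[R]$ is correct. But the heart of the matter is the claimed bound $\Var_{\tilde w}(R)/(\E_{\tilde w}R)^2=\widetilde O(k/n\wedge n^{-2/3})$, and here the proposal has a genuine gap. The objects $R_a(N_{-i})$ are permutation-mixture likelihoods (essentially permanents), and the quantity you need to control is the sensitivity of that permanent to which value is removed from the pool. This is \emph{not} the quantity that \cite{han2024approximate} bounds: their result compares a noisy permutation mixture $P_Z$ to its i.i.d.\ approximation $\prod_j P_{Z_j}$ in KL/$\chi^2$, which is a different object from your leave-one-out likelihood ratio. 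Your heuristic linking the $n^{1/3}$ exponent to the metric-entropy of Poisson mixtures is suggestive, but there is no direct route from Hellinger complexity of mixtures to the concentration of $R_a/R_b$; these ratios can fluctuate badly for the raw $\theta$-mixture.

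The paper's key idea, missing from your plan, is a \emph{stochastic interpolation}: construct $Z_j\sim\Poi(2\theta_j)$ with $N_j\sim\mathrm{Binom}(Z_j,\tfrac12)$, so that $\theta_j-Z_j-N_j$ is a Markov chain and $\E[\theta_j\mid N]-\E[\theta_j\mid N_j]=\E[Z_j\mid N]-\E[Z_j\mid N_j]$. An information-theoretic argument (Pinsker/Tao plus subadditivity of entropy and mutual information) then gives
\[
\sum_j\E\Big[\tfrac{(\E[\theta_j\mid N]-\E[\theta_j\mid N_j])^2}{N_j}\indc{N_j\gtrsim\log n}\Big]\;\lesssim\;\log n\cdot\DKL\!\Big(P_Z\,\Big\|\,\textstyle\prod_j P_{Z_j}\Big),
\]
and now the right-hand side is precisely of the form \cite{han2024approximate} handles. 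The point is that the noise in $Z$ (unlike the noiseless $\theta$) makes the permutation mixture approximately independent; the $n^{1/3}$ enters by partitioning $[0,n]$ into $O(n^{1/3})$ sub-intervals on the quadratic grid $\{[\,(\ell-1)^2,\ell^2):\ell\ge1\}$, each with bounded $\chi^2$-diameter for the Poisson family, and applying \cite{han2024approximate} cluster-by-cluster. Finally, small counts $N_j\lesssim\log n$ are handled separately by relaxing $\widehat p^{\PI}$ to the natural oracle $S_y/\Phi_y$ of \cite{orlitsky2015competitive} and a direct variance calculation, which accounts for the $\log^3 n/n$ term; your proposal's treatment of this regime (absorbing it into a generic bad-event contribution) would not give the right rate without this structural step.
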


The proof of Theorem \ref{thm:PI-Poisson} relies on several technical ingredients. 
First, we introduce a stochastic interpolation $Z_i$ between each $\theta_i$ and $N_i$ that acts as a ``noisy'' version of $\theta_i$ but a ``cleaner'' version of $N_i$. 
Second,  we apply an information-theoretic argument to show that
\begin{align}
\E \DKL(\widehat{p}^{\PI} \| \bar{p}^{\Sym}) \le \frac{C}{n}\DKL\pth{P_{Z} \Big\| \prod_{i=1}^k P_{Z_i}},
\label{eq:KLPI}
\end{align}
where $P_{Z_i}$ denotes the marginal law of $Z_i$'s and $P_Z$ their joint law (a permutation mixture).
The raison d'\^{e}tre for introducing the interpolation is the following: if $Z$ were replaced by $\theta$, the KL divergence $\DKL(P_{\theta} \| \prod_{i=1}^k P_{\theta_i})$ would be prohibitively large in the dimension $k$. In contrast, a recent work \cite{han2024approximate} by a subset of the authors discovers a surprising result that, for noisy versions of $\theta$, the KL divergence $\DKL(P_{Z} \| \prod_{i=1}^k P_{Z_i})$ becomes significantly smaller and \emph{dimension-free} even for $k=\infty$. Technically, this result in \cite{han2024approximate} requires a bounded $\chi^2$-diameter: $\mathsf{D}_{\chi^2}(\calP) := \max_{P_1,P_2\in \calP} \chi^2(P_1 \| P_2) = O(1)$, where $\calP:= \{P_{Z_1|\theta_1 = \theta_i^\star}: i\in [k]\}$ is a family of ``noisy'' distributions and 
$\chi^2(P_1 \| P_2):=
\sum_i \frac{P_1(i)^2}{P_2(i)}-1$ is the $\chi^2$-divergence. We generalize this result to unbounded $\chi^2$-diameters by partitioning $\calP$ into $m$ sub-families each with bounded $\chi^2$-diameters, which allows us to show the KL divergence in \prettyref{eq:KLPI} is nearly linear in the size $m$ of the partition. It turns out for Poisson models we may choose $m = O(k\wedge n^{1/3})$, leading to the result in  \prettyref{thm:PI-Poisson}.

Our technique also applies to mean-field approximations beyond Poisson models. For instance, we provide the first major improvement of the state-of-the-art result in \cite{greenshtein2009asymptotic} for compound normal mean models (see \ifthenelse{\boolean{arxiv}}{\prettyref{sec:main_proof_PI}}{SI Appendix \prettyref{sec:main_proof_PI}}).

\ifthenelse{\boolean{arxiv}}{}{
\acknow{Y. Han is supported in part by a fund from Renaissance Philanthropy. J. Niles-Weed is supported in part by the National Science Foundation under Grant No.~DMS-2339829.
Part of the work of Y.~Wu was supported by the National Science Foundation under Grant No.~DMS-1928930, while Y.~Wu was in residence at the Simons Laufer Mathematical Sciences Institute in Berkeley, California, during the Spring 2025 semester.}

\showacknow{} 
}

\section{Regret analysis of the NPMLE: Proof of \prettyref{thm:main}}
\label{sec:main_proof}

In this section and the next two, we prove our main result stated in \prettyref{thm:main} \ifthenelse{\boolean{arxiv}}{}{in the main text }that establishes the regret optimality of the NPMLE estimator.
The major steps of the proof are presented here, including a more general regret bound that implies the desired result, along with key supporting results whose proofs are deferred to \prettyref{sec:main_proof_separable} and 
\ref{sec:main_proof_PI}.

We first recall some notation\ifthenelse{\boolean{arxiv}}{}{ from the main text}. For any distribution $G$ on $\R_+$, the posterior mean in the Poisson model is given by
\begin{align}\label{eq:posterior_mean}
\theta_G(y) := \E_G[\theta|Y = y] = (y+1)\frac{f_G(y+1)}{f_G(y)} = (y+1)\Big(\frac{\Delta f_G(y)}{f_G(y)} + 1\Big),
\end{align}
where the expectation is taken over the pair $(\theta,Y)$ distributed as $\theta \sim G$, $Y|\theta \sim \Poi(\theta)$, $f_G(\cdot)$ denotes the Poisson mixture probability mass function $f_G(y) = \int \frac{\theta^y e^{-\theta}}{y!} G(\d\theta)$, and $\Delta f_G(y) = f_G(y+1) - f_G(y)$ is the forward difference. Given observations $N = (N_1,\ldots,N_k)$, the nonparametric maximum likelihood estimator (NPMLE) is defined as
\begin{align*}
\widehat G^{\NPMLE}:= \argmax_{G\in \calP(\R_+)} \sum_{i=1}^k \log f_G(N_i), 
\end{align*}
maximizing over all probability distributions on $\R_+$. The NPMLE-based estimator for $p^\star$ is: for $1\leq i\leq k$, 
\begin{align}\label{def:npmle_estimator}
\widehat p_i^{\NPMLE} := \frac{\bar p_i^{\NPMLE}}{Z^{\NPMLE}}, \quad \text{ where } \bar{p}^{\NPMLE}_i := \frac{\theta_{\widehat G^{\NPMLE}}(N_i) + \tau \bm{1}\{N_i = 0\}}{n}, \quad Z^{\NPMLE} := \sum_{i=1}^k \bar p_i^{\NPMLE},
\end{align}
where $\tau > 0$ is some tuning parameter to account for unseen symbols. We restate \prettyref{thm:main}\ifthenelse{\boolean{arxiv}}{}{ of the main text} as below. 

\begin{theorem}\label{thm:main-SI}
With $\hp = k^{-C_0}$ for $C_0\ge 1$, there exists some universal $C > 0$ such that 
\begin{align*}
\reg(\widehat{p}^{\NPMLE}) \le C\pth{n^{-2/3}\wedge \frac{k}{n}}\log^{14}(nk) . 
\end{align*}
\end{theorem}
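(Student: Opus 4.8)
The plan is to carry out in detail the program sketched in the ``Proof techniques'' section. First I would pass from the fixed-$p^\star$ formulation to the Bayesian experiment in which $p=\pi(p^\star)$ for a uniformly random permutation $\pi$ and $N_i\mid p\sim\poi(np_i)$ are conditionally independent; since every risk $r_n(p^\star,\cdot)$ depends on $p^\star$ only through the multiset of its entries, this loses nothing, and in this experiment $\widehat{p}^{\PI}_i=\E[p_i\mid N]$ and $\bar{p}^{\Sym}_i=\E[p_i\mid N_i]$ (the formal equivalence is deferred to \prettyref{sec:main_proof_PI}). Writing $\bar p$ for the unnormalized NPMLE \eqref{eq:NPMLE-unnormalized} and using $\log Z^{\NPMLE}\le Z^{\NPMLE}-1$ to dispose of the normalizing constant, I would split, pointwise in the realization,
\[
\DKL(p\|\widehat{p}^{\NPMLE}) \le \sum_{i=1}^{k}\bigl(p_i\log\tfrac{p_i}{\bar{p}^{\Sym}_i}-p_i+\bar{p}^{\Sym}_i\bigr) + \sum_{i=1}^{k}\bigl(p_i\log\tfrac{\bar{p}^{\Sym}_i}{\bar p_i}-\bar{p}^{\Sym}_i+\bar p_i\bigr),
\]
take expectations, subtract the oracle risk $\E\,\DKL(p\|\widehat{p}^{\PI})$, and apply the tower property (using $\E[p_i\mid N]=\widehat{p}^{\PI}_i$ on the first sum and $\E[p_i\mid N_i]=\bar{p}^{\Sym}_i$ on the second, after which the $\E[p_i\log p_i]$ terms cancel) to reach exactly \eqref{eq:reg_decomp_main}: $\reg(\widehat{p}^{\NPMLE})\le A+B$, where $A:=\E\sum_{i=1}^{k}\bigl(p^\star_i\log\frac{\bar{p}^{\Sym}_i}{\bar p_i}-\bar{p}^{\Sym}_i+\bar p_i\bigr)$ is the regret of $\widehat{p}^{\NPMLE}$ over the separable oracle and $B:=\E\,\DKL(\widehat{p}^{\PI}\|\bar{p}^{\Sym})$ is the regret of the separable oracle over the PI oracle.

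The term $B$ is precisely the quantity bounded by \prettyref{thm:PI-Poisson}, giving $B\le C\bigl(\tfrac{k\wedge n^{1/3}}{n}\log^2 n+\tfrac{\log^3 n}{n}\bigr)=\widetilde{O}(n^{-2/3}\wedge\tfrac{k}{n})$, so this piece needs nothing further. For the term $A$, if $\widehat G$ were a fixed distribution $G$ then $\bar{p}^{\Sym}_i$ and $\bar p_i$ would be functions of $N_i$ alone, and the fundamental theorem of compound estimation \eqref{eq:compound_fundamental} would rewrite $A$ as $\tfrac{k}{n}$ times the Bregman discrepancy between the Poisson Bayes rules $\theta_{G_k}$ and $\theta_G$ in the single-observation model, exactly as in \eqref{eq:sep_quantity}. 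The Poisson empirical-Bayes regret bound of \cite{shen2022empirical} then controls this by $C\tfrac{k}{n}\,H^2(f_{G_k},f_G)\log^5(nk)$ up to lower-order corrections from the $\tau\indc{Y=0}$ term, which are harmless since $\tau=k^{-C_0}$ is tiny and the correction lives on the single atom $Y=0$. To remove the data-independence assumption I would truncate the effective support of $\widehat G$ to $[0,O(\log(nk))]$ (negligible by Poisson tail bounds), round it to a finite $\eps$-net $\calN$ of probability measures on that interval while checking that the incurred changes in Hellinger distance and in the posterior mean are lower order, and take a union bound over $\calN$ for the concentration of the empirical compound average around its mean; because $\log|\calN|$ and the required high-probability factors are only polylogarithmic in $nk$, this merely inflates the power of $\log$, which the exponent $14$ in the statement is there to absorb.

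It then remains to bound $H^2(f_{G_k},f_{\widehat G})$. Here I would invoke the standard analysis of the Poisson NPMLE (see \prettyref{sec:NPMLE-basics}): since $\widehat G$ maximizes $\sum_{i=1}^{k}\log f_G(N_i)$ and, with high probability, has $O(\sqrt n)$ atoms, one obtains $H^2(f_{G_k},f_{\widehat G})=\widetilde{O}(\tfrac{n^{1/3}}{k}\wedge 1)$ with high probability, with the low-probability complement handled by a crude deterministic bound on $A$ that is finite thanks to the regularization $\tau$. Consequently $A\le C\tfrac{k}{n}\cdot\widetilde{O}(\tfrac{n^{1/3}}{k}\wedge 1)\cdot\log^5(nk)=\widetilde{O}(n^{-2/3}\wedge\tfrac{k}{n})$, and adding the bounds on $A$ and $B$ yields $\reg(\widehat{p}^{\NPMLE})\le C(n^{-2/3}\wedge\tfrac{k}{n})\log^{14}(nk)$, as claimed.

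I expect the main obstacle to be the control of $A$: $\widehat G$ is genuinely data-dependent, so neither the identity \eqref{eq:compound_fundamental} nor the bound of \cite{shen2022empirical} applies verbatim, and the discretize-and-union-bound device has to be calibrated with care — choosing the net resolution and the support-truncation level so that the rounding perturbations of $H^2$ and of $\theta_{\widehat G}$ are $o(n^{-2/3})$ while keeping $\log|\calN|$ polylogarithmic — lest the rate be degraded. A secondary but pervasive subtlety is that $\bar{p}^{\Sym}$ and $\bar p$ are unnormalized, so the argument must be run throughout with the Bregman form $\sum_i\bigl(u_i\log(u_i/v_i)-u_i+v_i\bigr)$ of the KL divergence, and one must check that re-normalizing to $\widehat{p}^{\NPMLE}$ never increases the regret; the event $\{N_i=0\}$, on which the $\tau$-regularization is active, likewise needs dedicated bookkeeping.
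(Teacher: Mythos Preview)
Your high-level decomposition into $A$ (regret over the separable oracle) and $B$ (separable vs.\ PI oracle) matches the paper exactly, as does invoking \prettyref{thm:PI-Poisson} for $B$. The issues are all in the execution of $A$, and two of them are genuine gaps rather than calibration details.

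First, the support truncation is wrong. You propose to ``truncate the effective support of $\widehat G$ to $[0,O(\log(nk))]$'', but the Poisson means $\theta_i^\star=np_i^\star$ range over $[0,n]$, and the NPMLE $\widehat G$ has atoms throughout $[N_{\min},N_{\max}]$, which is typically of order $n$. Nothing about Poisson tails forces $\widehat G$ onto a logarithmic interval. The paper never truncates $\widehat G$; instead it splits the \emph{observations} by a threshold $y_0=C_1\log^2(nk)$ and treats small and large counts by completely different arguments (Propositions~\ref{prop:reg_sep_smally} and~\ref{prop:reg_sep_largey}). For $N_i\le y_0$ there is no net at all---the analysis is a direct case split on whether $f_{G_k}(y)$ and $f_{G_k}(y+1)$ exceed $4\varepsilon^2$. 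For $N_i>y_0$ one first uses the deterministic fact $|\theta_G(y;\rho)-y|\lesssim\sqrt{y}\log(1/\rho)$ to replace the denominator $\theta_{\widehat G}(N_i;\rho)$ by $N_i$, and only then applies a net.

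Second, your assertion that ``$\log|\calN|$ \dots\ is only polylogarithmic'' is not just optimistic, it is false, and the truth is essential to the rate. The paper nets not over priors but over the function class $\{\theta_G(\cdot;\rho):G\in\calP(\reals_+)\}$ in $\|\cdot\|_{\infty,M}$ on $\{0,1,\dots,M\}$ with $M=\lceil n^{2/3}\rceil$ (Lemma~\ref{lem:bayes_rule_cover}), and the resulting log-covering number is $O(\sqrt{M}\,\mathrm{polylog})=O(n^{1/3}\,\mathrm{polylog})$. This $n^{1/3}$ is \emph{not} absorbed into the logarithms: it enters the Bernstein union bound as a $b\log L$ term and produces exactly the $(n^{1/3}\wedge k)/n$ contribution in Proposition~\ref{prop:reg_sep_largey}. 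In other words, both the Hellinger rate $k\varepsilon^2\asymp n^{1/3}\wedge k$ and the covering number independently deliver the same $n^{-2/3}$ rate; neither is negligible relative to the other. Your mental model that the net is a free device that only inflates logarithms would, if pursued, leave you unable to close the argument.

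A smaller point you gloss over: to make the net argument go through for general $\widehat G$ (not just the NPMLE), the paper introduces a second regularization $\rho=c(nk)^{-5}$ in $\theta_G(y;\rho)$, and only afterwards checks that for the NPMLE this regularization is vacuous thanks to the first-order optimality lower bound $f_{\widehat G^{\NPMLE}}(N_i)\gtrsim (k\sqrt{N_i+1})^{-1}$. Without $\rho$, the covering in Lemma~\ref{lem:bayes_rule_cover} is not available.
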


\subsection{A general regret bound}
To prove \prettyref{thm:main-SI}, we first show a general result that bounds the KL regret of any estimator $\widehat{G} = \widehat G(N)$ in terms of its density estimation guarantee. 
As a price of this generality, we introduce an additional tuning parameter $\rho>0$ in the final estimator:
\begin{align}\label{eq:est_regularized}
\widehat{p}_i := \frac{\bar{p}_i}{Z}, \quad \text{ where }\bar{p}_i:= \frac{\theta_{\widehat{G}}(N_i; \rho) + \hp \indc{N_i = 0}}{n}, \quad Z:= \sum_{i=1}^k \bar{p}_i,
\end{align}
where $\theta_G(y; \rho)$ is the regularized version of the Bayes estimator \eqbr{eq:posterior_mean} with $G$ as the prior, defined as\footnote{For $x,y\in \reals$, define $x\wedge y:= \min\{x,y\}$ and $x\vee y:= \max\{x,y\}$.}
\begin{align}\label{eq:regularized_bayes}
\theta_G(y;\rho) := (y+1)\Big(\frac{\Delta f_G(y)}{f_G(y) \vee \rho} + 1\Big), \quad y \in \naturals.
\end{align}
The following result controls the regret of the estimator $\widehat{p}$ in \eqbr{eq:est_regularized} in terms of the Hellinger distance\footnote{The squared Hellinger distance between pmfs $f$ and $g$ on $\integers_+$ are defined as $H^2(f,g) = \sum_{i\geq 0} \Big(\sqrt{f(i)}-\sqrt{g(i)}\Big)^2$.
} between the estimated Poisson mixture $\widehat f_{\widehat G}$ and the population. 
\begin{theorem}\label{thm:upper_bound_general}
Let $G_k := \frac{1}{k}\sum_{i=1}^k \delta_{np_i^\star}$ be the empirical distribution of $(np_1^\star, \dots, np_k^\star)$. Assume that the estimator $\widehat{G} = \widehat{G}(N)$ satisfies that
\begin{align}\label{eq:hellinger_bound}
\sup_{p^\star \in \Delta_k} \P\pth{ H^2(f_{G_k}, f_{\widehat{G}}) \ge \varepsilon^2 } \le \delta, 
\end{align}
for some $\varepsilon \ge k^{-1/2}$ and $\delta > 0$, then for $\hp = k^{-C_0}$ with any $C_0\ge 1$ and $\rho = c(nk)^{-5}$ with a small constant $c>0$, the distribution estimator $\widehat{p}$ in \eqbr{eq:est_regularized} satisfies the regret upper bound
\begin{align*}
\reg(\widehat{p}) \le \frac{C}{n}\pth{k\varepsilon^2\log^6(nk) + (k\wedge n^{1/3})\log^{4.5}(nk) + \delta(n+k)\log^6(nk)}, 
\end{align*}
with an absolute constant $C>0$. 
\end{theorem}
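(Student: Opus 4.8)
The plan is to insert the separable oracle $\bar p^{\Sym}$ of \eqref{eq:separable_oracle_intro} between $\widehat p$ and the PI oracle and to use the two-term bound \eqref{eq:reg_decomp_main}, writing $\reg(\widehat p)\le\mathrm{(I)}+\mathrm{(II)}$ with $\mathrm{(I)}=\E\sum_i\bigl(p_i^\star\log(\bar p_i^{\Sym}/\bar p_i)-\bar p_i^{\Sym}+\bar p_i\bigr)$ the regret of $\widehat p$ over $\bar p^{\Sym}$, and $\mathrm{(II)}=\E\,\DKL(\widehat p^{\PI}\|\bar p^{\Sym})$ the regret of $\bar p^{\Sym}$ over $\widehat p^{\PI}$. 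Term (II) does not involve $\widehat G$ at all and is bounded directly by \prettyref{thm:PI-Poisson}, contributing $\frac{C}{n}\bigl((k\wedge n^{1/3})\log^2 n+\log^3 n\bigr)$, which is absorbed into the stated bound. So the substance lies in Term (I), for which the target is $\mathrm{(I)}\le\frac{C}{n}\bigl(k\varepsilon^2\log^6(nk)+\delta(n+k)\log^6(nk)\bigr)$.

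For Term (I), note that $\bar p_i^{\Sym}=n^{-1}\E_{G_k}[\theta\mid Y=N_i]$ and $\bar p_i=n^{-1}\bigl(\theta_{\widehat G}(N_i;\rho)+\tau\mathbf{1}\{N_i=0\}\bigr)$. If $\widehat G$ were a fixed (data-independent) distribution, each summand of (I) would be a deterministic function of $(\theta_i^\star,N_i)$, and the fundamental theorem of compound estimation \eqref{eq:compound_fundamental} would collapse (I) into the one-dimensional Poisson empirical-Bayes regret of the plug-in posterior-mean rule with prior $\widehat G$ displayed in \eqref{eq:sep_quantity}. Invoking the Poisson empirical-Bayes bound of \cite{shen2022empirical} — and using $\rho=c(nk)^{-5}$ to control the bias introduced by the truncation $f_G(y)\vee\rho$ on atypical counts, and $\tau=k^{-C_0}$ to handle unseen symbols — bounds this by $C\frac{k}{n}H^2(f_{G_k},f_{\widehat G})\log^5(nk)$ up to lower-order terms, hence by $C\frac{k}{n}\varepsilon^2\log^5(nk)$ on the event $\calE:=\{H^2(f_{G_k},f_{\widehat G})\le\varepsilon^2\}$, whose probability is $\ge 1-\delta$ by hypothesis \eqref{eq:hellinger_bound}.

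The obstruction is that $\widehat G=\widehat G(N)$ depends on the whole sample, so (I) is not literally the expectation of a fixed function and \eqref{eq:compound_fundamental} does not apply verbatim. I would resolve this by discretization: build an $\eta$-net $\calN$, with $\eta=(nk)^{-O(1)}$, of the relevant class of candidate priors in a metric under which the induced Poisson mixture and the regularized posterior mean $\theta_G(\cdot;\rho)$ on the (polynomially bounded) range of observed counts both vary Lipschitz-continuously in $G$ — here $\rho$ is essential, since the truncation $f_G(y)\vee\rho$ keeps the denominator away from $0$ and makes $G\mapsto\theta_G(\cdot;\rho)$ stable — then round $\widehat G$ to its nearest net point $\widetilde G$, bound the rounding error summand-by-summand, and on $\calE$ apply the fixed-prior estimate above to $\widetilde G$ together with a Bernstein-type concentration, uniform over net points, of $\frac{1}{k}\sum_i(\cdot)$ around its $G_k$-expectation, union-bounded over those net points within $\varepsilon^2+\eta^2$ Hellinger distance of $f_{G_k}$; the net size and rounding error inflate the $\log^5$ to $\log^6$. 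On the complementary event $\calE^c$, whose probability is at most $\delta$, a crude deterministic estimate suffices: on a further high-probability event controlling $\max_i N_i$, the regularizers $\tau,\rho\ge(nk)^{-O(1)}$ (together with $f_{\widehat G}$ being Hellinger-close to $f_{G_k}$ and hence not too small on the typical range of counts) keep $\widehat p_i\ge(nk)^{-O(1)}$ for every $i$, so $\DKL(p^\star\|\widehat p)\le C(n+k)\log^6(nk)$ there and $\calE^c$ contributes at most $\frac{C}{n}\delta(n+k)\log^6(nk)$ to the regret. Summing the three pieces gives the claim.

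The main obstacle is this last, data-dependence step: one needs a quantitative stability estimate for the regularized posterior-mean map under perturbations of the prior, a net of the prior class that is coarse enough for the union bound to survive yet fine enough that the rounding error stays within $O(\frac{k}{n}\varepsilon^2\log^5(nk))$, and a uniform concentration argument that transfers the deterministic fixed-prior bound \eqref{eq:sep_quantity} to the random $\widetilde G$. By comparison, the decomposition \eqref{eq:reg_decomp_main}, the appeal to \prettyref{thm:PI-Poisson}, and the crude $\calE^c$ estimate are routine.
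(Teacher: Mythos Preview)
Your high-level strategy matches the paper's: the same two-term split via the separable oracle, \prettyref{thm:PI-Poisson} for $\mathrm{(II)}$, and an $\varepsilon$-net/union-bound argument to handle the data dependence of $\widehat G$ in $\mathrm{(I)}$. Two points, however, deserve correction.

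First, your stated target for $\mathrm{(I)}$ is too optimistic. You claim $\mathrm{(I)}\le \frac{C}{n}(k\varepsilon^2\log^6(nk)+\delta(n+k)\log^6(nk))$, attributing the $(k\wedge n^{1/3})$ term in the theorem entirely to $\mathrm{(II)}$. In fact the paper shows $\mathrm{(II)}$ contributes only $(k\wedge n^{1/3})\log^2 n$, while the $\log^{4.5}$ version of this term comes from $\mathrm{(I)}$ itself. Your own Bernstein-over-the-net argument would produce it too, since the net has cardinality $L=\exp(O(n^{1/3}\log^{2.5}(nk)))$ and the $b\log L$ term in Bernstein is unavoidable; you simply did not account for it.

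Second, and more substantively, the paper does not run the $\varepsilon$-net argument uniformly on $\mathrm{(I)}$ as you propose. It splits $\mathrm{(I)}$ at a threshold $y_0=C_1\log^2(nk)$: for small counts it argues directly by case analysis on $f_{G_k}(y)$ and $f_{G_k}(y+1)$ versus $4\varepsilon^2$; for large counts it first uses the tower property to replace the weight $p_i^\star$ by $\widehat p_i^{\PI}=\E[p_i\mid N]$ (the identity \eqref{eq:equivalence}), and only then decomposes into a main term with weight $\bar p_i^{\Sym}$ plus a cross term $(\widehat p_i^{\PI}-\bar p_i^{\Sym})\log(\bar p_i^{\Sym}/\bar p_i)$. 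The main term is what gets the $\varepsilon$-net/Bernstein treatment, but with summands now uniformly $O(\log^2(nk))$ rather than $O(n\log(nk))$; the cross term is controlled by Cauchy--Schwarz and the PI-oracle inequality (Lemma~\ref{lemma:main_inequality}), so the PI machinery is reused inside the analysis of $\mathrm{(I)}$, not only for $\mathrm{(II)}$. Your uniform approach with weights $p_i^\star=\theta_i^\star/n$ would face individual summands as large as $n\log(nk)$, and while the resulting $b\log L$ term still lands at the right $n^{1/3}$ order, you would not get the stated $\log^{4.5}$ exponent without this refinement.
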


Next, we apply \Cref{thm:upper_bound_general} 
to $\widehat G = \widehat{G}^{\NPMLE}$ in \eqbr{def:npmle}
to deduce \Cref{thm:main-SI}. To this end, we need two main properties of the NPMLE. First, the first-order optimality condition \prettyref{eq:kkt} of the NPMLE $\widehat{G}^{\NPMLE}$ implies 
the following deterministic lower bound on the estimated Poisson mixture density on each data point (see \cite[Lemma 16]{shen2022empirical}):
\begin{align*}
f_{\widehat{G}^{\NPMLE}}(N_i) \ge \frac{c}{k\sqrt{N_i+1}},\quad  i\in [k].
\end{align*}
Therefore, the regularization in \eqbr{eq:regularized_bayes} with 
$\rho = c(nk)^{-5}$ has no effect for the NPMLE $\widehat{G}^{\NPMLE}$, i.e. $\theta_{\widehat{G}^{\NPMLE}}(N_i; \rho) = \theta_{\widehat{G}^{\NPMLE}}(N_i)$ as long as $N_i\le 2n$ for all $i\in [k]$, an event with probability at least $1-\exp(-\Omega(n))$, because each $N_i \leq \sum N_i \sim \Poi(n)$. Second, Lemma \ref{lem:compound_density} applied to $p=1$ shows that the NPMLE satisfies the density estimator error bound
\eqbr{eq:hellinger_bound} with
\begin{align*}
\varepsilon^2 \asymp 1 \wedge \frac{n^{1/3}\log^8 k}{k}, \qquad \delta = \exp\pth{ -\Omega\pth{n^{1/3}\log k} }, 
\end{align*}
and thus \Cref{thm:upper_bound_general} implies \Cref{thm:main-SI}.

\subsection{Proof outline of \Cref{thm:upper_bound_general}}

We now describe the main steps of proving \Cref{thm:upper_bound_general}, deferring the detailed proofs to Sections \ref{sec:main_proof_separable} and \ref{sec:main_proof_PI} below. Consider the following intermediate quantity known as the (unnormalized) \emph{separable oracle}, defined in \eqbr{eq:separable_oracle_intro}\ifthenelse{\boolean{arxiv}}{}{ of the main text}:
\begin{align}\label{eq:separable_oracle}
\bar{p}_i^{\Sym} = \frac{\theta_{G_k}(N_i)}{n} = \frac{N_i+1}{n}\cdot \frac{f_{G_k}(N_i+1)}{f_{G_k}(N_i)}, 
\end{align}
which corresponds to the optimal decision rule applied to each data point $N_i$. Then we may decompose the excess risk of $\widehat{p}$ over the permutation-invariant oracle $\widehat{p}^{\PI}$ as  
\begin{align}\label{eq:reg_decomp}
\notag &\E \qth{\DKL(p^\star \| \widehat{p})} - \E\qth{\DKL(p^\star \| \widehat{p}^{\PI})}\\
\notag &= \E \qth{\sum_{i=1}^k p^\star_i \log \frac{\widehat{p}^{\PI}_i}{\bar{p}_i/Z}} = \E \qth{\sum_{i=1}^k \pth{p^\star_i \log \frac{\bar{p}^{\Sym}_i}{\bar{p}_i} + p_i^\star\log \frac{\widehat{p}^{\PI}_i}{\bar{p}^{\Sym}_i}}} + \E\qth{\log Z}\\
\notag&= \underbrace{\E \qth{\sum_{i=1}^k p_i^\star \log \frac{\bar{p}^{\Sym}_i}{\bar{p}_i} - \bar{p}^{\Sym}_i + \bar{p}_i}}_{(\mathrm{I})} + \underbrace{\E\qth{\sum_{i=1}^k p_i^\star \log \frac{\widehat{p}^{\PI}_i}{\bar{p}^{\Sym}_i} - \widehat{p}^{\PI}_i + \bar{p}^{\Sym}_i}}_{(\mathrm{II})} + \E[\log Z - Z + 1]\\
&\leq (\mathrm{I}) + (\mathrm{II}),
\end{align}
using the fact that $\log x \leq x - 1$ for $x > 0$. In words, \eqbr{eq:reg_decomp} decomposes the regret into two parts, where $(\mathrm{I})$ concerns the closeness between our (unnormalized) estimator $\bar{p}_i$ and the separable oracle $\bar{p}_i^{\Sym}$, and $(\mathrm{II})$ concerns the closeness between the separable oracle $\bar{p}_i^{\Sym}$ and the permutation-invariant oracle $\widehat{p}_i^{\PI}$. 


By considering the cases of large and small counts separately, we further decompose $(\mathrm{I})$ as
\begin{align*}
(\mathrm{I}) = \underbrace{\E \qth{\sum_{i=1}^k \Big(p^\star_i \log \frac{\bar{p}^{\Sym}_i}{\bar{p}_i} - \bar{p}^{\Sym}_i + \bar{p}_i\Big) \indc{N_i \leq y_0}}}_{(\mathrm{I}_1)} + \underbrace{\E \qth{\sum_{i=1}^k \Big(p^\star_i \log \frac{\bar{p}^{\Sym}_i}{\bar{p}_i} - \bar{p}^{\Sym}_i + \bar{p}_i\Big) \indc{N_i > y_0}}}_{(\mathrm{I}_2)},
\end{align*}
where $y_0 = C_1\log^2(nk)$ for some large $C_1 > 0$ (assumed for simplicity to be an integer). \Cref{thm:upper_bound_general} then follows from the following upper bounds on $(\mathrm{I}_1), (\mathrm{I}_2)$, and $(\mathrm{II})$. 

\begin{proposition}\label{prop:reg_sep_smally}
For $(\mathrm{I}_1)$, there is an absolute constant $C>0$ depending only on $(C_0, C_1)$ such that
\begin{align*}
(\mathrm{I}_1)\le \frac{C\log^6(nk)}{n}\pth{k\varepsilon^2 + \delta(n+k)}.
\end{align*}
\end{proposition}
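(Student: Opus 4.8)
The plan is to reduce $(\mathrm{I}_1)$ to a single-coordinate quantity via the fundamental theorem of compound estimation \eqbr{eq:compound_fundamental} and to bound that quantity by the squared Hellinger error $\varepsilon^2$, following the Poisson empirical Bayes estimates of \cite{shen2022empirical}. Since $p_i^\star=\theta_i^\star/n$, $\bar p_i^{\Sym}=\theta_{G_k}(N_i)/n$ and $\bar p_i=n^{-1}(\theta_{\widehat G}(N_i;\rho)+\tau\indc{N_i=0})$, each summand of $(\mathrm{I}_1)$ equals $n^{-1}\qth{\theta_i^\star\log\frac{\theta_{G_k}(N_i)}{\theta_{\widehat G}(N_i;\rho)+\tau\indc{N_i=0}}-\theta_{G_k}(N_i)+\theta_{\widehat G}(N_i;\rho)+\tau\indc{N_i=0}}\indc{N_i\le y_0}$, and because $N_i\le y_0$ it depends on $\widehat G$ only through the block $(f_{\widehat G}(0),\dots,f_{\widehat G}(y_0+1))$. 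As $\widehat G=\widehat G(N)$ is not separable, I first fix a finite net $\calV$ of this $(y_0+2)$-dimensional set of density blocks, geometric on $[\rho,1]$ at multiplicative resolution $1+(nk)^{-10}$; since $y_0=O(\log^2(nk))$, $\log|\calV|=O(\log^3(nk))$. On the event $\event:=\{H^2(f_{G_k},f_{\widehat G})\le\varepsilon^2\}$ I round $\widehat G$ to the nearest $v\in\calV$, which perturbs $(\mathrm{I}_1)$ by a polynomially small amount (each $\theta_{\widehat G}(N_i;\rho)$ moves by a multiplicative $1+O((nk)^{-10})$ and $\sum_i\theta_i^\star=n$) and satisfies $\sum_{y\le y_0+1}(\sqrt{f_{G_k}(y)}-\sqrt{v_y})^2\lesssim\varepsilon^2$. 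Off $\event$ (probability $\le\delta$), after intersecting with $\{\max_iN_i\le 2n\}$ (complement of probability $e^{-\Omega(n)}$, a negligible term) the crude bounds $\theta_{G_k}(N_i)\le n$, $\rho\le\theta_{\widehat G}(N_i;\rho)+\tau\indc{N_i=0}\le (nk)^{O(1)}$ and $\sum_i\theta_i^\star=n$ bound the summand-sum by $(n+k)\,\mathrm{polylog}(nk)$, contributing $\lesssim n^{-1}\delta(n+k)\log^6(nk)$.

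For a fixed $v\in\calV$ the plug-in estimator is separable, so \eqbr{eq:compound_fundamental} turns $\E\sum_i$ into $\tfrac kn\E_{G_k}[\,\cdot\,\indc{Y\le y_0}]$; applying the identity $\E_{G_k}[\theta\,g(Y)]=\E_{G_k}[\theta_{G_k}(Y)g(Y)]$ with $g(Y)=\log\frac{\theta_{G_k}(Y)}{\theta_v(Y;\rho)+\tau\indc{Y=0}}$ collapses the bracket to the Bregman divergence $d(a\|b):=a\log(a/b)-a+b\ge0$ of $x\mapsto x\log x$ at $\pth{\theta_{G_k}(Y),\,\theta_v(Y;\rho)+\tau\indc{Y=0}}$, so that, up to the terms above, the expected $(\mathrm{I}_1)$-contribution of $v$ equals $\tfrac kn\E_{Y\sim f_{G_k}}\qth{d\pth{\theta_{G_k}(Y)\,\big\|\,\theta_v(Y;\rho)+\tau\indc{Y=0}}\indc{Y\le y_0}}$. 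This single-coordinate quantity is then bounded by: (i) a second-order step $d(a\|b)\lesssim(a-b)^2/(a\wedge b)$ on the values $y$ with $f_{G_k}(y),f_{G_k}(y+1)\gtrsim\varepsilon^2$ (where Hellinger closeness forces $v_y\asymp f_{G_k}(y)$, $v_{y+1}\asymp f_{G_k}(y+1)$); (ii) expanding $|\theta_{G_k}(y)-\theta_v(y;\rho)|=(y+1)\big|\tfrac{f_{G_k}(y+1)}{f_{G_k}(y)}-\tfrac{v_{y+1}}{v_y}\big|$ in terms of $|\sqrt{f_{G_k}(\cdot)}-\sqrt{v_\cdot}|\le\varepsilon$, after which the $\P(Y=y)=f_{G_k}(y)$ weighting and a telescoping cancellation leave $\varepsilon^2\sum_{y\le y_0}(1+\theta_{G_k}(y))$; (iii) the a priori estimate $\theta_{G_k}(y),\theta_v(y;\rho)\le O((y+1)\log(nk))$ for $y\le y_0$, following from $f_G(y+1)\le f_G(y)\,O(\log(nk)+\log(1/f_G(y)))$ for Poisson mixtures together with the $\rho$-floor, which makes that sum $O(\mathrm{polylog}(nk))$; the remaining values $y$ (with $f_{G_k}(y)$ or $f_{G_k}(y+1)$ below $\varepsilon^2$) are treated separately, using that they carry total mass $O(y_0\varepsilon^2)$. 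This gives $\tfrac kn C\varepsilon^2\log^5(nk)$ plus terms of size $\tau+\rho(nk)^{O(1)}$ which — since $\tau=k^{-C_0}\le k^{-1}$, $\rho=c(nk)^{-5}$ and $\varepsilon^2\ge k^{-1}$, hence $k\varepsilon^2\ge1$ — are absorbed into $\tfrac kn\varepsilon^2\,\mathrm{polylog}(nk)$.

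To pass from each fixed $v$ to the realized rounding, I would note that for $v$ within restricted-Hellinger $O(\varepsilon)$ of $G_k$, steps (i)--(iii) also yield the pointwise bound $|\,\text{summand}_i[v]\,|\le R_i$, where $R_i$ is a nonnegative, \emph{separable} function of $(\theta_i^\star,N_i)$ of size $O(\varepsilon/n)$ times polylog factors; by \eqbr{eq:compound_fundamental} and the identity above, $\E[\sum_i R_i\indc{N_i\le y_0}]=O(k\varepsilon\,\mathrm{polylog}(nk)/n)$ and, more importantly, $\Var(\sum_i R_i\indc{N_i\le y_0})=O(k\varepsilon^2\,\mathrm{polylog}(nk)/n^2)$, so that $\sum_i\text{summand}_i[v]\indc{N_i\le y_0}$ concentrates around its mean $\tfrac kn\,O(\varepsilon^2\log^5(nk))$ with deviations $O(\varepsilon\sqrt k\,\mathrm{polylog}(nk)/n)\le O(k\varepsilon^2\,\mathrm{polylog}(nk)/n)$ — here the hypothesis $\varepsilon^2\ge k^{-1}$ is exactly what makes the deviation no larger than the mean. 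A union bound over $\calV$ (costing $\log|\calV|=O(\log^3(nk))$, absorbed into the polylog) then transfers the per-$v$ bound to the data-dependent $v$ on a high-probability event, the complement of which is folded into the off-$\event$ term. Collecting the main term $\tfrac kn\varepsilon^2\log^6(nk)$, the $O(n^{-1}\mathrm{polylog}(nk))$ slack, and $n^{-1}\delta(n+k)\log^6(nk)$ yields the claim. I expect the main obstacle to be the single-coordinate estimate: the naive $\chi^2$-type bound $d(a\|b)\lesssim(a-b)^2/b$ overshoots $\varepsilon^2$ by a factor $\asymp\rho^{-1}=(nk)^5$, so the telescoping cancellation of step (ii), the a priori posterior-mean bound of step (iii), and the careful isolation of low-density values of $Y$ are what make it go through; controlling the data-dependent rounding (a maximum over $\calV$, not a single $v$) is the secondary difficulty.
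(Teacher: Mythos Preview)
Your approach is workable in spirit but takes an unnecessarily roundabout route compared with the paper, and one step is not quite right as written.

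\textbf{Comparison with the paper.} The paper does \emph{not} use an $\varepsilon$-net or any concentration/union-bound argument for the small-$y$ range. Instead it groups by the count value $y$, writing $(\mathrm{I}_1)=\frac1n\sum_{y\le y_0}\E[B_y]$ with $B_y=S_y\log\frac{\theta_{G_k}(y)}{\theta_{\widehat G}(y;\rho)+\tau\indc{y=0}}-\Phi_y\theta_{G_k}(y)+\Phi_y(\theta_{\widehat G}(y;\rho)+\tau\indc{y=0})$, where $S_y=\sum_i\theta_i^\star\indc{N_i=y}$ and $\Phi_y=\sum_i\indc{N_i=y}$. The key point is that on the event $\{H^2(f_{G_k},f_{\widehat G})\le\varepsilon^2\}$ the pair $(f_{\widehat G}(y),f_{\widehat G}(y+1))$ lies in a \emph{deterministic} box around $(f_{G_k}(y),f_{G_k}(y+1))$, so in Case III (both densities $>4\varepsilon^2$) one has $\theta_{\widehat G}(y)/\theta_{G_k}(y)\in[1/4,4]$ and $(\theta_{G_k}(y)-\theta_{\widehat G}(y))^2/\theta_{\widehat G}(y)=O(\varepsilon^2\log^3(nk)/f_{G_k}(y))$ \emph{uniformly on the event}. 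Thus the ``main'' Bregman term $\Phi_y\cdot d(\theta_{G_k}(y)\|\theta_{\widehat G}(y))$ is bounded by this deterministic factor times $\Phi_y$, and $\E[\Phi_y]=kf_{G_k}(y)$ finishes it. The residual ``cross term'' $(S_y-\Phi_y\theta_{G_k}(y))\log\frac{\theta_{G_k}(y)}{\theta_{\widehat G}(y)}$ (which is exactly the obstruction to separability that you are trying to kill with a net) is handled by a single Cauchy--Schwarz: one factor is $\Var(S_y-\Phi_y\theta_{G_k}(y))/(kf_{G_k}(y)\theta_{G_k}(y))=O(\log^2(nk))$, the other is $kf_{G_k}(y)\theta_{G_k}(y)\cdot\E[\log^2(\cdot)\indc{H^2\le\varepsilon^2}]=O(k\varepsilon^2\log^3(nk))$. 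No decoupling of $\widehat G$ from $N$ is ever needed. Cases I and II (small densities) are dispatched by crude bounds from Lemma~\ref{lem:bayes_deviation} and Lemma~\ref{lem:reg_bayes_lower}. The $\varepsilon$-net machinery you propose is precisely what the paper reserves for the \emph{large}-$y$ regime (Proposition~\ref{prop:reg_sep_largey}), where no per-$y$ deterministic control is available.

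\textbf{A gap in your argument.} Your envelope claim ``$|\text{summand}_i[v]|\le R_i$ with $R_i$ of size $O(\varepsilon/n)\cdot\mathrm{polylog}$'' is not correct as stated: the summand contains $\theta_i^\star\log\frac{\theta_{G_k}(N_i)}{\theta_v(N_i;\rho)}$, and $\theta_i^\star$ can be as large as $n$. On the event $\{N_i\le y_0\}$ this is rare when $\theta_i^\star$ is large, but a deterministic envelope has to absorb it. If you first split off symbols with $\theta_i^\star>2y_0$ (whose contribution is super-polynomially small by Chernoff), the remaining summands are bounded by $O(\mathrm{polylog}(nk)/n)$; with that sub-exponential norm $b$, the Bernstein deviation term $b\log|\calV|=O(\mathrm{polylog}(nk)/n)$ is indeed absorbed into $k\varepsilon^2\,\mathrm{polylog}(nk)/n$ since $k\varepsilon^2\ge1$. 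So your route can be completed, but it needs this extra truncation and a genuine sub-exponential tail bound (not just a variance bound) to survive the union over $\calV$. The paper's Cauchy--Schwarz shortcut sidesteps all of this.
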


\begin{proposition}\label{prop:reg_sep_largey}
For $(\mathrm{I}_2)$, there is an absolute constant $C>0$ depending only on $(C_0, C_1)$ such that
\begin{align*}
(\mathrm{I}_2)\le \frac{C}{n}\pth{k\varepsilon^2\log^4(nk) + (n^{1/3}\wedge k)\log^{4.5}(nk) + \delta\log^2(nk)}. 
\end{align*}
\end{proposition}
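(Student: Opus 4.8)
\noindent\emph{Proof strategy.} The plan is to reduce the large-count contribution $(\mathrm I_2)$ to a one-dimensional comparison between the two posterior means $\theta_{G_k}(\cdot)$ and $\theta_{\widehat G}(\cdot;\rho)$ evaluated at the observed counts, and then to control that comparison using the density-estimation guarantee \prettyref{eq:hellinger_bound} together with the fact that a count $N_i > y_0 = C_1\log^2(nk)$ forces the corresponding symbol to be ``frequent'' with overwhelming probability.

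\emph{Step 1 (reduction via the compound identity and an $\varepsilon$-net).} Since $y_0\ge 1$, the term $\hp\indc{N_i=0}$ is inactive on $\{N_i>y_0\}$, so the summand of $(\mathrm I_2)$ equals $\frac1n\big(\theta_i^\star\log\frac{\theta_{G_k}(N_i)}{\theta_{\widehat G}(N_i;\rho)}-\theta_{G_k}(N_i)+\theta_{\widehat G}(N_i;\rho)\big)\indc{N_i>y_0}$. Writing $D(a\,\|\,b):=a\log(a/b)-a+b\ge 0$ and using $\E_{G_k}[\theta\mid Y]=\theta_{G_k}(Y)$, the fundamental theorem of compound estimation \prettyref{eq:compound_fundamental} would give $(\mathrm I_2)=\frac{k}{n}\,\E_{G_k}\!\big[D(\theta_{G_k}(Y)\,\|\,\theta_{\widehat G}(Y;\rho))\indc{Y>y_0}\big]$ were $\widehat G$ a fixed prior. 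To legitimize this despite the data-dependence of $\widehat G$, I would (exactly as in the proof outline of \prettyref{thm:upper_bound_general}) round $\widehat G$ onto an $\varepsilon$-net $\mathcal N$ of priors, chosen so that any $\widehat G$ meeting \prettyref{eq:hellinger_bound} is matched by some $G'\in\mathcal N$ with $H^2(f_{G_k},f_{G'})\lesssim\varepsilon^2$ and with $\theta_{G'}(\cdot;\rho)$ uniformly close to $\theta_{\widehat G}(\cdot;\rho)$ on $\{0,\dots,2n\}$, while $\log|\mathcal N|=\mathrm{polylog}(nk)$ so the union bound over $\mathcal N$ costs only logarithmic factors. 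On the exceptional event $\{H^2(f_{G_k},f_{\widehat G})>\varepsilon^2\}\cup\{\max_i N_i>2n\}$ (probability $\le\delta+e^{-\Omega(n)}$), the crude bounds $\theta_{G_k}(Y)\le n$ and $\theta_{\widehat G}(Y;\rho)\le 2(Y+1)/\rho$ together with $\sum_i p_i^\star=1$ bound the contribution by the advertised $\frac{\delta\log^2(nk)}{n}$ term. It then suffices to bound $\E_{G_k}\!\big[D(\theta_{G_k}(Y)\,\|\,\theta_{G'}(Y;\rho))\indc{Y>y_0}\big]$ for each fixed $G'\in\mathcal N$ with $H^2(f_{G_k},f_{G'})\le 4\varepsilon^2$.

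\emph{Step 2 (the pointwise bound for large counts).} I would split the sum over $y>y_0$ according to whether $y$ is ``compatible'' with $G_k$, i.e.\ whether $G_k$ places mass at some $\theta_i^\star$ within $O(\sqrt{y\log(nk)})$ of $y$. For incompatible $y$, a Poisson tail bound (using $y>y_0=C_1\log^2(nk)$ with $C_1$ large) shows $f_{G_k}(y)\le(nk)^{-10}$, and more importantly that the total $\theta$-weighted probability carried by such $y$ is $(nk)^{-10}$-small, so the crude bounds render this part negligible. For compatible $y$, the posterior $G_k(\cdot\mid Y=y)$ concentrates, since only atoms within $O(\sqrt{y\log(nk)})$ of $y$ carry non-negligible posterior weight; this pins $\theta_{G_k}(y)$ — and, via $H^2(f_{G'},f_{G_k})\le 4\varepsilon^2$, also $\theta_{G'}(y;\rho)$ — into the window $y\pm O(\sqrt{y\log(nk)})$, so both are $\asymp y\gtrsim\log^2(nk)$. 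With both means bounded away from $0$ on a $\log(nk)$-scale, $D(\theta_{G_k}(y)\,\|\,\theta_{G'}(y;\rho))\lesssim(\theta_{G_k}(y)-\theta_{G'}(y;\rho))^2/y$; expanding $\theta_G(y)=(y+1)f_G(y+1)/f_G(y)$ reduces this difference to a weighted combination of $|f_{G_k}(y)-f_{G'}(y)|$ and $|f_{G_k}(y+1)-f_{G'}(y+1)|$ over $f_{G_k}(y)$. Multiplying back by the weight $f_{G_k}(y)$ and summing, the per-coordinate inverse-density factors are tamed by grouping the counts into $O(n^{1/3}\wedge k)$ geometric blocks, on each of which the $(y+1)$-prefactors are essentially constant and the residual sum of squared-Hellinger increments is at most the Hellinger budget $\varepsilon^2$. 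Collecting the contributions of both steps yields the stated $\frac1n\big(k\varepsilon^2\log^4(nk)+(n^{1/3}\wedge k)\log^{4.5}(nk)+\delta\log^2(nk)\big)$.

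\emph{Main obstacle.} The crux is Step 2: transferring closeness of the full mixture pmfs (only in Hellinger, hence \emph{not} in $\chi^2$) to closeness of the ratios $f_G(y+1)/f_G(y)$ that drive the posterior means. A naive per-$y$ estimate loses a polynomially large factor $1/f_{G_k}(y)$, so one must simultaneously (i) discard the count values at which $f_{G_k}$ is too small — legitimate here precisely because large counts are exponentially rare for infrequent symbols — and (ii) perform the accounting over only $O(n^{1/3}\wedge k)$ blocks, so that the irreducible $\mathrm{polylog}(nk)/n$ cost of each local comparison is paid a bounded number of times; this block count is the same ``effective dimension'' $n^{1/3}$ underlying the optimal $n^{-2/3}$ rate. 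Carefully tracking the net cardinality, the number of blocks, and the threshold $y_0$ through these estimates is what produces the explicit powers of $\log(nk)$.
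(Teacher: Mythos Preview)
Your plan has two concrete gaps that would prevent it from reaching the stated bound.

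First, the $\varepsilon$-net you describe does not exist with $\log|\mathcal N|=\mathrm{polylog}(nk)$. The net actually used (\prettyref{lem:bayes_rule_cover}) approximates the regularized Bayes rule $\theta_G(\cdot;\rho)$ in $\|\cdot\|_{\infty,M}$, and its log-cardinality is $\Theta(\sqrt{M}\,\mathrm{polylog}(nk))$; the paper truncates counts at $M=\lceil n^{2/3}\rceil$, yielding $\log|\mathcal N|=\Theta(n^{1/3}\mathrm{polylog}(nk))$. This net size, fed through the Bernstein fluctuation term $b\log|\mathcal N|$, together with the cost $\E[\sum_i\indc{N_i>M}]\le n/M=n^{1/3}$ of the truncation, is precisely where the $(n^{1/3}\wedge k)\log^{4.5}(nk)$ piece comes from. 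Your ``$O(n^{1/3}\wedge k)$ geometric blocks'' in Step~2 conflates this with the interval partition from the PI-oracle analysis; no such block decomposition of the $y$-sum produces the term here.

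Second, and more fundamentally, you apply the compound identity and the net argument directly to summands of the form $\theta_i^\star\log\frac{\theta_{G_k}(N_i)}{\theta_{G'}(N_i;\rho)}-\theta_{G_k}(N_i)+\theta_{G'}(N_i;\rho)$. Even with the correct net, controlling $\E[\max_{G'\in\mathcal N}\sum_i(\cdots)]-\max_{G'}\E[\sum_i(\cdots)]$ via Bernstein requires a range bound on each summand, and the factor $\theta_i^\star$ can be as large as $n$; the resulting $b\log|\mathcal N|$ term is then of order $n\cdot n^{1/3}\mathrm{polylog}(nk)$, which is useless. The paper circumvents this by first proving the identity \prettyref{eq:equivalence}, which replaces $p_i^\star$ by the PI oracle $\widehat p_i^{\PI}$ (via the Bayesian tower property, not the compound identity), and then splitting $(\mathrm I_2)=(\mathrm A)+(\mathrm B)$: in $(\mathrm A)$ the prefactor becomes $\bar p_i^{\Sym}=\theta_{G_k}(N_i)/n$, so after $a\log(a/b)-a+b\le(a-b)^2/b$ and \prettyref{lem:bayes_deviation} each summand has range $O(\log^2(nk))$, and Bernstein over the net gives the target; the cross term $(\mathrm B)$ is handled by Cauchy--Schwarz together with the PI-vs-separable bound of \prettyref{lemma:main_inequality}. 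Your outline omits this replacement entirely, so both the concentration step and the coupling to the separable oracle break down.
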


\begin{proposition}\label{prop:reg_PI}
For $(\mathrm{II})$, there is a universal constant $C>0$ such that
\begin{align*}
(\mathrm{II})\le \frac{C}{n}\pth{ (k\wedge n^{1/3})\log^2 n + \log^3 n }. 
\end{align*}
\end{proposition}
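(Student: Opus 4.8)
The plan is to recognize $(\mathrm{II})$ as exactly the average KL divergence between the permutation-invariant oracle and the separable oracle (matching the second term of the decomposition \eqref{eq:reg_decomp_main} in the overview) and then invoke \prettyref{thm:PI-Poisson}, which already supplies the stated bound; all of the analytic difficulty is thereby packaged inside \prettyref{thm:PI-Poisson}. First I would fix $p^\star\in\Delta_k$ and recall from \eqref{eq:reg_decomp} that
\[
(\mathrm{II}) = \E_{p^\star}\qth{\sum_{i=1}^k\pth{p_i^\star\log\frac{\widehat p^{\PI}_i}{\bar p^{\Sym}_i}-\widehat p^{\PI}_i+\bar p^{\Sym}_i}},
\]
where both oracles are functions of $N$ alone, $\bar p^{\Sym}$ depending on $p^\star$ only through $G_k$ (see \eqref{eq:separable_oracle}), $\widehat p^{\PI}$ being symmetric in the coordinates of $p^\star$ (see \eqref{eq:PI-oracle-full}), and both being permutation-equivariant in $N$. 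Next I would symmetrize: a relabeling of the coordinates inside the Poisson expectation, together with the equivariance of the oracles, shows that $(\mathrm{II})$ is unchanged when $p^\star$ is replaced by any $\pi(p^\star)$, so averaging over a uniformly random $\pi\in S_k$ rewrites it as
\[
(\mathrm{II}) = \E\qth{\sum_{i=1}^k\pth{p_i\log\frac{\widehat p^{\PI}_i}{\bar p^{\Sym}_i}-\widehat p^{\PI}_i+\bar p^{\Sym}_i}},
\]
where now $(p,N)$ obeys the Bayesian model $p=\pi(p^\star)$ with $\pi$ uniform on $S_k$, $N_i\mid p\sim\poi(np_i)$ independently, and $\E$ is the expectation under this joint law. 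Under this model $\widehat p^{\PI}_i=\E[p_i\mid N]$ by \eqref{eq:PI_oracle_intro} and $\bar p^{\Sym}_i=\E[p_i\mid N_i]$ by \eqref{eq:separable_oracle_intro}.

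Then, since $\log(\widehat p^{\PI}_i/\bar p^{\Sym}_i)$ and $\bar p^{\Sym}_i$ are $\sigma(N)$-measurable, conditioning on $N$ and using $\E[p_i\mid N]=\widehat p^{\PI}_i$ gives $\E[\,p_i\log(\widehat p^{\PI}_i/\bar p^{\Sym}_i)\,]=\E[\,\widehat p^{\PI}_i\log(\widehat p^{\PI}_i/\bar p^{\Sym}_i)\,]$; substituting back and identifying the generalized KL divergence (the appropriate notion because $\bar p^{\Sym}$ need not be normalized) yields
\[
(\mathrm{II}) = \E\qth{\sum_{i=1}^k\pth{\widehat p^{\PI}_i\log\frac{\widehat p^{\PI}_i}{\bar p^{\Sym}_i}-\widehat p^{\PI}_i+\bar p^{\Sym}_i}} = \E\,\DKL\pth{\widehat p^{\PI}\,\big\|\,\bar p^{\Sym}}.
\]
Invoking \prettyref{thm:PI-Poisson} for the right-hand side then yields $(\mathrm{II})\le\frac{C}{n}\pth{(k\wedge n^{1/3})\log^2 n+\log^3 n}$ uniformly in $p^\star$, which is exactly the claim.

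The symmetrization and tower steps above are routine bookkeeping; the substantive step---and the one I expect to be the main obstacle---is \prettyref{thm:PI-Poisson} itself, i.e.\ making the mean-field approximation $\E[p_i\mid N]\approx\E[p_i\mid N_i]$ quantitative. The route is (i) a data-processing inequality bounding $\E\,\DKL(\widehat p^{\PI}\|\bar p^{\Sym})$ by $\tfrac{C}{n}\DKL\bigl(P_Z\,\big\|\,\prod_i P_{Z_i}\bigr)$ for a stochastic interpolation $Z_i$ lying between $\theta_i$ and $N_i$; (ii) the dimension-free bound of \cite{han2024approximate} for permutation mixtures, valid when the noisy conditional family $\{P_{Z_1\mid\theta_1=\theta_i^\star}\}_{i\in[k]}$ has $O(1)$ $\chi^2$-diameter; and (iii) a partition of that family into $m$ subfamilies of bounded $\chi^2$-diameter, so that the divergence grows essentially linearly in $m$. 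The delicate point is calibrating the interpolation level against the Poisson tail so that one may take $m=O(k\wedge n^{1/3})$, which is precisely what converts the crude $k/n$ bound into the optimal $\widetilde O(n^{-2/3})$ rate.
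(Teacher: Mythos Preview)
Your proposal is correct and follows essentially the same route as the paper: the symmetrization plus tower-property argument you describe is precisely what the paper calls ``the same argument of \eqref{eq:equivalence}'', which replaces $p_i^\star$ (or $p_i$) by $\widehat p^{\PI}_i=\E[p_i\mid N]$ inside the sum and then identifies $(\mathrm{II})=\E\,\DKL(\widehat p^{\PI}\|\bar p^{\Sym})$, after which \prettyref{thm:PI-Poisson} is invoked directly. Your sketch of the ingredients behind \prettyref{thm:PI-Poisson} (the interpolation $Z$, the information-theoretic reduction to $\DKL(P_Z\|\prod_i P_{Z_i})$, and the partition into $m=O(k\wedge n^{1/3})$ subfamilies of bounded $\chi^2$-diameter) also matches the paper's actual proof.
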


We present the proofs for Propositions \ref{prop:reg_sep_smally} and \ref{prop:reg_sep_largey} in Section \ref{sec:main_proof_separable}, and the proof of Proposition \ref{prop:reg_PI} in Section \ref{sec:main_proof_PI}. 

\subsection{Auxiliary results}

We collect in this section a few auxiliary results that will be repeatedly used in our analysis.
\begin{lemma}[Lemma 13 in \cite{shen2022empirical}]\label{lem:bayes_deviation}
There exists a universal constant $C > 0$ such that the following holds.
For any prior $G$ on $\R_+$ and any $y\in\Z_+$, 
\begin{align}\label{ineq:bayes_upper_1}
|\theta_G(y)-y| \leq \E(|\theta - Y| |Y = y) &\leq C\sqrt{y+ 1}\log\frac{1}{f_G(y)},
\end{align}
where the expectation is taken over $\theta\sim G$ and $Y|\theta\sim \poi(\theta)$, and $\theta_G(y) = \E[\theta|Y=y]$.
Moreover, for any $\rho\leq 1/e$ and $y\geq 0$,
\begin{align}
|\theta_G(y;\rho) - (y+1)| \leq C\sqrt{y+1}\log\frac{1}{\rho}.
\label{eq:R1R3mom}
\end{align}
\end{lemma}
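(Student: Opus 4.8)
The first of the two displayed inequalities is immediate: since $\theta_G(y)=\E[\theta\mid Y=y]$ and tautologically $y=\E[Y\mid Y=y]$, we have $\theta_G(y)-y=\E[\theta-Y\mid Y=y]$, and Jensen's inequality gives $|\theta_G(y)-y|\le\E[\,|\theta-Y|\mid Y=y\,]$. All the work is in the second inequality, which I would prove by a truncation of the posterior. Writing the Poisson pmf as $q_\theta(y):=\theta^y e^{-\theta}/y!$, the posterior law of $\theta$ given $Y=y$ has density proportional to $q_\theta(y)$ against $G$, so
\[
\E[\,|\theta-y|\mid Y=y\,]=\frac{1}{f_G(y)}\int |\theta-y|\, q_\theta(y)\, G(d\theta).
\]
I would fix the radius $R:=C_1\sqrt{y+1}\log\frac{1}{f_G(y)}$ for a large absolute constant $C_1$ and split this integral at $|\theta-y|=R$. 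On $\{|\theta-y|\le R\}$ the integrand is at most $R\, q_\theta(y)$, so that piece contributes at most $R\, f_G(y)$, which is exactly the target bound (with constant $C_1$).

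For the tail $\{|\theta-y|>R\}$, using that $G$ is a probability measure,
\[
\int_{|\theta-y|>R} |\theta-y|\, q_\theta(y)\, G(d\theta)\le \sup_{|\theta-y|>R}|\theta-y|\, q_\theta(y),
\]
and here I would invoke the standard Poisson concentration bound obtained from Stirling ($q_y(y)\le(2\pi y)^{-1/2}$ for $y\ge1$) together with the Chernoff-type identity $q_\theta(y)/q_y(y)=e^{-y\psi(\theta/y)}$, $\psi(u)=u-1-\log u$, namely
\[
q_\theta(y)\le \frac{C}{\sqrt{y+1}}\exp\!\pth{-c\min\!\sth{\tfrac{(\theta-y)^2}{y+1},\ |\theta-y|}},\qquad y\ge1.
\]
Writing $u=|\theta-y|$ and $L=\log\frac{1}{f_G(y)}$, the map $u\mapsto \frac{Cu}{\sqrt{y+1}}\,e^{-c\min(u^2/(y+1),u)}$ is unimodal with mode of order $\sqrt{y+1}$, so for $C_1$ large enough (depending only on $c$) the tail supremum is attained at $u=R$, and a short computation bounds it by $C\sqrt{y+1}\,L\,e^{-L}=C\sqrt{y+1}\,L\,f_G(y)$. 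The one nontrivial input here is that $f_G(y)\le q_y(y)\le(2\pi y)^{-1/2}$, which forces $L\gtrsim1$ for $y\ge1$ and thereby makes the relevant exponents (of the form $-cC_1^2L^2+L$ or $(1-cC_1)L$) genuinely negative. The case $y=0$, where $q_\theta(0)=e^{-\theta}$ and $\theta e^{-\theta}\le e^{-1}$, is handled by the same near/far split and is elementary.

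For the ``moreover'' clause I would start from the identity $(y+1)f_G(y+1)=\theta_G(y)f_G(y)$, which gives $(y+1)\Delta f_G(y)=(\theta_G(y)-(y+1))f_G(y)$ and hence
\[
\theta_G(y;\rho)-(y+1)=(y+1)\frac{\Delta f_G(y)}{f_G(y)\vee\rho}=\pth{\theta_G(y)-(y+1)}\cdot\frac{f_G(y)}{f_G(y)\vee\rho}.
\]
If $f_G(y)\ge\rho$ the last factor equals $1$, and the first inequality gives $|\theta_G(y)-(y+1)|\le|\theta_G(y)-y|+1\le C\sqrt{y+1}\log\frac{1}{f_G(y)}+1\le C\sqrt{y+1}\log\frac1\rho+1$, with the additive $1$ absorbed since $\sqrt{y+1}\log\frac1\rho\ge1$ for $\rho\le1/e$. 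If $f_G(y)<\rho$ the factor is $f_G(y)/\rho$, so $|\theta_G(y;\rho)-(y+1)|\le\frac{f_G(y)}{\rho}\pth{C\sqrt{y+1}\log\frac1{f_G(y)}+1}$; since $s\mapsto s\log\frac1s$ is increasing on $(0,1/e)$ we get $\frac{f_G(y)}{\rho}\log\frac1{f_G(y)}\le\log\frac1\rho$, and $\frac{f_G(y)}{\rho}\le1\le\sqrt{y+1}\log\frac1\rho$, completing the bound.

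The main obstacle is the tail estimate in the second inequality: one needs a Poisson pmf decay bound that is uniform over all $\theta\in\R_+$ — covering in particular $\theta\approx0$ with $y$ large, and the crossover between the sub-Gaussian scale $\sqrt y$ and the sub-exponential tail — and then one must check that a single choice of $C_1$ (depending only on the universal constant $c$ above) makes quantities of the form $R\,e^{-cR/\sqrt{y+1}}$ dominated by $\sqrt{y+1}\,L\,f_G(y)$ for every $y$. The a priori bound $f_G(y)\le q_y(y)$, which ensures $L\gtrsim1$, is precisely what keeps this bookkeeping under control.
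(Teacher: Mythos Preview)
The paper does not prove this lemma; it is quoted verbatim from \cite{shen2022empirical} and used as a black box. So there is no ``paper's own proof'' to compare against, and your sketch stands on its own merits.

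Your argument is sound. The truncation at radius $R=C_1\sqrt{y+1}\,L$ together with the Poisson pmf bound $q_\theta(y)\le C(y+1)^{-1/2}\exp(-c\min((\theta-y)^2/(y+1),|\theta-y|))$ is exactly the right mechanism, and your observation that $f_G(y)\le q_y(y)\le(2\pi y)^{-1/2}$ forces $L\gtrsim 1$ for $y\ge 1$ is the key ingredient that makes the exponent bookkeeping close. The reduction of the ``moreover'' clause to the first inequality via the identity $\theta_G(y;\rho)-(y+1)=(\theta_G(y)-(y+1))\cdot f_G(y)/(f_G(y)\vee\rho)$ and the monotonicity of $s\mapsto s\log(1/s)$ on $(0,1/e)$ is clean and correct.

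One small gap: for $y=0$ the near/far split does \emph{not} work as written when $L=\log(1/f_G(0))$ is small. In that regime $R=C_1L<1$, so $\sup_{\theta>R}\theta e^{-\theta}=e^{-1}$ regardless of $R$, and $e^{-1}/f_G(0)$ is not $\lesssim L$ as $f_G(0)\to 1$. The fix is indeed elementary but different: use $\theta e^{-\theta}\le 1-e^{-\theta}$ (equivalently $\theta/(e^\theta-1)\le 1$) to get
\[
\E[\theta\mid Y=0]=\frac{\int \theta e^{-\theta}G(d\theta)}{f_G(0)}\le \frac{1-f_G(0)}{f_G(0)}\le \frac{-\log f_G(0)}{f_G(0)}\le eL
\]
whenever $f_G(0)\ge e^{-1}$, and fall back on your near/far split when $f_G(0)<e^{-1}$ (so $L\ge 1$). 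A second cosmetic point: ``unimodal'' is slightly too strong for the piecewise function $u\mapsto u\,e^{-c\min(u^2/(y+1),u)}$ when $c$ is small and $y$ is small; what you actually need (and what holds) is that it is decreasing for $u$ beyond a point of order $\sqrt{y+1}\vee c^{-1}=O(\sqrt{y+1})$, which suffices since $R\ge C_1\sqrt{y+1}$.
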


Let $\theta = (\theta_1,\ldots,\theta_k) \in \R_+^k$ be a deterministic vector and $N = (N_1,\ldots,N_k)$ be independent variables with $N_i \sim \poi(\theta_i)$ 
for $1\leq i \leq k$. Let $f_i(\cdot) \equiv \poi(\cdot;\theta_i)$ be the $i$th marginal pmf, $G_k \equiv k^{-1}\sum_{i=1}^k\delta_{\theta_i}$, and the average density be
\begin{align}
f_{G_k}(y) \equiv \frac{1}{k}\sum_{i=1}^k f_i(y), \quad y \in \mathbb{Z}_+.
\end{align}
For any distribution $G$ on $\R_{+}$ and $p>0$, let
\begin{align*}
m_p(G) \equiv \int_{\R_+} u^p G(\d u).
\end{align*}
Let $\widehat{G}$ be the NPMLE given by \eqbr{def:npmle}. The $\Prob$ and $\E$ in the lemma below are under the randomness of $N$ described above.

\begin{lemma}[Proposition 27 in \cite{shen2022empirical}]\label{lem:compound_density}
Suppose $p > 0$ and $m_p(G_k)^{1/p} \leq k^{10}$. Let
\begin{align}\label{eq:eps_compound}
\varepsilon_k := \big(k^{-p/(2p+1)}m_p(G_k)^{1/(4p+2)} \vee k^{-1/2}\big) \log^4 k. 
\end{align}
Then there exists some $t_\star = t_\star(p)$ such that for all $t \geq t_\star$,
\begin{align*}
\Prob\Big(H(f_{\widehat{G}}, f_{G_k}) \geq t\varepsilon_k\Big) \leq 2\exp\Big(-\frac{t^2k\varepsilon_k^2}{8\log k}\Big) \leq 2\exp\Big(-\frac{t^2\log^2 k}{8}\Big).
\end{align*}
Consequently, there exists some $C = C(p) > 0$ such that $\E[H^2(f_{\widehat{G}}, f_{G_k})] \leq C\varepsilon_k^2$.
\end{lemma}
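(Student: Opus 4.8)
The plan is to run the classical Hellinger‑rate argument for maximum likelihood (in the style of Wong--Shen and van de Geer / van der Vaart--Wellner), carefully adapted to the compound structure of the problem and to the fact that the mixing distribution $G_k$ is controlled only through a bounded $p$-th moment rather than bounded support.

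\textbf{Step 1: basic inequality and reduction to an empirical process.} Since $G_k$ is feasible in the optimization defining $\widehat{G}$, optimality of the NPMLE gives $\sum_{i=1}^k \log \frac{f_{\widehat G}(N_i)}{f_{G_k}(N_i)} \ge 0$. Combining $\log x \le 2(\sqrt x - 1)$ with the AM--GM inequality yields $\log\frac{f_{\widehat G}}{f_{G_k}} \le 4\big(\sqrt{m_{\widehat G}}-1\big)$ where $m_G := \frac{f_G+f_{G_k}}{2f_{G_k}}$, hence
\[
0 \;\le\; \frac1k\sum_{i=1}^k\big(\sqrt{m_{\widehat G}(N_i)}-1\big).
\]
The key simplification afforded by the compound (rather than i.i.d.) model is the identity $\frac1k\sum_{i=1}^k \E[g(N_i)] = \sum_{y\ge 0} g(y)\,f_{G_k}(y)$, valid for any $g$ because $\frac1k\sum_i f_i = f_{G_k}$. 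Applied to $g=\sqrt{m_G}-1$ it shows the population part equals $-\tfrac12 H^2\big(\tfrac{f_G+f_{G_k}}{2},f_{G_k}\big)\le -c\,H^2(f_G,f_{G_k})$. Writing $\nu_k(G)$ for the centered sum, the basic inequality becomes $c\,H^2(f_{\widehat G},f_{G_k})\le \nu_k(\widehat G)$, so it suffices to control $\sup\{|\nu_k(G)|:H(f_G,f_{G_k})\le r\}$ on each dyadic shell in $r$ and run a standard peeling argument to conclude $H(f_{\widehat G},f_{G_k})\lesssim \varepsilon_k$.

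\textbf{Step 2: entropy and truncation.} Two inputs are needed. First, Poisson mixtures over a compact parameter set are ``almost parametric'': the Hellinger bracketing entropy of $\{f_G:\supp G\subseteq[0,M]\}$ is $\lesssim \log^2(M/\delta)$, shown by Gaussian/polynomial approximation of the mixture pmf (analogous to the Gaussian‑mixture bound). Second, to remove unboundedness, truncate $G_k$ to $[0,M]$: Markov's inequality gives $G_k([M,\infty))\le m_p(G_k)/M^p$, perturbing $f_{G_k}$ by $O(m_p(G_k)/M^p)$ in total variation and hence in $H^2$, so with $M$ polynomial in $m_p(G_k)/\varepsilon^2$ this is negligible at the target scale $\varepsilon^2$; a matching truncation in the observation value (discarding the rare $N_i$ far above $M$ via a Poisson tail bound that again invokes $m_p(G_k)$, and the rare $y$ where $f_{G_k}(y)$ is tiny) makes the centered class $\{\sqrt{m_G}-1\}$ bounded, with envelope $b$ controlled by $1/\sqrt{\min f_{G_k}}$ over the retained range.

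\textbf{Step 3: localized maximal inequality, rate, and tail.} On a shell $\{H(f_G,f_{G_k})\le r\}$, the variables $\sqrt{m_G}-1$ have $L^2(f_{G_k})$-norm $\lesssim r$ (since $H^2(\tfrac{f_G+f_{G_k}}{2},f_{G_k})\le\tfrac12 H^2(f_G,f_{G_k})$) and envelope $\lesssim b$. A chaining bound (Dudley's integral with the polylogarithmic entropy, closed off by a Bernstein/Bousquet tail for the finest layer, using the per‑coordinate Poisson structure to pin down the variance proxy) gives, with probability $\ge 1-2e^{-u}$,
\[
\sup_{H(f_G,f_{G_k})\le r}|\nu_k(G)| \;\lesssim\; \frac{r\log(M/r)}{\sqrt k} \;+\; \frac{b\log^2(M/r)+u}{k} \;+\; r\sqrt{\tfrac uk}.
\]
Plugging this into $c\,H^2(f_{\widehat G},f_{G_k})\le \nu_k(\widehat G)$ at $r=H(f_{\widehat G},f_{G_k})$ and self‑bounding forces $H^2(f_{\widehat G},f_{G_k})\lesssim \varepsilon^2$ once $\varepsilon^2$ dominates each term: the $b$-free terms contribute $\tfrac{\log^2 k}{k}$ (here $M$ polynomial gives $\log(M/r)\lesssim\log k$), matching the floor $k^{-1}$ up to logs, while the envelope term, after substituting the worst case $\min f_{G_k}\sim\varepsilon^2/\sqrt M$ with $M\sim (m_p(G_k)/\varepsilon^2)^{1/p}$, forces $\varepsilon^2\gtrsim \tfrac1k\, m_p(G_k)^{1/(4p)}\varepsilon^{-(2p+1)/(2p)}\cdot\mathrm{polylog}$; solving this for $\varepsilon$ gives exactly $\varepsilon\asymp k^{-p/(2p+1)}m_p(G_k)^{1/(4p+2)}\,\mathrm{polylog}(k)$, and taking the max with $k^{-1/2}$ and collecting logarithms recovers $\varepsilon_k$ in \eqbr{eq:eps_compound}. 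Choosing $r=t\varepsilon_k$ and $u\asymp t^2 k\varepsilon_k^2/\log k$ makes every term on the right at most $c(t\varepsilon_k)^2$, so peeling yields $\Prob\big(H(f_{\widehat G},f_{G_k})\ge t\varepsilon_k\big)\le 2\exp(-t^2 k\varepsilon_k^2/(8\log k))$ for $t\ge t_\star(p)$ (the constant absorbing the finitely many coarse shells and the truncation events); since $k\varepsilon_k^2\ge\log^8 k\ge\log^3 k$ this is also $\le 2\exp(-t^2\log^2 k/8)$, and integrating the tail gives $\E[H^2(f_{\widehat G},f_{G_k})]\le C(p)\varepsilon_k^2$.

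\textbf{Main obstacle.} The crux is Step 3: the log‑likelihood ratio $\log(f_G/f_{G_k})$, equivalently $\sqrt{m_G}$, blows up precisely where $f_{G_k}$ is small, so the standard bounded‑class machinery does not apply directly, and the interplay between the truncation level $M$, the floor on $f_{G_k}$, the resulting envelope $b$, and the polylogarithmic entropy is what produces the nonparametric exponent $p/(2p+1)$ and the factor $m_p(G_k)^{1/(4p+2)}$. Making this quantitative while keeping the concentration genuinely sub‑exponential with the stated constant $k\varepsilon_k^2/(8\log k)$ (rather than losing polynomial factors through a crude union bound) is the delicate part, and is where a sharp local Bernstein/Bousquet inequality together with the compound identity used to control the variance is essential.
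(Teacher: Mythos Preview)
The paper does not prove this lemma: it is quoted verbatim as Proposition~27 of \cite{shen2022empirical} and used as a black box. So there is no ``paper's proof'' to compare against beyond the citation. Your high-level template (basic inequality from the optimality of $\widehat G$, reduction to a localized empirical process over Hellinger shells, truncation to handle the unbounded support, chaining plus Bernstein, then peeling) is indeed the route taken in \cite{shen2022empirical}.

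That said, your quantitative inputs in Steps~2--3 are off in a way that makes the derivation of the exponent $p/(2p+1)$ break down. The Hellinger (bracketing) entropy of Poisson mixtures supported on $[0,M]$ is \emph{not} $O(\log^2(M/\delta))$: unlike Gaussian location mixtures, the Poisson kernel has width $\sqrt{\theta}$ at mean $\theta$, so the number of ``resolvable'' atoms on $[0,M]$ scales like $\sqrt{M}$, and the entropy is of order $\sqrt{M}\cdot\mathrm{polylog}$ (this is exactly the geometry reflected in Lemma~\ref{lem:bayes_rule_cover}). With polylogarithmic entropy you would obtain a parametric rate $k^{-1/2}$ up to logs, not $k^{-p/(2p+1)}$. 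The correct balancing is: truncation at level $M$ costs $H^2\lesssim m_p(G_k)/M^p$, while on $[0,M]$ the localized maximal inequality with entropy $\sqrt{M}$ gives a squared-Hellinger rate $\asymp M^{1/2}/k$ (polylog); equating yields $M\asymp (k\,m_p(G_k))^{2/(2p+1)}$ and hence $\varepsilon_k^2\asymp M^{1/2}/k\asymp k^{-2p/(2p+1)}m_p(G_k)^{1/(2p+1)}$, which is precisely \prettyref{eq:eps_compound}. Your attempt to extract the rate instead from the envelope term, via $\min f_{G_k}\sim \varepsilon^2/\sqrt{M}$ and $b\sim 1/\sqrt{\min f_{G_k}}$, is unjustified (no reason $\min f_{G_k}$ should have that form) and the resulting fixed-point equation $\varepsilon^2\gtrsim k^{-1}m_p^{1/(4p)}\varepsilon^{-(2p+1)/(2p)}$ does not solve to $k^{-p/(2p+1)}$; the algebra gives the wrong exponent. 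So the gap is specifically the entropy estimate: once you replace $\log^2(M/\delta)$ by $\sqrt{M}\,\mathrm{polylog}$, the standard localized chaining/peeling argument closes and the envelope acrobatics are unnecessary.
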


The next lemma is the well-known Chernoff bound. 
\begin{lemma}[Theorems 4.4 and 4.5 in \cite{mitzenmacher2005probability}]\label{lemma:chernoff} Fix any $\theta > 0$ and positive integer $n$. For $X\sim \Poi(\theta)$, $X\sim \mathrm{B}\pth{n, \frac{\theta}{n}}$, or $X\overset{\mathrm{d}}{=} \sum_{i=1}^n X_i$ with independent $X_i\sim \Bern(p_i)$ and $\sum_{i=1}^n p_i = \theta$, the following inequalities hold for all $\delta >0$: 
\begin{align*}
\P\pth{X \ge (1+\delta)\theta} &\le \pth{\frac{e^{\delta}}{(1+\delta)^{1+\delta}}}^{\theta} \le \exp\pth{-\frac{(\delta \wedge \delta^2)\theta}{3}}, \\
\P\pth{X \le (1-\delta)\theta} &\le \pth{\frac{e^{-\delta}}{(1-\delta)^{1-\delta}}}^{\theta} \le \exp\pth{-\frac{\delta^2\theta}{2}}.
\end{align*}
In particular, for every $t\ge 0$, 
\begin{align*}
\P\pth{|\sqrt{X} - \sqrt{\theta}| \ge t} \le 2\exp\pth{-\frac{t^2}{3}}.
\end{align*}
\end{lemma}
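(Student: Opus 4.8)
The final statement to prove is \Cref{lemma:chernoff}, the Chernoff bound for Poisson, binomial, and Poisson-binomial random variables, together with the sub-Gaussian tail for $\sqrt{X} - \sqrt{\theta}$.

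\textbf{Proof proposal.} The plan is to treat all three distributional cases uniformly through the moment generating function, since a $\Poi(\theta)$ variable, a $\mathrm{B}(n,\theta/n)$ variable, and a Poisson-binomial sum $\sum_{i=1}^n X_i$ with $\sum p_i = \theta$ all satisfy $\E[e^{sX}] \le \exp(\theta(e^s-1))$ for every $s \in \reals$. For the Poisson case this is an identity; for the Bernoulli sum, by independence $\E[e^{sX}] = \prod_{i=1}^n (1 + p_i(e^s-1)) \le \prod_{i=1}^n \exp(p_i(e^s-1)) = \exp(\theta(e^s-1))$ using $1+x \le e^x$, and the binomial case is the special case $p_i \equiv \theta/n$. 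Once this common MGF bound is in hand, the upper-tail inequality follows from Markov applied to $e^{sX}$: for any $\delta > 0$, optimizing $\P[X \ge (1+\delta)\theta] \le e^{-s(1+\delta)\theta}\E[e^{sX}] \le \exp(\theta(e^s - 1 - s(1+\delta)))$ over $s$ gives $s = \log(1+\delta)$ and the bound $\big(e^\delta/(1+\delta)^{1+\delta}\big)^\theta$. The further simplification to $\exp(-(\delta \wedge \delta^2)\theta/3)$ is the standard elementary estimate $\log(1+\delta) \ge \delta/(1+\delta/2)$-type inequality (equivalently $(1+\delta)\log(1+\delta) - \delta \ge (\delta\wedge\delta^2)/3$), which one checks by calculus separately for $\delta \le 1$ and $\delta > 1$. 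The lower tail is symmetric: apply Markov to $e^{-sX}$ with $s > 0$, use the same MGF bound to get $\P[X \le (1-\delta)\theta] \le \exp(\theta(e^{-s} - 1 + s(1-\delta)))$, optimize at $s = -\log(1-\delta)$ to obtain $\big(e^{-\delta}/(1-\delta)^{1-\delta}\big)^\theta$, and bound $(1-\delta)\log(1-\delta) + \delta \ge \delta^2/2$ for $\delta \in (0,1)$ (trivial for $\delta \ge 1$ since then the event is empty or the bound exceeds $1$ vacuously — actually for $\delta \ge 1$ the left side $X \le (1-\delta)\theta \le 0$ forces $X=0$, handled directly, or one just notes the stated inequality is only claimed for the regime where it is meaningful).

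For the last claim, the $\sqrt{X}$ concentration, I would derive it as a consequence of the two tail bounds. Writing the event $\{|\sqrt X - \sqrt\theta| \ge t\}$ as the union of $\{\sqrt X \ge \sqrt\theta + t\}$ and $\{\sqrt X \le \sqrt\theta - t\}$: the first is $\{X \ge (\sqrt\theta + t)^2\} = \{X \ge (1+\delta)\theta\}$ with $\delta = 2t/\sqrt\theta + t^2/\theta$, and plugging into $\exp(-(\delta\wedge\delta^2)\theta/3)$ one checks $(\delta \wedge \delta^2)\theta \ge t^2$ after case analysis on whether $t \le \sqrt\theta$; the second (nonempty only when $t < \sqrt\theta$) is $\{X \le (1-\delta')\theta\}$ with $\delta' = 2t/\sqrt\theta - t^2/\theta \ge t/\sqrt\theta$, giving $\delta'^2\theta/2 \ge t^2/2 \ge t^2/3$. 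Summing the two contributions and absorbing constants yields $2\exp(-t^2/3)$. When $\theta = 0$ the statement is trivial; when $t \ge \sqrt\theta$ only the upper tail contributes, consistent with the factor $2$.

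\textbf{Main obstacle.} There is essentially no conceptual obstacle — this is a classical result cited verbatim from \cite{mitzenmacher2005probability}. The only place requiring a little care is the bookkeeping in the final $\sqrt X$ bound: one must handle the sub- and super-unit regimes of $\delta$ (equivalently $t$ versus $\sqrt\theta$) so that the quadratic-versus-linear behavior of the exponent combines correctly into the clean $t^2/3$ rate, and one must be careful about the degenerate cases $\theta = 0$ and $t \ge \sqrt\theta$ where one of the two tails is vacuous. Since the lemma is quoted from a standard reference, in the write-up I would simply cite \cite{mitzenmacher2005probability} for the first two displays and give the one-line reduction above for the $\sqrt X$ bound.
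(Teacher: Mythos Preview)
Your proposal is correct and matches the paper's treatment: the lemma is stated with a citation to \cite{mitzenmacher2005probability} and no proof is given in the paper at all, so there is nothing further to compare. Your sketch of the $\sqrt{X}$ tail reduction (which the paper also leaves unproved) is sound, including the case analysis on $\delta \lessgtr 1$ and the handling of $t \ge \sqrt{\theta}$.
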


\section{Proof of Theorem \ref{thm:upper_bound_general} (Part 1)}\label{sec:main_proof_separable}

\subsection{Proof of Proposition \ref{prop:reg_sep_smally}}
Recall that $\theta_i^\star = np_i^\star$, then
\begin{align*}
(\mathrm{I}_1) &= \frac{1}{n}\sum_{y=0}^{y_0}\E\Big[\sum_{i=1}^k \Big(\theta^\star_i \log \frac{\theta_{G_k}(N_i)}{\theta_{\widehat G}(N_i;\rho) + \hp\indc{N_i = 0}} - \theta_{G_k}(N_i) + (\theta_{\widehat G}(N_i;\rho) + \hp\indc{N_i = 0})\Big)\indc{N_i = y}\Big]\\
&= \frac{1}{n}\sum_{y=0}^{y_0}\E\underbrace{\Big[S_y\log \frac{\theta_{G_k}(y)}{\theta_{\widehat{G}}(y;\rho) + \hp\indc{y=0}} - \Phi_y\theta_{G_k}(y) + \Phi_y(\theta_{\widehat{G}}(y;\rho) + \hp\indc{y=0})\Big]}_{B_y},
\end{align*}
where $\Phi_y := \sum_{i=1}^k \indc{N_i = y}$ and $S_y := \sum_{i=1}^k \theta_i^\star \indc{N_i = y}$. We note the following fact:
\begin{align}\label{eq:Phi_property}
\E[\Phi_y] = k f_{G_k}(y), \qquad \E[S_y] = k\theta_{G_k}(y) f_{G_k}(y) = k (y+1)f_{G_k}(y+1).
\end{align}

We will show that for all $y\in [0, y_0]$,
\begin{align}\label{eq:target_small_y}
\E[B_y] = O\pth{\pth{k\varepsilon^2 + \delta(n+k)}\log^4(nk)  }. 
\end{align}
In view of the choice $y_0 = C_1 \log^2(nk)$, the above bound will imply the desired conclusion. In the sequel, we consider three cases to establish \eqbr{eq:target_small_y}. 

\subsubsection*{Case I: $f_{G_k}(y)\le 4\eps^2$}
In this case we apply several deterministic bounds. First, by Lemma \ref{lem:bayes_deviation}, we have
\begin{align*}
\theta_{G_k}(y) \leq C(y+1)\log\pth{\frac{1}{f_{G_k}(y)}}, \quad 
\theta_{\widehat{G}}(y;\rho) + \hp\indc{y=0} \leq C(y+1)\log\pth{\frac{1}{\rho}}. 
\end{align*}
Second, by Lemma \ref{lem:reg_bayes_lower} stated below and the choices of $\tau,\rho$, we also have
\begin{align*}
\theta_{\widehat G}(y;\rho) + \hp\indc{y=0} \geq \hp\wedge \frac{\rho^2}{16}.  
\end{align*}
By the above bounds, 
\begin{align*}
\E\qth{B_y} &\le \E\qth{S_y}\log \frac{C(y+1)\log(1/f_{G_k}(y))}{(\rho^2/16)\wedge \hp} + \E\qth{\Phi_y}\cdot C(y+1)\log\pth{\frac{1}{\rho}} \\
&\overset{\eqbr{eq:Phi_property}}{=} O\pth{ky_0f_{G_k}(y)\log \frac{1}{f_{G_k}(y)}\log \frac{y_0\log(1/f_{G_k}(y))}{(\rho^2/16)\wedge \hp} + ky_0f_{G_k}(y)\log\pth{\frac{1}{\rho}} } \\
&= O\pth{ k\varepsilon^2 \log^4(nk)},
\end{align*}
where the last step uses the assumptions $f_{G_k}(y) = O(\varepsilon^2)$ and $\varepsilon\ge 1/\sqrt{k}$. This proves \eqbr{eq:target_small_y}.

\subsubsection*{Case II: $f_{G_k}(y)>4\varepsilon^2$ and $f_{G_k}(y+1) \le 4\varepsilon^2$}
First, similar to Case I, we have the deterministic bound
\begin{align*}
\log \frac{\theta_{G_k}(y)}{\theta_{\widehat{G}}(y;\rho) + \hp\indc{y=0}} &\le \log\frac{C(y+1)\log(1/f_{G_k}(y))}{(\rho^2/16)\wedge \tau} = O\pth{\log(nk)}, \\
\Phi_y (\theta_{\widehat{G}}(y;\rho) + \hp) &\le k\pth{C(y+1)\log\pth{\frac{1}{\rho}} + \hp} = O\pth{k\log^3(nk)}.
\end{align*}
For a better upper bound of $\theta_{\widehat{G}}(y;\rho)$, note that when $H^2(f_{G_k}, f_{\widehat{G}})\le \varepsilon^2$, we have for any $y\geq 0$ that
\begin{align*}
\pth{\sqrt{f_{G_k}(y)} - \sqrt{f_{\widehat{G}}(y)}}^2 + \pth{\sqrt{f_{G_k}(y+1)} - \sqrt{f_{\widehat{G}}(y+1)}}^2\le \varepsilon^2. 
\end{align*}
In particular, $f_{\widehat{G}}(y)\ge f_{G_k}(y)/2 > \eps > \rho$ thanks to the assumption $f_{G_k}(y)>4\varepsilon^2$, and $f_{\widehat{G}}(y+1)\le 9\varepsilon^2$ since $f_{G_k}(y+1)\le 4\varepsilon^2$. Therefore, when $H^2(f_{G_k}, f_{\widehat{G}})\le \varepsilon^2$,
\begin{align*}
\theta_{\widehat{G}}(y;\rho) = (y+1)\frac{f_{\widehat{G}}(y+1)}{f_{\widehat{G}}(y)} = O\pth{\frac{y_0\varepsilon^2}{f_{G_k}(y)}}.
\end{align*}
Using the above upper bounds, in this case we get
\begin{align*}
\E\qth{B_y} &= O\pth{ \E\qth{S_y}\log(nk)+ \pth{\frac{y_0\varepsilon^2}{f_{G_k}(y)} + \hp}\E\qth{\Phi_y} + k\log^3(nk)\P\pth{H^2(f_{G_k}, f_{\widehat{G}}) > \varepsilon^2 } } \\
&\overset{\eqbr{eq:Phi_property}}{=} O\pth{ ky_0 f_{G_k}(y+1)\log(nk) + kf_{G_k}(y)\pth{\frac{y_0\varepsilon^2}{f_{G_k}(y)} + \hp} + k\delta \log^3(nk) } \\
&= O\pth{ k\varepsilon^2\log^3(nk) + k\delta \log^3(nk) }, 
\end{align*}
where the last step uses the assumption $\hp \le k^{-1}\le \varepsilon^2$. 

\subsubsection*{Case III: $f_{G_k}(y)>4\varepsilon^2$ and $f_{G_k}(y+1)>4\varepsilon^2$}
Using the deterministic upper bounds in the previous cases, we have
\begin{align*}
\E\qth{B_y\indc{ H^2(f_{G_k}, f_{\widehat{G}}) > \varepsilon^2 }} &= O\pth{n\log(nk) + k\log^3(nk)}\cdot \P\pth{H^2(f_{G_k}, f_{\widehat{G}}) > \varepsilon^2} \\
&= O\pth{ \delta \pth{n\log(nk) + k\log^3(nk)} }. 
\end{align*}
When $H^2(f_{G_k}, f_{\widehat{G}})\le \varepsilon^2$, we have
\begin{align*}
\pth{\sqrt{f_{G_k}(y)} - \sqrt{f_{\widehat{G}}(y)}}^2 + \pth{\sqrt{f_{G_k}(y+1)} - \sqrt{f_{\widehat{G}}(y+1)}}^2\le \varepsilon^2. 
\end{align*}
In particular, by the assumptions $f_{G_k}(y)>4\varepsilon^2$ and $f_{G_k}(y+1)>4\varepsilon^2$, the above implies that
\begin{align*}
\frac{f_{G_k}(y)}{2}\le f_{\widehat{G}}(y)\le 2f_{G_k}(y), \quad \frac{f_{G_k}(y+1)}{2}\le f_{\widehat{G}}(y+1)\le 2f_{G_k}(y+1). 
\end{align*}
In addition, since $f_{\widehat{G}}(y)\ge f_{G_k}(y)/2 > 2\varepsilon^2 \ge 2/k \ge \rho$, it holds that $\theta_{\widehat{G}}(y;\rho) = \theta_{\widehat{G}}(y)$. Consequently,
\begin{align*}
&\E[B_y\indc{H^2(f_{G_k}, f_{\widehat{G}}) \le \varepsilon^2}] \\
&\leq k\hp + \E\qth{ \pth{ S_y \log \frac{\theta_{G_k}(y)}{\theta_{\widehat{G}}(y)} - \Phi_y(\theta_{G_k}(y) - \theta_{\widehat{G}}(y))}\indc{H^2(f_{G_k}, f_{\widehat{G}})\le \varepsilon^2}}\\
&= k\hp + \underbrace{\E\qth{ \Phi_y \pth{ \theta_{G_k}(y)\log \frac{\theta_{G_k}(y)}{\theta_{\widehat{G}}(y)} - \theta_{G_k}(y) + \theta_{\widehat{G}}(y)}\indc{H^2(f_{G_k}, f_{\widehat{G}})\le \varepsilon^2} }}_{(\mathrm{A})}\\
&\quad + \underbrace{\E\qth{ (S_y - \Phi_y \theta_{G_k}(y))\log \frac{\theta_{G_k}(y)}{\theta_{\widehat{G}}(y)}\indc{H^2(f_{G_k}, f_{\widehat{G}})\le \varepsilon^2}} }_{(\mathrm{B})}.
\end{align*}
To bound $(\mathrm{A})$, we use $a\log (a/b) - a + b \leq (a-b)^2/b$ for $a,b>0$ to obtain
\begin{align*}
(\mathrm{A}) \leq \E\qth{\Phi_y \frac{\big(\theta_{G_k}(y) - \theta_{\widehat{G}}(y)\big)^2}{\theta_{\widehat{G}}(y)}\indc{H^2(f_{G_k}, f_{\widehat{G}})\le \varepsilon^2}}. 
\end{align*}
Define the following notations $a = f_{G_k}(y), b = f_{G_k}(y+1), \widehat{a} = f_{\widehat{G}}(y)$, and $\widehat{b} = f_{\widehat{G}}(y+1)$. We have shown that when $H^2(f_{G_k}, f_{\widehat{G}})\le \varepsilon^2$, it holds that $\widehat{a}\in [a/2,2a], \widehat{b}\in [b/2,2b]$, and 
\begin{align*}
\frac{(a-\widehat{a})^2}{a} + \frac{(b-\widehat{b})^2}{b} \le 3\frac{(a-\widehat{a})^2}{a + \widehat{a}} + 3\frac{(b-\widehat{b})^2}{b + \widehat{b}} &\le 6\pth{\sqrt{a} - \sqrt{\widehat{a}}}^2 + 6\pth{\sqrt{b} - \sqrt{\widehat{b}}}^2 \\
&\le 6H^2(f_{G_k}, f_{\widehat{G}})\le 6\varepsilon^2. 
\end{align*}
Therefore, when $H^2(f_{G_k}, f_{\widehat{G}})\le \varepsilon^2$, 
\begin{align*}
\frac{\big(\theta_{G_k}(y) - \theta_{\widehat{G}}(y)\big)^2}{\theta_{\widehat{G}}(y)} &= (y+1)\frac{\pth{\frac{b}{a}-\frac{\widehat{b}}{\widehat{a}}}^2}{\frac{b}{a}} \le 2(y+1)\frac{a}{b}\pth{ \pth{\frac{b-\widehat{b}}{a}}^2 + \pth{\frac{\widehat{b}}{a} - \frac{\widehat{b}}{\widehat{a}}}^2} \\
&= \frac{2(y+1)}{a}\pth{\frac{(b-\widehat{b})^2}{b} + \frac{\widehat{b}^2(a-\widehat{a})^2}{\widehat{a}^2 b}} = O\pth{\frac{\varepsilon^2\log^{3}(nk)}{a}}, 
\end{align*}
where the last step uses
\begin{align*}
    \frac{\widehat{b}}{\widehat{a}} = \frac{f_{\widehat{G}}(y+1)}{f_{\widehat{G}}(y)} \le \frac{2f_{G_k}(y+1)}{f_{G_k}(y)/2} = \frac{4\theta_{G_k}(y)}{y+1}  \le 4\cdot \frac{y+C\sqrt{y+1}\log(1/f_{G_k}(y))}{y+1} = O\pth{\log k}
\end{align*}
by Lemma \ref{lem:bayes_deviation} and $f_{G_k}(y) > 4\varepsilon^2\ge 4/k$. Therefore,
\begin{align*}
(\mathrm{A}) = O\pth{\frac{\varepsilon^2\log^{3}(nk)}{f_{G_k}(y)} \E\qth{\Phi_y}} \overset{\eqbr{eq:Phi_property}}{=} O\pth{k\varepsilon^2\log^{3}(nk)}. 
\end{align*} 

To upper bound $(\mathrm{B})$, by Cauchy--Schwarz 
\begin{align*}
(\mathrm{B})&\leq \E^{1/2}\Big[ \frac{(S_y  - \Phi_y\theta_{G_k}(y))^2}{kf_{G_k}(y)\theta_{G_k}(y)}\Big] \cdot \E^{1/2}\Big[ kf_{G_k}(y)\theta_{G_k}(y)\log^2 \frac{\theta_{G_k}(y)}{\theta_{\widehat{G}}(y)}\indc{H^2(f_{G_k}, f_{\widehat{G}})\le \varepsilon^2} \Big] \\
&=: \sqrt{(\mathrm{B}_1)(\mathrm{B}_2)}.
\end{align*}
Since $\E S_y = \theta_{G_k}(y) \cdot \E \Phi_y$ by \eqbr{eq:Phi_property}, we have
\begin{align*}
\E(S_y  - \Phi_y\theta_{G_k}(y))^2 &= \Var(S_y  - \Phi_y\theta_{G_k}(y)) \le 2 \Var(S_y) + 2\theta^2_{G_k}(y) \Var(\Phi_y) \\
&\leq 2\sum_{i=1}^k (\theta_i^\star)^2\poi(y;\theta_i^\star) + 2\theta_{G_k}^2(y) \cdot kf_{G_k}(y).
\end{align*}
Since $y \leq y_0 = C_1\log^2(nk)$, by considering the cases $\theta_i^\star \leq 2C_1\log^2(nk)$ and $\theta_i^\star > 2C_1\log^2(nk)$ separately and applying the Chernoff bound (cf. Lemma \ref{lemma:chernoff}) we have 
\begin{align*}
\sum_{i=1}^k (\theta_i^\star)^2\poi(y;\theta_i^\star) &= O\pth{\log^2(nk) \sum_{i=1}^k \theta_i^\star\poi(y;\theta_i^\star) + \frac{n^2}{(nk)^2} } = O\pth{\log^2(nk) kf_{G_k}(y)\theta_{G_k}(y) + \frac{1}{k^2} }, 
\end{align*}
as long as the constant $C_1>0$ is large enough. Therefore,
\begin{align*}
(\mathrm{B}_1) \stepa{=} O\pth{ \log^2(nk) + \frac{1}{k^2} + \theta_{G_k}(y) } \stepb{=} O\pth{\log^2(nk) + y + \sqrt{y+1}\log\frac{1}{f_{G_k}(y)}} \stepc{=} O\pth{\log^2(nk)}, 
\end{align*}
where (a) uses $kf_{G_k}(y)\theta_{G_k}(y) = k(y+1)f_{G_k}(y+1) \ge 4k\varepsilon^2\ge 4$, (b) is thanks to Lemma \ref{lem:bayes_deviation}, and (c) uses $y\le y_0=C_1\log^2(nk)$ and $f_{G_k}(y)>4\varepsilon^2$. 

As for $(\mathrm{B}_2)$, recall that $H^2(f_{G_k}, f_{\widehat{G}})\le \varepsilon^2$ implies
\begin{align*}
\frac{\theta_{G_k}(y)}{\theta_{\widehat{G}}(y)} = \frac{f_{G_k}(y+1)}{f_{\widehat{G}}(y+1)}\frac{f_{\widehat{G}}(y)}{f_{G_k}(y)} \in \qth{\frac{1}{4},4}. 
\end{align*}
Since $|\log x| \le 2|x-1|$ for all $x\in [1/4,4]$, we get
\begin{align*}
(\mathrm{B}_2) &\le 4kf_{G_k}(y)\theta_{G_k}(y)\E\qth{\pth{\frac{\theta_{G_k}(y)}{\theta_{\widehat{G}}(y)}-1}^2 \indc{H^2(f_{G_k}, f_{\widehat{G}})\le \varepsilon^2}}\\
&\le 16kf_{G_k}(y)\E\qth{ \frac{\big(\theta_{G_k}(y) - \theta_{\widehat{G}}(y)\big)^2}{\theta_{\widehat{G}}(y)}\indc{H^2(f_{G_k}, f_{\widehat{G}})\le \varepsilon^2}} = O\pth{k\varepsilon^2\log^3(nk)}, 
\end{align*}
where the last step follows from the analysis of $(\mathrm{A})$. Combining the upper bounds of $(\mathrm{B}_1)$ and $(\mathrm{B}_2)$ gives that
\begin{align*}
(\mathrm{B}) = O\pth{ \sqrt{k\varepsilon^2 \log^5(nk)} } = O\pth{k\varepsilon^2\log^3(nk)},
\end{align*}
as $k\varepsilon^2 \ge 1$. Collecting the upper bounds of $(\mathrm{A})$ and $(\mathrm{B})$, as well as $\hp\le k^{-1}\le \varepsilon^2$, completes the proof of \eqbr{eq:target_small_y}.  \qed

\begin{lemma}\label{lem:reg_bayes_lower}
Let $y\ge 1$ be an integer, and $\rho<1$. For any probability distribution $G$ supported on $\R_+$, it holds that $\theta_G(y;\rho) \geq \rho^2/16$.
\end{lemma}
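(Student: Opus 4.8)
The plan is to unfold the definition of the regularized Bayes map and reduce the claim to one quantitative fact about Poisson mixtures. Writing $\Delta f_G(y) = f_G(y+1)-f_G(y)$ and abbreviating $a := f_G(y)$, $b := f_G(y+1)$, one has
\begin{align*}
\theta_G(y;\rho) = (y+1)\Big(\frac{\Delta f_G(y)}{a\vee\rho}+1\Big) = (y+1)\cdot\frac{b + \big((a\vee\rho)-a\big)}{a\vee\rho} = (y+1)\cdot\frac{b + (\rho-a)_+}{a\vee\rho},
\end{align*}
with numerator $\ge 0$ and denominator in $[\rho, a+\rho]\subseteq[\rho, 1+\rho]$. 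Hence $\theta_G(y;\rho)$ can be small only if $b$ is tiny \emph{and} $a$ is close to (but not much below) $\rho$, so the real content is a lower bound on $f_G(y+1)$ in terms of $f_G(y)$ in that regime.

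The key sub-lemma I would prove is the self-bound
\begin{align*}
f_G(y+1) \ge \frac{f_G(y)^2}{4(y+1)} \qquad \text{for every integer } y\ge 1 \text{ and every prior } G.
\end{align*}
Its proof: write $f_G(y+1)=\frac{1}{y+1}\int_{\R_+}\theta\,\d\mu_y(\theta)$ where $\d\mu_y(\theta):=\frac{\theta^y e^{-\theta}}{y!}G(\d\theta)$ is a positive measure with total mass $f_G(y)$. For $0<\delta\le 1$ and $y\ge 1$ we have $\theta^y/y!\le\delta$ on $[0,\delta]$, so $\mu_y([0,\delta])\le\delta$; thus $\mu_y((\delta,\infty))\ge f_G(y)-\delta$ and $\int\theta\,\d\mu_y\ge\delta(f_G(y)-\delta)$. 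Taking $\delta=f_G(y)/2\le 1/2$ (legitimate since $f_G(y)\le1$) gives $\int\theta\,\d\mu_y\ge f_G(y)^2/4$, which is the claim. This step is exactly where $y\ge1$ is essential: for $y=0$ the estimate $\mu_0([0,\delta])\le\delta$ fails (a point mass at $0$ can carry all of $f_G(0)$), which matches the fact that unseen symbols require the separate $\tau$-regularization.

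With the sub-lemma, the proof concludes with a short three-way split on the size of $a=f_G(y)$ relative to $\rho$, using $y+1\ge2$ and $\rho<1$ (so $\rho\ge\rho^2$ and $\rho^2/16\le1$). If $a\le\rho/2$, the term $(\rho-a)_+\ge\rho/2$ already forces $\theta_G(y;\rho)\ge(y+1)\frac{\rho/2}{\rho}=\frac{y+1}{2}\ge1\ge\rho^2/16$. If $\rho/2<a\le\rho$ (the delicate case, where the regularization is barely active), the denominator is $\rho$, and dropping the nonnegative $(\rho-a)$ and invoking the sub-lemma gives $\theta_G(y;\rho)\ge(y+1)\frac{b}{\rho}\ge\frac{a^2}{4\rho}>\frac{\rho}{16}\ge\frac{\rho^2}{16}$. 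Finally if $a>\rho$, the regularization is inactive, $\theta_G(y;\rho)=\theta_G(y)=(y+1)b/a$, and the sub-lemma yields $\theta_G(y;\rho)\ge a/4>\rho/4\ge\rho^2/16$. The only step requiring genuine care is the middle regime; everything else is elementary, and the quadratic self-bound $f_G(y+1)\gtrsim f_G(y)^2$ is precisely what keeps the estimator from collapsing there.
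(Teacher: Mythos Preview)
Your proof is correct. Both your argument and the paper's split on the size of $a=f_G(y)$ relative to $\rho$ and handle the case $a\le\rho/2$ identically. In the nontrivial regime, both bound $f_G(y+1)$ from below by the same trick: restrict the defining integral to $\theta$ above a small threshold and use $\poi(y;\theta)\le\theta$ for $y\ge1$, $\theta\le1$ to control the remainder. The paper does this in one shot for all $a>\rho/2$, choosing the threshold $\rho/4$ to get $f_G(y+1)\ge\rho^2/(16(y+1))$ directly, then finishing via $f_G(y)\vee\rho\le1$. You instead isolate the prior-free self-bound $f_G(y+1)\ge f_G(y)^2/(4(y+1))$ with threshold $\delta=f_G(y)/2$, which is a cleaner standalone inequality (and gives the sharper $\theta_G(y;\rho)\ge a/4>\rho/4$ when $a>\rho$), at the cost of a three-way rather than two-way case split. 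The mathematical content is the same.
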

\begin{proof}
If $f_G(y) \leq \rho/2$, then
\begin{align*}
\theta_G(y;\rho) = (y+1)\Big(\frac{f_G(y+1) - f_G(y)}{\rho} + 1\Big) \geq (y+1)\pth{1-\frac{f_G(y)}{\rho}} \geq \frac{1}{2}. 
\end{align*}
If $f_G(y) > \rho/2$, then
\begin{align*}
f_G(y+1) &= \int_{\R_+} \frac{\theta^{y+1}e^{-\theta}}{(y+1)!}G(\d \theta) \geq \int_{\theta \geq \rho/4}  \frac{\theta^{y}e^{-\theta}}{y!}\frac{\theta}{y+1}G(\d \theta) \geq \frac{\rho}{4(y+1)}\int_{\theta \ge \rho/4} \frac{\theta^{y}e^{-\theta}}{y!} G(\d\theta)\\
&= \frac{\rho}{4(y+1)}\Big(f_G(y) - \int_{\theta < \rho/4} \frac{\theta^{y}e^{-\theta}}{y!} G(\d\theta)\Big) \stepa{\ge} \frac{\rho}{4(y+1)}\pth{\frac{\rho}{2}-\frac{\rho}{4}}= \frac{\rho^2}{16(y+1)},
\end{align*}
where (a) follows from $\poi(y;\theta) \leq \poi(1;\theta) \leq \theta$ for $\theta \le 1$ and $y\geq 1$. Therefore, 
\begin{align*}
\theta_G(y;\rho) \ge (y+1)\frac{f_G(y+1)}{f_{G}(y) \vee \rho} \stepb{\geq} (y+1)f_G(y+1) \ge \frac{\rho^2}{16}, 
\end{align*}
where in (b) we use the simple inequality $f_G(y)\le 1$ as a pmf. 
\end{proof}

\subsection{Proof of Proposition \ref{prop:reg_sep_largey}}
We first prove that 
\begin{align}\label{eq:equivalence}
\E \Big[\sum_{i=1}^k \Big(p^\star_i \log \frac{\bar{p}^{\Sym}_i}{\bar{p}_i} - \bar{p}^{\Sym}_i + \bar{p}_i\Big)\indc{N_i > y_0}\Big] = \E \Big[\sum_{i=1}^k \Big(\widehat{p}_i^{\PI} \log \frac{\bar{p}^{\Sym}_i}{\bar{p}_i} - \bar{p}^{\Sym}_i + \bar{p}_i\Big)\indc{N_i > y_0}\Big]. 
\end{align}
Note that both sides of \eqbr{eq:equivalence} are stated under the original data generation scheme $N_i\sim \Poi(np_i^\star)$. In the proof of \eqbr{eq:equivalence}, we will resort to a different generating scheme discussed in \Cref{sec:PI_equivalence} where $\widehat{p}_i^{\PI}$ and $\bar{p}^{\Sym}_i$ admit simple expressions. In the new scheme, we first permute $(p^\star_1,\dots,p^\star_k)$ uniformly at random into $(p_1,\dots,p_k)$, with $p_i = p^\star_{\pi(i)}$ and $\pi\sim \mathrm{Unif}(S_k)$, and draw conditionally independently observations $N_i \sim \poi(\theta_i)$ conditioned on $(p_1,\dots,p_k)$, where $\theta_i := np_i$. Note that for any function $T = T(N,p^\star)$ with a permutation-invariant distribution (i.e., $T(\pi(N), \pi(p^\star)) \equald T(N, p^\star)$ for any permutation $\pi$), we have 
\begin{align*}
\E\qth{T(N,p^\star)} = \E\qth{T(N,p)}, 
\end{align*}
where the LHS is under the original data generation scheme, and the RHS is under the new data generation scheme. Therefore, using the fact that the distribution of $\sum_{i=1}^k \Big(p^\star_i \log \frac{\bar{p}^{\Sym}_i}{\bar{p}_i} - \bar{p}^{\Sym}_i + \bar{p}_i\Big)\indc{N_i > y_0}$ is permutation-invariant, we have
\begin{align*}
\E \Big[\sum_{i=1}^k \Big(p^\star_i \log \frac{\bar{p}^{\Sym}_i}{\bar{p}_i} - \bar{p}^{\Sym}_i + \bar{p}_i\Big)\indc{N_i > y_0}\Big] &= \E \Big[\sum_{i=1}^k \Big(p_i \log \frac{\bar{p}^{\Sym}_i}{\bar{p}_i} - \bar{p}^{\Sym}_i + \bar{p}_i\Big)\indc{N_i > y_0}\Big] \\
&\stepa{=} \E \Big[\sum_{i=1}^k \Big(\E\qth{p_i | N} \log \frac{\bar{p}^{\Sym}_i}{\bar{p}_i} - \bar{p}^{\Sym}_i + \bar{p}_i\Big)\indc{N_i > y_0}\Big] \\
&= \E \Big[\sum_{i=1}^k \Big( \widehat{p}_i^{\PI} \log \frac{\bar{p}^{\Sym}_i}{\bar{p}_i} - \bar{p}^{\Sym}_i + \bar{p}_i\Big)\indc{N_i > y_0}\Big], 
\end{align*}
where (a) is simply the tower property of conditional expectation, and the final expectation is the same under both schemes as the distribution of $\sum_{i=1}^k \Big(\widehat p^{\PI}_i \log \frac{\bar{p}^{\Sym}_i}{\bar{p}_i} - \bar{p}^{\Sym}_i + \bar{p}_i\Big)\indc{N_i > y_0}$ is permutation-invariant. This proves \eqbr{eq:equivalence}. 

Based on \eqbr{eq:equivalence}, we can write
\begin{align*}
(\mathrm{I}_2) &= \E \Big[\sum_{i=1}^k \Big(\widehat{p}_i^{\PI} \log \frac{\bar{p}^{\Sym}_i}{\bar{p}_i} - \bar{p}^{\Sym}_i + \bar{p}_i\Big)\indc{N_i > y_0}\Big] \\
&= \E \Big[\sum_{i=1}^k \Big(\bar{p}_i^{\Sym} \log \frac{\bar{p}^{\Sym}_i}{\bar{p}_i} - \bar{p}^{\Sym}_i + \bar{p}_i\Big)\indc{N_i > y_0}\Big] + \E \Big[\sum_{i=1}^k  (\widehat{p}_i^{\PI} - \bar{p}_i^{\Sym}) \log \frac{\bar{p}^{\Sym}_i}{\bar{p}_i}\indc{N_i > y_0}\Big] \\
&=: (\mathrm{A}) + (\mathrm{B}). 
\end{align*}
In the sequel, we upper bound $(\mathrm{A})$ and $(\mathrm{B})$ separately to prove Proposition \ref{prop:reg_sep_largey}.

\subsubsection*{Upper bound of $(\mathrm{A})$}
Recall that $\bar{p}_i^{\Sym} = \theta_{G_k}(N_i)/n$ and $\bar{p}_i = \theta_{\widehat{G}}(N_i; \rho)/n$. Using $a\log(a/b) - a + b\leq (a-b)^2/b$ for $a,b > 0$, we have
\begin{align*}
(\mathrm{A}) &\leq \frac{1}{n}\E\Big[\sum_{i=1}^k \frac{\big(\theta_{G_k}(N_i) - \theta_{\widehat{G}}(N_i; \rho)\big)^2}{\theta_{\widehat{G}}(N_i; \rho)}\indc{N_i > y_0}\Big] \\
&\le \frac{2}{n}\E\Big[\sum_{i=1}^k \frac{\big(\theta_{G_k}(N_i; \rho) - \theta_{\widehat{G}}(N_i; \rho)\big)^2}{\theta_{\widehat{G}}(N_i; \rho)}\indc{N_i > y_0}\Big] + \frac{2}{n}\underbrace{\E\Big[\sum_{i=1}^k \frac{\big(\theta_{G_k}(N_i; \rho) - \theta_{G_k}(N_i)\big)^2}{\theta_{\widehat{G}}(N_i; \rho)}\indc{N_i > y_0}\Big]}_{\zeta_1}. 
\end{align*}
Deferring the proof of $\zeta_1 = O(1)$ to the end of this section, we will show that
\begin{align}\label{eq:target_large_y}
    \notag &\E\Big[\sum_{i=1}^k \frac{\big(\theta_{G_k}(N_i; \rho) - \theta_{\widehat{G}}(N_i; \rho)\big)^2}{\theta_{\widehat{G}}(N_i; \rho)}\indc{N_i > y_0}\Big] \\
    &= O\pth{ k\varepsilon^2\log^4(nk) + (n^{1/3}\wedge k)\log^{4.5}(nk) + \delta\log^2(nk) }. 
\end{align}

Let $M = \lceil n^{2/3} \rceil$ be an auxiliary parameter, then 
\begin{align*}
&\E\Big[\sum_{i=1}^k \frac{\big(\theta_{G_k}(N_i; \rho) - \theta_{\widehat{G}}(N_i; \rho)\big)^2}{\theta_{\widehat{G}}(N_i; \rho)}\indc{N_i > y_0}\Big] \\
&\stepb{\le} 2\E\Big[\sum_{i=1}^k \frac{\big(\theta_{G_k}(N_i; \rho) - \theta_{\widehat{G}}(N_i; \rho)\big)^2}{N_i}\indc{N_i > y_0}\Big] \\
&\le 2\E\Big[\sum_{i=1}^k \frac{\big(\theta_{G_k}(N_i; \rho) - \theta_{\widehat{G}}(N_i; \rho)\big)^2}{N_i}\indc{y_0 < N_i \le M}\Big] + 2\underbrace{\E\Big[\sum_{i=1}^k \frac{\big(\theta_{G_k}(N_i; \rho) - \theta_{\widehat{G}}(N_i; \rho)\big)^2}{N_i}\indc{N_i > M}\Big]}_{\zeta_2} \\
&\le 2\E\Big[\sum_{i=1}^k \frac{\big(\theta_{G_k}(N_i; \rho) - \theta_{\widehat{G}}(N_i; \rho)\big)^2}{N_i}\indc{y_0 < N_i \le M, H^2(f_{G_k},f_{\widehat{G}})\le \varepsilon^2}\Big] + 2\zeta_2 \\
&\qquad + 2\underbrace{\E\Big[\sum_{i=1}^k \frac{\big(\theta_{G_k}(N_i; \rho) - \theta_{\widehat{G}}(N_i; \rho)\big)^2}{N_i}\indc{y_0 < N_i \le M, H^2(f_{G_k},f_{\widehat{G}})>\varepsilon^2}\Big]}_{\zeta_3}, 
\end{align*}
where (b) follows from Lemma \ref{lem:bayes_deviation} and $N_i>y_0=C_1\log^2(nk)$ that the deterministic inequality
\begin{align}\label{eq:theta_Ghat_lb}
\theta_{\widehat{G}}(N_i; \rho) \ge N_i - C\sqrt{N_i+1}\log \pth{\frac{1}{\rho}} \ge \frac{N_i}{2}
\end{align}
holds for a large enough constant $C_1>0$. To deal with the main term above, we need to decouple the dependence in the randomness of $N_i$ and $\widehat{G}$. To this end, we apply an $\varepsilon$-net argument to replace the data-dependent distribution $\widehat{G}$ by some fixed distribution $H$. Specifically, let $\eta = (nk)^{-1}$ and $\{H_1,\ldots, H_L\}$ be a proper $(d_M, \eta)$-covering of the set $\{G: H^2(f_G, f_{G_k}) \leq \eps^2\}$, where
\begin{align*}
d_M(G, H) := \pnorm{\theta_{G}(\cdot;\rho) - \theta_{H}(\cdot,\rho)}{\infty,M} := \max_{y=0,\dots,M} \big|\theta_{G}(y;\rho) - \theta_{H}(y;\rho)\big|.
\end{align*}
In other words, for every $\widehat{G}$ satisfying $H^2(f_{G_k}, f_{\widehat{G}})\le \varepsilon^2$, there exists $\ell\in [L]$ such that $\|\theta_{\widehat{G}}(\cdot; \rho) - \theta_{H_\ell}(\cdot; \rho)\|_{\infty,M}\le \eta$. By Lemma \ref{lem:bayes_rule_cover}, which is a restatement of \cite[Lemma 29]{shen2022empirical}, we also have the cardinality bound $L = \exp\pth{O(n^{1/3}\log^{2.5}(nk))}$. Using this covering, the main term above can be bounded by
\begin{align*}
&\E\Big[\sum_{i=1}^k \frac{\big(\theta_{G_k}(N_i; \rho) - \theta_{\widehat{G}}(N_i; \rho)\big)^2}{N_i}\indc{y_0 < N_i \le M, H^2(f_{G_k},f_{\widehat{G}})\le \varepsilon^2}\Big] \\
&\le 2\E\Big[ \max_{\ell\in [L]}\sum_{i=1}^k \frac{\big(\theta_{G_k}(N_i; \rho) - \theta_{H_\ell}(N_i; \rho)\big)^2}{N_i}\indc{y_0 < N_i \le M} \Big] \\
&\qquad + 2\underbrace{\E\Big[ \min_{\ell\in [L]}\sum_{i=1}^k \frac{\big(\theta_{\widehat{G}}(N_i; \rho) - \theta_{H_\ell}(N_i; \rho)\big)^2}{N_i}\indc{y_0 < N_i \le M, H^2(f_{G_k},f_{\widehat{G}})\le \varepsilon^2} \Big]}_{\zeta_4}. 
\end{align*}

To proceed, we analyze the independent random variables $(Z_1^{(\ell)},\dots,Z_k^{(\ell)})$ with
\begin{align*}
    Z_i^{(\ell)} := \frac{\big(\theta_{G_k}(N_i; \rho) - \theta_{H_\ell}(N_i; \rho)\big)^2}{N_i}\indc{y_0 < N_i \le M}. 
\end{align*}
We will show that $Z_i^{(\ell)}$ is $(\sigma_{i,\ell}^2, b)$-subExponential, for some parameters $\sigma_{i,\ell}$ and $b$, so that standard tail bounds of Bernstein's inequality give 
\begin{align}\label{eq:subExp}
\E\qth{\max_{\ell\in [L]} \sum_{i=1}^k Z_i^{(\ell)} } = \max_{\ell\in [L]} \E\qth{\sum_{i=1}^k Z_i^{(\ell)} } + O\pth{ \max_{\ell\in [L]}\sqrt{\sum_{i=1}^k \sigma_{i,\ell}^2\log L} + b\log L }.
\end{align}
We deal with the terms in \eqbr{eq:subExp} separately. For the expectation, we have
\begin{align*}
\E\qth{\sum_{i=1}^k Z_i^{(\ell)} } &= \sum_{i=1}^k \sum_{y_0<y\le M}\Poi(y; \theta_i^\star)\frac{(\theta_{G_k}(y; \rho) - \theta_{H_\ell}(y;\rho) )^2}{y} \\
&\le 2\sum_{i=1}^k \sum_{y_0<y\le M}\Poi(y; \theta_i^\star) (y+1)\pth{ \frac{\Delta f_{G_k}(y)}{f_{G_k}(y)\vee\rho} - \frac{\Delta f_{H_\ell}(y)}{f_{H_\ell}(y)\vee\rho} }^2 \\
&\le 2k\sum_{y=0}^\infty (y+1)f_{G_k}(y)\pth{ \frac{\Delta f_{G_k}(y)}{f_{G_k}(y)\vee\rho} - \frac{\Delta f_{H_\ell}(y)}{f_{H_\ell}(y)\vee\rho} }^2 \\
&\stepc{=}O\pth{ k\log^4(1/\rho)H^2(f_{G_k}, f_{H_\ell}) + \rho^{10} } = O\pth{k\varepsilon^2 \log^4(nk)}, 
\end{align*}
where (c) is a functional inequality in Lemma \ref{lem:hellinger-to-regret}, and the last step uses the definition of a proper covering that $H^2(f_{G_k}, f_{H_\ell})\le \varepsilon^2$. For the norm parameter $b$, Lemma \ref{lem:bayes_deviation} clearly leads to a deterministic inequality
\begin{align*}
|Z_i^\ell| = O\pth{ \log^2\pth{\frac{1}{\rho}} }, 
\end{align*}
so that $b = O(\log^2(1/\rho))$. For the variance parameter $\sigma_{i,\ell}^2$, we combine the above observations to get
\begin{align*}
\sum_{i=1}^k \sigma_{i,\ell}^2 = \sum_{i=1}^k \var(Z_i^{(\ell)}) \le \sum_{i=1}^k \E\qth{\pth{Z_i^{(\ell)}}^2} = O\pth{ \log^2\pth{\frac{1}{\rho}} } \sum_{i=1}^k \E\qth{Z_i^{(\ell)}} = O(k\varepsilon^2 \log^6(nk)).
\end{align*}
Therefore, recalling that $L = \exp\pth{O(n^{1/3}\log^{2.5}(nk))}$, \eqbr{eq:subExp} leads to
\begin{align*}
&\E\Big[ \max_{\ell\in [L]}\sum_{i=1}^k \frac{\big(\theta_{G_k}(N_i; \rho) - \theta_{H_\ell}(N_i; \rho)\big)^2}{N_i}\indc{y_0 < N_i \le M} \Big] \\
&= O\pth{k\varepsilon^2 \log^4(nk) + \sqrt{k\varepsilon^2 \log^6(nk)\cdot n^{1/3}\log^{2.5}(nk)} + \log^2(nk)\cdot n^{1/3}\log^{2.5}(nk)}\\
&= O\pth{k\varepsilon^2 \log^4(nk) + n^{1/3}\log^{4.5}(nk) }. 
\end{align*}
In the case where $k$ is small, there is a better deterministic upper bound by Lemma \ref{lem:bayes_deviation}: 
\begin{align*}
\sum_{i=1}^k \frac{\big(\theta_{G_k}(N_i; \rho) - \theta_{H_\ell}(N_i; \rho)\big)^2}{N_i}\indc{y_0 < N_i \le M} = O(k\log^2(nk)). 
\end{align*}
In summary, the main term is upper bounded as
\begin{align*}
\E\Big[ \max_{\ell\in [L]}\sum_{i=1}^k \frac{\big(\theta_{G_k}(N_i; \rho) - \theta_{H_\ell}(N_i; \rho)\big)^2}{N_i}\indc{y_0 < N_i \le M} \Big] = O\pth{ k\varepsilon^2\log^4(nk) + (n^{1/3}\wedge k)\log^{4.5}(nk) }. 
\end{align*}

In the following we upper bound the remainder terms $\zeta_1$ to $\zeta_4$. 

\paragraph{Upper bounding $\zeta_1$.}  In addition, by Lemma \ref{lem:bayes_deviation} again, 
\begin{align*}
|\theta_{G_k}(y) - \theta_{G_k}(y; \rho)| &= (y+1)\frac{|\Delta f_{G_k}(y)|}{f_{G_k}(y)}\pth{1-\frac{f_{G_k}(y)}{\rho}}_+ \\
&= |\theta_{G_k}(y)-(y+1)|\pth{1-\frac{f_{G_k}(y)}{\rho}}_+ \\
&= O\pth{\sqrt{y+1} \log\pth{\frac{1}{f_{G_k}(y)}} \indc{f_{G_k}(y)\le \rho} }. 
\end{align*}
Therefore, by the choice of $\rho = c(nk)^{-5}$, 
\begin{align*}
\zeta_1 &\overset{\eqbr{eq:theta_Ghat_lb}}{=} O\pth{ \E\qth{\sum_{i=1}^k \log^2\pth{\frac{1}{f_{G_k}(N_i)}}\indc{N_i>y_0, f_{G_k}(N_i)\le \rho} } } \\
&= O\pth{\sum_{i=1}^k \sum_{y>y_0} \Poi(y; \theta_i^\star)\log^2\pth{\frac{1}{f_{G_k}(y)}}\indc{f_{G_k}(y)\le \rho} }  \\
&= O\pth{k\sum_{y>y_0} f_{G_k}(y)\log^2\pth{\frac{1}{f_{G_k}(y)}}\indc{f_{G_k}(y)\le \rho} } \\
&= O\pth{ k\sqrt{\rho}\log^2\pth{\frac{1}{\rho}}\sum_{y=0}^\infty \sqrt{f_{G_k}(y)} } \overset{\text{Lemma } \ref{lem:poi_mixture_sqrt}}{=} O\pth{k\sqrt{n\rho}\log^2\pth{\frac{1}{\rho}}} = O(1). 
\end{align*}

\paragraph{Upper bounding $\zeta_2$.} By Lemma \ref{lem:bayes_deviation}, we have a deterministic upper bound
\begin{align*}
\frac{\big(\theta_{G_k}(N_i; \rho) - \theta_{\widehat{G}}(N_i; \rho)\big)^2}{N_i} = O\pth{ \frac{\pth{\sqrt{N_i+1}\log(1/\rho)}^2}{N_i} } = O\pth{\log^2(nk)}
\end{align*}
when $N_i\ge y_0$. Now since $M = \lceil n^{2/3} \rceil$, 
\begin{align*}
\zeta_2 &= O\pth{\log^2(nk) \E\qth{\sum_{i=1}^k \indc{N_i>M} }} = O\pth{\log^2(nk)}\cdot \min\sth{k, \frac{\sum_{i=1}^k\E[N_i]}{M}} \\
&= O\pth{\pth{k\wedge n^{1/3}}\log^2(nk)}. 
\end{align*}

\paragraph{Upper bounding $\zeta_3$.} Similar to the upper bound of $\zeta_2$, we have
\begin{align*}
\zeta_3 = O\pth{\log^2(nk)}\cdot \P\pth{H^2(f_{G_k}, f_{\widehat{G}})\le \varepsilon^2} = O(\delta\log^2(nk)). 
\end{align*}

\paragraph{Upper bounding $\zeta_4$.} Since there exists $\ell\in [L]$ such that $\|\theta_{\widehat{G}}(\cdot; \rho) - \theta_{H_\ell}(\cdot; \rho)\|_{\infty,M}\le \eta$, it is clear that
\begin{align*}
\zeta_4 \le k\eta^2 = O\pth{\frac{1}{k}}. 
\end{align*}

Combining the above displays now proves \eqbr{eq:target_large_y}, and therefore
\begin{align}\label{eq:bound_A}
(\mathrm{A}) = O\pth{ \frac{k\varepsilon^2}{n}\log^4(nk) + \frac{n^{1/3}\wedge k}{n}\log^{4.5}(nk) + \frac{\delta\log^2(nk)}{n} }. 
\end{align}

\subsubsection*{Upper bound of $(\mathrm{B})$}
By Cauchy--Schwarz, 
\begin{align*}
    (\mathrm{B}) &= \E \Big[\sum_{i=1}^k  (\widehat{p}_i^{\PI} - \bar{p}_i^{\Sym}) \log \frac{\bar{p}^{\Sym}_i}{\bar{p}_i}\indc{N_i > y_0}\Big] \\
    &= \E \Big[\sum_{i=1}^k  (\widehat{p}_i^{\PI} - \bar{p}_i^{\Sym}) \log \frac{\theta_{G_k}(N_i; \rho)}{\theta_{\widehat{G}}(N_i; \rho)}\indc{N_i > y_0}\Big] +  \E \Big[\sum_{i=1}^k  (\widehat{p}_i^{\PI} - \bar{p}_i^{\Sym}) \log \frac{\theta_{G_k}(N_i)}{\theta_{G_k}(N_i; \rho)}\indc{N_i > y_0}\Big] \\
    &\le \E^{1/2}\qth{ \sum_{i=1}^k \frac{(\widehat{p}_i^{\PI} - \bar{p}_i^{\Sym} )^2}{ \theta_{G_k}(N_i; \rho) }\indc{N_i > y_0} }\cdot \Bigg( \E^{1/2}\qth{ \sum_{i=1}^k\theta_{G_k}(N_i; \rho)\log^2 \frac{\theta_{G_k}(N_i; \rho)}{\theta_{\widehat{G}}(N_i; \rho)}\indc{N_i > y_0} }\\
    &\qquad  + \E^{1/2}\qth{ \sum_{i=1}^k\theta_{G_k}(N_i; \rho)\log^2 \frac{\theta_{G_k}(N_i)}{\theta_{G_k}(N_i; \rho)} \indc{N_i > y_0}} \Bigg) \\
    &=: \sqrt{(\mathrm{B}_1)}\pth{\sqrt{(\mathrm{B}_2)} + \sqrt{(\mathrm{B}_3)}}. 
\end{align*}
We proceed to upper bound $(\mathrm{B}_1)$ to $(\mathrm{B}_3)$, respectively. 

\paragraph{Upper bounding $(\mathrm{B}_1)$.} Similar to \eqbr{eq:theta_Ghat_lb}, it holds that $\theta_{G_k}(N_i; \rho)\ge N_i/2$ when $N_i > y_0$. Therefore, using the new data generating scheme below \eqbr{eq:equivalence} and the expressions $\widehat{p}_i^{\PI}=\E\qth{\theta_i | N}$ and $\bar{p}_i^{\Sym} = \E\qth{\theta_i | N_i}$ in \eqbr{eq:PI_oracle_intro} and \eqbr{eq:separable_oracle_intro}\ifthenelse{\boolean{arxiv}}{}{ of the main text}, we can upper bound $(\mathrm{B}_1)$ as
\begin{align*}
(\mathrm{B}_1) &\le 2\E\qth{ \sum_{i=1}^k \frac{(\widehat{p}_i^{\PI} - \bar{p}_i^{\Sym} )^2}{ N_i }\indc{N_i > y_0} } \\
&= \frac{2}{n^2}\E \qth{ \sum_{i=1}^k \frac{(\E\qth{\theta_i | N} - \E\qth{\theta_i | N_i} )^2}{ N_i }\indc{N_i > y_0} } \\
&= O\pth{\frac{n^{1/3}\wedge k}{n^2}\log^2 n + \frac{\log^3 n}{n^2}},
\end{align*}
where the last step uses Lemma \ref{lemma:main_inequality} in the analysis of the permutation-invariant oracle. 
\paragraph{Upper bounding $(\mathrm{B}_2)$.} Similar to \eqbr{eq:theta_Ghat_lb}, we have $\theta_{G_k}(N_i; \rho), \theta_{\widehat{G}}(N_i; \rho)\in [N_i/2, 2N_i]$ if $N_i > y_0$. Since $|\log x|\le 2|1-1/x|$ for $x\in [1/4,4]$, we obtain
\begin{align*}
(\mathrm{B}_2) &\le 4\E\Big[\sum_{i=1}^k \frac{\big(\theta_{G_k}(N_i; \rho) - \theta_{\widehat{G}}(N_i; \rho)\big)^2}{\theta_{\widehat{G}}(N_i; \rho)}\indc{N_i > y_0}\Big] \\
&\overset{\eqbr{eq:target_large_y}}{=} O\pth{ k\varepsilon^2\log^4(nk) + (n^{1/3}\wedge k)\log^{4.5}(nk) + \delta\log^2(nk) }.
\end{align*}
\paragraph{Upper bounding $(\mathrm{B}_3)$.} Again, we have $\theta_{G_k}(N_i; \rho)\in [N_i/2, 2N_i]$ for $N_i > y_0$, and
\begin{align*}
n\ge \theta_{G_k}(N_i) \overset{\eqbr{eq:sep_lower}}{\ge} \frac{1}{ne^n}\sum_{j=1}^k (\theta_j^\star)^2 \ge \frac{1}{nke^n} \Big(\sum_{j=1}^k \theta_j^\star\Big)^2 = \frac{n}{ke^n}.
\end{align*}
Since $\theta_{G_k}(N_i; \rho) = \theta_{G_k}(N_i)$ as long as $f_{G_k}(N_i)\ge \rho$, we get
\begin{align*}
(\mathrm{B}_3) &= O\pth{ \E\qth{\sum_{i=1}^k N_i\log^2 (ke^n) \indc{f_{G_k}(N_i)<\rho} } } \\
&= O\pth{ n^2+\log^2 k }\cdot \sum_{i=1}^k\sum_{y=0}^\infty y\indc{f_{G_k}(y)<\rho}\Poi(y; \theta_i^\star)\\
&= O\pth{ n^2+\log^2 k }\Big(\sum_{y=0}^{2n} 2knf_{G_k}(y)\indc{f_{G_k}(y)<\rho} + \sum_{i=1}^k \sum_{y>2n} y\Poi(y; \theta_i^\star)   \Big) \\
&= O\pth{ n^2+\log^2 k }\Big(\sum_{y=0}^{2n} 2knf_{G_k}(y)\indc{f_{G_k}(y)<\rho} + \sum_{i=1}^k \sum_{y>2n} \theta_i^\star \Poi(y-1; \theta_i^\star)   \Big) \\
&= O\pth{(n^2+\log^2 k)  (kn^2\rho + ne^{-n/3}) } = O(\log^2(nk)), 
\end{align*}
where in the last line we have used the Chernoff bound (cf. Lemma \ref{lemma:chernoff}), the identity $\sum_{i=1}^k \theta_i^\star = n$, and the choice of $\rho = c(nk)^{-5}$. 

Finally, a combination of the above bounds yields
\begin{align}\label{eq:bound_B}
    (\mathrm{B}) = O\pth{ \frac{k\varepsilon^2}{n}\log^4(nk) + \frac{n^{1/3}\wedge k}{n}\log^{4.5}(nk) + \frac{\delta\log^2(nk)}{n} }, 
\end{align}
by the AM-GM inequality $\sqrt{ab}\le \frac{na}{2}+\frac{b}{2n}$. Combining the bounds \eqbr{eq:bound_A} and \eqbr{eq:bound_B} completes the proof. \qed

\begin{lemma}[Lemma 29 of \cite{shen2022empirical}]\label{lem:bayes_rule_cover}
For any $\rho > 0$, let $\Theta_0(\rho) \equiv \{\theta_{G}(\cdot; \rho): G\subseteq\mathcal{P}(\R_+)\}$ be the set of all $\rho$-regularized Bayes forms. For any $\theta_{G}(\cdot;\rho), \theta_{H}(\cdot;\rho)\in\Theta_0(\rho)$ and $M > 0$, let
\begin{align*}
\pnorm{\theta_{G}(\cdot;\rho) - \theta_{H}(\cdot,\rho)}{\infty,M} := \max_{y=0,\dots,M} \big|\theta_{G}(y;\rho) - \theta_{H}(y;\rho)\big|.
\end{align*}
Then for any $\eta \in (0,10^{-3})$ and $M \geq \log^\kappa(1/\rho\eta)$ for some sufficiently large $\kappa > 0$, there exists some universal constant $C > 0$ such that
\begin{align*}
\log \mathcal{N}(\eta, \Theta_0(\rho), \pnorm{\cdot}{\infty, M}) \leq C\sqrt{M}\log^{5/2}(M/\rho\eta).
\end{align*}
\end{lemma}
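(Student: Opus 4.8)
\emph{Strategy.} The appearance of $\sqrt M$ rather than the naive $M$ is the crux: a Poisson mixture $f_G$ observed on $\{0,\dots,M+1\}$ resolves the mixing distribution $G$ only at the local scale $\sqrt\theta$, so $G$ carries just $\widetilde{O}(\sqrt M)$ effective degrees of freedom. I would argue in three steps.

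\emph{Step 1: reduction to a bounded interval, and to covering the mixture pmf.} Put $M_0 := M + C\sqrt M\log(M/\rho\eta)$; since $M\ge\log^\kappa(1/\rho\eta)$, one has $M_0\le 2M$. Let $G'$ be the pushforward of $G$ under $\theta\mapsto\theta\wedge M_0$. For $y\le M+1$ the Poisson pmf is decreasing past $y$, so $|f_{G'}(y)-f_G(y)|\le\poi(y;M_0)\le e^{-c(M_0-M)^2/M_0}$ by Lemma~\ref{lemma:chernoff}, which is $\le c'\rho^2\eta/M$ for $C$ large. Combined with the elementary sensitivity bound
\[
\bigl|\theta_G(y;\rho)-\theta_H(y;\rho)\bigr| \le \frac{C(y+1)}{\rho^2}\max\bigl\{\,|f_G(y)-f_H(y)|,\ |f_G(y+1)-f_H(y+1)|\,\bigr\}
\]
(valid since the regularized denominator is $\ge\rho$ and $|\Delta f_G(y)|\le 1$), this shows that it suffices to cover $\{\theta_G(\cdot;\rho):\supp(G)\subseteq[0,M_0]\}$, and that an $\alpha$-cover in $\pnorm{\cdot}{\infty}$ of $\{f_G|_{\{0,\dots,M+1\}}:\supp(G)\subseteq[0,M_0]\}$ with $\alpha := c\rho^2\eta/M$ (so $\log(1/\alpha)=O(\log(M/\rho\eta))$) induces an $\eta$-cover of $\Theta_0(\rho)$ of no larger cardinality, since $\theta_G(y;\rho)$ for $y\le M$ depends only on the pair $(f_G(y),f_G(y+1))$.

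\emph{Step 2: local moment matching --- the main step.} Let $r := C\sqrt{\log(1/\alpha)}$; by $\P(|\sqrt X-\sqrt\theta|\ge t)\le 2e^{-t^2/3}$ the pmf $\theta\mapsto\poi(y;\theta)$ is $\le\alpha/M$ outside $J_y := [(\sqrt y-r)_+^2,(\sqrt y+r)^2]$. Under the variance-stabilizing map $u=2\sqrt\theta$ the mixing distribution lives on $[0,O(\sqrt M)]$; partitioning that into $B=O(\sqrt M)$ unit-width intervals and pulling back yields blocks $I_1,\dots,I_B$ of $[0,M_0]$ of width $\asymp\sqrt\theta$, each coordinate $y$ meeting only $\widetilde{O}(1)$ of them. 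On each $I_b$ replace $G|_{I_b}$ by a discrete $G'_b$ with $L=\widetilde{O}(\log(1/\alpha))$ atoms matching its first $\gtrsim L$ moments (classical Gaussian quadrature / Tchakaloff) and set $G' := \sum_b G'_b$. Then $|f_G(y)-f_{G'}(y)|\le\sum_b 2G(I_b)\inf_{\deg q<L}\pnorm{\poi(y;\cdot)-q}{L^\infty(I_b)}$: blocks disjoint from $J_y$ contribute $\le 2G(I_b)\alpha/M$; and for the $\widetilde{O}(1)$ blocks meeting $J_y$, rescaling $I_b$ to $[-1,1]$ makes $\poi(y;\cdot)$ analytic, and bounded by $e^{O(1)}\sup_{I_b}\poi(y;\cdot)$, on a fixed Bernstein ellipse --- the point being that for $\theta=\theta_0+is$ with $\theta_0\in I_b$ and $|s|=O(\sqrt{\theta_0})$ (the scale forced by the block width) one has $|\theta|^y e^{-\mathrm{Re}\,\theta}/y!\le e^{O(1)}\poi(y;\theta_0)$ because $(|\theta|/\theta_0)^y\le(1+O(1/\theta_0))^y$ and $y\asymp\theta_0$ on $J_y$ (the $O(\log(1/\alpha))$ small values of $y$ being treated directly). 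Jackson/Bernstein for analytic functions then yields $\inf_{\deg q<L}\pnorm{\poi(y;\cdot)-q}{L^\infty(I_b)}\le\alpha^2$, so $\pnorm{f_G-f_{G'}}{\infty,\{0,\dots,M+1\}}\le\alpha$.

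\emph{Step 3: counting, and the main obstacle.} Since $G'$ has at most $BL=\widetilde{O}(\sqrt M)$ atoms in $[0,M_0]$, discretizing their locations and masses to a $\mathrm{poly}(\alpha/(MBL))$-grid perturbs each $f_{G'}(y)$ by $o(\alpha)$, so the number of resulting measures $G'$ is $\exp(\widetilde{O}(\sqrt M)\cdot O(\log(M/\rho\eta)))$; tracking the $\sqrt{\log}$ factors that enter through $r$ and through the analyticity ellipse in the bound on $L$, this is $\exp(O(\sqrt M\log^{5/2}(M/\rho\eta)))$, which with Step 1 gives the claim. The real obstacle is Step 2: a uniform grid on $\theta$ would yield only $\widetilde{O}(M^{3/2})$ and global moment matching on all of $[0,M_0]$ only $\widetilde{O}(M)$; it is precisely the $\sqrt\theta$-scale blocking (equivalently, variance stabilization) together with the fact that each $y$ sees only $\widetilde{O}(1)$ blocks that pulls the exponent down to $\sqrt M$, and balancing $r$, the degree $L$, and the discretization scale so as to land exactly on the logarithmic power $5/2$ is the main bookkeeping burden.
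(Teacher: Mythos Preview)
The paper does not prove this lemma: it is quoted verbatim as Lemma~29 of \cite{shen2022empirical} and used as a black box in the proof of Proposition~\ref{prop:reg_sep_largey}. So there is no proof in the paper to compare your proposal against.

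That said, your outline is essentially the right argument and is, as far as one can tell, the approach taken in \cite{shen2022empirical}: (i) truncate the support of $G$ to $[0,M_0]$ with $M_0\lesssim M$ at negligible cost; (ii) use the Lipschitz dependence of $\theta_G(\cdot;\rho)$ on $(f_G(y),f_G(y+1))$ to reduce to an $\ell^\infty$-cover of $\{f_G|_{\{0,\dots,M+1\}}\}$; (iii) partition $[0,M_0]$ at the variance-stabilizing scale and moment-match locally; (iv) count. The crucial observation---that under $u=2\sqrt\theta$ the support has length $O(\sqrt M)$ and each observation $y$ interacts with only polylogarithmically many blocks---is exactly what produces the $\sqrt M$ exponent, and you have it.

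Two small points. First, your bound $e^{O(1)}$ on the Bernstein ellipse is slightly off: for $\theta=\theta_0+z$ with $|z|=O(\sqrt{\theta_0})$ and $y$ with $|y-\theta_0|=O(r\sqrt{\theta_0})$, one gets $|\poi(y;\theta)|/\poi(y;\theta_0)=e^{O(r)}$, not $e^{O(1)}$, because the real part of $z$ contributes a factor $e^{(\mathrm{Re}\,z)(y-\theta_0)/\theta_0}$. This is harmless---it only inflates the degree $L$ by an additive $O(r)=O(\sqrt{\log(1/\alpha)})$, which is lower order. Second, your bookkeeping seems to give $\sqrt M\log^2(M/\rho\eta)$ rather than $\sqrt M\log^{5/2}(M/\rho\eta)$; if so your bound is simply stronger than stated, and the half-power discrepancy likely reflects a looser accounting in the cited source rather than a gap on your end.
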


\begin{lemma}\label{lem:hellinger-to-regret}
Let $\rho\in (0,1)$, and $G, H$ be two priors. There is a universal constant $C>0$ such that
\begin{align*}
\sum_{y=0}^\infty (y+1)f_{G}(y)\pth{ \frac{\Delta f_{G}(y)}{f_{G}(y)\vee\rho} - \frac{\Delta f_{H}(y)}{f_{H}(y)\vee\rho} }^2 \le C\pth{\log^4(1/\rho) \cdot H^2(f_G,f_H) + \rho^{10}}. 
\end{align*}
\end{lemma}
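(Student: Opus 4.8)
\emph{Proof strategy.} The plan is to set $\psi_G(y):=\theta_G(y;\rho)/(y+1)=\Delta f_G(y)/(f_G(y)\vee\rho)+1$, so that the left‑hand side becomes $\sum_{y\ge0}(y+1)f_G(y)\,(\psi_G(y)-\psi_H(y))^2$, and then to run the estimate on three ingredients. First, the dimension‑free moment bound \eqref{eq:R1R3mom} gives $|\psi_G(y)-1|\le C\log(1/\rho)/\sqrt{y+1}$, and likewise for $H$, hence $|\psi_G(y)-\psi_H(y)|\le 2C\log(1/\rho)/\sqrt{y+1}$ at \emph{every} $y$. Second, since $f_G(y)\vee\rho\ge f_G(y)$ we have $|\psi_G(y)-1|\le|\Delta f_G(y)|/f_G(y)$, and joint convexity of $(g,h)\mapsto g^2/h$ applied to $f_G=\int\poi(\cdot;\theta)\,\mathrm dG$, combined with the uniform single‑Poisson bound $\sum_y(y+1)\Delta\poi(y;\theta)^2/\poi(y;\theta)=\E_{Y\sim\poi(\theta)}[(\theta-Y-1)^2/(Y+1)]=O(1)$, yields the self‑bounding estimate $\sum_y(y+1)f_G(y)(\psi_G(y)-1)^2=O(1)$. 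Third, $f_G(y)\le(2\pi y)^{-1/2}$ for any prior (Stirling applied to $\theta^ye^{-\theta}/y!\le y^ye^{-y}/y!$), so $\{y:f_G(y)\ge\rho\}\subseteq[0,(2\pi\rho^2)^{-1}]$, and similarly for $H$.

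\emph{Step 1: the region where the regularization is inactive.} Let $\calR:=\{y:f_G(y)\ge\rho,\ f_H(y)\ge\rho\}\subseteq[0,(2\pi\rho^2)^{-1}]$. On $\calR$ both $\psi_G(y)=f_G(y+1)/f_G(y)$ and $\psi_H(y)=f_H(y+1)/f_H(y)$ are honest posterior means, with $\psi_G(y)\le1+C\log(1/\rho)/\sqrt{y+1}$ by Lemma~\ref{lem:bayes_deviation}. I would write the difference over the common denominator $f_G(y)f_H(y)$ as $\psi_G(y)-\psi_H(y)=\dfrac{f_G(y+1)-f_H(y+1)}{f_G(y)}+\dfrac{f_H(y+1)}{f_H(y)}\cdot\dfrac{f_H(y)-f_G(y)}{f_G(y)}$, convert each numerator difference into a square‑root difference via $(u-v)^2\le2(u+v)(\sqrt u-\sqrt v)^2$, use the ratio bound on $\calR$, and treat the cases $f_H(y)\le2f_G(y)$ and $f_H(y)>2f_G(y)$ separately (in the latter $(\sqrt{f_G(y)}-\sqrt{f_H(y)})^2\gtrsim f_H(y)$ absorbs the ratio $f_H(y)/f_G(y)$). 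This should give $\sum_{y\in\calR}(y+1)f_G(y)(\psi_G(y)-\psi_H(y))^2\lesssim\log^2(1/\rho)\sum_y\bigl[(\sqrt{f_G(y)}-\sqrt{f_H(y)})^2+(\sqrt{f_G(y+1)}-\sqrt{f_H(y+1)})^2\bigr]\lesssim\log^2(1/\rho)\,H^2(f_G,f_H)$, with room to spare for the boundary losses of Step 2.

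\emph{Step 2: the region where the regularization is active, and the main obstacle.} On $\Z_+\setminus\calR$ at least one of $f_G(y),f_H(y)$ lies below $\rho$, and the delicate point — which makes the lemma harder than it looks — is that one must \emph{not} bound $(\psi_G(y)-\psi_H(y))^2$ by the triangle inequality: near the level set $\{f\approx\rho\}$ the two scores agree only through cancelling contributions of size $\Theta(1/\rho)$, and this cancellation is exactly what the dimension‑free bound \eqref{eq:R1R3mom} (and Lemma~\ref{lem:bayes_deviation}), pinning $\psi_G(y)$ to $1+O(\log(1/\rho)/\sqrt{y+1})$, encodes. I would therefore use the first ingredient, $(y+1)f_G(y)(\psi_G(y)-\psi_H(y))^2\le 4C^2\log^2(1/\rho)f_G(y)$, which is summable, and extract the negligible $\rho^{10}$ by partitioning $\Z_+\setminus\calR$ according to the size of $f_G(y)$ against successive powers of $\rho$: where $f_G(y)$ is deeply subcritical the identity $|\psi_G(y)-1|\le|\Delta f_G(y)|/\rho\le(f_G(y)/\rho)\,|\theta_G(y)/(y+1)-1|$ makes $\psi_G(y)-1$ itself of order $\rho^{c}$; where $f_G(y)$ is only mildly subcritical one invokes the pmf bound to confine the relevant $y$ to $[0,(2\pi\rho^2)^{-1}]$, the self‑bounding estimate, and the rigidity that $f_G(y)\ge\rho$ with $y\gg\log^2(1/\rho)$ forces $\theta_G(y)\approx y$, hence $\psi_G(y)\approx1$. (Throughout, the priors are supported on a bounded interval, as in every application.) Assembling Steps 1 and 2 gives the claimed bound; the hardest part is genuinely Step 2 and the boundary of $\calR$, where preserving the cancellation is indispensable.
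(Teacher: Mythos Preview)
Your decomposition into $\calR$ and $\calR^c$ is a natural first move, but Step~1 has a real gap: the claimed bound $\sum_{y\in\calR}(y+1)f_G(y)(\psi_G(y)-\psi_H(y))^2\lesssim\log^2(1/\rho)\,H^2(f_G,f_H)$ does not follow from your decomposition. After writing $\psi_G-\psi_H=\frac{f_G(y+1)-f_H(y+1)}{f_G(y)}+\psi_H(y)\cdot\frac{f_H(y)-f_G(y)}{f_G(y)}$ and converting numerator differences to $(\sqrt\cdot-\sqrt\cdot)^2$, the first term still carries the prefactor $(y+1)\,\frac{f_G(y+1)+f_H(y+1)}{f_G(y)}$, which on $\calR$ (and in the comparable case $f_H\asymp f_G$) is of order $\theta_G(y)\asymp y$, \emph{not} $\log^2(1/\rho)$. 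The same issue arises in the second term via $\psi_H(y)^2(y+1)\asymp y$. So what you end up needing is a \emph{weighted} Hellinger bound $\sum_y (y+1)(\sqrt{f_G}-\sqrt{f_H})^2\lesssim H^2(f_G,f_H)$, which is false in general; passing from the $(y+1)$-weighted quantity to the unweighted $H^2$ is precisely the hard part of the lemma, and your Step~1 does not address it. Step~2 has a related issue: the self-bounding estimate $\sum_y(y+1)f_G(y)(\psi_G-1)^2=O(1)$ is correct but only yields an $O(1)$ additive term, and your sketch for extracting $\rho^{10}$ instead is too vague to be convincing (and silently assumes bounded support, which the lemma does not).

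The paper's proof sidesteps the weighted-Hellinger obstacle by a different decomposition: insert the common denominator $f_G(y)\vee\rho+f_H(y)\vee\rho$ and split into three pieces $R_1+R_2+R_3$. The outer terms $R_1,R_3$ (fixed numerator, change of denominator) factor as $\bigl(\sqrt{y+1}\,\frac{\Delta f_G}{f_G\vee\rho}\bigr)^2$ times $f_G(y)\bigl(\frac{f_G\vee\rho-f_H\vee\rho}{f_G\vee\rho+f_H\vee\rho}\bigr)^2$; the first factor is $O(\log^2(1/\rho))$ by \eqref{eq:R1R3mom}, and the second is at most $2(\sqrt{f_G}-\sqrt{f_H})^2$ by the elementary inequality $a\bigl(\frac{a\vee\rho-b\vee\rho}{a\vee\rho+b\vee\rho}\bigr)^2\le 2(\sqrt a-\sqrt b)^2$, so $R_1+R_3\lesssim\log^2(1/\rho)H^2$ with no $(y+1)$ leakage. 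The middle term $R_2=4\sum_y(y+1)\frac{(\Delta f_G-\Delta f_H)^2}{f_G\vee\rho+f_H\vee\rho}$ is exactly the weighted object you are implicitly trying to control, and the paper does not prove it from scratch either: it invokes \cite[Proposition~14]{shen2022empirical}, which gives $R_2\lesssim\log^4(1/\rho)H^2+\rho^{10}$. That proposition is where both the $\log^4$ and the $\rho^{10}$ originate, and it is a genuinely nontrivial functional inequality exploiting the Poisson-mixture structure of $f_G,f_H$ (not just that they are pmfs). If you want a self-contained proof, that is the inequality you need to re-derive; your current Steps~1--2 do not supply a substitute for it.
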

\begin{proof}
The argument is essentially similar to Eq. (6.21)--(6.23) in \cite{shen2022empirical}, and we include a proof for completeness. By triangle inequality, we have
\begin{align*}
&\sum_{y=0}^\infty (y+1)f_{G}(y)\pth{ \frac{\Delta f_{G}(y)}{f_{G}(y)\vee\rho} - \frac{\Delta f_{H}(y)}{f_{H}(y)\vee\rho} }^2 \\
&\le 3\sum_{y=0}^\infty (y+1)f_{G}(y)\pth{ \frac{\Delta f_{G}(y)}{f_{G}(y)\vee\rho} - \frac{2\Delta f_{G}(y)}{f_{G}(y)\vee\rho+f_{H}(y)\vee\rho} }^2 \\
&\qquad + 3\sum_{y=0}^\infty (y+1)f_{G}(y)\pth{ \frac{2(\Delta f_G(y) - \Delta f_{H}(y))}{f_{G}(y)\vee\rho+f_{H}(y)\vee\rho} }^2  \\
&\qquad + 3\sum_{y=0}^\infty (y+1)f_{G}(y)\pth{ \frac{\Delta f_{H}(y)}{f_{H}(y)\vee\rho} - \frac{2\Delta f_{H}(y)}{f_{G}(y)\vee\rho+f_{H}(y)\vee\rho} }^2 \\
&=: 3(R_1 + R_2 + R_3).
\end{align*}
We first upper bound $R_1$ as
\begin{align*}
    R_1 &= \sum_{y=0}^\infty \pth{\sqrt{y+1} \frac{\Delta f_{G}(y)}{f_{G}(y)\vee\rho} }^2\cdot f_G(y)\pth{\frac{f_{G}(y)\vee\rho-f_{H}(y)\vee\rho}{f_{G}(y)\vee\rho+f_{H}(y)\vee\rho}}^2 \\
    &= O\Big( \log^2\pth{\frac{1}{\rho}} \sum_{y=0}^\infty \pth{\sqrt{f_G(y)} - \sqrt{f_H(y)}}^2 \Big) = O\pth{\log^2(1/\rho) H^2(f_G,f_H)}, 
\end{align*}
where the middle step uses Lemma \ref{lem:bayes_deviation} and that for $a,b\ge 0$,
\begin{align*}
a\pth{\frac{a\vee \rho - b\vee\rho}{a\vee\rho+b\vee\rho}}^2 \le \frac{(a\vee \rho - b\vee\rho)^2}{a\vee\rho+b\vee\rho} \le 2\pth{\sqrt{a\vee \rho} - \sqrt{b\vee \rho}}^2 \le 2\pth{\sqrt{a}-\sqrt{b}}^2. 
\end{align*}
Similarly, we obtain the same upper bound of $R_3$. As for $R_2$, we have
\begin{align*}
R_2\le 4\sum_{y=0}^\infty (y+1)\frac{(\Delta f_G(y)-\Delta f_H(y))^2}{f_G(y)\vee\rho + f_H(y)\vee\rho} = O\pth{ \log^4(1/\rho)H^2(f_G,f_H) + \rho^{10} }, 
\end{align*}
where the last step uses the crucial inequality in \cite[Proposition 14]{shen2022empirical}.
\end{proof}

\begin{lemma}\label{lem:poi_mixture_sqrt}
For any distribution such that $\supp(G) \subseteq [0,K]$ for some $K \ge 1$, there exists some universal $C > 0$ such that $\sum_{y=0}^\infty \sqrt{f_G(y)} \leq C\sqrt{K}$. 
\end{lemma}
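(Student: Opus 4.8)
The plan is to split the sum at a threshold proportional to $K$ and bound the two pieces by entirely different means: Cauchy--Schwarz for the ``head'' $y<T$, and a crude super-exponential pointwise estimate for the ``tail'' $y\ge T$. The point to appreciate is that the mixture pmf $f_G$ need not concentrate on a window of width $O(\sqrt K)$ (since $\supp(G)\subseteq[0,K]$ only forces the mixture mean to be at most $K$), so the correct split is at $T\asymp K$, not $T\asymp\sqrt K$; the square root in the conclusion then comes purely from Cauchy--Schwarz over $O(K)$ indices.

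Concretely, I would set $T:=\lceil 3K\rceil$ and write $\sum_{y\ge 0}\sqrt{f_G(y)}=\sum_{y=0}^{T-1}\sqrt{f_G(y)}+\sum_{y\ge T}\sqrt{f_G(y)}$. For the head, Cauchy--Schwarz gives $\sum_{y=0}^{T-1}\sqrt{f_G(y)}\le \sqrt{T}\,\bigl(\sum_{y=0}^{T-1}f_G(y)\bigr)^{1/2}\le \sqrt{T}\le\sqrt{3K+1}\le 2\sqrt K$, where the last inequality uses $K\ge 1$ (so that $3K+1\le 4K$).

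For the tail I would use the elementary monotonicity fact that $\theta\mapsto\poi(y;\theta)=e^{-\theta}\theta^y/y!$ is nondecreasing on $[0,y]$, hence on $[0,K]$ whenever $y\ge K$; since $\supp(G)\subseteq[0,K]$, this yields $f_G(y)=\int\poi(y;\theta)\,G(\d\theta)\le\poi(y;K)$ for every $y\ge K$, in particular for $y\ge T$. Then $\sqrt{\poi(y;K)}=e^{-K/2}K^{y/2}/\sqrt{y!}\le e^{-K/2}(eK/y)^{y/2}$ by the standard bound $y!\ge(y/e)^y$, and for $y\ge T\ge 3K$ one has $eK/y\le e/3<1$, so $\sqrt{\poi(y;K)}\le e^{-K/2}(e/3)^{y/2}$. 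Summing the geometric series gives $\sum_{y\ge T}\sqrt{f_G(y)}\le e^{-K/2}\sum_{y\ge 0}(e/3)^{y/2}\le(1-\sqrt{e/3})^{-1}$, a universal constant. Combining the head and tail bounds and using $\sqrt K\ge 1$ to absorb the additive constant proves the lemma with $C=2+(1-\sqrt{e/3})^{-1}$. I do not anticipate any real obstacle; the only step requiring care is the choice of threshold, which must be a constant multiple of $K$ strictly exceeding $eK$ so that the tail terms decay geometrically while the head retains only $O(K)$ summands.
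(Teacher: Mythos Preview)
Your proof is correct and follows essentially the same approach as the paper: split at a threshold $cK$, apply Cauchy--Schwarz on the head, and use an exponential tail bound. The only cosmetic difference is in the tail: the paper bounds $f_G(y)\le\Prob(\poi(K)\ge y)$ and invokes the Chernoff inequality, whereas you use the slightly sharper pointwise bound $f_G(y)\le\poi(y;K)$ together with Stirling; both give a geometric tail and the argument is otherwise identical.
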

\begin{proof}
By the Chernoff bound (cf. Lemma \ref{lemma:chernoff}), 
\begin{align*}
\sum_{y=0}^\infty \sqrt{f_G(y)} &= \sum_{y=0}^{2K} \sqrt{f_G(y)} + \sum_{y=2K+1}^{\infty} \sqrt{f_G(y)} \le \sqrt{K} + \sum_{y = 2K + 1}^\infty\Big(\Prob\big(\poi(K) \geq y\big)\Big)^{1/2}\\
&\le \sqrt{K} + \sum_{y=2K+1}^\infty \exp\pth{-\frac{y-K}{6}} = O\pth{\sqrt{K}}.
\end{align*}
\end{proof}

\section{Proof of Theorem \ref{thm:upper_bound_general} (Part 2)}\label{sec:main_proof_PI}

In this section we prove Proposition \ref{prop:reg_PI}, which is the last missing piece in the proof of Theorem \ref{thm:upper_bound_general}. In fact, using the same argument of \eqbr{eq:equivalence}, Proposition \ref{prop:reg_PI} follows readily from \Cref{thm:PI-Poisson}\ifthenelse{\boolean{arxiv}}{}{ in the main text}: 
\begin{align*}
    (\mathrm{II}) = \E\qth{\sum_{i=1}^k p^\star_i \log \frac{\widehat{p}^{\PI}_i}{\bar{p}^{\Sym}_i} - \widehat{p}^{\PI}_i + \bar{p}^{\Sym}_i} = \E\qth{\sum_{i=1}^k \widehat{p}^{\PI}_i \log \frac{\widehat{p}^{\PI}_i}{\bar{p}^{\Sym}_i} - \widehat{p}^{\PI}_i + \bar{p}^{\Sym}_i} = O\pth{\frac{k\wedge n^{1/3}}{n}\log^2 n + \frac{\log^3 n}{n}}. 
\end{align*}
In the remainder of this section, we will prove \Cref{thm:PI-Poisson}. Before that, we will use the Gaussian compound decision problem as a warm-up example to illustrate the proof ideas and compare to existing results in the literature. 

\subsection{Gaussian compound decision problem}
Before describing our results in the Poisson model, we first summarize the existing results and our improvements for the popular Gaussian model which are easier to state. A state-of-the-art result by Greenshtein and Ritov \cite{greenshtein2009asymptotic} examined an analogous setting in the Gaussian location model: let $\theta^\star\in \R^n$ be a deterministic vector, and let $\theta = \pi \circ \theta^\star$ be a random permutation of $\theta^\star$, with $\pi\sim \mathrm{Unif}(S_k)$. Conditioned on $\theta$, the observation vector in the Gaussian location model is $X\sim \calN(\theta,I)$. Under the squared error loss, it is known \cite{green1993nonparametric} that the PI and separable oracles are given by
\begin{equation*}
    \widehat{\theta}^{\PI}_i = \E[\theta_i | X] \quad \text{ and } \quad \widehat{\theta}^{\Sym}_i = \E[\theta_i | X_i],
\end{equation*}
respectively. The main result of \cite{greenshtein2009asymptotic} claims that, if $|\theta_i|\le \mu$ for all $i\in [n]$ and $\mu\ge 1$, then
\begin{align}\label{eq:GR-result}
    \E \qth{ \| \widehat{\theta}^{\PI} - \widehat{\theta}^{\Sym} \|_2^2 } \le e^{C\mu^2},
\end{align}
for some universal constant $C>0$. Notably, this bound does \emph{not} depend on the dimension $n$, thereby validating the mean-field approximation for the PI oracle. However, this bound grows very rapidly with $\mu$ and ceases to be informative for moderately large $\mu$, say $\mu \gg \sqrt{\log n}$. Our following result establishes an improved upper bound when $\mu \gg 1$.


\begin{theorem}\label{thm:PIoracle_Gaussian}
Under the Gaussian setting, if $\mu\ge 1$ and $|\theta_i^\star|\le \mu$ for all $i\in [n]$, then
\begin{align*}
\E\qth{\|\widehat{\theta}^{\mathrm{S}} - \widehat{\theta}^{\mathrm{PI}}\|_2^2} \le \min\sth{C\mu \log^2 n, n}, 
\end{align*}
where $C<\infty$ is an absolute constant. 
\end{theorem}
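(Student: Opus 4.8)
The plan is to prove the two bounds $\E\|\widehat{\theta}^{\mathrm{S}}-\widehat{\theta}^{\mathrm{PI}}\|_2^2\le n$ and $\E\|\widehat{\theta}^{\mathrm{S}}-\widehat{\theta}^{\mathrm{PI}}\|_2^2\le C\mu\log^2 n$ separately. The first is immediate from the Pythagorean identity for conditional expectations: since $\sigma(X_i)\subseteq\sigma(X)$,
\[
\E\bigl(\widehat{\theta}_i^{\mathrm{PI}}-\widehat{\theta}_i^{\mathrm{S}}\bigr)^2=\mmse(\theta_i\mid X_i)-\mmse(\theta_i\mid X)\le \mmse(\theta_i\mid X_i)\le \E(X_i-\theta_i)^2=1,
\]
and summing over $i\in[n]$ gives $\E\|\widehat{\theta}^{\mathrm{S}}-\widehat{\theta}^{\mathrm{PI}}\|_2^2\le n$. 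All of the work is in the bound $C\mu\log^2 n$, which improves \eqref{eq:GR-result}.

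For this I would follow the same two-step route used for \prettyref{thm:PI-Poisson}. Step 1 is an information-theoretic reduction to a total correlation: I would show
\[
\E\bigl\|\widehat{\theta}^{\mathrm{S}}-\widehat{\theta}^{\mathrm{PI}}\bigr\|_2^2\;\le\;C\,\DKL\Bigl(P_X\,\Big\|\,\textstyle\prod_{i=1}^n P_{X_i}\Bigr),
\]
the Gaussian analogue of \eqref{eq:KLPI} (with squared error in place of the KL loss, and with neither the $1/n$ rescaling nor the stochastic interpolation needed there, precisely because in the Gaussian model the noise level is a fixed constant rather than scaling with the signal --- this is what makes the Gaussian case a clean warm-up). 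The point of the reduction is that, via $\widehat{\theta}^{\mathrm{PI}}_i-\widehat{\theta}^{\mathrm{S}}_i=\E[\,\theta_i-\widehat{\theta}^{\mathrm{S}}_i\mid X\,]$ together with $\Var(\theta_i\mid X_i)\le 1$, the dependence on $\mu$ enters only through mild truncation of the Gaussian tails and contributes at most logarithmic, not polynomial, factors; the comparison itself is of the kind used to establish \eqref{eq:KLPI}.

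Step 2, the substance of the theorem, is to bound $\DKL(P_X\|\prod_i P_{X_i})$ for the permutation mixture $P_X=\frac1{n!}\sum_{\pi\in S_n}\prod_{i=1}^n\calN(\theta^\star_{\pi(i)},1)$. The dimension-free bound of \cite{han2024approximate} gives $O(\log^2 n)$ as soon as the channel family $\calP=\{\calN(\theta_i^\star,1):i\in[n]\}$ has $\chi^2$-diameter $O(1)$; but $\chi^2(\calN(a,1)\,\|\,\calN(b,1))=e^{(a-b)^2}-1$, so over $|a|,|b|\le\mu$ the diameter is $e^{\Theta(\mu^2)}$, far too large. The fix, as in the Poisson proof, is to partition the value range $[-\mu,\mu]$ into $m=\Theta(\mu)$ subintervals of length $O(1)$ and group the indices accordingly; each sub-family then has $\chi^2$-diameter $O(1)$, and the generalization of \cite{han2024approximate} to partitioned families shows that the total correlation of the mixture is (nearly) linear in $m$, yielding $\DKL(P_X\|\prod_i P_{X_i})=O(m\log^2 n)=O(\mu\log^2 n)$. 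Chaining Steps 1 and 2 gives $\E\|\widehat{\theta}^{\mathrm{S}}-\widehat{\theta}^{\mathrm{PI}}\|_2^2=O(\mu\log^2 n)$, completing the proof.

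I expect the main obstacle to be the ``nearly linear in $m$'' claim of Step 2: one must decouple the contributions of the $m$ value-groups while respecting the hard global constraint that a single permutation $\pi$ distributes all $n$ coordinates, so that the total correlation does not blow up combinatorially in $m$. Pinning down the polylogarithmic exponent (that it is exactly $\log^2 n$), and carrying Step 1 through with only logarithmic rather than polynomial dependence on $\mu$, are the remaining points requiring care.
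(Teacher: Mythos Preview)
Your outline matches the paper's architecture at the high level (Pythagorean identity for the $\le n$ bound; information-theoretic reduction followed by a partition of $[-\mu,\mu]$ into $\Theta(\mu)$ unit intervals), and your Step~2 is essentially the paper's Step~III. The gap is in Step~1: you assert that the stochastic interpolation is \emph{not} needed in the Gaussian case, but the paper uses it there exactly as it does for Poisson, and for good reason.

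Concretely, applying Tao's inequality to $\theta_i-\widehat\theta^{\mathrm S}_i$ (or to $\theta_i-X_i$) and then chasing the mutual information through the subadditivity argument lands you at $\DKL(P_\theta\|\prod_i P_{\theta_i})$, which is $\Omega(n)$ for a random permutation; alternatively, bounding the range of $\theta_i-\widehat\theta^{\mathrm S}_i$ by $2\mu$ in Tao's inequality incurs a $\mu^2$ prefactor, which defeats the whole point. Your remark that ``$\Var(\theta_i\mid X_i)\le 1$'' controls the range does not help: Tao/Pinsker needs an $L^\infty$ bound, not an $L^2$ bound, and the posterior of $\theta_i$ given $X_i$ can be spread over $[-\mu,\mu]$. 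The paper's fix is to split the noise, writing $X=Z+W'$ with $Z=\theta+W$ and $W,W'\sim\calN(0,\tfrac12 I)$ independent, and proving the exchangeability identity
\[
\E[\theta_1\mid X_1]-\E[\theta_1\mid X]=2\bigl(\E[Z_1\mid X_1]-\E[Z_1\mid X]\bigr).
\]
Now Tao's inequality is applied to $Z_1-X_1=-W_1'\sim\calN(0,\tfrac12)$, which is $O(\sqrt{\log n})$ with high probability \emph{independently of $\mu$}; the Markov structure $Z\to X$ then yields the bound in terms of $\DKL(P_Z\|\prod_i P_{Z_i})$, the noisy permutation mixture to which the partition argument of Step~2 applies. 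So the interpolation is not a Poisson artifact---it is precisely the device that lets Step~1 produce a $\log n$ factor rather than a $\mu$ factor.
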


Compared with \eqbr{eq:GR-result}, \Cref{thm:PIoracle_Gaussian} improves the dependence on $\mu$ from exponential to linear. While the logarithmic factors in $n$ might be an artifact of the analysis, the linear dependence on $\mu$ turns out to be tight in Gaussian location models, as shown in the following result. 

\begin{theorem}\label{prop:PIoracle_Gaussian_LB}
There exist absolute constants $n_0, c>0$ such that, for $n\ge n_0$ and $\mu \ge 8\sqrt{\log n}$, there exist $\theta_1^\star, \dots, \theta_n^\star \in [-\mu, \mu]$ such that
\begin{align*}
\E\qth{\|\widehat{\theta}^{\mathrm{S}} - \widehat{\theta}^{\mathrm{PI}}\|_2^2} \ge c\min\sth{\frac{\mu}{\sqrt{\log n}}, n}. 
\end{align*}
\end{theorem}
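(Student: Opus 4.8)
The plan is to build a configuration in which seeing the whole vector $X$ lets the oracle ``find the partner'' of each coordinate and thereby sharpen its estimate by a constant, something the separable oracle (which sees only $X_i$) cannot do. Choose a spacing $s\asymp\sqrt{\log(n\mu)}$ — large enough that Gaussian fluctuations of size $s/4$ are overwhelmingly unlikely, yet $\asymp\sqrt{\log n}$ whenever $\mu$ is polynomial in $n$ — and set $L:=\min\{\lfloor \mu/(Cs)\rfloor,\lfloor n/2\rfloor\}$ for an appropriate constant $C$; one checks $L\asymp\min\{\mu/\sqrt{\log n},\,n\}$, and $L\ge 1$ except in a degenerate window $\mu\in[8\sqrt{\log n},\,c_1\sqrt{\log n}]$ handled separately with $L=1$, $s=\mu-1$. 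Take $\theta^\star$ to consist, for each $j\in[L]$, of one coordinate equal to $a_j:=js$ and one equal to $a_j+1$ (so distinct pairs $\{a_j,a_j+1\}$ are $s$ apart, while the two values in a pair are at the noise scale), together with $n-2L$ ``bulk'' coordinates equal to $0$; since $a_L+1\le\mu$ we have $\theta^\star\in[-\mu,\mu]^n$.

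Let $\mathcal G:=\{|X_i-\theta_i|\le s/4\ \forall i\}$, so $\P(\mathcal G^c)\le 2ne^{-s^2/32}$ can be made tiny. On $\mathcal G$ the windows $W_j:=[a_j-s/4,\,a_j+1+s/4]$ and $[-s/4,s/4]$ are pairwise disjoint, each $W_j$ contains exactly the two coordinates with $\theta\in\{a_j,a_j+1\}$, and the posterior over $\pi$ puts weight $1-e^{-\Omega(s^2)}$ on permutations that respect these clusters, on which event it factorizes into independent within-cluster permutations. Hence, on $\mathcal G$, if $u,w$ are the two coordinates in $W_j$ then $\widehat\theta^{\mathrm{PI}}_u=\E[\theta_u\mid X]=g(X_u,X_w)+O(\mu e^{-\Omega(s^2)})$, where $g(x,x')$ is the posterior mean of $\theta$ in the two-point ``pair model'' $(\theta,\theta')\sim\mathrm{Unif}\{(a_j,a_j{+}1),(a_j{+}1,a_j)\}$, $x\sim\calN(\theta,1)$, $x'\sim\calN(\theta',1)$ independent. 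By contrast $\widehat\theta^{\mathrm S}_u=\E[\theta_u\mid X_u]=m_{G_n}(X_u)$, and since every other atom of $G_n$ is at distance $\ge s-1$ from $W_j$, we have $m_{G_n}(X_u)=m_2(X_u)+O(\mu n e^{-\Omega(s^2)})$ for $X_u\in W_j$, where $m_2$ is the single-observation posterior mean for the prior $\frac12\delta_{a_j}+\frac12\delta_{a_j+1}$; both $g$ and $m_2$ lie in $[a_j,a_j+1]$ up to exponentially small corrections.

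To finish, I bound each pair's contribution from below. On $\mathcal G$, conditionally on $\{\theta_u,\theta_w\}=\{a_j,a_j+1\}$, the pair $(X_u,X_w)$ is — up to the restriction to $\mathcal G$ — exactly the pair model above, so $\E[(\widehat\theta^{\mathrm{PI}}_u-\widehat\theta^{\mathrm S}_u)^2\indc{\mathcal G}]\ge \E[(g-m_2)^2]-\P(\mathcal G^c)-O(\mu n e^{-\Omega(s^2)})\ge c_0/2$, where $c_0:=\E[(g-m_2)^2]=\mmse(\theta\mid X)-\mmse(\theta\mid X,X')>0$ is a fixed constant (the reflected copy $X'$ strictly reduces the MMSE). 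Summing over the two coordinates of each of the $L$ pairs and dropping the nonnegative bulk terms,
\[
\E\qth{\|\widehat\theta^{\mathrm S}-\widehat\theta^{\mathrm{PI}}\|_2^2}\ \ge\ \E\qth{\indc{\mathcal G}\sum_{i=1}^n(\widehat\theta^{\mathrm S}_i-\widehat\theta^{\mathrm{PI}}_i)^2}\ \ge\ L\cdot c_0/2\ \ge\ c\min\sth{\tfrac{\mu}{\sqrt{\log n}},\,n}.
\]

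The main obstacle is the claim that on $\mathcal G$ the PI posterior mean collapses to the pair posterior mean $g(X_u,X_w)$: one must show that the total posterior weight of all permutations violating the cluster structure is $e^{-\Omega(s^2)}$, and that this leakage stays negligible after being multiplied by the $O(\mu)$ diameter of the estimators — a careful combinatorial estimate against the multivariate Gaussian likelihood. A secondary (but fiddly) point is choosing $s$ and $L$ so that, simultaneously in all regimes of $\mu$ versus $n$ (including $\mu$ super-polynomial in $n$), the clusters separate with sufficient probability, $a_L+1\le\mu$, and $L\asymp\min\{\mu/\sqrt{\log n},n\}$.
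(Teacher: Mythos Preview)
Your construction is essentially the paper's: $L \asymp \min\{\mu/\sqrt{\log n}, n\}$ well-separated pairs $\{a_j,a_j+1\}$ plus bulk coordinates, and the endgame---reducing to a two-point model where the separable/PI gap is a fixed positive constant---is the same (the paper writes it as $\E[(\tanh Z_1 - \tanh(Z_1 - Z_2))^2]$, you write it as an MMSE difference; these coincide by orthogonality of nested conditional expectations). The divergence is in the reduction step. You plan to condition on a good event $\mathcal G$ and argue directly that the permutation posterior puts mass $1-e^{-\Omega(s^2)}$ on cluster-respecting $\pi$; you correctly flag this as the main obstacle, and it is genuinely fiddly---naive ratio bounds $w(\sigma)/w(\mathrm{id})$ accrue an $e^{O(Ls)}$ factor from within-cluster swaps, so one needs a matching between bad permutations and nearby good ones rather than a comparison to the identity. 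The paper bypasses this entirely by introducing the cluster-membership vector $J$ and invoking Tao's inequality together with Fano: since $J$ is estimable from $X$ with error probability $O(n^{-8})$ per coordinate, one gets $H(J\mid X)=O(n^{-7}\log n)$ and hence $\E[(\E[\theta_1\mid X]-\E[\theta_1\mid X,J])^2]=O(n^{-5}\log^2 n)$; conditioned on $J$ both oracles have the closed-form $\tanh$ expressions, so no posterior-concentration argument is needed. Two smaller remarks: (i) your worry about super-polynomial $\mu$ is moot---once $\mu\ge Cn\sqrt{\log n}$ the target bound is $cn$, already achieved by the construction for $\mu'=Cn\sqrt{\log n}\le\mu$, so $s\asymp\sqrt{\log n}$ suffices throughout (this is what the paper's $m=\lfloor\min\{n/2,\ldots\}\rfloor$ encodes); (ii) the paper places the pair values at $\mu_j\pm 1$ rather than $\{a_j,a_j{+}1\}$, which yields cleaner $\tanh$ formulas but is otherwise immaterial.
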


Before we present the proofs of \Cref{thm:PIoracle_Gaussian} and \Cref{prop:PIoracle_Gaussian_LB}, we explain the intuition behind the linear dependence on $\mu$. Intuitively, the quantity $\mu$ represents the ``effective number of clusters'': the interval $[-\mu,\mu]$ can be partitioned into $m=\widetilde{O}(\mu)$ subintervals $I_1,\dots,I_m$, each of length $\widetilde{O}(1)$. By the concentration of normal random variables, based on the observation $X_i$, with high probability one can locate $\theta_i$ within an interval of range $\widetilde{O}(1)$. Therefore, even if we additionally provide the membership information for each $\theta_i$, i.e. the index $j(i)\in [m]$ where $\theta_i \in I_{j(i)}$, both the symmetric and permutation-invariant oracles only change a little in view of this additional piece of knowledge. However, conditioned on the memberships $j(1), \dots, j(n)$, one effectively arrives at a particular subgroup of the permutation group $S_n$, where the random permutation $\pi$ now follows a product distribution $\pi \sim \otimes_{j=1}^m \mathrm{Unif}(S_{A_j}). $ Here $S_A$ is the symmetric group over a finite set $A$, and $A_j := \sth{i\in [n]: j(i)=j}$. Thanks to the product structure, the problem of upper bounding the $\ell^2$-distance $\|\widehat{\theta}^{\mathrm{S}} - \widehat{\theta}^{\mathrm{PI}}\|_2^2$ is now a \emph{direct sum} of $m$ \emph{independent} subproblems of upper bounding $\|\widehat{\theta}_{A_j}^{\mathrm{S}} - \widehat{\theta}_{A_j}^{\mathrm{PI}}\|_2^2$ for each $j\in [m]$, where $\theta_A$ is the restriction of $\theta$ to the index set $A$. Consequently, it is natural to expect that the target quantity is linear in the number of subproblems, which explains the linear dependence on $m=\widetilde{O}(\mu)$. Technically, \Cref{prop:PIoracle_Gaussian_LB} shows the easy direction that each subproblem can contribute $\widetilde{\Omega}(1)$ to the $\ell^2$-distance, while \Cref{thm:PIoracle_Gaussian} establishes the hard direction that the contribution from each subproblem is at most $\widetilde{O}(1)$.

\subsubsection{Proof of \Cref{thm:PIoracle_Gaussian}}
We first prove the easy upper bound $\E\qth{\|\widehat{\theta}^{\mathrm{S}} - \widehat{\theta}^{\mathrm{PI}}\|_2^2} \le n$. In fact,
\begin{align*}
\E\qth{\|\widehat{\theta}^{\mathrm{S}} - \widehat{\theta}^{\mathrm{PI}}\|_2^2} &\stepa{=} \E\qth{\|\widehat{\theta}^{\mathrm{S}} - \theta\|_2^2} - \E\qth{\|\widehat{\theta}^{\mathrm{PI}} - \theta\|_2^2} \le \E\qth{\|\widehat{\theta}^{\mathrm{S}} - \theta\|_2^2} \stepb{\le} \E\qth{\|X - \theta\|_2^2} = n, 
\end{align*}
where (a) is a simple orthogonality relation established in \cite{greenshtein2009asymptotic}, and (b) uses the defining property that the separable oracle $\widehat{\theta}^{\mathrm{S}}$ achieves the smallest MSE among all separable decision rules, which contain the decision rule $\widehat{\theta}(X)=X$. The second upper bound is more challenging, and we break the proof into three steps.

\paragraph{Step I: introducing an auxiliary vector $Z$.} The first, and perhaps a bit surprising, step is to make use of the divisibility of the normal distribution and introduce an auxiliary random vector $Z$ to interpolate between $\theta$ and $X$. Specifically, let $W, W'\sim \calN(0, \frac{1}{2}I_n)$ be independent random vectors, we define
\begin{align*}
Z = \theta + W, \qquad X = \theta + W + W'. 
\end{align*}
Note that the joint distribution of $(\theta, X)$ remains the same, and the auxiliary vector $Z$ interpolates between $\theta$ and $X$. Let $\{(X_i,\theta_i,W_i,W_i',Z_i)\}_{i=1}^n$ be iid observations generated from the above model. The main identity we prove in this step is as follows: 
\begin{align}\label{eq:interpolation_Gaussian}
\E\qth{Z_1 | X_1} - \E\qth{Z_1 | X} = \frac{1}{2}\pth{ \E\qth{\theta_1 | X_1} - \E\qth{\theta_1 | X} }. 
\end{align}
To see \eqbr{eq:interpolation_Gaussian}, note that
\begin{align*}
\E\qth{Z_1 | X_1} &= \E\qth{\theta_1 + W_1 | \theta_1 + W_1 + W_1'} \\
&\stepa{=} \frac{\E\qth{\theta_1 + W_1 | \theta_1 + W_1 + W_1'} + \E\qth{\theta_1 + W_1' | \theta_1 + W_1' + W_1}}{2} \\
&=  \frac{\E\qth{2\theta_1 + W_1 + W_1' | \theta_1 + W_1 + W_1'}}{2} = \frac{\E\qth{\theta_1|X_1} + X_1}{2}, 
\end{align*}
where (a) follows from the exchangeability $(\theta, W, W') \overset{\textrm{d}}{=} (\theta, W', W)$. An entirely similar argument gives
\begin{align*}
\E\qth{Z_1 | X} = \frac{\E\qth{\theta_1 | X} + X_1}{2}, 
\end{align*}
and a combination of the above displays completes the proof of \eqbr{eq:interpolation_Gaussian}. From now on we drop $\theta$ and work on the joint distribution of $(Z,X)$ instead, where the key benefit of replacing $\theta$ by $Z$ will be apparent in the third step below. 

\paragraph{Step II: an information-theoretic upper bound.} The second step is to relate the mean squared error to information-theoretic quantities such as the mutual information and KL divergence. This step is similar to the analysis in \cite[Lemma 23]{nie2023large}, with some extra steps to handle the concentration. By \eqbr{eq:interpolation_Gaussian} and symmetry, 
\begin{align}
\E\qth{\|\widehat{\theta}^{\mathrm{S}} - \widehat{\theta}^{\mathrm{PI}}\|_2^2} &= n\E\qth{\pth{\E\qth{\theta_1|X_1} - \E\qth{\theta_1|X} }^2} \nonumber \\
&= 4n\E\qth{\pth{\E\qth{Z_1|X_1} - \E\qth{Z_1|X} }^2} \nonumber\\
&= 4n\E\qth{\pth{\E\qth{Z_1-X_1|X_1} - \E\qth{Z_1-X_1|X} }^2}\nonumber\\
&\le 12n\E\qth{\pth{\E\qth{(Z_1-X_1)\indc{E}|X_1} - \E\qth{(Z_1-X_1)\indc{E}|X} }^2} \nonumber\\
&\qquad + 12n\E\qth{\pth{\E\qth{(Z_1-X_1)\indc{E^c}|X_1}}^2} + 12n \E\qth{\pth{\E\qth{(Z_1-X_1)\indc{E^c}|X}}^2}, \label{eq:Gaussian_triangle}
\end{align}
where the last step is due to the triangle inequality $(a+b+c)^2\le 3(a^2+b^2+c^2)$, and we define the event $E:=\sth{ |Z_1 - X_1|\le \sqrt{2\log n} }$. For the last two terms, note that
\begin{align*}
\P(E^c) = \P\pth{|W_1'|> \sqrt{2\log n}} = \P\pth{|\calN(0,1/2)|> \sqrt{2\log n}} \le 2\exp\pth{-\frac{(\sqrt{2\log n})^2}{2\cdot \frac{1}{2}}} = \frac{2}{n^2}, 
\end{align*}
and by repeated applications of Cauchy--Schwarz, 
\begin{align*}
\E\qth{\pth{\E\qth{(Z_1-X_1)\indc{E^c}|X_1}}^2} \le \E\qth{(Z_1-X_1)^2 \indc{E^c}} \le \E^{1/2}\qth{W'^4} \P^{1/2}(E^c) \le \frac{2}{n}. 
\end{align*}
A similar inequality also holds for the last term of \eqbr{eq:Gaussian_triangle}. Therefore, we continue \eqbr{eq:Gaussian_triangle} as
\begin{align*}
\E\qth{\|\widehat{\theta}^{\mathrm{S}} - \widehat{\theta}^{\mathrm{PI}}\|_2^2} &\le 12n\E\qth{\pth{\E\qth{(Z_1-X_1)\indc{E}|X_1} - \E\qth{(Z_1-X_1)\indc{E}|X} }^2} + 48 \\
&\stepa{\le} 12n\cdot 2(\sqrt{2\log n})^2 I((Z_1-X_1)\indc{E}; X_2^n | X_1) + 48 \\
&\stepb{\le} 48n\log n\cdot I(Z_1; X_2^n | X_1) + 48,   
\end{align*}
where (a) is due to Tao's inequality (a consequence of Pinsker's inequality, cf. e.g. \cite[Corollary 7.11]{polyanskiy2024information}) and the upper bound $|(Z_1-X_1)\indc{E}|\le \sqrt{2\log n}$, and (b) uses the data-processing inequality that given $X_1$, the event $\indc{E}$ is determined by $Z_1$. The mutual information is then upper bounded as
\begin{align*}
I(Z_1; X_2^n | X_1) &= h(Z_1|X_1) - h(Z_1 | X) \\
&\stepc{\le} h(Z_1|X_1) - \frac{h(Z|X)}{n} \\
&= h(Z_1) - \frac{h(Z)}{n} - \pth{I(Z_1;X_1) - \frac{I(Z;X)}{n}} \\
&\stepd{\le} h(Z_1) - \frac{h(Z)}{n} = \frac{1}{n}\DKL\pth{ P_Z \bigg\| \prod_{i=1}^n P_{Z_i} }, 
\end{align*}
where $h(\cdot)$ denotes the differential entropy (i.e., $h(Q) = -\int q\log q$ if $Q$ has density $q$), (c) follows from the subadditivity of entropy and symmetry $h(Z|X) \le \sum_{i=1}^n h(Z_i|X) = nh(Z_1|X)$, and (d) is the subadditivity of mutual information $I(Z;X) \le \sum_{i=1}^n I(Z_i; X_i) = nI(Z_1;X_1)$ when $P_{X|Z} = \prod_{i=1}^n P_{X_i|Z_i}$. In summary,
\begin{align}\label{eq:Gaussian_stepII}
\E\qth{\|\widehat{\theta}^{\mathrm{S}} - \widehat{\theta}^{\mathrm{PI}}\|_2^2} \le 48\log n\cdot \DKL\pth{ P_Z \bigg\| \prod_{i=1}^n P_{Z_i} } + 48. 
\end{align}

\paragraph{Step III: approximate independence of noisy permutation mixtures.} The final step is an upper bound on the KL divergence between the joint distribution $P_Z$ and the product $\prod_{i=1}^n P_{Z_i}$ of the marginal distributions in \eqbr{eq:Gaussian_stepII}. Before we proceed, we emphasize the significance of replacing $\theta$ by $Z$ in Step I: without the replacement, we would arrive at the KL divergence between $P_{\theta}$ and $\prod_{i=1}^n P_{\theta_i}$ which could be as large as $\Omega(n)$ in general. By contrast, the auxiliary vector $Z$ is a \emph{noisy} version of $\theta$, and recent advances \cite{tang2023capacity,han2024approximate} show that the \emph{noisy permutation mixture} $P_Z$ is quantitatively close to its i.i.d.~approximation. This phenomenon was first observed in \cite{tang2023capacity} assuming that 
(a) the number of distinct $\theta_i$'s are bounded, i.e.,  $|\{\theta_1, \dots, \theta_n\}| = O(1)$;
and (b) the channel $P_{Z|\theta}$ is discrete; both conditions are significantly relaxed in \cite{han2024approximate} through the development of new proof techniques. For instance, for the Gaussian channel, \cite[Corollary 1.3]{han2024approximate} shows that
\begin{align*}
\DKL\pth{ P_Z \bigg\| \prod_{i=1}^n P_{Z_i} } \le \log\pth{1+\chi^2\pth{ P_Z \bigg\| \prod_{i=1}^n P_{Z_i} } } = O(\mu^3), 
\end{align*}
which already provides an upper bound polynomial in $\mu$. However, to obtain the linear dependence on $\mu$, we need some extra steps and use the chain rule of the KL divergence. 

To this end, observe that $[-\mu,\mu]\subseteq \cup_{j=1}^m I_j$, where $m=\lceil 2\mu \rceil$, and
\begin{align*}
    I_j = [-\mu + j-1, -\mu+j). 
\end{align*}
Now define an auxiliary vector $J$ in addition to $(\theta,Z,X)$, where $J_i \in [m]$ is the index of the interval such that $\theta_i\in I_{J_i}$. Therefore, 
\begin{align}\label{eq:chain_rule}
\DKL\pth{ P_Z \bigg\| \prod_{i=1}^n P_{Z_i} } &\le \DKL\pth{ P_{J,Z} \bigg\| \prod_{i=1}^n P_{J_i,Z_i} }  \nonumber\\
&= \DKL\pth{ P_{J} \bigg\| \prod_{i=1}^n P_{J_i} } + \E_J\qth{ \DKL\pth{ P_{Z|J} \bigg\| \prod_{i=1}^n P_{Z_i|J_i} } }, 
\end{align}
by the chain rule of the KL divergence. We bound the first term of \eqbr{eq:chain_rule} via a method-of-types argument: For $j\in [m]$, let $n_j = \sum_{i=1}^n \indc{J_i = j} = \sum_{i=1}^n \indc{\theta_i^\star\in I_j}$ be the number of coordinates of $\theta^\star$ lying in $I_j$. Since the vector $\theta^\star$ is deterministic, $(n_1,\dots,n_m)$ is also a deterministic vector. Then $P_J$ is the uniform distribution over $[m]^n$ with a given type $(n_1, \dots, n_m)$ (i.e. $1$ appears $n_1$ times, $2$ appears $n_2$ times, etc), while each $P_{J_i}$ is the discrete distribution $\pth{\frac{n_1}{n}, \dots, \frac{n_m}{n}}$. Therefore,
\begin{align*}
\DKL\pth{ P_{J} \bigg\| \prod_{i=1}^n P_{J_i} }  = nH(J_1) - H(J) = \sum_{j=1}^m n_j\log \frac{n}{n_j} - \log \binom{n}{n_1, \dots, n_m}. 
\end{align*}
By Stirling's approximation $\sqrt{2\pi n}\pth{\frac{n}{e}}^n \le n!\le 2\sqrt{2\pi n}\pth{\frac{n}{e}}^n$ and the AM-GM inequality, we have
\begin{align*}
\log \binom{n}{n_1, \dots, n_m} &= \log(n!) - \sum_{j=1}^m \log(n_j!) \\
&\ge n\log\frac{n}{e} + \frac{1}{2}\log(2\pi n) - \sum_{j=1}^m \pth{ n_j\log\frac{n_j}{e} + \frac{1}{2}\log(8\pi n_j) } \\
&= \sum_{j=1}^m n_j\log \frac{n}{n_j} - \frac{m\log(8\pi) - \log(2\pi)}{2} - \frac{1}{2}\log\frac{\prod_{j=1}^m n_j}{n} \\
&\ge \sum_{j=1}^m n_j\log \frac{n}{n_j} - \frac{m\log(8\pi) - \log(2\pi)}{2} - \frac{1}{2}\log\frac{(n/m)^m}{n}, 
\end{align*}
we conclude that
\begin{align}\label{eq:Gaussian_first_term}
    \DKL\pth{ P_{J} \bigg\| \prod_{i=1}^n P_{J_i} }  = O(m\log n). 
\end{align}

As for the second term of \eqbr{eq:chain_rule}, we fix any realization of $J$, which must be of type $(n_1, \dots, n_m)$. For $j\in [m]$, let $A_j = \sth{i\in [n]: J_i = j}$ be the set of coordinates of $\theta$ lying in $I_j$, with $|A_j|=n_j$. We note that
\begin{align*}
P_{Z|J}(z) = \prod_{j=1}^m P_j(z_{A_j}), 
\end{align*}
where $z_{A_j} = (z_i: i\in A_j)$, and $P_j$ is the joint distribution of $z_{A_j}\sim \calN(\theta_{A_j}, \frac{1}{2}I_{n_j})$ where $\theta_{A_j}$ is a uniformly random permutation of $\sth{\theta_1^\star, \dots, \theta_n^\star}\cap I_j$ (counted with multiplicity). Similarly, 
\begin{align*}
\prod_{i=1}^n P_{Z_i | J_i}(z_i) = \prod_{j=1}^m Q_j(z_{A_j}), 
\end{align*}
where $Q_j$ is an i.i.d.~product with the marginal distribution being $\E\qth{\calN(\theta,\frac{1}{2})}$, with the scalar mean $\theta$ uniformly distributed in $\sth{\theta_1^\star, \dots, \theta_n^\star}\cap I_j$ (counted with multiplicity). In other words, given a set of distributions $\sth{\calN(\theta_i^\star, \frac{1}{2}): \theta_i^\star\in I_j}$, the distributions $P_j$ and $Q_j$ are the permutation mixture and its i.i.d.~approximation (in the language of \cite{han2024approximate}), respectively. Since
\begin{align*}
\chi^2\pth{ \calN\pth{u, \frac{1}{2}} \bigg\| \calN\pth{v, \frac{1}{2}} } = e^{2(u-v)^2} - 1 \le e^2 - 1
\end{align*}
for all $u, v\in I_j$, we conclude from \cite[Theorem 1.2]{han2024approximate} that $\DKL(P_j \| Q_j) \le \chi^2(P_j \| Q_j) = O(1) $ for every $j\in [m]$. Consequently, for any realization of $J$, 
\begin{align}\label{eq:Gaussian_second_term}
    \DKL\pth{ P_{Z|J} \bigg\| \prod_{i=1}^n P_{Z_i|J_i} } = \sum_{j=1}^m \DKL\pth{P_j \| Q_j} = O(m). 
\end{align}
Finally, combining \eqbr{eq:chain_rule}, \eqbr{eq:Gaussian_first_term}, \eqbr{eq:Gaussian_second_term}, and the choice of $m=\lceil 2\mu \rceil$ yields 
\begin{align*}
\DKL\pth{ P_Z \bigg\| \prod_{i=1}^n P_{Z_i} } = O(\mu \log n), 
\end{align*}
which combined with \eqbr{eq:Gaussian_stepII} proves \Cref{thm:PIoracle_Gaussian}.  \qed

\subsubsection{Proof of \Cref{prop:PIoracle_Gaussian_LB}}
Define a positive integer
\begin{align*}
    m = \left\lfloor \min\sth{ \frac{n}{2} , 1 + \frac{\mu-1}{4\sqrt{\log n}} }  \right\rfloor, 
\end{align*}
and set
\begin{align*}
\theta_{2i-1}^\star &= -1 + 8(i-1)\sqrt{\log n}, \quad \theta_{2i}^\star = 1 + 8(i-1)\sqrt{\log n}, \qquad i\in [m], \\
\theta_{2m+1}^\star &= \dots = \theta_n^\star = -8\sqrt{\log n}. 
\end{align*}
We also define $(m+1)$ intervals $I_0, \dots, I_m$ as follows: 
\begin{align*}
I_0 &= (-\infty, -4\sqrt{\log n}),\\
I_j &= [4(2j-3)\sqrt{\log n}, 4(2j-1)\sqrt{\log n}), \quad j\in [m-1], \\ 
I_m &= [4(2m-3)\sqrt{\log n}, \infty).
\end{align*}
Thanks to the assumption $\mu\ge 8\sqrt{\log n}$, these parameters are constructed in a way such that $I_0, \dots, I_m$ constitute a partition of $\mathbb{R}$, $\{\theta_{2j-1}^\star, \theta_{2j}^\star\} \subseteq I_j$ for $j\in [m]$, and $\theta_{2m+1}^\star, \dots, \theta_n^\star\in I_0$. In addition, $(\theta_{2j-1}^\star, \theta_{2j}^\star)$ for $j\in [m]$ are different translations of $(-1,1)$.  

To analyze the oracles in this scenario, we introduce auxiliary random variables $J_1, \dots, J_n\in [m]\cup\{0\}$ such that $\theta_i\in I_{J_i}$ for each $i\in [n]$. In other words, this is the membership information of the unknown vector $\theta$. We argue that the oracles $\E\qth{\theta_1 | X_1}$ and $\E\qth{\theta_1 | X}$ are close to the counterparts $\E\qth{\theta_1 | X_1, J_1}$ and $\E\qth{\theta_1 | X, J}$, respectively. Specifically, by Tao's inequality (cf. \cite[Corollary 7.11]{polyanskiy2024information}) and $|\theta_i| = O(n\sqrt{\log n})$ almost surely, we have
\begin{align*}
\E\qth{ \pth{\E\qth{\theta_1 | X} - \E\qth{\theta_1 | X, J} }^2 } = O(n^2\log n)\cdot I(\theta_1; J | X) \le O(n^2\log n)\cdot H( J | X).
\end{align*}
To upper bound the conditional entropy $H(J | X)$, we construct an estimator $\widehat{J}$ of $J$ based on $X$, where for each $i\in [n]$, $\widehat{J}_i$ is the index $j\in [m]\cup\{0\}$ such that $X_i\in I_j$. Based on our construction, we have the following implications of events: 
\begin{align*}
    \sth{J_i\neq \widehat{J}_i} \Rightarrow \sth{\theta_i\in I_j} \cap \sth{X_i\notin I_j} \text{ for some }j\in [m]\cup\{0\} \stepa{\Rightarrow} |\theta_i - X_i| \ge 4\sqrt{\log n} - 1. 
\end{align*}
Here (a) recalls that $\theta_i$ must be one of $\theta_1^\star, \dots, \theta_n^\star$. Therefore,
\begin{align*}
\P\pth{J_i\neq \widehat{J}_i} \le \P\pth{|\theta_i - X_i| \ge 4\sqrt{\log n} - 1} = \P\pth{|\calN(0,1)| \ge 4\sqrt{\log n} - 1} = O\pth{ \frac{1}{n^8} }, 
\end{align*}
and by Fano's inequality (cf. \cite[Theorem 3.12]{polyanskiy2024information}) applied to the Markov chain $J_i - X - \widehat{J}_i$,  
\begin{align*}
H(J_i | X) \le \P\pth{J_i\neq \widehat{J}_i} \log m+ h_2\pth{\P\pth{J_i\neq \widehat{J}_i}} = O\pth{\frac{\log n}{n^8}}, 
\end{align*}
where $h_2(p) = p\log \frac{1}{p} + (1-p)\log\frac{1}{1-p}$ is the binary entropy function. Finally, the subadditivity of entropy gives 
\begin{align*}
    H(J|X)\le \sum_{i=1}^n H(J_i|X) = O\pth{\frac{\log n}{n^7}}. 
\end{align*}
In summary, we have shown that
\begin{align}\label{eq:difference}
    \E\qth{ \pth{\E\qth{\theta_1 | X} - \E\qth{\theta_1 | X, J} }^2 } \le \frac{C\log^2 n}{n^5}, 
\end{align}
for some absolute constant $C<\infty$. By a similar argument, the same upper bound also holds for $\E\qth{ \pth{\E\qth{\theta_1 | X_1} - \E\qth{\theta_1 | X_1, J_1} }^2 }$. 

In view of \eqbr{eq:difference}, the inequality $(a+b+c)^2\ge \frac{a^2}{3}-(b^2+c^2)$ gives
\begin{align*}
\E\qth{\|\widehat{\theta}^{\mathrm{S}} - \widehat{\theta}^{\mathrm{PI}}\|_2^2} &= n\E\qth{\pth{ \E\qth{\theta_1|X_1} - \E\qth{\theta_1|X} }^2} \\
&\ge n\pth{ \frac{1}{3}\E\qth{\pth{ \E\qth{\theta_1|X_1,J_1} - \E\qth{\theta_1|X,J} }^2} - \frac{2C\log^2 n}{n^5}}. 
\end{align*}
Next we fix any realization of $J$ with $J_1 = j\neq 0$; without loss of generality we assume that $J_2$ is the other variable taking the same value $j$. In this case, the Bayes rule gives
\begin{align*}
\E\qth{\theta_1|X_1,J_1} &= \frac{(\mu_j + 1)\varphi(X_1 - \mu_j - 1) + (\mu_j-1)\varphi(X_1 - \mu_j + 1)}{\varphi(X_1 - \mu_j - 1) + \varphi(X_1 - \mu_j + 1)} = \mu_j + \tanh(X_1 - \mu_j), \\
\E\qth{\theta_1|X,J} &= \frac{(\mu_j + 1)\varphi(X_1 - \mu_j - 1)\varphi(X_2 - \mu_j+1) + (\mu_j-1)\varphi(X_1 - \mu_j + 1)\varphi(X_2 - \mu_j - 1)}{\varphi(X_1 - \mu_j - 1)\varphi(X_2 - \mu_j+1) + \varphi(X_1 - \mu_j + 1)\varphi(X_2-\mu_j-1)} \\
&= \mu_j + \tanh(X_1 - X_2), 
\end{align*}
where we let $\mu_j := 8(j-1)\sqrt{\log n}$ be the center of $I_j$, and $\varphi(\cdot)$ is the standard normal density. Conditioned on such a realization of $J$, the joint distribution of $(X_1 - \mu_j, X_2 - \mu_j)$ is a balanced two-component Gaussian mixture with means $\sth{(-1,1), (1,-1)}$ and identity covariance, independent of $J$. Denote by $(Z_1, Z_2)$ the random vector with the above law, we have
\begin{align*}
\E\qth{\pth{ \E\qth{\theta_1|X_1,J_1} - \E\qth{\theta_1|X,J} }^2} &\ge \E\qth{\pth{ \E\qth{\theta_1|X_1,J_1} - \E\qth{\theta_1|X,J} }^2\indc{J_1\neq 0}} \\
&=\E\qth{\pth{\tanh(Z_1) - \tanh(Z_1-Z_2)}^2 \indc{J_1\neq 0}} \\
&=\frac{2m}{n} \E\qth{\pth{\tanh(Z_1) - \tanh(Z_1-Z_2)}^2} =: \frac{c_0 m}{n}, 
\end{align*}
for some absolute constant $c_0 > 0$. Therefore, we have established that
\begin{align*}
\E\qth{\|\widehat{\theta}^{\mathrm{S}} - \widehat{\theta}^{\mathrm{PI}}\|_2^2} \ge n\pth{ \frac{c_0m}{3n} - \frac{2C\log^2 n}{n^5}}, 
\end{align*}
and the claimed result follows from the choice of $m = \Omega\pth{\min\sth{n, \mu/\sqrt{\log n}}}$ and the assumption $n\ge n_0$ for a large enough constant $n_0$. 

\subsection{Poisson compound decision problem: Proof of \prettyref{thm:PI-Poisson}}

Recall the setting of this theorem is as follows:
$(p_1^\star, \dots, p_k^\star)$ is the true pmf, $\theta_j = np_{\pi(j)}^\star$ with $\pi\sim \mathrm{Unif}(S_k)$ is the actual Poisson rate, and the observation count vector is $N=(N_1, \dots, N_k)$ with conditionally independent $N_j\sim \Poi(\theta_j)$. The following theorem is an equivalent statement to Theorem \ref{thm:PI-Poisson}\ifthenelse{\boolean{arxiv}}{}{ in the main text} (after applying a scaling factor $n^{-1}$): 
\begin{theorem}\label{thm:PIoracle_Poisson}
For some absolute constant $C<\infty$, it holds that
\begin{align*}
\sum_{i=1}^k\E\qth{ \pth{ \E\qth{\theta_j | N}\log \frac{\E\qth{\theta_j | N}}{\E\qth{\theta_j | N_j}} - \E\qth{\theta_j | N} + \E\qth{\theta_j | N_j} }} \le C\pth{\min\sth{k, n^{1/3}}\log^2 n + \log^3 n}. 
\end{align*}
\end{theorem}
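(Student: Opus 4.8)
The plan is to run, in the Poisson model, the same three-step argument used for the Gaussian warm-up (the proof of \prettyref{thm:PIoracle_Gaussian}), with the thinning property of Poisson laws playing the role of Gaussian divisibility. Write $a_i:=\E[\theta_i\mid N]$ and $b_i:=\E[\theta_i\mid N_i]$, so that $\widehat p^{\PI}_i=a_i/n$, $\bar p^{\Sym}_i=b_i/n$ (cf. \prettyref{eq:PI_oracle_intro}, \prettyref{eq:separable_oracle_intro}) and $\E[a_i\mid N_i]=b_i$; the quantity to bound is $\sum_i\E[a_i\log(a_i/b_i)-a_i+b_i]$. Using $x\log(x/y)-x+y\le (x-y)^2/(2\min(x,y))$ and splitting according to whether $N_i\le y_0$ or $N_i>y_0$ with $y_0=C_1\log^2(nk)$, exactly as in the proofs of \prettyref{prop:reg_sep_smally} and \prettyref{prop:reg_sep_largey}: the low-count part is dispatched by the crude deterministic bounds of \prettyref{lem:bayes_deviation} together with \prettyref{lemma:chernoff}, and off a negligible event where $f_{G_k}(N_i)$ is super-polynomially small, \prettyref{lem:bayes_deviation} gives $b_i\asymp N_i$ for $N_i>y_0$, which (with $\E[a_i\mid N_i]=b_i$ and a crude posterior second-moment estimate) lets one replace $\min(a_i,b_i)^{-1}$ by $O(N_i^{-1})$ up to negligible corrections. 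Thus everything reduces to the core estimate $\sum_i\E\big[(a_i-b_i)^2N_i^{-1}\indc{N_i>y_0}\big]=\widetilde O(k\wedge n^{1/3})$, which is the content of \prettyref{lemma:main_inequality} invoked in the proof of \prettyref{prop:reg_sep_largey}. For \emph{Step 1} (interpolation by thinning), couple $N$ with an auxiliary count vector $Z$ by drawing, conditionally on $\theta$, $Z_i\sim\Poi(2\theta_i)$ independently and then $N_i\mid Z_i\sim\Binom(Z_i,\tfrac12)$; Poisson splitting reproduces the correct joint law of $(\theta,N)$, makes $\theta_i-Z_i-N_i$ a Markov chain, and gives $Z_i=N_i+R_i$ with $R_i\mid\theta_i\sim\Poi(\theta_i)$ and $R_i\perp N\mid\theta$, whence the Poisson analogue of \prettyref{eq:interpolation_Gaussian}: $a_i-b_i=\E[R_i-N_i\mid N]-\E[R_i-N_i\mid N_i]$.

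For \emph{Step 2} (information-theoretic reduction), work with the $N_i$-centered increments $R_i-N_i$: conditionally on $\theta_i$, both $R_i$ and $N_i$ are $\Poi(\theta_i)$ and hence, by \prettyref{lemma:chernoff}, concentrate within $O(\sqrt{N_i}\log(nk))$ of each other, so after truncating on this high-probability event the relevant variable is bounded, given $N_i$, by $O(\sqrt{N_i}\log(nk))$. A Pinsker/Tao-type inequality applied to these bounded increments—exactly the chain of inequalities producing \prettyref{eq:Gaussian_stepII}, but with differential entropies replaced by discrete entropies since the $Z_i$ are counts—combined with the $1/N_i$ weight in the core estimate (which cancels the $N_i$ coming from the Pinsker bound, leaving only $\log^{O(1)}(nk)$ factors), gives $\sum_i\E[(a_i-b_i)^2N_i^{-1}\indc{N_i>y_0}]\le \widetilde O(1)\cdot\sum_i I(Z_i;N_{-i}\mid N_i)+\widetilde O(k\wedge n^{1/3})$; then the subadditivity of entropy and of mutual information (using $P_{N\mid Z}=\prod_i P_{N_i\mid Z_i}$) yields $\sum_i I(Z_i;N_{-i}\mid N_i)\le \DKL\big(P_Z\big\|\prod_i P_{Z_i}\big)$, the analogue of \prettyref{eq:KLPI}. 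So the core estimate reduces to bounding $\DKL(P_Z\|\prod_i P_{Z_i})$ by $\widetilde O(k\wedge n^{1/3})$.

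For \emph{Step 3} (KL of the noisy permutation mixture via partitioning), since \cite[Theorem 1.2]{han2024approximate} controls $\DKL(P_Z\|\prod_i P_{Z_i})$ only when $\calP=\{\Poi(2\theta_i^\star):i\in[k]\}$ has $O(1)$ $\chi^2$-diameter, partition the range of $\theta^\star$ into cells $\{I_j\}_{j=1}^m$ within each of which $\{\Poi(2\theta):\theta_i^\star\in I_j\}$ has $\chi^2$-diameter $O(1)$: since $\chi^2(\Poi(2a)\|\Poi(2b))=e^{2(a-b)^2/b}-1$, unit-length cells in the $\sqrt{\cdot}$ coordinate (plus one cell near $0$) suffice, and—crucially using $\sum_i\theta_i^\star=n$—the $\theta_i^\star$ exceeding the threshold $M\asymp n^{2/3}\log^{O(1)}(nk)$ (beyond which $N_i$, $a_i$, $b_i$ all concentrate and contribute negligibly) number at most $O(n^{1/3})$, so $m=\widetilde O(k\wedge n^{1/3})$ (and one takes $m=k$ singleton cells when $k\le n^{1/3}$). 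Introduce cell-membership labels $J_i\in[m]$ with $\theta_i\in I_{J_i}$ and apply the chain rule as in \prettyref{eq:chain_rule}: $\DKL(P_Z\|\prod_i P_{Z_i})\le \DKL(P_J\|\prod_i P_{J_i})+\E_J\DKL(P_{Z\mid J}\|\prod_i P_{Z_i\mid J_i})$; the first term is $O(m\log k)$ by the method-of-types/Stirling computation behind \prettyref{eq:Gaussian_first_term}, and the second is $\sum_{j=1}^m\DKL(P_j\|Q_j)\le\sum_{j=1}^m\chi^2(P_j\|Q_j)=O(m)$, one application of \cite[Theorem 1.2]{han2024approximate} per cell. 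Hence $\DKL(P_Z\|\prod_i P_{Z_i})=\widetilde O(k\wedge n^{1/3})$, and combining with Steps 1--2 gives the core estimate and thus the theorem.

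The step I expect to be the main obstacle is Step 3: constructing a partition that is simultaneously fine enough ($O(1)$ per-cell $\chi^2$-diameter) and coarse enough ($m=\widetilde O(n^{1/3})$ rather than $\widetilde O(\sqrt n)$). Poisson heteroscedasticity forces a $\sqrt{\cdot}$-scale partition, which by itself would give $\Theta(\sqrt n)$ cells over the full range $[0,n]$; only the normalization $\sum_i\theta_i^\star=n$ permits truncating the range at $\asymp n^{2/3}$ and thereby collapsing the cell count to $n^{1/3}$, and the truncated tail must be shown to cost at most the target order—this interacts with the bookkeeping of Step 2. A secondary difficulty, interlocking with this, is the treatment in Step 2 of the unbounded, heteroscedastic increments $R_i$: recentering at $N_i$ and exploiting the $1/N_i$ weight is what keeps the Pinsker/Tao step to a loss of only logarithmic factors, and making this work uniformly over the cells is precisely what makes \prettyref{lemma:main_inequality} substantially more involved than its Gaussian counterpart.
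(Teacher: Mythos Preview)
Your plan for the main inequality (\prettyref{lemma:main_inequality}) is essentially what the paper does: Poisson thinning for Step~1, Pinsker plus truncation for Step~2, and partitioning into cells of bounded $\chi^2$-diameter plus \cite[Theorem~1.2]{han2024approximate} for Step~3. One difference worth noting in Step~3: rather than truncating the range at $n^{2/3}$ and handling large $\theta_i^\star$ separately, the paper partitions the \emph{entire} range $[0,n]$ into $m=\lceil\sqrt{n+1}\rceil$ quadratic cells $I_\ell=[(\ell-1)^2,\ell^2)$ and then observes that the number $m_0$ of \emph{non-empty} cells satisfies
\[
n=\sum_j\theta_j^\star\ge\sum_{\ell}k_\ell(\ell-1)^2\ge\sum_{\ell=1}^{m_0}(\ell-1)^2=\Omega(m_0^3),
\]
hence $m_0=O(n^{1/3})$. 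This sidesteps your concern about the ``truncated tail'' entirely, since empty cells cost nothing in either the method-of-types term or the per-cell $\chi^2$ bound.

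There is, however, a genuine gap in your treatment of the \emph{small-count} part. You propose to handle $N_j\le y_0$ via ``crude deterministic bounds of \prettyref{lem:bayes_deviation}'', mimicking \prettyref{prop:reg_sep_smally}--\prettyref{prop:reg_sep_largey}. But those propositions compare two \emph{Bayes rules} $\theta_{G_k}(N_i)$ and $\theta_{\widehat G}(N_i;\rho)$, to each of which \prettyref{lem:bayes_deviation} applies. The PI oracle $a_j=\E[\theta_j\mid N]$ is not of this form---no deterministic bound like $|a_j-N_j|\lesssim\sqrt{N_j}\log(\cdot)$ is available for it---and for small $N_j$ the denominator $b_j=\theta_{G_k}(N_j)$ can be arbitrarily small, so $(a_j-b_j)^2/b_j$ is not controlled. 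The paper's device is to relax the PI oracle to the \emph{natural oracle} via convexity of $x\mapsto x\log x$: since $a_j=\E[\theta_j\mid N]$, Jensen gives
\[
\sum_j\E\qth{\Big(a_j\log\frac{a_j}{b_j}-a_j+b_j\Big)\indc{N_j=y}}\le \sum_j\E\qth{\Big(\tilde a_j\log\frac{\tilde a_j}{b_j}-\tilde a_j+b_j\Big)\indc{N_j=y}}
\]
with $\tilde a_j:=\E[\theta_j\mid N,S_y]$ and $S_y=\sum_\ell\theta_\ell\indc{N_\ell=y}$. By symmetry $\tilde a_j=S_y/\Phi_y$ on $\{N_j=y\}$, so the small-count sum collapses to $\E[S_y\log(S_y/(b_y\Phi_y))-S_y+b_y\Phi_y]$, which is then shown to be $O(\log^2 n)$ for each fixed $y\le 100\log n$ by a four-case analysis on the magnitudes of $\sum_j\Poi(y;\theta_j^\star)$ and $\sum_j\theta_j^\star\Poi(y;\theta_j^\star)$. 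This natural-oracle relaxation is the missing idea; it is also the source of the additive $\log^3 n$ in the theorem (summing $O(\log^2 n)$ over $O(\log n)$ values of $y$), and it is why the paper splits at $100\log n$ rather than your $C_1\log^2(nk)$.

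A smaller point on the large-count bridge: you need $b_j\gtrsim N_j$ to convert to the core estimate, and you suggest this follows from \prettyref{lem:bayes_deviation} ``off a negligible event where $f_{G_k}(N_j)$ is super-polynomially small''. The paper's actual argument is more delicate---it conditions on $E_j=\{\theta_j/2\le N_j\le 2\theta_j\}$ and proves directly (see \eqref{eq:Ej_implication}) that on $E_j\cap\{N_j\ge 100\log n\}$ the posterior mass on $\{\theta_\ell^\star\ge N_j/4\}$ dominates, giving $b_j\ge N_j/8$. Also, using the bound $a\log(a/b)-a+b\le (a-b)^2/b$ (as the paper does) rather than your $(a-b)^2/(2\min(a,b))$ avoids the need for a lower bound on $a_j$.
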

We first note that the analogue of \eqbr{eq:GR-result} (established in \cite{greenshtein2009asymptotic}) in the Poisson model is insufficient to establish \Cref{thm:PIoracle_Poisson}: a central limit theorem $\Poi(\theta) \approx N(\theta,\theta)$ relating the Poisson and Gaussian models will effectively lead to $\mu$ on the order of $\mathrm{poly}(n)$ in \eqbr{eq:GR-result}, leading to an exponentially large bound. In contrast, our improvement via the ``direct sum'' view in the Gaussian setup turns out to be critical in the Poisson setting, where it could be shown that the number of ``clusters'' is precisely $\widetilde{O}(\min\sth{k,n^{1/3}})$ due to Poisson concentration and the normalization  $\sum_{j=1}^k p_j^\star = 1$.

Despite the above connection, the proof of \Cref{thm:PIoracle_Poisson} is considerably more involved and will be split into three parts. In the first part, we state and prove the counterpart of \Cref{thm:PIoracle_Gaussian} in the Poisson model, as our ``main inequality''. In the second part, we control the contributions of large counts $N_j = \Omega(\log n)$ in \eqbr{eq:large_counts} and relate the target quantity to our main inequality via Poisson concentration. In the third part, we handle the small counts $N_j = O(\log n)$ in \eqbr{eq:small_counts} via a relaxation of the permutation-invariant oracle to the natural oracle, a technique tracing back to \cite{orlitsky2015competitive}. Finally, Theorem \ref{thm:PIoracle_Poisson} is a direct consequence of \eqbr{eq:large_counts} and \eqbr{eq:small_counts}. 

\subsubsection{The main inequality}
In this section, we state and prove the following result: 
\begin{lemma}\label{lemma:main_inequality}
Under the setup of \Cref{thm:PIoracle_Poisson}, it holds that
\begin{align*}
\sum_{j=1}^k \E\qth{ \frac{(\E\qth{\theta_j | N} - \E\qth{\theta_j | N_j} )^2}{N_j} \indc{N_j \ge 100\log n} } \le C\min\sth{k,n^{1/3}}\log^2 n, 
\end{align*}
where $C<\infty$ is an absolute constant. 
\end{lemma}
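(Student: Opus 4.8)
The plan is to follow the three-step template behind \Cref{thm:PIoracle_Gaussian}, substituting the infinite divisibility of the Poisson law for that of the Gaussian. First a preliminary reduction. Let $B_i:=\indc{\theta_i\ge 10\log n}$; since $N_i\asymp\theta_i$ off an event of probability $n^{-\omega(1)}$ (by the Chernoff bounds of \Cref{lemma:chernoff}), the type vector $B=(B_i)$ is determined by $N$ up to a negligible error, and conditioning on $B$ makes the ``large-rate'' and ``small-rate'' slots conditionally independent (a product of two independent uniform permutations of the respective rate multisets, with a product channel). For a small-rate slot, $\indc{N_j\ge 100\log n}$ forces $\theta_j<10\log n$, an event of probability $n^{-\omega(1)}$ per coordinate, so the total contribution of those slots to the left-hand side is $n^{-\omega(1)}$; hence we may assume all relevant rates satisfy $\theta_j\ge 10\log n$, and in particular that there are at most $\min\{k,n/\log n\}$ of them. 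The key device is then a stochastic interpolation: introduce a fictitious copy $N_j^{(2)}\sim\Poi(\theta_j)$, conditionally independent of $N$ given $\theta$, and set $Z_j:=N_j+N_j^{(2)}$, so that $Z_j\mid\theta_j\sim\Poi(2\theta_j)$ is a ``cleaner'' observation of $\theta_j$ than $N_j$, while $N_j\mid Z_j\sim\Binom(Z_j,\tfrac12)$ by Poisson thinning. Two structural facts make $Z$ useful: the chain $\theta\to Z\to N$ is Markov with a \emph{product} channel $Z\to N$, and $\E[N_j^{(2)}|N]=\E[\theta_j|N]$, $\E[N_j^{(2)}|N_j]=\E[\theta_j|N_j]$, so with $g_j:=N_j^{(2)}-N_j=Z_j-2N_j$ one has the exact identity
\begin{align*}
\E[\theta_j|N]-\E[\theta_j|N_j]=\E[g_j|N]-\E[g_j|N_j].
\end{align*}

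For the information-theoretic step, center on $N_j$ and truncate: conditioned on $N_j$, the increment $g_j$ is (a posteriori) a centered Skellam-type quantity with exponential tails at scale $\sqrt{N_j}$, so replacing $g_j$ by $\widetilde g_j:=g_j\,\indc{|g_j|\le C\sqrt{N_j\log n}}$ changes $\E[g_j|N]-\E[g_j|N_j]$ by $n^{-\omega(1)}$ (using the Skellam concentration together with the exponential posterior tail of $\theta_j$ given a large $N_j$, plus a crude bound on the untruncated tail). Tao's inequality (a consequence of Pinsker, cf.\ \cite[Corollary 7.11]{polyanskiy2024information}), applied conditionally on $N_j$, then gives
\begin{align*}
\E\big[(\E[\widetilde g_j|N]-\E[\widetilde g_j|N_j])^2\,\big|\,N_j\big]\le 2\|\widetilde g_j\|_\infty^2\, I(\widetilde g_j;N_{-j}\mid N_j)\le 2C^2N_j\log n\cdot I(Z_j;N_{-j}\mid N_j),
\end{align*}
the last step by the data-processing inequality. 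The crucial point is that the factor $N_j$ is \emph{exactly} cancelled by the $1/N_j$ weight in the statement (this is the Poisson analogue of Gaussian homoscedasticity), so after taking expectations, summing over the $\le\min\{k,n/\log n\}$ relevant (exchangeable) coordinates, and invoking the entropy-subadditivity bound $I(Z_1;N_{-1}\mid N_1)\le\frac1k\DKL(P_Z\|\prod_i P_{Z_i})$ from the proof of \Cref{thm:PIoracle_Gaussian} (valid verbatim with Shannon entropy in place of differential entropy, because $Z\to N$ is a product channel), we obtain
\begin{align*}
\sum_{j=1}^k\E\bigg[\frac{(\E[\theta_j|N]-\E[\theta_j|N_j])^2}{N_j}\indc{N_j\ge 100\log n}\bigg]\lesssim \log n\cdot\DKL\Big(P_Z\Big\|\prod_i P_{Z_i}\Big)+n^{-\omega(1)}.
\end{align*}

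It remains to bound $\DKL(P_Z\|\prod_i P_{Z_i})$ for the reduced, all-large-rate problem. Partition the rate range into intervals of width $\asymp\sqrt\theta$ so that within each interval the family $\{\Poi(2\theta)\}$ has $\chi^2$-diameter $O(1)$; let $J_i\in\mathbb N$ be the index with $\theta_i\in I_{J_i}$ and $m'$ the number of non-empty intervals. By the chain rule,
\begin{align*}
\DKL\Big(P_Z\Big\|\prod_i P_{Z_i}\Big)\le\DKL\Big(P_J\Big\|\prod_i P_{J_i}\Big)+\E_J\Big[\DKL\Big(P_{Z\mid J}\Big\|\prod_i P_{Z_i\mid J_i}\Big)\Big].
\end{align*}
The first term is $O(m'\log k)$ by the method-of-types (Stirling) computation already carried out in \eqref{eq:Gaussian_first_term}. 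For the second, conditioning on $J$ factors $P_{Z\mid J}$ over intervals into permutation mixtures, each of which---by the bounded $\chi^2$-diameter---lies within $O(1)$ of its i.i.d.\ approximation by \cite[Theorem 1.2]{han2024approximate}, giving $O(m')$. Finally $m'=O(\min\{k,n^{1/3}\})$: the relevant rates lie in $[10\log n,n]$, i.e.\ in $O(\log n)$ dyadic blocks, and the block $[2^\ell,2^{\ell+1})$ holds at most $n2^{-\ell}$ of the rates (since $\sum_i\theta_i=n$) while meeting $O(2^{\ell/2})$ intervals, so $m'\lesssim\sum_\ell\min\{n2^{-\ell},2^{\ell/2}\}=O(n^{1/3})$ (the sum being geometric on either side of $2^\ell\asymp n^{2/3}$), and trivially $m'\le k$. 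Hence $\DKL(P_Z\|\prod_i P_{Z_i})=O(\min\{k,n^{1/3}\}\log k)$, and combined with the previous display this yields the claimed bound $O(\min\{k,n^{1/3}\}\log^2 n)$.

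The main obstacle is the information-theoretic step. Unlike the homoscedastic Gaussian model, the Poisson noise scale $\sqrt{\theta_j}$ sweeps a polynomial band, so one must (i) verify that conditioning on $N_j\ge 100\log n$ pins $\theta_j$ to within a constant factor of $N_j$ off a super-polynomially small set---which needs a case split between ``likely'' and ``unlikely'' realizations of $N_j$, the latter absorbed into the crude bound---and (ii) choose the truncation level precisely at $\asymp\sqrt{N_j\log n}$ so that the $\|\widetilde g_j\|_\infty^2$ factor produced by Tao's inequality is neutralized \emph{exactly} by the $1/N_j$ weight; it is this alignment, not any slack, that prevents the bound from blowing up with the range of $\theta$. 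A secondary but necessary point is the preliminary decoupling of large- and small-rate slots, without which the many small rates (each occupying its own $O(1)$-$\chi^2$-diameter cluster near the origin) would push $m'$ up to $k$ and break the $n^{1/3}$ bound when $k\gg n^{1/3}$.
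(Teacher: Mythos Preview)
Your three-step skeleton---Poisson thinning to introduce $Z$, Tao/Pinsker to pass to $\DKL(P_Z\|\prod_i P_{Z_i})$, then a chain-rule/clustering bound via \cite{han2024approximate}---is exactly the paper's approach, and your handling of Steps I--II matches the paper closely, including the key truncation at scale $\sqrt{N_j\log n}$ whose $\|\widetilde g_j\|_\infty^2$ factor is neutralized by the $1/N_j$ weight.

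The one place you deviate is the preliminary decoupling of large- and small-rate slots, and this step is both unnecessary and not quite right as stated. The claim that $B=(\indc{\theta_i\ge 10\log n})_i$ is determined by $N$ up to a negligible error fails whenever some $\theta_i^\star$ sit near the threshold: for those coordinates $N_i$ cannot reliably recover $B_i$, so the asserted conditional-independence factorization does not go through cleanly, and in any case the oracles $\E[\theta_j\mid N]$ and $\E[\theta_j\mid N_j]$ in the statement are defined with respect to the \emph{full} permutation mixture, not the reduced one. The paper avoids this entirely: it keeps all rates, uses the quadratic grid $I_\ell=[(\ell-1)^2,\ell^2)$, and for the single interval $I_1=[0,1)$ that absorbs all small rates it invokes the $\chi^2$-\emph{capacity} criterion from \cite[Theorem 1.2]{han2024approximate} (which is bounded away from $1$ even though the $\chi^2$-\emph{diameter} over $I_1$ is infinite). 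The count $m_0=O(n^{1/3})$ then falls out of the one-line inequality $n=\sum_j\theta_j^\star\ge\sum_\ell k_\ell(\ell-1)^2\ge\Omega(m_0^3)$, in place of your dyadic computation.
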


Lemma \ref{lemma:main_inequality} is the counterpart of \Cref{thm:PIoracle_Gaussian} in the Poisson model, which adds the normalizing factor $N_j\approx \theta_j$ on the denominator to account for the dispersion of Poisson distribution, and the indicator $\indc{N_j\ge 100\log n}$ to operate in the subGaussian concentration regime for $N_j\sim \Poi(\theta_j)$. Similar to the proof of \Cref{thm:PIoracle_Gaussian}, the proof of Lemma \ref{lemma:main_inequality} also takes three steps. 

\paragraph{Step I: introducing an auxiliary vector $Z$.} Similar to the Gaussian case, we construct an auxiliary vector $Z=(Z_1,\dots,Z_k)$ to interpolate between $\theta$ and $N$. Our construction is as follows: for each $j\in [k]$, let $\theta_j - Z_j - N_j$ form a Markov chain, with the distributions $Z_j\sim \Poi(2\theta_j)$ given $\theta_j$, and $N_j\sim \mathrm{B}(Z_j, 1/2)$ given $Z_j$ (the binomial distribution with $Z_j$ trials and success probability $1/2$). By the Poison subsampling property, marginally $N_j\sim \Poi(\theta_j)$ follows the target distribution. 

Next we show that
\begin{align}\label{eq:interpolation_poisson}
\E\qth{Z_j | N_j} = N_j + \E\qth{\theta_j | N_j}. 
\end{align}
In fact, by Poisson subsampling property again, $N_j$ and $Z_j - N_j$ are independent $\Poi(\theta_j)$ random variables given $\theta_j$. By the tower property, 
\begin{align*}
\E\qth{Z_j - N_j | N_j} = \E\sth{ \E\qth{Z_j - N_j | N_j, \theta_j} | N_j } = \E\sth{ \E\qth{Z_j - N_j | \theta_j} | N_j } = \E\sth{\theta_j | N_j}, 
\end{align*}
which proves \eqbr{eq:interpolation_poisson}. Similarly we can prove that $\E\qth{Z_j | N} = N_j + \E\qth{\theta_j | N}$, and therefore
\begin{align}\label{eq:poisson_stepI}
\E\qth{\theta_j | N} - \E\qth{\theta_j | N_j} = \E\qth{Z_j | N} - \E\qth{Z_j | N_j}. 
\end{align}

\paragraph{Step II: an information-theoretic upper bound.} Based on \eqbr{eq:poisson_stepI} and the triangle inequality $(a+b+c)^2 \le 3(a^2+b^2+c^2)$, we have
\begin{align*}
&\E\qth{ \frac{(\E\qth{\theta_j | N} - \E\qth{\theta_j | N_j} )^2}{N_j} \indc{N_j \ge 100\log n} } \\
&= \E\qth{ \frac{(\E\qth{Z_j | N} - \E\qth{Z_j | N_j} )^2}{N_j} \indc{N_j \ge 100\log n} } \\
&= \E\qth{ \frac{(\E\qth{Z_j - 2N_j | N} - \E\qth{Z_j - 2N_j | N_j} )^2}{N_j} \indc{N_j \ge 100\log n} } \\
&\le 3\E\qth{ \frac{(\E\qth{(Z_j - 2N_j)\indc{E} | N} - \E\qth{(Z_j - 2N_j)\indc{E} | N_j} )^2}{N_j} \indc{N_j \ge 100\log n} } \\
&\qquad + 3\E\qth{ \frac{(\E\qth{(Z_j - 2N_j)\indc{E^c} | N})^2 + (\E\qth{(Z_j - 2N_j)\indc{E^c} | N_j})^2}{N_j} \indc{N_j \ge 100\log n} } \\
&=: 3(A_1 + A_2),  
\end{align*}
where the event $E$ is defined as $\sth{ |Z_j - 2N_j|\le 100\sqrt{N_j\log n} }$. We start from the main term $A_1$. Writing $X:=(Z_j - 2N_j)\indc{E}$, by the upper bound of $|X|\le 100\sqrt{N_j\log n}$ and Pinsker's inequality, 
\begin{align*}
    A_1 \le \E\qth{ \frac{(\E\qth{X|N} - \E\qth{X|N_j})^2}{N_j} } &\lesssim \log n\cdot \E\qth{\mathrm{TV}(P_{X|N},P_{X|N_j})^2} \\
    &\le \log n \cdot \E\qth{\DKL(P_{X|N} \| P_{X|N_j})} \\
    &= \log n\cdot I(X;N|N_j) \le \log n\cdot I(Z_j; N| N_j). 
\end{align*}
Noticing the Markov structure $\theta_j-Z_j-N_j$, following the same analysis in the Gaussian case gives the upper bound
\begin{align*}
A_1 \lesssim \frac{\log n}{k}\DKL\pth{P_{Z} \bigg\| \prod_{j=1}^k P_{Z_j}}. 
\end{align*}

As for the remainder term $A_2$, by repeated applications of Cauchy--Schwarz, we have
\begin{align*}
    A_2 &\le 2 \E\qth{ \frac{(Z_j-2N_j)^2\indc{E^c}}{N_j}\indc{N_j\ge 100\log n} } \\
    &\le \frac{1}{50\log n} \E\qth{ (Z_j-2N_j)^2 \indc{E^c, N_j\ge 100\log n} } \\
    &= \frac{1}{50k\log n} \sum_{\ell=1}^k \E\qth{ (Z_j-2N_j)^2 \indc{E^c, N_j\ge 100\log n} | \theta_j = \theta_\ell^\star} \\
    &\le \frac{1}{50k\log n} \sum_{\ell=1}^k \E^{1/2}\qth{ (Z_j-2N_j)^4 | \theta_j = \theta_\ell^\star} \P^{1/2}\pth{E^c, N_j\ge 100\log n | \theta_j = \theta_\ell^\star}, 
\end{align*}
where $\theta_\ell^\star := np_\ell^\star$ is deterministic. Since $\theta_\ell^\star \le n$ and $Z_j - 2N_j = (Z_j - N_j) - N_j$ is the difference of two $\Poi(\theta_\ell^\star)$ random variables conditioned on $\theta_j = \theta_\ell^\star$, we have
\begin{align*}
    \E^{1/2}\qth{ (Z_j-2N_j)^4 | \theta_j = \theta_\ell^\star} = O\pth{n^2}. 
\end{align*}
For the second term, we distinguish into two cases. If $\theta_\ell^\star \le 20\log n$, the Chernoff bound (cf. Lemma \ref{lemma:chernoff}) gives
\begin{align*}
    \P\pth{E^c, N_j\ge 100\log n | \theta_j = \theta_\ell^\star} &\le  \P\pth{N_j\ge 100\log n | \theta_j = \theta_\ell^\star} \\
    &\le \pth{\frac{e\theta_{\ell}^\star}{100\log n}}^{100\log n} = O\pth{\frac{(\theta_{\ell}^\star)^2}{n^{10}}}. 
\end{align*}
If $\theta_{\ell}^\star > 20\log n$, note that when $Z_j\le 8N_j$, the event $E^c$ implies that
\begin{align*}
\left|\sqrt{N_j} - \sqrt{\frac{Z_j}{2}} \right| = \frac{|N_j - Z_j/2|}{\sqrt{N_j} + \sqrt{Z_j/2}} \ge \frac{100\sqrt{N_j\log n}}{\sqrt{N_j} + 2\sqrt{N_j}} > 10\sqrt{\log n}. 
\end{align*}
This lower bound also holds trivially when $Z_j > 8N_j$ and $N_j \ge 100\log n$. Therefore, the Chernoff bound in Lemma \ref{lemma:chernoff} applied to $N_j\sim \mathrm{B}(Z_j, \frac{1}{2})$ gives
\begin{align*}
\P\pth{E^c, N_j\ge 100\log n | \theta_j = \theta_\ell^\star} \le \P\pth{ \left|\sqrt{N_j} - \sqrt{\frac{Z_j}{2}} \right| > 10\sqrt{\log n} \bigg| \theta_j = \theta_\ell^\star }\le \frac{2}{n^{10}}. 
\end{align*}
Combining the above displays yields
\begin{align*}
A_2 \lesssim \frac{n^2}{k\log n}\pth{\sum_{\ell: \theta_{\ell}^\star \le 20\log n} \frac{\theta_{\ell}^\star}{n^5} + \sum_{\ell: \theta_{\ell}^\star > 20\log n} \frac{1}{n^{5}}} = O\pth{\frac{1}{n^2k}}, 
\end{align*}
where the last step uses the identity $\sum_{\ell=1}^k \theta_{\ell}^\star = n$. Summarizing the upper bounds of $A_1$ and $A_2$ leads to the final upper bound of this step: 
\begin{align}\label{eq:poisson_stepII}
    \sum_{j=1}^k \E\qth{ \frac{(\E\qth{\theta_j | N} - \E\qth{\theta_j | N_j} )^2}{N_j} \indc{N_j \ge 100\log n} } = O\pth{\DKL\pth{P_{Z} \bigg\| \prod_{j=1}^k P_{Z_j}}\log n + \frac{1}{n^2}}. 
\end{align}

\paragraph{Step III: analysis of Poisson permutation mixture.} This step is again similar to the Gaussian case, with different definitions of the intervals $I_1, \dots, I_m$. As entries of $\theta^\star$ lie in $[0, n]$, we define the following quadratic grid motivated by the Poisson concentration: 
\begin{align*}
I_{\ell} = [(\ell-1)^2, \ell^2), \qquad \ell = 1, \dots, m = \lceil \sqrt{n+1} \rceil. 
\end{align*}
It is clear that $[0,n] \subseteq \cup_{\ell=1}^m I_\ell$. Similar to the proof in the Gaussian case, for $j\in [k]$ let $L_j\in [m]$ be the index where $\theta_j\in I_{L_j}$, and the same analysis in \eqbr{eq:chain_rule} leads to
\begin{align*}
\DKL\pth{ P_Z \bigg\| \prod_{j=1}^k P_{Z_i} } \le \DKL\pth{ P_{L} \bigg\| \prod_{j=1}^k P_{L_i} } + \E_L\qth{ \DKL\pth{ P_{Z|L} \bigg\| \prod_{j=1}^k P_{Z_j|L_j} } } =: B_1 + B_2. 
\end{align*}
We begin with the upper bound of $B_1$. For $\ell\in [m]$, let $k_\ell = \sum_{j=1}^k \indc{L_j = \ell}=\sum_{j=1}^k \mathbf{1}\{\theta_j^\star = \ell\}$ be the number of coordinates of $\theta^\star$ lying in $I_\ell$. Again, $(k_1,\dots,k_m)$ is a deterministic vector, and $P_L$ is the uniform distribution over $[m]^k$ with the given type $(k_1,\dots,k_m)$. Therefore,
\begin{align*}
    B_1 = kH(L_1) - H(L) = \sum_{j=1}^k k_j \log \frac{k}{k_j} - \log \binom{k}{k_1,\dots,k_m}. 
\end{align*}
To proceed, we shall need the following inequality: 
\begin{align}\label{eq:effective_support}
m_0 := \sum_{\ell=1}^m \indc{k_\ell > 0} = O\pth{\min\sth{k, n^{1/3}}}. 
\end{align}
The quantity $m_0$ represents the effective number of ``clusters'' in the Poisson model. To show \eqbr{eq:effective_support}, the first upper bound $m_0\le k$ is a direct consequence of $\sum_{\ell=1}^m k_{\ell} = k$. For the second upper bound $m_0 = O(n^{1/3})$, note that
\begin{align*}
n = \sum_{j=1}^k \theta_j^\star \ge \sum_{\ell=1}^m k_\ell (\ell-1)^2 \ge \sum_{\ell=1}^{m_0} (\ell-1)^2 = \Omega(m_0^3) \Longrightarrow m_0 = O\pth{n^{1/3}}. 
\end{align*}
Based on \eqbr{eq:effective_support}, we use Stirling's approximation to upper bound $B_1$ as
\begin{align*}
B_1 &\le \sum_{\ell=1}^m k_{\ell}\log \frac{k}{k_{\ell}} - k\log\frac{k}{e} - \frac{1}{2}\log(2\pi k) + \sum_{\ell: k_{\ell}>0} \pth{k_{\ell}\log\frac{k_\ell}{e} + \frac{1}{2}\log(8\pi k_\ell) } \\
&=-\frac{1}{2}\log(2\pi k)+ \frac{1}{2}\sum_{\ell: k_{\ell}>0}\log(8\pi k_{\ell}) \\
&\stepa{\le} -\frac{1}{2}\log(2\pi k)+ \frac{m_0\log(8\pi)}{2} + \frac{1}{2}\log(n^{m_0-1}k) \overset{\eqbr{eq:effective_support}}{=} O\pth{\min\sth{k,n^{1/3}}\log n}, 
\end{align*}
where (a) is a crude upper bound $\prod_{\ell: k_{\ell}>0} k_{\ell} \le n^{m_0-1}k$, for there are $m_0$ non-zero terms in $(k_{\ell})_{\ell=1}^m$, with $k_1\le k$ and $k_2,\dots,k_{m}\le n$. 

As for $B_2$, we employ the same proof in the Gaussian case to arrive at
\begin{align*}
\DKL\pth{ P_{Z|L} \bigg\| \prod_{j=1}^k P_{Z_j | L_j} } = \sum_{\ell=1}^m \DKL\pth{ P_{\ell} \| Q_{\ell} } \le \sum_{\ell=1}^m \chi^2\pth{ P_{\ell} \| Q_{\ell} }, 
\end{align*}
where $P_\ell$ and $Q_\ell$ are the permutation mixture and its i.i.d.~approximation for the set of distributions $\calP_{\ell} := \{\Poi(2\theta_j^\star): \theta_j^\star\in I_{\ell}\}$. In the proof of \cite[Corollary 1.3]{han2024approximate}, it was shown that $\mathsf{C}_{\chi^2}(\calP_1)\le 1-c$ for an absolute constant $c>0$, and $\mathsf{D}_{\chi^2}(\calP_\ell) = O(1)$ for every $\ell \ge 2$, where $\mathsf{C}_{\chi^2}, \mathsf{D}_{\chi^2}$ are the $\chi^2$ channel capacity and diameter defined in \cite{han2024approximate}. Therefore, \cite[Theorem 1.2]{han2024approximate} shows that $\chi^2\pth{ P_{\ell} \| Q_{\ell} }  = O(1)$ for all $\ell\in [m]$. In addition, at most $m_0$ of $\calP_1, \dots, \calP_m$ is non-empty, by the definition of $m_0$ in \eqbr{eq:effective_support}. Combining these observations leads to
\begin{align*}
    B_2 = O(m_0) = O\pth{ \min\sth{k, n^{1/3}} }. 
\end{align*}
A combination of the upper bounds of $B_1$ and $B_2$ yields the main result of this step: 
\begin{align*}
\DKL\pth{ P_Z \bigg\| \prod_{j=1}^k P_{Z_j} } = O\pth{\min\sth{k,n^{1/3}}\log n}.  
\end{align*}
Plugging this result into \eqbr{eq:poisson_stepII} completes the proof of Lemma \ref{lemma:main_inequality}. \qed

\subsubsection{Controlling the contributions of large counts}
The target inequality of this section is
\begin{align}\label{eq:large_counts}
\sum_{j=1}^k \E\qth{ T_j\indc{N_j\ge 100\log n} } = O\pth{\min\sth{k,n^{1/3}}\log^2 n}, 
\end{align}
where we use the shorthand
\begin{align*}
    T_j := \E\qth{\theta_j | N}\log \frac{\E\qth{\theta_j | N}}{\E\qth{\theta_j | N_j}} - \E\qth{\theta_j | N} + \E\qth{\theta_j | N_j}. 
\end{align*}
To this end, we define a good event $E_j := \sth{ \theta_j/2 \le N_j \le 2\theta_j }$. Note that by Bayes rule,
\begin{align*}
\E\qth{\theta_j | N_j} = \frac{ \sum_{\ell=1}^k \theta_{\ell}^\star \Poi(N_j; \theta_{\ell}^\star) }{\sum_{\ell=1}^k \Poi(N_j; \theta_{\ell}^\star)} \ge \frac{N_j}{4} \cdot \frac{\sum_{\ell: \theta_\ell^\star \ge N_j/4} \Poi(N_j; \theta_{\ell}^\star)}{\sum_{\ell=1}^k \Poi(N_j; \theta_{\ell}^\star)}. 
\end{align*}
We claim that the event $E_j \cap \sth{N_j\ge 100\log n}$ implies
\begin{align}\label{eq:Ej_implication}
\sum_{\ell: \theta_\ell^\star < N_j/4} \Poi(N_j; \theta_{\ell}^\star) \le \sum_{\ell: N_j/2 \le \theta_\ell^\star \le 2N_j} \Poi(N_j; \theta_{\ell}^\star), 
\end{align}
which, based on the expression of $\E\qth{\theta_j | N_j}$, further implies that
\begin{align*}
\E\qth{\theta_j | N_j} \ge \frac{N_j}{4} \cdot \Big(1 - \frac{\sum_{\ell: \theta_\ell^\star < N_j/4} \Poi(N_j; \theta_{\ell}^\star)}{\sum_{\ell=1}^k \Poi(N_j; \theta_{\ell}^\star)}\Big) \geq \frac{N_j}{4} \cdot \Big(1 - \frac{\sum_{\ell: \theta_\ell^\star < N_j/4} \Poi(N_j; \theta_{\ell}^\star)}{\sum_{\ell:\theta_\ell^\star < N_j/4} + \sum_{\ell:N_j/2 \leq \theta_\ell^\star \leq 2N_j} \Poi(N_j; \theta_{\ell}^\star)}\Big)\geq \frac{N_j}{8}.
\end{align*}
To show \eqbr{eq:Ej_implication}, first we note that the index set $\sth{\ell\in [k]: N_j/2 \le \theta_\ell^\star \le 2N_j}$ is nonempty thanks to $E_j$. Therefore, the right-hand side of \eqbr{eq:Ej_implication} is at least
\begin{align*}
A := \min\sth{ \Poi(N_j; N_j/2), \Poi(N_j; 2N_j) }. 
\end{align*}
For $\theta < N_j/4$, we have
\begin{align*}
\frac{\Poi(N_j; \theta)}{A} &= \max\sth{ e^{N_j/2 - \theta}\pth{\frac{2\theta}{N_j}}^{N_j}, e^{2N_j - \theta}\pth{\frac{\theta}{2N_j}}^{N_j} } \\
&= \exp\pth{ \max\sth{N_j\log\frac{2\theta}{N_j} + \frac{N_j}{2} - \theta, N_j\log\frac{\theta}{2N_j} + 2N_j - \theta} } \\
&\le \pth{\frac{2\theta}{N_j}}^{\frac{N_j}{8}}\exp\pth{ \max\sth{\frac{7N_j}{8}\log\frac{2\theta}{N_j} + \frac{N_j}{2} - \theta, \frac{7N_j}{8}\log\frac{\theta}{2N_j} + 2N_j - \theta} } \\
&\stepa{\le} \pth{\frac{2\theta}{N_j}}^{\frac{N_j}{8}}\exp\pth{ \max\sth{\frac{1}{4}-\frac{7\log 4}{8}, \frac{7}{4}-\frac{7\log 8}{8} }N_j } \stepb{\le} \frac{\theta}{n}, 
\end{align*}
where (a) uses that both quantities are increasing in $\theta\in (0,N_j/4)$ by differentiation, and (b) follows from the assumption $N_j\ge 100\log n$ and simple algebra. Therefore, 
\begin{align*}
\sum_{\ell: \theta_\ell^\star \le N_j/4 }\frac{\Poi(N_j; \theta_\ell^\star)}{A} \le \sum_{\ell: \theta_\ell^\star \le N_j/4 } \frac{\theta_\ell^\star}{n} \le 1, 
\end{align*}
where the last step follows from $\sum_{\ell=1}^k \theta_{\ell}^\star = n$. This proves \eqbr{eq:Ej_implication}. Now using $a\log\frac{a}{b}-a+b\le \frac{(a-b)^2}{b}$ for $a,b>0$, 
\begin{align}\label{eq:large_counts_good}
\sum_{j=1}^k \E\qth{ T_j\indc{E_j, N_j\ge 100\log n} } &\le \sum_{j=1}^k \E\qth{ \frac{(\E\qth{\theta_j | N} - \E\qth{\theta_j | N_j} )^2}{ \E\qth{\theta_j | N_j} } \indc{E_j, N_j \ge 100\log n} } \nonumber \\
&\le \sum_{j=1}^k \E\qth{ \frac{(\E\qth{\theta_j | N} - \E\qth{\theta_j | N_j} )^2}{ N_j/8 } \indc{E_j, N_j \ge 100\log n} } \nonumber \\
&\le 8\sum_{j=1}^k \E\qth{ \frac{(\E\qth{\theta_j | N} - \E\qth{\theta_j | N_j} )^2}{ N_j } \indc{N_j \ge 100\log n} } \nonumber \\
&= O\pth{ \min\sth{k,n^{1/3}}\log^2 n }, 
\end{align}
by our main inequality in Lemma \ref{lemma:main_inequality}. 

To finish the proof of \eqbr{eq:large_counts}, it remains to control the contribution from $E_j^c$. We first establish an upper bound on $\P(E_j^c, N_j\ge 100\log n)$. 
For $\theta_\ell^\star \ge 20\log n$, Chernoff bound (cf. Lemma \ref{lemma:chernoff}) gives 
\begin{align*}
\P(N_j\notin [\theta_\ell^\star/2, 2\theta_\ell^\star]| \theta_j = \theta_\ell^\star) \le 2\exp\pth{-\frac{(\theta_{\ell}^\star)^2}{8}} = O\pth{\frac{1}{n^{10}}}. 
\end{align*}
For $\theta_\ell^\star < 20\log n$, again Chernoff bound (cf. Lemma \ref{lemma:chernoff}) gives 
\begin{align*}
\P(N_j \ge 100\log n| \theta_j = \theta_\ell^\star) \le \pth{\frac{e\theta_{\ell}^\star}{100\log n}}^{100\log n} = O\pth{\frac{(\theta_{\ell}^\star)^2}{n^{10}}}. 
\end{align*}
Therefore,
\begin{align*}
\P(E_j^c, N_j\ge 100\log n) &= \frac{1}{k}\sum_{\ell=1}^k \P(N_j\notin [\theta_\ell^\star/2, 2\theta_\ell^\star], N_j \ge 100\log n| \theta_j = \theta_\ell^\star) \\
&\lesssim \frac{1}{k}\pth{ \sum_{\ell: \theta_\ell^\star \ge 20\log n} \frac{1}{n^{10}} + \sum_{\ell: \theta_\ell^\star < 20\log n} \frac{(\theta_\ell^\star)^2}{n^{10}}} = O\pth{\frac{1}{kn^{10}}\sum_{\ell=1}^k (\theta_\ell^\star)^2 }. 
\end{align*}
Next we prove a deterministic upper bound of $T_j$. Clearly, since $0\le \theta_\ell^\star \le n$ for all $\ell\in [k]$, we have $0\le \E\qth{\theta_j | N_j}, \E\qth{\theta_j | N}\le n$. We also show a lower bound for $\E\qth{\theta_j | N_j}$. Note that
\begin{align*}
\E\qth{\theta_j | N_j = y} = \frac{ \sum_{\ell=1}^k \theta_{\ell}^\star \Poi(y; \theta_{\ell}^\star) }{\sum_{\ell=1}^k \Poi(y; \theta_{\ell}^\star)} =  \frac{ \sum_{\ell=1}^k e^{-\theta_{\ell}^\star} (\theta_{\ell}^\star)^{y+1} }{\sum_{\ell=1}^k e^{-\theta_{\ell}^\star} (\theta_{\ell}^\star)^y}
\end{align*}
is non-decreasing in $y$ by Cauchy--Schwarz, for $y\ge 1$ we have
\begin{align}\label{eq:sep_lower}
\E\qth{\theta_j | N_j = y} \ge \E\qth{\theta_j | N_j = 1} = \frac{ \sum_{\ell=1}^k e^{-\theta_{\ell}^\star} (\theta_{\ell}^\star)^2 }{\sum_{\ell=1}^k e^{-\theta_{\ell}^\star} \theta_{\ell}^\star } \ge \frac{e^{-n} \sum_{\ell=1}^k (\theta_{\ell}^\star)^2 }{\sum_{\ell=1}^k \theta_{\ell}^\star} = \frac{1}{ne^n}\sum_{\ell=1}^k (\theta_\ell^\star)^2. 
\end{align}
Therefore, deterministically we have
\begin{align*}
    T_j \le n\log\frac{n^2 e^n}{\sum_{\ell=1}^k (\theta_\ell^\star)^2} + n = O\pth{n^2 + n\log \frac{n^2}{\sum_{\ell=1}^k (\theta_\ell^\star)^2}}. 
\end{align*}
A combination of the above displays then gives
\begin{align}\label{eq:large_counts_bad}
\sum_{j=1}^k \E\qth{ T_j\indc{E_j^c, N_j\ge 100\log n} } \lesssim \frac{\sum_{\ell=1}^k (\theta_\ell^\star)^2}{n^{10}}\pth{n^2 + n\log \frac{n^2}{\sum_{\ell=1}^k (\theta_\ell^\star)^2}} = O\pth{\frac{1}{n^6}}, 
\end{align}
where the last step follows from simple algebra and $\sum_{\ell=1}^k (\theta_\ell^\star)^2 \le (\sum_{\ell=1}^k \theta_\ell^\star)^2 = n^2$. Now \eqbr{eq:large_counts_good} and \eqbr{eq:large_counts_bad} give the target inequality \eqbr{eq:large_counts}. 

\subsubsection{Controlling the contributions of small counts}
This section aims to show that for all $0\le y\le 100\log n$, it holds that
\begin{align}\label{eq:small_counts}
\sum_{j=1}^k \E\qth{ \pth{\E\qth{\theta_j | N}\log \frac{\E\qth{\theta_j | N}}{\E\qth{\theta_j | N_j}} - \E\qth{\theta_j | N} + \E\qth{\theta_j | N_j}} \indc{N_j = y} } = O\pth{ \log^2 n }. 
\end{align}
To analyze the challenging term $\E\qth{\theta_j | N}$, we make use of the convexity of $x\mapsto x\log x$ to get
\begin{align*}
&\sum_{j=1}^k \E\qth{ \pth{\E\qth{\theta_j | N}\log \frac{\E\qth{\theta_j | N}}{\E\qth{\theta_j | N_j}} - \E\qth{\theta_j | N} + \E\qth{\theta_j | N_j}} \indc{N_j = y} } \\
&\le \sum_{j=1}^k \E\qth{ \pth{\E\qth{\theta_j | N, S_y}\log \frac{\E\qth{\theta_j | N, S_y}}{\E\qth{\theta_j | N_j}} - \E\qth{\theta_j | N, S_y} + \E\qth{\theta_j | N_j}} \indc{N_j = y} }, 
\end{align*}
where 
\begin{align}\label{eq:S_y}
    S_y := \sum_{j=1}^k \theta_j \indc{N_j = y}
\end{align}
is the sum of Poisson rates for symbols exactly appearing $y$ times. The key benefit of introducing $S_y$ is that the quantity $\E\qth{\theta_j | N, S_y}$ now admits a simple expression. Specifically, by symmetry, if $N_j = N_{j'} = y$, we have $\E\qth{\theta_j | N, S_y} = \E\qth{\theta_{j'} | N, S_y}$. Therefore, using the shorthand
\begin{align}\label{eq:Phi_y}
\Phi_y := \sum_{j=1}^k \indc{N_j = y}
\end{align}
to denote the number of symbols appearing exactly $y$ times, for $N_j=y$ we have
\begin{align*}
\E\qth{\theta_j | N, S_y} = \frac{1}{\Phi_y}\sum_{\ell: N_{\ell} = y} \E\qth{\theta_\ell | N, S_y} = \frac{1}{\Phi_y} \E\qth{S_y | N, S_y} = \frac{S_y}{\Phi_y}. 
\end{align*}
This is precisely the natural oracle introduced in \cite{orlitsky2015competitive}, and the quantity $\Phi_y$ is called the \textit{profile} (see also \prettyref{eq:modified-Good--Turing}\ifthenelse{\boolean{arxiv}}{}{ of the main text}). For the symmetric oracle, when $N_j = y$, the quantity 
\begin{align}\label{eq:b_y}
\E\qth{\theta_j | N_j} = \frac{ \sum_{\ell=1}^k \theta_{\ell}^\star \Poi(y; \theta_{\ell}^\star) }{\sum_{\ell=1}^k \Poi(y; \theta_{\ell}^\star)}  =: b_y
\end{align}
is deterministic and denoted by $b_y$. Consequently, 
\begin{align*}
&\sum_{j=1}^k \E\qth{ \pth{\E\qth{\theta_j | N, S_y}\log \frac{\E\qth{\theta_j | N, S_y}}{\E\qth{\theta_j | N_j}} - \E\qth{\theta_j | N, S_y} + \E\qth{\theta_j | N_j}} \indc{N_j = y} } \\
&= \sum_{j=1}^k \E\qth{\pth{ \frac{S_y}{\Phi_y}\log \frac{S_y}{b_y\Phi_y} - \frac{S_y}{\Phi_y} + b_y}\indc{N_j=y}} = \E\qth{ S_y\log\frac{S_y}{b_y\Phi_y} - S_y + b_y\Phi_y }, 
\end{align*}
so \eqbr{eq:small_counts} is established if we show that
\begin{align}\label{eq:small_counts_target}
    (\star) := \E\qth{ S_y\log\frac{S_y}{b_y\Phi_y} - S_y + b_y\Phi_y } = O\pth{\log^2 n}, \qquad 0\le y\le 100\log n. 
\end{align}
Note that the joint distribution of $(S_y, \Phi_y)$ is invariant with the permutation of $\theta$, in the sequel we may work under the scenario where $\theta_j = \theta_j^\star$ and $N_j\sim \Poi(\theta_j^\star)$. We use the shorthand $q_j = \Poi(y; \theta_j^\star)$ and prove \eqbr{eq:small_counts_target} by distinguishing into four different cases. 

\paragraph{Case I: $\sum_{j=1}^k q_j < 100\log n$ and $\sum_{j=1}^k \theta_j^\star q_j < \frac{1}{n^{10}}$.} Since $\E\qth{S_y - b_y\Phi_y}=0$ by \eqbr{eq:S_y}--\eqbr{eq:b_y}, in this case
\begin{align*}
(\star) &= \E\qth{ S_y\log\frac{S_y}{b_y\Phi_y}} = \E\qth{ S_y\log\frac{S_y}{b_y\Phi_y}\indc{\Phi_y>0}} \\
&\le \E\qth{ S_y}\log\frac{n}{b_y} = \pth{\sum_{j=1}^k \theta_j^\star q_j}\log \frac{n\sum_{j=1}^k q_j }{\sum_{j=1}^k \theta_j^\star q_j} = O\pth{\frac{\log n}{n^{10}}}. 
\end{align*}

\paragraph{Case II: $\sum_{j=1}^k q_j < 100\log n$ and $\sum_{j=1}^k \theta_j^\star q_j \ge \frac{1}{n^{10}}$.} Using $a\log\frac{a}{b}-a+b\le \frac{(a-b)^2}{b}$ for $a,b>0$, 
\begin{align*}
(\star) \le \E\qth{ \frac{(S_y - b_y\Phi_y)^2}{b_y\Phi_y} } \le \frac{1}{b_y}\E\qth{ \pth{S_y - b_y\Phi_y}^2 }. 
\end{align*}
Since $\E\qth{S_y - b_y\Phi_y}=0$, we continue the above bound as
\begin{align*}
(\star) &\le \frac{1}{b_y}\var\pth{S_y - b_y\Phi_y} = \frac{1}{b_y} \sum_{j=1}^k \var\pth{ (\theta_j^\star - b_y)\indc{N_j = y} } \stepa{\le} \frac{1}{b_y}\sum_{j=1}^k (\theta_j^\star - b_y)^2 q_j \\
&\stepb{\le} \frac{1}{b_y}\sum_{j=1}^k (\theta_j^\star)^2 q_j = \frac{\sum_{j=1}^k (\theta_j^\star)^2 q_j }{\sum_{j=1}^k \theta_j^\star q_j}\sum_{j=1}^k q_j \le (100\log n)\cdot \frac{\sum_{j=1}^k (\theta_j^\star)^2 q_j }{\sum_{j=1}^k \theta_j^\star q_j}, 
\end{align*}
where (a) uses the independence of $(N_j)_{j=1}^k$, (b) uses the definition \eqbr{eq:b_y} of $b_y$ that it is the weighted average of $\theta_j^\star$ with weights proportional to $q_j$. To upper bound the last ratio, note that since $y\le 100\log n$, the Chernoff bound (cf. Lemma \ref{lemma:chernoff}) gives that $q_j = \Poi(y; \theta_j^\star) \le n^{-20}$ if $\theta_j^\star \ge 200\log n$. Therefore,
\begin{align*}
\frac{\sum_{j=1}^k (\theta_j^\star)^2 q_j }{\sum_{j=1}^k \theta_j^\star q_j}\le \frac{1}{\sum_{j=1}^k \theta_j^\star q_j}\pth{\sum_{j: \theta_j^\star < 200\log n} (200\log n)\theta_j^\star q_j + \sum_{j: \theta_j^\star \ge 200\log n} \frac{n\theta_j^\star}{n^{20}} } = O\pth{\log n}, 
\end{align*}
and consequently $(\star) = O\pth{\log^2 n}$. 

\paragraph{Case III: $\sum_{j=1}^k q_j \ge 100\log n$ and $\sum_{j=1}^k \theta_j^\star q_j < \frac{1}{n^{10}}$.} By the Chernoff bound (cf. Lemma \ref{lemma:chernoff}), 
\begin{align*}
\P\pth{ \Phi_y \ge \frac{\E\qth{\Phi_y}}{2}} \ge 1 - \exp\pth{-\frac{1}{8}\sum_{j=1}^k q_j}, \quad \text{ where } \frac{\E\qth{\Phi_y}}{2} = \frac{1}{2}\sum_{j=1}^k q_j.
\end{align*}
Call the above event $E$, then distinguishing into $E$ and $E^c$ gives
\begin{align*}
(\star) &= \E\qth{ S_y\log \frac{S_y}{b_y\Phi_y}\indc{E} } + \E\qth{ S_y\log \frac{S_y}{b_y\Phi_y}\indc{E^c} } \\
&\le \E\qth{S_y}\log \frac{2n}{\sum_{j=1}^k \theta_j^\star q_j} + \E\qth{S_y\indc{E^c}}\log \frac{n}{b_y} \\
&\le \pth{\sum_{j=1}^k \theta_j^\star q_j}\log \frac{2n}{\sum_{j=1}^k \theta_j^\star q_j} + \E^{1/2}\qth{S_y^2}\P^{1/2}\pth{E^c}\log \frac{n\sum_{j=1}^k q_j}{\sum_{j=1}^k \theta_j^\star q_j} \\
&\le \pth{\sum_{j=1}^k \theta_j^\star q_j}\log \frac{2n}{\sum_{j=1}^k \theta_j^\star q_j} + \pth{n\sum_{j=1}^k \theta_j^\star q_j}^{1/2}\exp\pth{-\frac{1}{16}\sum_{j=1}^k q_j}\log \frac{n\sum_{j=1}^k q_j}{\sum_{j=1}^k \theta_j^\star q_j} \\
&= O\pth{\frac{\log n}{n^{10}}}, 
\end{align*}
where the last step follows from both assumptions and simple algebra. 

\paragraph{Case IV: $\sum_{j=1}^k q_j \ge 100\log n$ and $\sum_{j=1}^k \theta_j^\star q_j \ge \frac{1}{n^{10}}$.} Using $a\log\frac{a}{b}-a+b\le \frac{(a-b)^2}{b}$ for $a,b>0$,
\begin{align*}
(\star) \le \E\qth{ \frac{(S_y - b_y\Phi_y)^2}{b_y\Phi_y} }. 
\end{align*}
Define the same event $E$ in Case III, with $\P(E^c) \le \exp(-\frac{1}{8}\sum_{j=1}^k q_j)$. Under $E$, 
\begin{align*}
\E\qth{ \frac{(S_y - b_y\Phi_y)^2}{b_y\Phi_y} \indc{E}}\le \frac{2}{\sum_{j=1}^k \theta_j^\star q_j}\E\qth{ \pth{S_y - b_y\Phi_y}^2 }.
\end{align*}
Proceeding in the same way as Case II, the above term is at most $O(\log n)$. Under $E^c$, note that
\begin{align*}
\frac{(S_y - b_y\Phi_y)^2}{b_y\Phi_y}\le \frac{\pth{\max\sth{n,  \frac{b_y}{2}\sum_{j=1}^k q_j}}^2}{b_y} = \frac{\pth{\max\sth{n, \frac{1}{2}\sum_{j=1}^k \theta_j^\star q_j}}^2}{\sum_{j=1}^k \theta_j^\star q_j}\sum_{j=1}^k q_j \le n^{12}\sum_{j=1}^k q_j. 
\end{align*}
Therefore,
\begin{align*}
    \E\qth{ \frac{(S_y - b_y\Phi_y)^2}{b_y\Phi_y} \indc{E^c}}\le n^{12}\pth{\sum_{j=1}^k q_j}\P(E^c)\le n^{12}\pth{\sum_{j=1}^k q_j} \exp\pth{-\frac{1}{8}\sum_{j=1}^k q_j} = O(1). 
\end{align*}
Adding up the contributions from $E$ and $E^c$ shows that $(\star) = O(\log n)$. 
\section{Proof of Good-Turing lower bound (\prettyref{thm:Good--Turing-LB})}
\label{sec:GT-proof}


Recall that the modified Good-Turing estimator is defined to be:
\begin{align}\label{eq:modified-Good-Turing}
\widehat{p}^{\MGT}_j(N;y_0) = \frac{\bar{p}^{\MGT}_j(N;y_0)}{Z^{\MGT}}, \quad j=1,\ldots,k
\end{align}
where, with $\Phi_y = \sum_{j=1}^k \bm{1}\{N_j = y\}$,
\begin{align*}
\bar{p}_{j}^{\mathrm{MGT}}(N; y_0) = \begin{cases}
    \frac{y+1}{n}\cdot \frac{\Phi_{y+1}+1}{\Phi_y} &\text{if } N_j = y \le y_0, \\
    \frac{N_j}{n} & \text{if } N_j = y > y_0,
\end{cases} \quad Z^{\MGT} = \sum_{j=1}^k  \bar{p}^{\MGT}_j(N;y_0).
\end{align*} 



We distinguish two cases based on the value of threshold $y_0$. 
In each case, we construct a hard instance of the true distribution $p^\star$ under which the regret of the modified Good--Turing estimator is provably larger than $\Omega((n\log n)^{-1/2})$. 
Let $y^\star := C_0\sqrt{n\log n}$ for a large enough absolute constant $C_0>0$ to be specified later.

\subsection{Case I: $y_0 \le y^\star$} 
For small $y_0$, consider
\begin{align*}
p^\star = \pth{ \frac{y_0+y^\star}{n}, \dots, \frac{y_0+y^\star}{n}, 0, \dots, 0 }, 
\end{align*}
where without loss of generality we assume that $m:= \frac{n}{y_0+y^\star}$ is an integer, with $m\le \sqrt{n}\le k$. Let $E = \cap_{j=1}^m \sth{N_j> y_0}$ be the ``good event'' that all first $m$ symbols have large counts, the Chernoff bound (cf. Lemma \ref{lemma:chernoff}) gives that
\begin{align*}
\P(E^c) \le m \P\pth{ \Poi\pth{y_0+y^\star} \le y_0 } \le m\P\pth{ \Poi\pth{y_0+y^\star} \le \frac{y_0+y^\star}{2} } \le m\exp\pth{-\frac{\sqrt{n}}{8}}. 
\end{align*}

Next we lower bound the KL risk of the modified Good--Turing estimator \eqbr{eq:modified-Good-Turing} and upper bound the KL risk of the permutation-invariant oracle. For $\widehat{p}^{\mathrm{MGT}}$, note that on the event $E$, we have $\Phi_0 = k-m$ and $\Phi_1 = 0$ which implies $Z^{\MGT} = \sum_{i=1}^m N_i/n + \sum_{i=m+1}^k \frac{1}{n(k-m)} = \frac{1}{n}(\sum_{i=1}^m N_i + 1)$, hence
\begin{align*}
\E\qth{ \DKL(p^\star \| \widehat{p}^{\mathrm{MGT}}) } &\stepa{\ge} 2\pth{\E\qth{ \mathrm{TV} (p^\star,  \widehat{p}^{\mathrm{MGT}})}}^2 \\
&\ge 2\pth{\E\qth{ \mathrm{TV} (p^\star,  \widehat{p}^{\mathrm{MGT}}) \indc{E}} }^2 \\
&\ge \frac{1}{2}\pth{ \sum_{j=1}^m \E\qth{|p_j^\star - \widehat{p}_j^{\mathrm{MGT}}| \indc{E} } }^2 \\
&=\frac{1}{2}\pth{ \sum_{j=1}^m \E\qth{\Big|p_j^\star - \frac{N_j}{\sum_{\ell=1}^m N_\ell + 1}\Big| \indc{E} } }^2\\
&\geq \frac{1}{2}\pth{ \sum_{j=1}^m \E\qth{\Big|p_j^\star - \frac{N_j}{n}\Big| - \frac{N_j}{n}\Big|1 - \frac{n}{\sum_{\ell=1}^m N_\ell + 1} \Big| } - 2\P(E^c)  }_+^2 \\
&\stepb{=} \Omega\pth{\sqrt{\frac{m}{n}} - O(n^{-1/2}) - O\pth{me^{-\sqrt{n}/8}} }^2  =  \Omega\pth{\frac{m}{n}}, 
\end{align*}
where (a) is due to Pinsker's inequality and Cauchy--Schwarz, and (b) uses $\E|\Poi(\theta) - \theta| = \Omega(\sqrt{\theta})$ for $\theta\ge 1$, the upper bound of $\P(E^c)$, and
\begin{align*}
\E\Big[\sum_{j=1}^m \frac{N_j}{n}\Big|1 - \frac{n}{\sum_{\ell=1}^m N_\ell + 1} \Big|\Big] = \E\qth{\frac{N|N+1-n|}{n(N+1)}} \le \frac{\E|N+1-n|}{n} \le \frac{\sqrt{n}+1}{n},
\end{align*}
for $N := \sum_{\ell=1}^m N_\ell \sim \Poi(n)$. As for the permutation-invariant oracle, we define another permutation-invariant decision rule: 
\begin{align*}
\widehat{p}_j = \begin{cases}
    \frac{y_0+y^\star}{n} &\text{if } N_j>0 \text{ and } \sum_{\ell=1}^k \indc{N_\ell>0} = m, \\
    0 &\text{if } N_j=0 \text{ and } \sum_{\ell=1}^k \indc{N_\ell>0} = m, \\
    \frac{1}{k} &\text{otherwise}. 
\end{cases}
\end{align*}
It is clear that $\widehat{p} = p^\star$ when $E$ holds, therefore 
\begin{align*}
\E\qth{ \DKL(p^\star \| \widehat{p}^{\mathrm{PI}}) } \le \E\qth{ \DKL(p^\star \| \widehat{p}) } = \E\qth{ \DKL(p^\star \| \widehat{p}) \indc{E^c}} \le \P(E^c)\log k \le m\exp\pth{-\frac{\sqrt{n}}{8}}\log k. 
\end{align*}
Consequently, since $m = \Omega(\sqrt{n/\log n})$ and $k \leq n^C$, we have
\begin{align*}
\reg(\widehat{p}^{\mathrm{MGT}}) &= \E\qth{ \DKL(p^\star \| \widehat{p}^{\mathrm{MGT}}) } - \E\qth{ \DKL(p^\star \| \widehat{p}^{\mathrm{PI}}) } \\
&\ge \Omega\pth{\frac{m}{n}} -  m\exp\pth{-\frac{\sqrt{n}}{8}}\log k = \Omega\pth{\frac{1}{\sqrt{n\log n}}}, 
\end{align*}
as claimed. 

\subsection{Case II: $y_0 > y^\star$}
For large $y_0$, the hard instance of $p^\star$ is constructed in a different manner. In the sequel $C_1, C_2, \dots$ denote different absolute constants.

Let $x_\ell := (C_1\ell^2\log n)/{n}$, where $C_1$ is a large enough constant to be specified later. Then for $\ell_0 := \lceil (n/(C_2\log n))^{1/4}\rceil$ with a large enough constant $C_2$ (depending on $C_1$), we define $\ell_1\in \naturals$ to be the solution to
$\sum_{\ell=\ell_0}^{\ell_1} \ell x_\ell = 1$. (Here for simplicity we assume $\ell_1$ to be an integer; in general, one may proceed similarly by letting $\ell_1$ be the largest integer such that $\sum_{\ell=\ell_0}^{\ell_1} \ell x_\ell \leq 1$ and then renormalizing $\{x_\ell\}_{\ell=\ell_0}^{\ell_1}$ such that $p^\star$ defined below is a probability vector).
In addition, by choosing $C_1$ and $C_2$ properly, such a solution satisfies
$\ell_1 = O\pth{(n/\log n)^{1/4}}$ and $\ell_1\ge 2\ell_0$; in particular, $n\cdot x_{\ell_1}\le y^\star = C_0\sqrt{n\log n}$ for a large enough constant $C_0$ depending only on $(C_1, C_2)$. 

Consider
\begin{align*}
p^\star = \pth{\underbrace{x_{\ell_0}, \dots, x_{\ell_0}}_{\ell_0-\text{times}}, \underbrace{x_{\ell_0+1}, \dots, x_{\ell_0+1}}_{(\ell_0+1)-\text{times}}, \dots, \underbrace{x_{\ell_1},\ldots, x_{\ell_1}}_{\ell_1-\text{times}}, 0, \dots, 0}, 
\end{align*}
where for $\ell_0\le \ell\le \ell_1$, the term $x_\ell$ appears exactly $\ell$ times. Since $\sum_{\ell=\ell_0}^{\ell_1} \ell x_\ell = 1$, the vector $p^\star$ is a valid probability vector. In addition, the support size of $p^\star$ is $\sum_{\ell=\ell_0}^{\ell_1} \ell \le \ell_1^2 = O(\sqrt{n/\log n})$, which is smaller than $k\ge \sqrt{n}$ for $n$ large enough. 

We first upper bound the KL risk for the permutation-invariant oracle. To this end, we define another permutation-invariant decision rule $\widehat{p}$ as follows: for $\ell_0\le \ell\le \ell_1$, let 
\begin{align*}
I_{\ell} := \left[ \frac{n(x_{\ell-1}+x_\ell)}{2}, \frac{n(x_\ell+x_{\ell+1})}{2} \right),  
\end{align*}
and $E_1$ be the event that there are precisely $\ell$ symbols with counts belonging to $I_\ell$ for all $\ell_0\le \ell\le \ell_1$, and rest of the symbols all being unseen. The decision rule $\widehat{p}$ is then defined as
\begin{align*}
\widehat{p}_j = \begin{cases}
    x_{\ell} & \text{if } E_1\text{ holds and }N_j\in I_{\ell}, \\
    0 & \text{if }E_1\text{ holds and }N_j=0,\\
    \frac{1}{k} & \text{if }E_1\text{ does not hold}, 
\end{cases}
\end{align*}
which is obviously permutation invariant. In particular, $\widehat{p} = p^\star$ if the following event $E$ holds: for all $j\in [k]$ and $\ell\in [\ell_0, \ell_1]$, if $p_j^\star = x_\ell$, then $N_j\in I_\ell$. Using the union bound and Chernoff bound (cf. Lemma \ref{lemma:chernoff}), simple algebra gives that $\P(E^c) = O\pth{n^{-10}}$ for a large enough $C_1>0$. Therefore,
\begin{align}\label{eq:PI_risk}
\E\qth{ \DKL(p^\star \| \widehat{p}^{\mathrm{PI}}) } \le \E\qth{ \DKL(p^\star \| \widehat{p}) } = \E\qth{ \DKL(p^\star \| \widehat{p}) \indc{E^c}} \le \P(E^c)\log k = O\pth{\frac{\log k}{n^{10}}}. 
\end{align}

Next we lower bound the KL risk of the modified Good--Turing estimator $\widehat{p}^{\mathrm{MGT}}$. We first lower bound the KL risk by the sum of contributions from symbols with different probabilities: 
\begin{align*}
\E\qth{ \DKL(p^\star \| \widehat{p}^{\mathrm{MGT}}) } &= \sum_{j=1}^k \E\qth{ p_j^\star \log \frac{p_j^\star}{\widehat{p}_j^{\mathrm{MGT}}} - p_j^\star + \widehat{p}_j^{\mathrm{MGT}}  } \\
&\ge \sum_{\ell=\ell_0}^{\ell_1} \sum_{j: p_j^\star = x_\ell} \E\qth{ p_j^\star \log \frac{p_j^\star}{\widehat{p}_j^{\mathrm{MGT}}} - p_j^\star + \widehat{p}_j^{\mathrm{MGT}}  } =: \sum_{\ell=\ell_0}^{\ell_1} S_{\ell}. 
\end{align*}
To lower bound each term $S_\ell$, let
\begin{align*}
\underline{I}_\ell = [nx_\ell - C_3\sqrt{nx_\ell}, nx_\ell - C_3^{-1}\sqrt{nx_\ell} - 1], \quad \overline{I}_\ell= [nx_\ell + C_3^{-1}\sqrt{nx_\ell} + 1, nx_\ell + C_3\sqrt{nx_\ell}],
\end{align*}
where $C_3>1$ is a large absolute constant. For each $j\in [k]$ with $p_j^\star = x_\ell$, we define two events
\begin{align*}
    E_j &:= \sth{ N_j \in \underline{I}_\ell, \Phi_{N_j+1} = 0, \Phi_{N_j} = 1}, \\
    F_j &:=  \sth{ N_j \in \overline{I}_\ell, \Phi_{N_j+1} = 0, \Phi_{N_j} = 1},
\end{align*}
where we recall that $\Phi_y = \sum_{j=1}^k \indc{N_j = y}$ is the $y$th profile. Using $Z$ as a shorthand for $Z^{\MGT}$ in the sequel, we will show that 
\begin{align}\label{eq:claim_joint_prob}
\Prob(\{Z \geq 1\} \cap E_j) \vee \Prob(\{Z \leq 1\} \cap F_j) =  \Omega(1).
\end{align}
We establish \eqbr{eq:claim_joint_prob} by showing that $\Prob(Z \geq 1) \geq 1/2$ would imply $\Prob(\{Z \geq 1\} \cap E_j) = \Omega(1)$; the other implication from $\Prob(Z \leq 1) \geq 1/2$ to $\Prob(\{Z \leq 1\} \cap F_j) = \Omega(1)$ is entirely symmetric. Since trivially $\P(Z\ge 1) \vee \P(Z\le 1) \ge 1/2$, the above implications would imply \eqbr{eq:claim_joint_prob}.

Now we assume that $\Prob(Z \geq 1) \geq 1/2$. Note that
\begin{align*}
\Prob(\{Z \geq 1\} \cap E_j) &= \Prob\pth{ \sth{Z\ge 1} \cap \sth{N_j \in \underline{I}_\ell} \cap \sth{\Phi_{N_j+1} = 0, \Phi_{N_j} = 1}}\\
&\geq \Prob\pth{\{Z\geq 1\} \cap \sth{N_j \in \underline{I}_\ell}} - \Prob\pth{\sth{N_j \in \underline{I}_\ell}
\cap \sth{\Phi_{N_j+1} = 0, \Phi_{N_j} = 1}^c}.
\end{align*}
We will lower bound the first probability and upper bound the second. For the second probability, we have
\begin{align*}
\Prob\pth{\sth{N_j \in \underline{I}_\ell}
\cap \sth{\Phi_{N_j+1} = 0, \Phi_{N_j} = 1}^c} &= \sum_{y\in \underline{I}_\ell} \Prob(\sth{\Phi_{N_j+1} \neq 0} \cup \sth{\Phi_{N_j} \neq 1} | N_j = y)\Prob(N_j = y)\\
&\leq \sum_{y\in \underline{I}_\ell} \big(\Prob(\Phi_{y+1} \neq 0|N_j = y) + \Prob(\Phi_{y} \neq 1 | N_j = y)\big) \Prob(N_j = y).
\end{align*}
Thanks to the mutual independence of $N = (N_1, \dots, N_k)$, by the union bound we have
\begin{align*}
\P(\Phi_{y} \neq 1 | N_j = y) = \P(N_{j'} = y \text{ for some }j' \neq j) &\leq \sum_{j'\neq j} \P(N_{j'} = y). 
\end{align*}
To upper bound the probability $\P(N_{j'} = y)$, we distinguish into three cases: 
\begin{enumerate}
    \item If $p_{j'}^\star = 0$, then clearly $\P(N_{j'} = y) = \P(\Poi(0)=y) = 0$; 
    \item If $p_{j'}^\star \neq x_\ell$, then $|np_{j'}^\star -y|\ge \min_{\ell'\neq \ell} |nx_{\ell'}-nx_\ell|-C_3\sqrt{nx_\ell}\ge \sqrt{C_1nx_\ell/2}$ if $C_1>0$ large enough (depending on $C_3$). Consequently, the Chernoff bound (cf. Lemma \ref{lemma:chernoff}) gives $\P(N_{j'}= y) = O(n^{-10})$ again for $C_1>0$ large enough; 
    \item If $p_{j'}^\star = x_\ell$, we simply use 
    \begin{align*}
    \max_{x\in \naturals} \P(\Poi(\theta) = x) =\P(\Poi(\theta) = \lfloor \theta \rfloor) \le \frac{e^{-\theta} \theta^{\lfloor \theta \rfloor} }{ \sqrt{2\pi \lfloor \theta \rfloor}(\lfloor \theta \rfloor/e)^{\lfloor \theta \rfloor} } \le \frac{2}{\sqrt{\theta}}
    \end{align*}
    for all $\theta\ge 1$ to conclude that $\P(N_{j'}= y) \le \frac{2}{\sqrt{nx_\ell}} \le \frac{2}{\sqrt{C_1}\ell}$. 
\end{enumerate}
Combining the above cases and noticing that $x_\ell$ appears $\ell$ times in $p^\star$, we get
\begin{align*}
\P(\Phi_{y} \neq 1 | N_j = y)\leq O(n^{-9}) + \frac{2}{\sqrt{C_1}} \leq 0.005
\end{align*}
as long as both $n$ and $C_1$ are large enough. A similar argument yields that $\Prob(\Phi_{y+1} \neq 0 | N_j = y) \leq 0.005$ as well, so 
\begin{align*}
\Prob\pth{\sth{N_j \in \underline{I}_\ell}
\cap \sth{\Phi_{N_j+1} = 0, \Phi_{N_j} = 1}^c} \le 0.01\sum_{y\in \underline{I}_\ell} \Prob(N_j = y) \leq 0.01. 
\end{align*}

Next we lower bound $\Prob(\{Z\geq 1\} \cap \sth{N_j \in \underline{I}_\ell})$. Let
\begin{align*}
A_\ell := \Big\{\sum_{j:p_j^\star \in I_\ell}\indc{N_j \in \underline{I}_\ell} \geq \frac{\ell}{5}\Big\}. 
\end{align*}
Since $\Prob(\Poi(\theta) \in [\theta - C_3\sqrt{\theta}, \theta - C_3^{-1}\sqrt{\theta} - 1]) \geq 0.3$ for both $(\theta,C_3)$ large enough, the random sum in the event $A_\ell$ follows a binomial distribution with $\ell$ independent trials and success probability at least $0.3$. By Chernoff inequality, we have $\P(A_\ell) \geq 1 - O(n^{-10})$ for $n$ large enough, which further implies $\Prob(A_\ell | Z \geq 1) \geq \Prob(A_\ell \cap \{Z\geq 1\})\geq \Prob(Z\geq 1) - \Prob(A_\ell^c) \geq 1/4$, again for $n$ large enough. By Markov's inequality, we have
\begin{align*}
\frac{1}{4} &\leq \Prob(A_\ell | Z \geq 1) = \Prob\Big(\sum_{j:p_j^\star \in I_\ell}\indc{N_j \in \underline{I}_\ell} \ge \frac{\ell}{5} \Big| Z \geq 1\Big) \\
&\leq \frac{5}{\ell} \cdot \E\Big[\sum_{j:p_j^\star \in I_\ell}\indc{N_j \in \underline{I}_\ell}\Big|Z\geq 1\Big] = 5 \cdot \P(N_j \in \underline{I}_\ell | Z\geq 1),
\end{align*}
implying that $\P(N_j \in \underline{I}_\ell | Z\geq 1) \geq 1/20$. Since it is assumed that $\P(Z\ge 1)\ge 1/2$, we conclude that $\Prob(\{Z\geq 1\} \cap \sth{N_j \in \underline{I}_\ell}) = \Prob(N_j \in \underline{I}_\ell |Z\geq 1) \Prob(Z\geq 1) \geq 1/40$. Therefore, 
\begin{align*}
\Prob(\{Z \geq 1\} \cap E_j) \ge \frac{1}{40} - 0.01 > 0.01 = \Omega(1), 
\end{align*}
establishing the claim \eqbr{eq:claim_joint_prob}. 

Finally, if $\Prob(\{Z \geq 1\} \cap E_j) = \Omega(1)$, using $a\log(a/b) - a + b \geq 0$ for $a,b>0$, we have 
\begin{align*}
S_\ell &= \sum_{j: p_j^\star = x_\ell} \E\qth{ p_j^\star \log \frac{p_j^\star}{\widehat{p}_j^{\mathrm{MGT}}} - p_j^\star + \widehat{p}_j^{\mathrm{MGT}}  } \\
&\ge \sum_{j: p_j^\star = x_\ell} \E\qth{ \pth{p_j^\star \log \frac{p_j^\star}{\widehat{p}_j^{\mathrm{MGT}}} - p_j^\star + \widehat{p}_j^{\mathrm{MGT}} }\indc{E_j \cap \{Z \geq 1\}} } \\
&\stepa{=} \sum_{j: p_j^\star = x_\ell} \E\qth{ \pth{p_j^\star \log \frac{p_j^\star}{\bar{p}_j^{\mathrm{MGT}}} - p_j^\star + \bar{p}_j^{\mathrm{MGT}} }\indc{E_j \cap \{Z \geq 1\}} }\\
&\stepb{\ge} \sum_{j: p_j^\star = x_\ell} \pth{ x_\ell \log \frac{x_\ell}{x_{\ell} - C_3^{-1}\sqrt{x_\ell/n} } - x_\ell + x_{\ell} - \frac{1}{C_3}\sqrt{\frac{x_\ell}{n}}  }\P(E_j \cap \{Z\geq 1\}) \\
&\stepc{\ge} \sum_{j: p_j^\star = x_\ell} \frac{\Omega(1)}{2n} \stepd{=} \Omega\pth{\frac{\ell}{n}}, 
\end{align*}
where (a) and (b) note that on the event $E_j \cap \{Z\geq 1\}$, we have $$\widehat{p}^{\MGT}_j \le \bar{p}_j^{\MGT} = (N_j+1)/n\le x_\ell - C_3^{-1}\sqrt{\frac{x_\ell}{n}},$$ as well as the decreasing property of $b\mapsto a\log(a/b) - a + b$ on $0\le b\le a$, (c) uses the inequality $\log\frac{1}{1-x}\ge x+\frac{x^2}{2}$ for $x\ge 0$, and (d) follows from $|\{j: p_j^\star = x_\ell\}| = \ell$. The case of $\Prob(\{Z \le 1\} \cap F_j) = \Omega(1)$ is handled using an analogous manner. Consequently, \eqbr{eq:claim_joint_prob} implies that
\begin{align}\label{eq:MGT-risk}
    \E\qth{ \DKL(p^\star \| \widehat{p}^{\mathrm{MGT}}) } = \sum_{\ell=\ell_0}^{\ell_1} S_{\ell} = \Omega\pth{\frac{1}{n}}\sum_{\ell = \ell_0}^{\ell_1} \ell = \Omega\pth{\frac{\ell_0^2}{n}} = \Omega\pth{\frac{1}{\sqrt{n\log n}}}. 
\end{align}
Now the desired claim of the theorem follows from \eqbr{eq:PI_risk} and \eqbr{eq:MGT-risk}.

\appendix 
\section{Equivalent definitions of regret and PI oracle}\label{sec:PI_equivalence}
In the compound decision literature, 
the regret can be defined by competing against either the oracle that knows the true parameter but is restricted to be permutation-invariant (PI), or 
the oracle that knows the true parameter up to a permutation. For completeness, in this section, we verify the equivalence of these definitions in the context of distribution estimation 
and give an expression for the oracle.

Recall from \eqbr{def:intro_regret} \ifthenelse{\boolean{arxiv}}{}{in the main tex} that the regret of a distribution estimator $\widehat p$ is defined as the worst-case excess risk over the PI oracle that knows the true distribution $p^\star$, that is,
\begin{align}\label{eq:reg_original}
\notag\reg(\widehat{p}) &:= \sup_{p^\star \in \Delta_k} [r_n(p^\star, \widehat p) - \min_{\widetilde{p} \text{: PI}} r_n(p^\star, \widetilde{p})]\\ 
&= \sup_{p^\star \in \Delta_k} [r_n(p^\star, \widehat p) - r_n(p^\star, \widehat{p}^{\PI})],
\end{align}
where we recall the following definitions:
\begin{itemize}
    \item For any estimator $\widetilde{p} = \widetilde{p}(N_1,\ldots,N_k) 
\in \Delta_k$, $r_n(p^\star,\widetilde{p}) = \E_{p^\star}[\DKL(p^\star||\widetilde{p})]$ denotes its average risk measured by the Kullback-Leibler (KL) divergence, where
$\E_{p^\star}$ is over the empirical counts 
$N=(N_1,\ldots,N_k)$ which are independently distributed as $\Poi(n p^\star_i)$ under the Poisson sampling model. 
\item
The PI oracle $\widehat{p}^{\PI}$ is defined by 
\begin{align}\label{eq:PI_oracle_SI_def1}
\widehat p^{\PI} = \argmin  \E[\DKL(p^\star \| \widetilde{p})],
\end{align}
where the minimum is taken over all PI estimators $\widetilde p: \mathbb{N}^k  \rightarrow \Delta_k$ satisfying  $\pi(\widetilde{p}(N)) = \widetilde{p}(\pi(N))$ for 
for any permutation $\pi$ of $[k]$ and 
any $N = (N_1,\ldots,N_k)$. 
We emphasize that $\widehat{p}^{\PI}$ is called an oracle since it has knowledge to the true distribution $p^\star$ but must satisfy the symmetry constraint of permutation invariance.
\end{itemize}

Next, we show that $\reg(\widehat{p})$ admits the following equivalent formulation:
\begin{align}\label{eq:reg_SI_equivalent}
\reg(\widehat{p}) = \sup_{p^\star\in \Delta_k}[r_n(p^\star,\widehat{p}) - \min_{\widetilde{p}} \max_{\pi\in S_k} r_n(\pi(p^\star), \widetilde{p})].
\end{align}
Here the inner minimization is taken over all (oracle) estimators $\widetilde p$ which may depend on the ground truth  $p^\star$ up to a permutation (or equivalently, the empirical distribution thereof.)
Furthermore, both minima (oracle risks) in \prettyref{eq:PI_oracle_SI_def1}
and \prettyref{eq:reg_SI_equivalent} coincide, attained by  $\widehat{p}^{\PI}$ given by the explicit formula \eqbr{eq:PI_oracle_intro} and \eqbr{eq:PI-oracle-full} \ifthenelse{\boolean{arxiv}}{}{in the main text}, namely:
\begin{equation}\label{eq:PI_oracle_SI}
\widehat p^{\PI}_i = \E[p_i|N] = 
\frac{
\sum_{\pi \in S_k} p^\star_{\pi(i)} \prod_{j=1}^k 
(p^{\star}_{\pi(j)})^{N_j}
}{\sum_{\pi \in S_k}  \prod_{j=1}^k 
(p^{\star}_{\pi(j)})^{N_j} }, \quad i = 1,\ldots, k.
\end{equation}
Here the expectation is taken over the following data generating process in which the true distribution is randomly relabeled: for fixed $p^\star \in \Delta_k$, first draw a uniform random permutation $\pi$ on $[k]$ and define the permuted version  
$p := \pi(p^\star)$, then generate the counts $N_1,\ldots,N_k$
according to $p$, which are independent $\Poi(np_i) $  conditioned  on $p$. In other words, under this Bayesian setting, the PI oracle is given by the posterior mean $\widehat p^{\PI} = \E[p|N]$.
Moreover, this interpretation makes it clear that \prettyref{eq:PI_oracle_SI} is permutation invariant.

As a historical remark, we note that \prettyref{eq:reg_SI_equivalent} was first used as the definition of compound regret \cite{robbins1951asymptotically, zhang2003compound}; see also \cite{orlitsky2015competitive} for  the specific problem of distribution estimation under 
Poisson sampling. 
In the Gaussian sequence model, \eqbr{eq:PI_oracle_SI_def1} and its further Bayesian interpretation \eqbr{eq:PI_oracle_SI} is introduced in \cite{greenshtein2009asymptotic} (see Proposition 1.1 therein). 

For completeness, we next show the equivalence between 
\eqbr{eq:reg_original} and \eqbr{eq:reg_SI_equivalent}. To see the minimum in \eqbr{eq:PI_oracle_SI_def1} is achieved by \eqbr{eq:PI_oracle_SI}, we note that for any permutation invariant estimator $\widetilde{p}:\mathbb{N}^k\rightarrow\Delta_k$, 
\begin{align*}
\E_{p^\star}[\DKL(p^\star\| \widetilde{p})] - \E_{p^\star}[\DKL(p^\star\| \widehat{p}^{\PI})] &\stackrel{(i)}{=} \E[\DKL(p\| \widetilde{p})] - \E[\DKL(p\| \widehat{p}^{\PI})] \\
&= \E\sum_{i=1}^k p_i\log\frac{\widehat p^{\PI}_i}{\widetilde p_i} \stackrel{(ii)}{=} \E\sum_{i=1}^k \widehat{p}^{\PI}_i\log\frac{\widehat p^{\PI}_i}{\widetilde p_i} = \E[\DKL(\widehat p^{\PI}\| \widetilde{p})] \geq 0,
\end{align*}
where in $(i)$ we apply the permutation invariance of $\widetilde{p}$ and $\widehat{p}^{\PI}$, and in $(ii)$ we take first the conditional expectation over $N$ and apply $\E[p_i|N] = \widehat{p}^{\PI}_i$. To see the minimum in \eqbr{eq:reg_SI_equivalent} is also achieved by \eqbr{eq:PI_oracle_SI}, we have for any (not necessarily PI)
estimator $\widetilde{p}: \mathbb{N}^k \rightarrow \Delta_k$, 
\ifthenelse{\boolean{arxiv}}{\begin{align*}
&\max_{\pi\in S_k}\E_{\pi(p^\star)}\qth{\DKL(\pi(p^\star) \| \widetilde{p})} - \max_{\pi\in S_k}\E_{\pi(p^\star)}\qth{\DKL(\pi(p^\star) \| \widehat{p}^{\PI})} \\
&\stackrel{(i)}{=} \max_{\pi\in S_k}\Big\{\E_{\pi(p^\star)}\qth{\DKL(\pi(p^\star) \| \widetilde{p})} - \E_{\pi(p^\star)}\qth{\DKL(\pi(p^\star) \| \widehat{p}^{\PI})}\Big\}\\
&\stackrel{(ii)}{\geq} \E\Big[\DKL(p \| \widetilde{p}) - \DKL(p \| \widehat{p}^{\PI})\Big] = \E\sum_{i=1}^k p_i\log\frac{\widehat{p}^{\PI}_i}{\widetilde{p}_i} = \E\sum_{i=1}^k \widehat{p}^{\PI}_i\log\frac{\widehat p^{\PI}_i}{\widetilde p_i} = \E[\DKL(\widehat p^{\PI}\| \widetilde{p})] \geq 0, 
\end{align*}}{
\begin{align*}
&\max_{\pi\in S_k}\E_{\pi(p^\star)}\qth{\DKL(\pi(p^\star) \| \widetilde{p})} - \max_{\pi\in S_k}\E_{\pi(p^\star)}\qth{\DKL(\pi(p^\star) \| \widehat{p}^{\PI})} \stackrel{(i)}{=} \max_{\pi\in S_k}\Big\{\E_{\pi(p^\star)}\qth{\DKL(\pi(p^\star) \| \widetilde{p})} - \E_{\pi(p^\star)}\qth{\DKL(\pi(p^\star) \| \widehat{p}^{\PI})}\Big\}\\
&\stackrel{(ii)}{\geq} \E\Big[\DKL(p \| \widetilde{p}) - \DKL(p \| \widehat{p}^{\PI})\Big] = \E\sum_{i=1}^k p_i\log\frac{\widehat{p}^{\PI}_i}{\widetilde{p}_i} = \E\sum_{i=1}^k \widehat{p}^{\PI}_i\log\frac{\widehat p^{\PI}_i}{\widetilde p_i} = \E[\DKL(\widehat p^{\PI}\| \widetilde{p})] \geq 0, 
\end{align*}}
where $(i)$ applies the fact that $\E_{\pi(p^\star)}\qth{\DKL(\pi(p^\star) \| \widehat{p}^{\PI})}$ is independent of $\pi$, and $(ii)$ lower bounds the maximal risk (in $\pi$) by the average risk with a uniform $\pi$.

\section{Preliminaries on NPMLE}
\label{sec:NPMLE-prelim}
In this section we summarize some well-known properties of the nonparametric maximum likelihood estimator (NPMLE) in the one-dimensional Poisson model \cite{simar1976maximum,lindsay1983geometry1}. 
We emphasize these results are deterministic and hold for any configuration of the data.
For results in general one-dimensional exponential families such as Gaussian model, we refer the reader to the monograph \cite{lindsay1995mixture}.

\subsection{Basic properties}
\label{sec:NPMLE-basics}
Recall that 
$\Poi(\theta)$ denotes the Poisson distribution with mean $\theta\geq0$, whose probability mass function (pmf) is given by
$f_\theta(y) := \Poi(\theta;y)
:= \frac{e^{-\theta} \theta^y}{y!}, y\in
\integers_+:=\{0,1,\ldots\}$.
For any probability distribution (prior) $G$ on $\reals_+$, 
\begin{equation}    
f_G(y) := \Expect_{\theta \sim G}[f_\theta(y)] =
\frac{1}{y!} \int G(\d\theta) e^{-\theta} \theta^y, \quad y\geq 0
\label{eq:poisson-mixture-fG}
\end{equation}
denotes the pmf of the Poisson mixture with mixing distribution $G$. In other words, if $\theta \sim G$ and $Y \sim \Poi(\theta)$ conditioned on $\theta$, then the marginal distribution of $Y$ is $f_G$.

Given a sample $(N_1,\ldots,N_k) \in \integers_+^k =\{0,1,\ldots\}^k$, 
we seek to fit a   Poisson mixture that best explains this data by maximizing the total likelihood:
\begin{equation}
\max_{G\in \calP(\reals_+)} \underbrace{\frac{1}{k} \sum_{i=1}^k \log f_G(N_i)}_{:=\ell(G)}
    \label{eq:NPMLE-opt}
\end{equation}
where $\calP(\reals_+)$ denotes the collection of all probability distributions on $\reals_+$. It is known that \prettyref{eq:NPMLE-opt}
has a unique optimizer, denoted by $\widehat G$, henceforth referred to as the NPMLE. Furthermore, 
\begin{itemize}
	\item $\widehat G$ is discrete, whose support is contained in the interval $[N_{\min},N_{\max}]$ with cardinality at most the number of distinct values in the sample $N_1,\ldots,N_k$. Therefore, deterministically, the support size of $\widehat G$ cannot exceed the domain size $k$. In practice, $\widehat G$ typically has much fewer atoms than this worst-case bound.   
    For example, in the Poisson sampling model where the counts $N_i$'s are drawn independently from $\Poi(n p^\star_i)$, we have the following dimension-free bound:\footnote{To see this, it suffices to show that the number of distinct values in $N_1,\ldots,N_k$ is at most $O\Big(\sqrt{N_{\rm tot}}\Big)$, where 
    $N_{\rm tot} = \sum_{i=1}^k N_i \sim \Poi(n)$, which is at most $O(n)$ with probability tending to one.     
    Indeed, the profile $\Phi_y = \sum_{i=1}^k \Indc\{N_i=y\}$ denotes the number of symbols occurring exactly $y$ times (the $y$th profile).
    Then $q=\sum_{y\geq 0}^\infty \Indc\{\Phi_y\geq 1\}$ is the number of distinct values, and
$N_{\rm tot}=
\sum_{y\geq 0}^\infty y \Phi_y 
\geq \sum_{y\geq 0} y \Indc\{\Phi_y\geq 1\}
\geq \sum_{y\geq 0}^{q-1} y = q(q-1)/2$.    
    } with high probability, $|\supp(\widehat G)| = O(\sqrt{n})$, which does not depend on the underlying domain size $k$. This type of structural property of the NPMLE is known as self-regularization that is well-understood in the Gaussian model \cite{PW20-npmle}.

	\item $\widehat G$ is the optimal solution if and only if the the \textit{first-order optimality condition} is satisfied: 
	\begin{equation}
	D_{\widehat G}(\theta) \triangleq \frac{1}{k} \sum_{i=1}^k \frac{f_\theta(N_i)}{f_{\widehat G}(N_i)} \leq 1, \quad \forall \theta\geq 0.
	\label{eq:kkt}
	\end{equation}
    This general fact is a simple consequence of the convexity of the optimization problem \prettyref{eq:NPMLE-opt}, (i.e., $G\mapsto\ell(G)$ is concave.)
    Here $D_{\widehat G}(\theta) -1  = \lim_{\varepsilon\to0} \frac{\ell(\widehat G)-\ell((1-\varepsilon)\widehat G+\varepsilon \delta_\theta)}{\varepsilon}$ is the ``directional gradient'' of the log-likelihood objective. 	
	As a corollary, by averaging both sides over $\theta\sim\widehat G$, we have 
	$D_{\widehat G}(\theta) = 1$ for any $\theta \in \supp(\widehat G)$.
    Note that in the special case of Poisson model, for any $G$,  $D_G(\theta)$ is always a  polynomial in $\theta$ with degree $N_{\max}$.
\end{itemize}
Note that $\widehat G$ is determined by the empirical distribution of the data $\pi_{\sf emp} \triangleq \frac{1}{k} \sum_{i=1}^k \delta_{N_i}$,  or equivalently, by the profiles.

\subsection{Computation of the NPMLE}
\label{sec:NPMLE-computation}
The NPMLE can be computed using the Frank--Wolfe (F--W) algorithm \cite{frank1956algorithm,jaggi2013revisiting}, also known as the vertex direction method (VDM) algorithm in the literature of mixture models \cite{simar1976maximum,lindsay1995mixture,jana2022poisson} and experimental design 
\cite{fedorov1972theory,wynn1970sequential}.

Specifically, we apply the 
so-called \textit{fully-corrective} version of the F--W algorithm (see \cite[Algorithm 4]{jaggi2013revisiting}) that iterates the following steps: Given the current solution $G_t$, we find a global maximizer of the gradient:
\begin{equation}
\theta_{t+1} \in \argmax_{\theta}D_{G_t}(\theta).
    \label{eq:fw1}
\end{equation}
Then we add $\theta_{t+1}$ to the support of $G_t$ and re-fit the weights, namely, by solving the convex optimization problem
\begin{equation}
G_{t+1} = \argmax_{G\in \calP(\supp(G_t) \cup \{\theta_{t+1}\})} \ell(G).
    \label{eq:fw2}
\end{equation}
It is known that starting from any initialization $G_0$, the sequence $G_t$ converges to the NPMLE $\widehat G$ at a rate of $O(1/t)$ \cite{fedorov1972theory,lindsay1983geometry1}.
Our implementation is similar to that used in \cite{jana2022poisson}. One notable difference is the following: The gradient function in \prettyref{eq:fw1} is a degree-$N_{\max}$ polynomial in $\theta$  so that its maxima can be found by computing the roots of its derivative. This, however, turns out to be numerically unstable due to the high degree of the polynomial when the sample size is large. As such, we opt for a grid search by discretizing the support interval $[N_{\min},N_{\max}]$ into an equal grid of size $10 N_{\max}$.

\subsection{Two examples}
	\label{sec:NPMLE-example}

In general, there is no closed-form expression for the NPMLE.
Here, we give two examples in which the counts take two distinct values and the NPMLE solution can be computed analytically.
Later in \prettyref{sec:notau} we will use the first example to investigate the role of the regularization parameter $\tau$ (see \prettyref{eq:NPMLE-unnormalized}\ifthenelse{\boolean{arxiv}}{}{ in the main text}) in the NPMLE distribution estimator.
\begin{example}
    \label{ex:NPMLE1}
Suppose that the data has a special configuration, in which a single symbol, say, $k$, appeared $m\geq 2$ times and the other symbols are unseen. In other words, the counts are $(N_1,\ldots,N_k)=(0,\ldots,0,m)$, whose empirical distribution is $\pi_{\sf emp} = (1-\frac{1}{k})\delta_0 + \frac{1}{k}\delta_m$.

To find the NPMLE, thanks to its uniqueness, we simply need to ``guess'' the form of the solution and verify the first-order optimality condition \prettyref{eq:kkt}. Let
\begin{equation}
\widehat G = (1-\varepsilon) \delta_0 + \varepsilon \delta_b,
\label{eq:NPMLE-ex1}
\end{equation}
where $\varepsilon = \frac{1}{k(1-e^{-b})} \in (0,1)$, and $b\in (0,m)$ is the unique solution of the following equation:
\begin{equation}
b = (m-b)(e^b-1).
\label{eq:NPMLE-ex1b}
\end{equation}
We claim that \prettyref{eq:NPMLE-ex1} satisfies the optimality condition \prettyref{eq:kkt} and is thus the unique NPMLE solution.
To this end, we verify:
\begin{itemize}
	\item For $m\geq 2$, the equation $b = (m-b)(e^b-1)$ has a unique solution in $(0,m)$. (Note that this fails for $m=1$.) Indeed, the function $g(b)\equiv(m-b)(e^b-1)-b$ is increasing on $[0,m-1]$ and decreasing on $[m-1,\infty)$, satisfies $g(0)=0, g(m-1)>0$, and $g(m)<0$. 
	\item 
	For \prettyref{eq:NPMLE-ex1}, we have
	\[
	D_{\widehat G}(\theta) 
    = \int \pi_{\mathsf{emp}} (\d y)\frac{f_\theta(y)}{f_{\widehat G}(y)}
    = \frac{k-1}{k} \frac{e^{-\theta}}{1-\varepsilon(1-e^{-b})} + \frac{1}{k \varepsilon} \frac{\theta^m e^{-\theta}}{b^m e^{-b}}
	= e^{-\theta} + (1-e^{-b}) \frac{\theta^m}{b^m}e^{-\theta+b},
	\]
	the last identity thanks to the choice of $\varepsilon$. Then we have, simultaneously, $D_{\widehat G}(0) = D_{\widehat G}(b) = 1$.
	
	\item $D_{\widehat G}(\theta)$ has a global maximum at $\theta=b$ in $(0,m)$; see \prettyref{fig:NPMLE-ex1}. In particular, $D_{\widehat G}'(b) = 0$ and 
	$D_{\widehat G}''(b) < 0$. 
\end{itemize}

\begin{figure}[ht]%
\centering
\includegraphics[width=0.5\columnwidth]{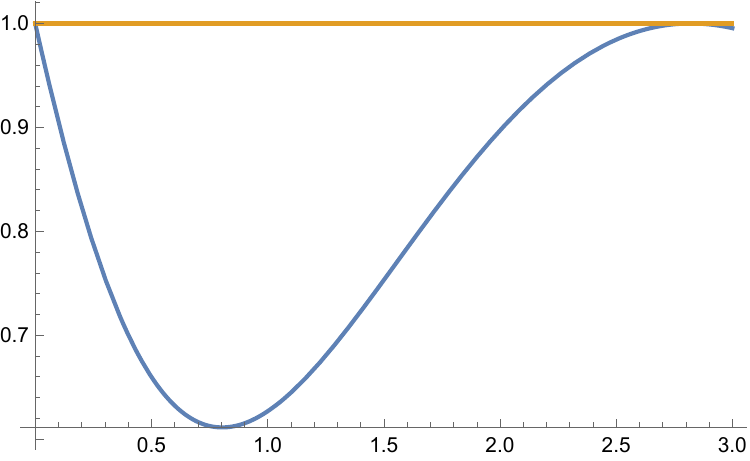}
\caption{The gradient function $\theta\mapsto D_{\widehat{G}}(\theta)$ (blue curve)
in \prettyref{ex:NPMLE1} for $m=3$.}%
\label{fig:NPMLE-ex1}%
\end{figure}

\end{example}

\begin{example}
    \label{ex:NPMLE2}
Suppose the data is such that each symbol either appears 0 times or once, so $N_i=0$ or $1$, but not all $0$ or all $1$.
From the basic properties summarized in \prettyref{sec:NPMLE-basics}, we know that NPMLE has at most two atoms contained in $[0,1]$.
In fact, the NPMLE is a simply a point mass at the sample mean:
\[
\widehat G = \delta_{q}, \quad q = \frac{1}{k}\sum_{i=1}^k N_i.
\]
Indeed, in this case one can verify that the function
\[
D_{\widehat G}(\theta) = (1-q+\theta) e^{-\theta+q}.
\]
satisfies $\max_{0\leq\theta\leq1}D_{\widehat G}(\theta)=1$, uniquely attained at $q$ in the interior; see \prettyref{fig:NPMLE-ex2}.
\begin{figure}[ht]%
\centering
\includegraphics[width=0.5\columnwidth]{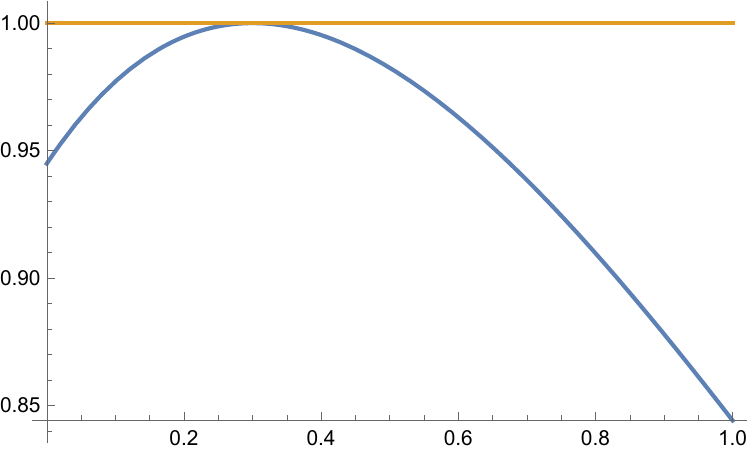}%
\caption{The gradient function $\theta\mapsto D_{\widehat{G}}(\theta)$ in \prettyref{ex:NPMLE2}
with $q=0.3$.}%
\label{fig:NPMLE-ex2}%
\end{figure}
\end{example}

\begin{remark}
As mentioned in the Introduction\ifthenelse{\boolean{arxiv}}{}{ of the main text}, it is well-known that the original Good--Turing estimator
\begin{equation}
\widehat{p}_{i}^{\mathrm{GT}} \propto
(N_i+1) \frac{\Phi_{N_i+1}}{\Phi_{N_i}},
    \label{eq:GT-original-SI}
\end{equation}
may result in unreasonable estimates so that additional modification must be applied in practice. 
In fact, it may not even be well-defined.
The preceding two examples are useful to illustrate its pathological behavior. 
For simplicity, assume that the alphabet is $\{\texttt{a,b,c,d}\}$.
\begin{itemize}
    \item As in \prettyref{ex:NPMLE1}, assume that we sample 4 times and only observe symbol $\texttt{d}$. So $N_{\texttt{a}}=N_{\texttt{b}}=N_{\texttt{c}}=0$ and $N_{\texttt{d}}=4$ and the only non-zero profiles are  
    $\Phi_0=3, \Phi_4=1$.
    In this case, the right-hand side of \prettyref{eq:GT-original-SI} is always 0 and the Good-Turing estimator is undefined.

    \item As in \prettyref{ex:NPMLE2}, assume that we sample twice and $N_{\texttt{a}}=N_{\texttt{b}}=1$ and $N_{\texttt{c}}=N_{\texttt{d}}=0$, so that the only non-zero profiles are 
    $\Phi_0=2, \Phi_1=2$.
    The Good--Turing estimator assigns zero probabilities $\widehat{p}_{\texttt{a}}^{\mathrm{GT}}=\widehat{p}_{\texttt{b}}^{\mathrm{GT}}=0$ to the two singletons 
    and non-zero probabilities $\widehat{p}_{\texttt{c}}^{\mathrm{GT}}=\widehat{p}_{\texttt{d}}^{\mathrm{GT}}=\frac{1}{2}$ to the two unseen symbols.
    This is a more general phenomenon: By \prettyref{eq:GT-original-SI}, the Good--Turing estimator always assigns zero probability to the most frequently observed symbols.
    In practice, this is remedied by reverting to the empirical frequency for symbols that occur sufficiently often, as in the modified Good--Turing estimators used by \cite{orlitsky2015competitive}
    (see 
    \prettyref{eq:modified-Good--Turing} and \prettyref{eq:MGT-exp}\ifthenelse{\boolean{arxiv}}{}{ 
    of the main text}.)
\end{itemize}

\end{remark}

\section{Regret sub-optimality of unregularized NPMLE}
\label{sec:notau}

In this section we show that without appropriate regularization for unseen symbols, the NPMLE estimator cannot compete with the PI oracle.
Consider the NPMLE-based estimator 
in \eqbr{def:npmle_estimator}, but without regularization for unseen symbols (i.e., setting $\tau = 0$ in \eqbr{def:npmle_estimator}):
\begin{align}
\widehat p_i^{\UNPML} := \frac{\bar p_i^{\UNPML}}{Z^{\UNPML}}, \quad \text{ where } \bar{p}^{\UNPML}_i := \frac{\theta_{\widehat G^{\NPMLE}}(N_i)}{n}, \quad Z^{\UNPML} := \sum_{i=1}^k \bar p_i^{\UNPML}.
\end{align}
The following result shows that without this regularization for unseen symbols, the regret NPML-based estimator does not vanish in the worst case.
\begin{proposition}
For large enough $n$ and $2\leq k\leq n^C$ for some universal $C > 0$, it holds that $\reg(\widehat{p}^{\UNPML}) \geq c$ for some universal $c > 0$.
\end{proposition}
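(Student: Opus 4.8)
The plan is to produce a single hard instance $p^\star\in\Delta_k$ on which $\widehat p^{\UNPML}$ has $\Omega(1)$ risk while the permutation‑invariant oracle has $e^{-\Omega(n)}$ risk. Set $q:=1/n$ and let $p^\star$ assign mass $1-q$ to symbol $k$ and mass $q/(k-1)$ to each of $1,\dots,k-1$. Under Poisson sampling the counts are independent with $N_k\sim\poi(n-1)$ and $N_i\sim\poi(1/(k-1))$ for $i<k$, so the event
\[
\mathcal E:=\{N_1=\dots=N_{k-1}=0\}\cap\{N_k\ge n/2\}
\]
has probability $e^{-1}\,\P(\poi(n-1)\ge n/2)\ge \tfrac1{2e}=\Omega(1)$ for $n$ large, by Lemma~\ref{lemma:chernoff}. (This is where $q=\Theta(1/n)$ is forced: a larger mass on the light symbols would make $\mathcal E$ exponentially unlikely.)

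On $\mathcal E$ the data is exactly the configuration of \prettyref{ex:NPMLE1} with $m=N_k\ge n/2\ge 2$, hence $\widehat G^{\NPMLE}=(1-\varepsilon)\delta_0+\varepsilon\delta_b$ with $b$ the root of $b=(m-b)(e^b-1)$ and $\varepsilon=1/(k(1-e^{-b}))$. A short computation (using that a rate‑$0$ Poisson puts no mass at $m\ge1$) gives $\theta_{\widehat G^{\NPMLE}}(0)=be^{-b}/((k-1)(1-e^{-b}))$, $\theta_{\widehat G^{\NPMLE}}(m)=b$, and $nZ^{\UNPML}=b/(1-e^{-b})$, so on $\mathcal E$
\[
\widehat p^{\UNPML}_i=\frac{e^{-b}}{k-1}\ \ (i<k),\qquad \widehat p^{\UNPML}_k=1-e^{-b}.
\]
The quantitative crux is the elementary bound $b\ge m-1$, immediate from $e^b-1\ge b$ and $b=(m-b)(e^b-1)$; thus $b\ge n/2-1$ and $e^{-b}$ is exponentially small. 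Therefore, on $\mathcal E$,
\[
\DKL(p^\star\|\widehat p^{\UNPML})=q\bigl(b+\log q\bigr)+(1-q)\log\frac{1-q}{1-e^{-b}}\ \ge\ qb-q\log\tfrac1q-q\ \ge\ \frac14
\]
for $n$ large, using $(1-q)\log(1-q)\ge-q$ and discarding the nonnegative term $-(1-q)\log(1-e^{-b})$. Since $\DKL(p^\star\|\widehat p^{\UNPML})\ge0$ always, this yields $r_n(p^\star,\widehat p^{\UNPML})\ge\E[\DKL(p^\star\|\widehat p^{\UNPML})\,\indc{\mathcal E}]\ge \tfrac14\P(\mathcal E)\ge \tfrac1{8e}$.

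It remains to show the PI oracle risk is negligible, which I do by exhibiting a competing permutation‑invariant estimator $\widetilde p$: assign $1-q$ to the unique strict $\argmax$ of $(N_1,\dots,N_k)$ and $q/(k-1)$ to every other coordinate, reverting to the uniform distribution when no strict maximum exists. This rule is permutation invariant, and $\widehat p^{\PI}$, being optimal among PI estimators against $p^\star$, satisfies $r_n(p^\star,\widehat p^{\PI})\le r_n(p^\star,\widetilde p)$. On the event that $N_k$ is the unique strict maximum one has $\widetilde p=p^\star$, so $\DKL(p^\star\|\widetilde p)=0$ there; by Lemma~\ref{lemma:chernoff} and a union bound over the $k-1\le n^C$ light symbols the complement has probability $e^{-\Omega(n)}$, and on it $\DKL(p^\star\|\widetilde p)\le\log((k-1)n)\le(C+1)\log n$ since $\widetilde p$ has full support bounded below by $(n(k-1))^{-1}\ge n^{-(C+1)}$. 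Hence $r_n(p^\star,\widehat p^{\PI})\le(C+1)\log n\cdot e^{-\Omega(n)}=e^{-\Omega(n)}$, and therefore
\[
\reg(\widehat p^{\UNPML})\ \ge\ r_n(p^\star,\widehat p^{\UNPML})-r_n(p^\star,\widehat p^{\PI})\ \ge\ \frac1{8e}-e^{-\Omega(n)}\ \ge\ \frac1{16e}
\]
for all $n$ large enough, which is the claim with $c=\tfrac1{16e}$.

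The only place the special structure of the NPMLE enters is the analysis of \prettyref{ex:NPMLE1} (the closed forms for $\theta_{\widehat G^{\NPMLE}}(0),\theta_{\widehat G^{\NPMLE}}(m)$ and the bound $b\ge m-1$), which the excerpt already supplies; the rest is bookkeeping. The main, and relatively minor, obstacle is to check that $\P(\mathcal E)=\Omega(1)$ and the tail estimates for $\widetilde p$ hold uniformly over $2\le k\le n^C$. Conceptually the mechanism is transparent: without the $\tau$‑regularization the NPMLE can place total mass $e^{-\Omega(n)}$ on the unseen symbols, which genuinely carry mass $\Theta(1/n)$, and $\tfrac1n\cdot\Omega(n)=\Omega(1)$ worth of KL divergence.
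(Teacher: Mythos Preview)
Your proof is correct and follows essentially the same approach as the paper's: both construct a hard instance with one heavy symbol and $k-1$ light symbols of total mass $\Theta(1/n)$, invoke \prettyref{ex:NPMLE1} on the high-probability event where all light counts vanish to show the unregularized NPMLE assigns $e^{-\Omega(n)}$ mass to the light symbols, and bound the PI oracle risk by exhibiting a permutation-invariant rule that recovers $p^\star$ whenever the heavy symbol is identifiable from its large count. The only cosmetic differences are your choice $q=1/n$ versus the paper's $100(k-1)/(nk)$, and your argmax-based PI competitor versus the paper's threshold at $n/4$.
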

\begin{proof}
We shorthand $\widehat{p}^{\UNPML}$ as $\widehat{p}$. Take the construction $p^\star$ in Lemma \ref{lem:NPMLE_bad} such that $\E[\DKL(p^\star\|\widehat{p})] \geq c$ for some universal $c > 0$. It suffices to show that $\E[\DKL(p^\star\|\widehat{p}^{\PI})] \leq c/2$. To this end, consider the following permutation-invariant estimator 
\begin{align*}
\widetilde{p}_j = \widetilde{p}_j(N) = 
\begin{cases}
\frac{100}{nk} & \text{if } N_j \leq n/4 \text{ and }\sum_{\ell=1}^k \bm{1}\{N_\ell > n/4\} = 1,\\
1 - \frac{100(k-1)}{nk} & \text{if } N_j > n/4 \text{ and }  \sum_{\ell=1}^k \bm{1}\{N_\ell > n/4\} = 1,\\
1/k & \text{otherwise},
\end{cases}
\end{align*}
which satisfies $\sum_{j=1}^k \widetilde{p}_j = 1$ with probability one. Define the event $E = \{\sum_{\ell=1}^k \bm{1}\{N_\ell > n/4\} = 1\}$, which satisfies $\Prob(E^c) = O(n^{-10})$ by the Chernoff bound. Hence $\E[\DKL(p^\star\|\widetilde{p})\bm{1}\{E^c\}] \leq \frac{\log k}{n^{10}} \leq \frac{c}{4}$ for large enough $n$, and again by the Chernoff bound,
\begin{align*}
\E[\DKL(p^\star\|\widetilde{p})\bm{1}\{E\}] = \E\Big[\sum_{\ell=1}^{k-1} \Big(p_{\ell}^\star\log\frac{p_{\ell}^\star}{p_k^\star} + p_k^\star\log\frac{p_k^\star}{p_{\ell}^\star}\Big)\bm{1}\{N_\ell > n/4, N_k \leq n/4\}\Big] = O(k\exp(-\Omega(n))) \leq \frac{c}{4}
\end{align*}
for large enough $n$. We therefore conclude that $\E[\DKL(p^\star\|\widehat{p}^{\PI})] \leq \E[\DKL(p^\star\|\widetilde{p})] \leq c/2$, as desired.
\end{proof}

\begin{lemma}\label{lem:NPMLE_bad}
For any $k\geq 2$ and large enough $n$, there exists some $p^\star \in \Delta_k$ such that 
\[
\E [\DKL(p^\star\|\widehat{p}^{\UNPML})] \geq c
\]
for some universal $c > 0$. 
\end{lemma}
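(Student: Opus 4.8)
The plan is to exhibit a single worst-case instance on which the unregularized NPMLE collapses the estimated probability of an unseen symbol to something exponentially small in $n$, even though that symbol carries non-negligible true mass. I would take $p^\star_j = \frac{100}{nk}$ for $j \in [k-1]$ and $p^\star_k = 1 - \frac{100(k-1)}{nk}$, which is a valid probability vector once $n > 100$ (this is the construction implicitly used in the proof of the preceding proposition, where the matching PI competitor $\widetilde p$ is exhibited). Under Poisson sampling the counts are independent with $N_j \sim \Poi(100/k)$ for $j \le k-1$ and $N_k \sim \Poi(n - 100(k-1)/k)$. Let $\calE := \pth{\bigcap_{j=1}^{k-1}\{N_j = 0\}} \cap \{n/4 \le N_k \le 2n\}$. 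Since $\Prob(N_j = 0) = e^{-100/k}$ and, by Lemma~\ref{lemma:chernoff}, $\Prob(n/4 \le N_k \le 2n) \ge 1 - e^{-\Omega(n)}$, independence gives $\Prob(\calE) \ge e^{-100(k-1)/k}\pth{1 - e^{-\Omega(n)}} \ge e^{-100}/2$ for $n$ large --- a positive constant that does not depend on $k$.

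The point of $\calE$ is that on it the empirical distribution of the counts equals $\pth{1-\tfrac1k}\delta_0 + \tfrac1k\delta_m$ with $m := N_k \in [n/4, 2n]$ and $m \ge 2$, which is exactly the configuration analyzed in \prettyref{ex:NPMLE1} (one symbol observed $m$ times, all others unseen). Since the NPMLE depends on the sample only through its empirical distribution (\prettyref{sec:NPMLE-basics}), that example applies verbatim on $\calE$: $\widehat G^{\NPMLE} = (1-\varepsilon)\delta_0 + \varepsilon\delta_b$ with $\varepsilon = \tfrac{1}{k(1-e^{-b})}$ and $b$ the unique root of $e^b(m-b) = m$, which lies in $(m-1, m)$. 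The two posterior means we need are recorded there:
\[
\theta_{\widehat G^{\NPMLE}}(0) = \frac{f_{\widehat G^{\NPMLE}}(1)}{f_{\widehat G^{\NPMLE}}(0)} = \frac{b\,e^{-b}}{(k-1)(1-e^{-b})}, \qquad \theta_{\widehat G^{\NPMLE}}(m) = (m+1)\frac{f_{\widehat G^{\NPMLE}}(m+1)}{f_{\widehat G^{\NPMLE}}(m)} = b.
\]
Because $b > m - 1 \ge n/4 - 1$, we have $b\,e^{-b} \le e^{-\Omega(n)}$, so $\bar p^{\UNPML}_j = \theta_{\widehat G^{\NPMLE}}(0)/n \le e^{-\Omega(n)}/(k-1)$ for every unseen symbol $j$, while $\bar p^{\UNPML}_k = b/n \in [1/8, 2]$. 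Hence on $\calE$, $Z^{\UNPML} = (k-1)\bar p^{\UNPML}_1 + \bar p^{\UNPML}_k \in [1/8, 3]$ and $\widehat p^{\UNPML}_j = \bar p^{\UNPML}_j / Z^{\UNPML} \le e^{-\Omega(n)}$ for all $j \le k-1$.

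It remains to turn this into a lower bound on the loss. Using $a\log(a/b) - a + b \ge 0$ termwise (valid since $p^\star$ and $\widehat p^{\UNPML}$ are probability vectors), and noting that $k \le n^C$ makes $\log(nk) = O(\log n)$ negligible against the $\Omega(n)$ exponent above, on $\calE$ we get
\[
\DKL(p^\star \| \widehat p^{\UNPML}) \;\ge\; \sum_{j=1}^{k-1}\pth{p^\star_j \log\frac{p^\star_j}{\widehat p^{\UNPML}_j} - p^\star_j} \;\ge\; \sum_{j=1}^{k-1}\frac{100}{nk}\cdot\Omega(n) \;=\; \Omega\pth{\frac{k-1}{k}} \;=\; \Omega(1).
\]
Taking expectations, $\E[\DKL(p^\star \| \widehat p^{\UNPML})] \ge \Omega(1)\cdot\Prob(\calE) \ge c$ for a universal constant $c > 0$, which is the claim (and, via \prettyref{eq:reg_original}, the proposition follows as in the preceding argument).

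The only step requiring a moment's care is the reduction to \prettyref{ex:NPMLE1}: one must explicitly observe that the labeled data pattern ``$N_1 = \dots = N_{k-1} = 0$, $N_k = m$'' is covered by that example thanks to the permutation invariance of the NPMLE; everything after is elementary tracking of constants, so there is no genuine obstacle. The substance of the argument is already contained in \prettyref{ex:NPMLE1}, which shows $\theta_{\widehat G^{\NPMLE}}(0) = \Theta\pth{(m-b)/(k-1)} = e^{-\Theta(n)}$ when a single symbol dominates the sample --- precisely the degeneracy on unseen symbols that the regularizer $\hp$ in \eqbr{def:npmle_estimator} is introduced to rule out.
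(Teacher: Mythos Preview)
Your approach is essentially the same as the paper's: same instance $p^\star$, same event $\calE$ (the paper uses $0.75n \le N_k \le 2n$ instead of $n/4$, but this is immaterial), same reduction to \prettyref{ex:NPMLE1}, and the same mechanism (the posterior mean at $0$ is exponentially small in $n$).

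There is one small slip. The lemma as stated has no upper bound on $k$; the hypothesis $k \le n^C$ belongs to the preceding proposition, not here. When you pass from $\bar p^{\UNPML}_j \le e^{-\Omega(n)}/(k-1)$ to $\widehat p^{\UNPML}_j \le e^{-\Omega(n)}$, you discard the $1/(k-1)$ factor and then need $\log(nk) = O(\log n)$ to absorb $\log p^\star_j = \log(100/(nk))$. Without the assumption $k \le n^C$ this step fails for very large $k$. The fix is already in your hands: keep the $1/(k-1)$, so that
\[
\frac{p^\star_j}{\widehat p^{\UNPML}_j} \;\ge\; \frac{100/(nk)}{8\,e^{-\Omega(n)}/(k-1)} \;=\; \frac{100(k-1)}{8nk}\,e^{\Omega(n)} \;\ge\; \frac{C}{n}\,e^{\Omega(n)},
\]
and $\log(p^\star_j/\widehat p^{\UNPML}_j) \ge \Omega(n) - \log n = \Omega(n)$ uniformly in $k$. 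The paper handles this by computing $\log\frac{p^\star_1}{\theta_{\widehat G}(0)/(n/2)}$ directly in terms of $f_{\widehat G}(0)/f_{\widehat G}(1)$, where the $k$-dependence likewise cancels. With that correction your argument is complete and matches the paper's.
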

\begin{proof}
We shorthand $\widehat{p}^{\UNPML}$ as $\widehat{p}$. Let $p^\star = (p_1^\star, \ldots, p_1^\star, p_k^\star)$ where $p_1^\star = 100/(nk)$ and $p_k^\star = 1 - 100(k-1)/(nk)$, leading to $G_k = \frac{1}{k}\sum_{i=1}^k \delta_{\theta^\star_i} = (1-\frac{1}{k})\delta_{100/k} + \frac{1}{k}
\delta_{a}$ with $a = n - 100(k-1)/k \in [0.99n,n]$ for large enough $n$. Define the event
\begin{align*}
\cE := \{N_1 = \ldots = N_{k-1} = 0, 0.75 n \leq N_{k} \leq 2n\},
\end{align*}
which, for large enough $n$, holds with probability $\Prob(\cE) = \Prob(N_1=0)^{k-1}\Prob(0.75n \leq N_k \leq 2n) \geq e^{-\frac{100(k-1)}{k}}(1-\exp(-cn)) \geq c'$ for some universal $c' > 0$. Then on $\cE$ we have
\begin{align*}
\bar{p}_1 = \ldots = \bar{p}_{k-1} = \frac{\theta_{\widehat{G}}(0)}{n}, \quad \bar{p}_k = \frac{\theta_{\widehat{G}}(N_k)}{n}, 
\end{align*}
and $\widehat{p}_j = \bar{p}_j/Z$ with $Z = \sum_{j=1}^k \bar{p}_j$, which further implies that
\ifthenelse{\boolean{arxiv}}{\begin{align*}
\E[\DKL(p^\star\|\widehat{p})] &\geq \E [\DKL(p^\star\|\widehat{p})\indc{\cE}] \\
&= \E \Big[\Big((k-1)p_1^\star \log\frac{p^\star_1}{\theta_{\widehat{G}}(0)/(n/2)} + p_k^\star\log\frac{p^\star_k}{\theta_{\widehat{G}}(N_k)/(n/2)}\Big)\indc{\cE}\Big] + \E[\log (2Z)\indc{\cE}].
\end{align*}}{
\begin{align*}
\E[\DKL(p^\star\|\widehat{p})] \geq \E [\DKL(p^\star\|\widehat{p})\indc{\cE}] = \E \Big[\Big((k-1)p_1^\star \log\frac{p^\star_1}{\theta_{\widehat{G}}(0)/(n/2)} + p_k^\star\log\frac{p^\star_k}{\theta_{\widehat{G}}(N_k)/(n/2)}\Big)\indc{\cE}\Big] + \E[\log (2Z)\indc{\cE}].
\end{align*}}
On the event $\cE$, we apply
\prettyref{ex:NPMLE1} to conclude that the NPMLE is given by $\widehat{G} = (1-\eps)\delta_0 + \eps\delta_b$ where $b \in (0,N_k)$ is the unique solution to $N_k = (N_k-b)e^b$ and $\eps = \frac{1}{k(1-e^{-b})}$; in particular, $b \geq N_k - 2N_ke^{-N_k}$.
For such a $\widehat G$ with a point mass at 0, we have 
$\theta_{\widehat{G}}(y) = b$  for all $y\geq 1$.
As a result,
on the event $\cE$ for large enough $n$, we have
\begin{align*}
Z = \frac{(k-1)\theta_{\widehat{G}}(0) + \theta_{\widehat{G}}(N_k)}{n} \geq \frac{ \theta_{\widehat{G}}(N_k)}{n} = \frac{b}{n} 
\geq \frac{N_k - 2N_ke^{-N_k}}{n} \geq \frac{1}{2}.
\end{align*}
Therefore $\E[\log (2Z)\indc{\cE}] \geq 0$. Next we have $\theta_{\widehat{G}}(0) = f_{\widehat{G}}(1)/f_{\widehat{G}}(0)$, with $f_{\widehat{G}}(0) = (1-\eps) + \eps e^{-b} = 1-\frac1k$ and $f_{\widehat{G}}(1) = \eps be^{-b} = \frac{1}{k(e^b-1)}$. Using $b \in [\frac{N_k}{2}, N_k]\subseteq [\frac{n}{4},2n]$ on the event $\cE$,
\begin{align*}
(k-1)p_1^\star \log\frac{p^\star_1}{\theta_{\widehat{G}}(0)/(n/2)} = \frac{100(k-1)}{nk}\log\frac{f_{\widehat{G}}(0)}{f_{\widehat{G}}(1)\cdot 50k} = \frac{100(k-1)}{nk}\log\frac{(k-1)(e^b-1)}{50k}\geq 10,
\end{align*}
for $k\ge 2$ and large enough $n$. On the other hand, using $\theta_{\widehat{G}}(N_k) = b \le 2n$ on $\cE$ and $p_k^\star \in [0.9, 1]$, 
\begin{align*}
    p_k^\star\log \frac{p_k^\star}{\theta_{\widehat{G}}(N_k)/(n/2)} \geq \log\frac{0.9}{4} > -2
\end{align*} on $\cE$, so for large enough $n$ we conclude that $\E[\DKL(p^\star\|\widehat{p})] \geq 8\Prob(\cE) \geq c$ for some universal $c > 0$, as desired. 
\end{proof}

\section{Connections to Profile Maximum Likelihood (PML)}
\label{sec:PML}

We discuss in this section another likelihood-based estimator for distribution estimation, known as the \emph{profile maximum likelihood} (PML) \cite{PML,acharya2017unified,hao2019broad,han2021competitive}.
For distribution estimation, there are  three distinct maximum likelihood methods : the vanilla MLE (empirical frequency), the PML, and the NPMLE. 
The PML aims to maximize the total likelihood modulo a relabeling, which involves a non-convex polynomial optimization that is difficult to solve.
In \prettyref{sec:PML-relax}, we show that NPMLE turns out to be a convex relaxation of PML in an appropriate sense. Next, in \prettyref{sec:PML-regret}, we show that like NPMLE, the (more computationally expensive) PML also attains the near optimal regret. This result is a by-product of our regret analysis of NPMLE and the competitive analysis of PML in \cite{han2021competitive}.

As noted before (see \ifthenelse{\boolean{arxiv}}{\prettyref{eq:modified-Good--Turing} and 
\prettyref{eq:Phi_y}}{\prettyref{eq:modified-Good--Turing} in the main text and 
\prettyref{eq:Phi_y} in this supplement}), the profile is a further summary of the histogram counts (without label information), which records how many symbols occur exactly a given number of times. Specifically, let $N=(N_1,\ldots,N_k)$, where 
$N_i$ denotes the number of times the $i$th symbol appears, for $i=1,\ldots,k$. 
The profile of $N$ is 
$\Phi\equiv \Phi(N) = (\Phi_0, \Phi_1, \dots)$, where $\Phi_j = \sum_{i=1}^k \indc{N_i=j}$.  In other words, profiles are the histogram of histograms. For example, if the observed symbols are $(a,b,a,a,c)$ in a universe of symbols $\{a,b,c,d,e\}$, the histogram counts are $(3,1,1,0,0)$, and the profile is $\Phi = (2,2,0,1,\ldots)$, with the rest being $0$. 


The PML aims to maximize the probability of the observed profile $\Phi$, namely
\[
\widehat{p}^{\PML} = \argmax_{p\in S_k} F_{\PML}(p)
\]
where, with $S_k$ denoting the set of permutations on $[k]$,
\begin{equation}
F_{\PML}(p)
:= \frac{1}{k!} \sum_{\pi \in S_k} \prod_{i=1}^k p_{\pi(i)}^{N_i} \propto \sum_{N'\in \naturals^k: \Phi(N')=\Phi} \prod_{i=1}^k p_i^{N_i'}. 
    \label{eq:FPML}
\end{equation}
In other words, the objective $F_{\PML}(p)$ is proportional to the likelihood of observing a given profile $\Phi$, or equivalently the average likelihood over a uniform permutation of $p$, in both multinomial and Poisson sampling models. Indeed, the average likelihood in the multinomial model $(N_1,\dots,N_k)\sim \mathrm{Multi}(n; (p_1,\dots,p_k))$ is
\begin{align*}
\frac{1}{k!} \sum_{\pi \in S_k} \binom{n}{N_1, \dots, N_k} \prod_{i=1}^k p_{\pi(i)}^{N_i} \propto F_{\PML}(p), 
\end{align*}
and similarly the average likelihood in the Poisson model $(N_1,\dots,N_k)\sim \prod_{i=1}^k \Poi(np_i)$ is
\begin{align*}
\frac{1}{k!} \sum_{\pi \in S_k} \prod_{i=1}^k e^{-np_{\pi(i)}}\frac{(np_{\pi(i)})^{N_i}}{N_i!} = \frac{n^{\sum_{i=1}^k N_i}e^{-n}}{N_1!\cdots N_k!} \cdot \frac{1}{k!}\sum_{\pi\in S_k} \prod_{i=1}^k p_{\pi(i)}^{N_i} \propto F_{\PML}(p). 
\end{align*}
Since it is clear that $F_{\PML}(p)$ is oblivious to the relabeling of $p$, the PML distribution $\widehat{p}^{\PML
}$ estimates $p^\star$ up to a permutation, or equivalently, the sorted version of $p^\star$. We refer to \cite{acharya2017unified,han2021competitive} for the state-of-the-art statistical analysis of the PML.

\subsection{NPMLE as a convex relaxation of PML}
\label{sec:PML-relax}
The PML maximization is computationally difficult as the objective $F_{\PML}(p)$ in \eqbr{eq:FPML} is the permanent of the matrix $(p_j^{N_i})_{i,j=1}^k$ \cite{valiant1979complexity}.
In addition, 
$F_{\PML}(p)$ is a symmetric function (polynomial), so that it only depends on the empirical distribution of $p$. This motivates the following relaxation: 
Using the fact $\sum_{j=1}^k p_j=1$, we have
\ifthenelse{\boolean{arxiv}}{\begin{align}
 F_{\PML}(p)\propto
\frac{1}{k!} \sum_{\text{distinct } j_1,\ldots,j_k \in [k]} \prod_{i=1}^k e^{-np_{j_i}}
p_{j_i}^{N_i}
&\leq \frac{1}{k!} \sum_{j_1,\ldots,j_k \in [k]} \prod_{i=1}^k  e^{-np_{j_i}} p_{j_i}^{N_i}
\nonumber\\
&= \frac{1}{k!} 
\prod_{i=1}^k
\sum_{j \in [k]} 
 e^{-np_j} p_j^{N_i}
\propto \prod_{i=1}^k
f_{G_p}(N_i), 
\label{eq:PML-relax}
\end{align}}{
\begin{align}
 F_{\PML}(p)\propto
\frac{1}{k!} \sum_{\text{distinct } j_1,\ldots,j_k \in [k]} \prod_{i=1}^k e^{-np_{j_i}}
p_{j_i}^{N_i}
\leq \frac{1}{k!} \sum_{j_1,\ldots,j_k \in [k]} \prod_{i=1}^k  e^{-np_{j_i}} p_{j_i}^{N_i}
= \frac{1}{k!} 
\prod_{i=1}^k
\sum_{j \in [k]} 
 e^{-np_j} p_j^{N_i}
\propto \prod_{i=1}^k
f_{G_p}(N_i), 
\label{eq:PML-relax}
\end{align}}
where, per the notation \prettyref{eq:poisson-mixture-fG}, $f_{G_p}$ is the marginal pmf of the Poisson mixture with mixing distribution $G_p = \frac{1}{k}\sum_{j=1}^k \delta_{np_j}$:  
\begin{align*}
f_{G_p}(y) 
= \int \Poi(y; \theta) \mathrm{d}G_p(\theta) = \frac{1}{k}\sum_{j=1}^k \Poi(y; np_j) 
= \frac{1}{k}\sum_{j=1}^k 
\frac{e^{-np_j} (np_j)^y}{y!}.
\end{align*}
Finally, optimizing the rightmost upper bound 
in \prettyref{eq:PML-relax} and 
relaxing the decision variable $G_p$ from being a $k$-point empirical distribution to an arbitrary distribution $G$, we arrive at 
\[
\max_{G \in \calP(\reals_+)} \prod_{i=1}^k
f_G(N_i),
\]
which is precisely the NPMLE optimization 
\prettyref{eq:NPMLE-opt}.

\subsection{Regret analysis of PML}
\label{sec:PML-regret}
The PML solution $\widehat{p}^\PML = (\widehat{p}_1^\PML, \dots, \widehat{p}_k^\PML)$ naturally gives an estimator 
\begin{align}\label{def:PML_prior}
\widehat{G}^\PML := \frac{1}{k}\sum_{j=1}^k \delta_{n\widehat{p}_j^\PML}
\end{align}
for the true empirical distribution $G_k = \frac{1}{k}\sum_{j=1}^k \delta_{np_j^\star}$. Consequently, we can use the estimator $\widehat{G}_\PML$ in place of the NPMLE $\widehat{G}$ and construct an estimator $\widehat{p}$ for $p$ as
\begin{align}\label{eq:PML_final_est}
\widehat{p}_j = \frac{\theta_{\widehat{G}^\PML}(N_j; \rho)}{Z^{\PML}}, \quad \text{ with } Z^{\PML} = \sum_{j=1}^k \theta_{\widehat{G}^\PML}(N_j; \rho),
\end{align}
where $\theta_{G}(y; \rho)$ is the regularized Bayes estimator defined in \eqbr{eq:regularized_bayes}.

The following result shows that PML attains the optimal regret $n^{-2/3}$ with a subpolynomial factor $n^{o(1)}$.
\begin{theorem}\label{thm:PML}
Let $\varepsilon\in (0,1/4)$ be any fixed constant, $\rho = (nk)^{-10}$, and assume that $k\le n^{A_0}$ for some constant $A_0>0$. Then under the Poissonized model, the following regret bound holds for the estimator $\widehat{p}$ in \eqbr{eq:PML_final_est}:   
\begin{align*}
\reg\pth{ \widehat{p} } \le C\pth{ \frac{k}{n} \wedge n^{-2/3 + \varepsilon} }\log^5 n, 
\end{align*}
where $C > 0$ depends only on $(A_0, \varepsilon)$. 
\end{theorem}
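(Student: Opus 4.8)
The plan is to derive Theorem~\ref{thm:PML} as a corollary of the general regret bound in Theorem~\ref{thm:upper_bound_general}, applied with $\widehat G=\widehat G^{\PML}$ (as defined in \eqbr{def:PML_prior}) playing the role of the NPMLE. Recall that Theorem~\ref{thm:upper_bound_general} turns any density-estimation guarantee $\sup_{p^\star\in\Delta_k}\P(H^2(f_{G_k},f_{\widehat G})\ge \varepsilon^2)\le \delta$ into a regret bound whose dominant term is $\tfrac{C}{n}\,k\varepsilon^2\log^6(nk)$. Its hypotheses are met here: $\widehat G^{\PML}$ is a measurable function of $N$ alone---indeed of the profile $\Phi(N)$---so the $\varepsilon$-net decoupling step in its proof goes through unchanged, and the regularization scale $\rho=(nk)^{-10}$ is polynomially small, so every estimate in the proof that used the value $\rho=c(nk)^{-5}$ only improves. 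The unseen-symbol regularization is incorporated exactly as in \eqbr{eq:est_regularized} (this is needed because $\widehat G^{\PML}$ typically places an atom at $0$, which can make $\theta_{\widehat G^{\PML}}(0;\rho)$ tiny). Thus the whole theorem reduces to establishing, for an arbitrarily small constant $\varepsilon>0$, the bound
\[
\sup_{p^\star\in\Delta_k}\P\Big(H^2(f_{G_k},f_{\widehat G^{\PML}})\ge n^{\varepsilon}\big(1\wedge \tfrac{n^{1/3}}{k}\big)\Big)\le \exp(-n^{\Omega(1)}),
\]
after which $k\varepsilon^2\asymp n^{\varepsilon}(k\wedge n^{1/3})$, the $\log^6(nk)$ and $n^\varepsilon$ factors are absorbed (using $k\le n^{A_0}$) into $n^{\varepsilon}\log^5 n$, and $\delta(n+k)\log^6(nk)/n$ is negligible.

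Establishing this Hellinger rate for $\widehat G^{\PML}$ is the crux, and it is where the competitive analysis of PML from \cite{han2021competitive} enters. The benchmark is the rate attained by the NPMLE: by Lemma~\ref{lem:compound_density} (with $p=1$), $H^2(f_{G_k},f_{\widehat G^{\NPMLE}})=\widetilde O(1\wedge n^{1/3}/k)$ with superpolynomially high probability. The competitive guarantees for PML assert that, for a broad class of symmetric estimation problems, the PML plug-in is within a factor $n^{o(1)}$ of the optimum. Concretely I would: (i) verify that estimating the Poisson mixture density $f_{G_{p^\star}}$ in squared Hellinger---equivalently, estimating the sorted empirical distribution of $p^\star$, or a suitable family of its smoothed moments---falls within the class of functionals covered by \cite{han2021competitive}; (ii) use the defining optimality $F_{\PML}(\widehat p^{\PML})\ge F_{\PML}(p^\star)$ together with the standard lower bound that a typical profile has probability at least $\exp(-n^{1/3}\,\mathrm{polylog}(n))$ under $p^\star$ (an event of superpolynomially high probability), so that $\widehat p^{\PML}$ also assigns the observed profile at least this probability; (iii) invoke the well-separation step: any $q\in\Delta_k$ that assigns the observed profile probability $\ge\exp(-n^{1/3}\,\mathrm{polylog}(n))$ must have $f_{G_q}$ within Hellinger radius $n^{\varepsilon}(1\wedge n^{1/3}/k)$ of the empirical profile statistics, whence (combining with the same statement for $p^\star$ via the triangle inequality) $H^2(f_{G_{\widehat p^{\PML}}},f_{G_k})\le n^{\varepsilon}(1\wedge n^{1/3}/k)$. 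Plugging this $\varepsilon^2$ and $\delta=\exp(-n^{\Omega(1)})$ into Theorem~\ref{thm:upper_bound_general} and renaming constants yields $\reg(\widehat p)\le C(\tfrac kn\wedge n^{-2/3+\varepsilon})\log^5 n$.

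The main obstacle is step~(iii): one must extract from the PML competitive machinery a bound sharp enough to recover the \emph{optimal} exponent $n^{1/3}$ inside $\varepsilon^2$ (so that the regret is $n^{-2/3+\varepsilon}$, not merely the $n^{-1/3}$-type rate obtained from cruder arguments). This hinges on the ``resolution'' at which PML localizes $G_k$---governed by how fast the profile probability decays as one moves away from the empirical statistics---matching the $\sqrt{n^{1/3}/k}$ Hellinger scale up to $n^{o(1)}$, which in turn relies on the sharpest profile-probability lower bounds and the quantitative competitive bounds of \cite{han2021competitive}. The remaining points are routine and parallel the NPMLE analysis: controlling the normalizer $Z^{\PML}$, handling the small-count behavior of $\theta_{\widehat G^{\PML}}(\cdot;\rho)$, and flooring unseen symbols at order $\tau=k^{-C_0}$; once the Hellinger bound is in hand, these are identical to the steps already carried out in the proof of Theorem~\ref{thm:upper_bound_general}.
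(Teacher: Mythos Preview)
Your high-level plan matches the paper's: reduce to a Hellinger guarantee for $\widehat G^{\PML}$ via Theorem~\ref{thm:upper_bound_general}, then establish that guarantee through the competitive analysis of PML from \cite{han2021competitive}. There is, however, a gap in your step~(ii). The claim that a typical profile has probability at least $\exp(-n^{1/3}\,\mathrm{polylog}(n))$ under $p^\star$ is not a ``standard lower bound'': the elementary bound from Hardy--Ramanujan partition counting gives only $\exp(-c\sqrt{n})$, and this is in general sharp for individual profiles. With a $\sqrt{n}$ exponent, the competitive argument you sketch would require the reference estimator to have failure probability $\delta \ll \exp(-\sqrt{n})$; but the NPMLE's guarantee from Lemma~\ref{lem:compound_density} is only $\delta=\exp(-\widetilde\Omega(n^{1/3}))$, which is not small enough to beat $\exp(c\sqrt n)$. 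So the obstacle appears already at (ii), not only at (iii) as you flag.

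The paper resolves this by applying \cite[Theorem~1]{han2021competitive} as a black box (restated and adapted to the Poissonized model as Lemma~\ref{lemma:PML_competitive}). The crucial $n^{1/3}$ exponent there arises not from a raw profile-probability lower bound but from the quantization/chunking scheme of \cite{han2021competitive}, which reduces the effective profile space to cardinality $\exp(\widetilde O(n^{1/3}))$. One then feeds in the NPMLE as the profile-based reference estimator, whose $\delta=\exp(-\Omega(n^{1/3+\varepsilon}))$ beats the $\exp(Cn^{1/3+\kappa})$ blow-up for small~$\kappa$. The paper must additionally verify that a remainder term $r_{n/2}(A)$ (coming from the approximation scheme in \cite{han2021competitive}) is $O(k^{-1})$, a detail absent from your sketch. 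Your step~(iii)---the well-separation argument---is effectively what Lemma~\ref{lemma:PML_competitive} already packages, so once that lemma is invoked correctly your (ii)--(iii) collapse into a single citation.
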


\begin{proof}
By \Cref{thm:upper_bound_general}, it suffices to show that for a constant $c>0$, 
\begin{align}\label{eq:PML_hellinger}
\P\pth{ H^2\pth{f_{G_k}, f_{\widehat{G}^\PML}} \ge \frac{cn^{1/3+\varepsilon}}{k}  } = \exp\pth{-\Omega\pth{ n^{1/3+\varepsilon}} }
\end{align}
for the PML estimate $\widehat{G}^\PML$ in \eqbr{def:PML_prior}. The proof of \eqbr{eq:PML_hellinger} relies on the following lemma, which is a slight variant of the competitive analysis of the PML in \cite[Theorem 1]{han2021competitive}: 
\begin{lemma}\label{lemma:PML_competitive}
Suppose there exists another estimator $\widehat{G}$ depending only on the profile of $N$ such that
\begin{align}\label{eq:PML_assumption}
\sup_{p^\star} \P\pth{ H^2\pth{f_{G_k}, f_{\widehat{G}}} \ge \frac{n^{1/3+\varepsilon}}{k}  } \le \delta,
\end{align}
then 
\begin{align}\label{eq:PML_result}
\sup_{p^\star} \P\pth{ H^2\pth{f_{G_k}, f_{\widehat{G}^\PML}} \ge \frac{4n^{1/3+\varepsilon}}{k} + r_{n/2}(A)  } \le \delta^{1-\kappa}\exp\pth{C(A, \kappa) n^{1/3+\kappa}} + 2\exp\pth{-\frac{n}{8}},
\end{align}
where $A, \kappa>0$ are any fixed constants, the constant $C(A, \kappa) > 0$ depends only on $A$ and $\kappa$, and the remainder term $r_m(A)$ is defined as
\begin{align*}
r_m(A) := \sup\sth{ H^2\pth{ f_{G_1}, f_{G_2} }: G_1 = \frac{1}{k}\sum_{j=1}^k \delta_{np_j}, G_2 = \frac{1}{k}\sum_{j=1}^k \delta_{nq_j}, q \text{ is } (m^{-A}, 3m^{-A/2}) \text{-close to }p  }. 
\end{align*}
Here for pmfs $p$ and $q$ in $\Delta_k$, $q$ is $(\alpha,\beta)$-close to $p$ means that $p_j \le \alpha \Longrightarrow q_j\le \alpha$, and $p_j>\alpha \Longrightarrow \frac{p_j}{1+\beta}\le q_j\le p_j $ for every $j\in [k]$.
\end{lemma}

The only difference between \cite[Theorem 1]{han2021competitive} and Lemma \ref{lemma:PML_competitive} is that Lemma \ref{lemma:PML_competitive} generalizes the multinomial model in \cite[Theorem 1]{han2021competitive} to the Poisson model. We defer the proof of Lemma \ref{lemma:PML_competitive} to the end of this section. By choosing $\kappa$ small enough and $A$ large enough (both depending on $(A_0, \varepsilon)$), it is clear that Lemma \ref{lemma:PML_competitive} implies \eqbr{eq:PML_hellinger} if we establish the following two claims:
\begin{enumerate}
    \item there exists an estimator $\widehat{G}$ such that \eqbr{eq:PML_assumption} holds with $\delta = \exp(-\Omega(n^{1/3+3\varepsilon/2}))$; 
    \item $r_{n/2}(A) = O\pth{k^{-1}}$ for a large enough constant $A>0$ (e.g. $A = A_0+1$ suffices). 
\end{enumerate}

The first claim simply follows from taking $\widehat{G}$ to be the NPMLE (which depends on $N$ only through its profile) and invoking Lemma \ref{lem:compound_density}. As for the second claim, note that when $q$ is $(n^{-A}, 3n^{-A/2})$-close to $p$, 
\begin{align*}
H^2\pth{f_{G_1}, f_{G_2}} &= H^2\pth{ \frac{1}{k}\sum_{j=1}^k \Poi(np_j), \frac{1}{k}\sum_{j=1}^k \Poi(nq_j) } \\
&\stepa{\le} \frac{1}{k}\sum_{j=1}^k H^2\pth{\Poi(np_j), \Poi(nq_j)} \\
&\stepb{\le} \frac{1}{k}\sum_{j=1}^k n\pth{ \sqrt{p_j} - \sqrt{q_j}}^2 \stepc{\le} \frac{1}{k}\sum_{j=1}^k n\cdot \frac{3}{(n/2)^A} = \frac{3\cdot 2^A}{n^{A-1}}. 
\end{align*}
Here (a) uses the joint convexity of the squared Hellinger distance, (b) follows from $H^2(\Poi(\theta), \Poi(\mu)) = 2(1-e^{-(\sqrt{\theta}-\sqrt{\mu})^2/2}) \le (\sqrt{\theta}-\sqrt{\mu})^2$, and (c) follows from the definition of $(m^{-A}, 3m^{-A/2})$-closeness with $m=n/2$. Since $k\le n^{A_0}$, clearly $r_{n/2}(A) = O\pth{k^{-1}}$ for $A = A_0+1$. 
\end{proof}

\begin{proof}[Proof of Lemma \ref{lemma:PML_competitive}]
Recall that a Poissonized model is simply a multinomial model with a random sample size $\Poi(n)$. An estimator $\widehat{G}$ under the Poissonized model can be written as a sequence of estimators $(\widehat{G}_m)_{m=0}^\infty$ where $\widehat{G}(N) = \widehat{G}_m(N)$ whenever the sample size is $\sum_{i=1}^k N_i = m$; since $\sum_{i=1}^k N_i = \sum_{j=0}^\infty j\Phi_j$ is a function of the profile $\Phi$, each $\widehat{G}_m$ can be made to depend only on the profile $\Phi$. Let $\widehat{G}_m$ be the estimator under the multinomial model with sample size $m$, then \eqbr{eq:PML_assumption} implies that
\begin{align*}
\sum_{m=0}^\infty \P_m\pth{ H^2\pth{f_{G_k}, f_{\widehat{G}_m}} \ge \frac{n^{1/3+\varepsilon}}{k}  } \Poi(m;n) =  \P_{\Poi(n)}\pth{ H^2\pth{f_{G_k}, f_{\widehat{G}}} \ge \frac{n^{1/3+\varepsilon}}{k}  } \le \delta.
\end{align*}
Here we use $\P_m$ to denote the probability under the multinomial model with sample size $m$, and similarly $\P_{\Poi(n)}$ for the Poissonized model. In particular,
\begin{align}\label{eq:PML_multinomial}
    \sup_{p^\star} \P_m\pth{ H^2\pth{f_{G_k}, f_{\widehat{G}_m}} \ge \frac{n^{1/3+\varepsilon}}{k}  } \le \frac{\delta}{\Poi(m;n)}, \quad \forall m\in \naturals.
\end{align}
In addition, since the PML distributions $P_\PML$ take the same form under multinomial and Poissonized models, we obtain
\begin{align*}
&\P_{\Poi(n)}\pth{ H^2\pth{f_{G_k}, f_{G_\PML}} \ge \frac{4n^{1/3+\varepsilon}}{k} + r_{n/2}(A) } \\
&= \sum_{m=0}^\infty \P_m \pth{ H^2\pth{f_{G_k}, f_{G_\PML}} \ge \frac{4n^{1/3+\varepsilon}}{k} + r_{n/2}(A) } \Poi(m;n) \\
&\stepa{\le} \sum_{m=n/2}^{2n} \P_m \pth{ H^2\pth{f_{G_k}, f_{G_\PML}} \ge \frac{4n^{1/3+\varepsilon}}{k} + r_m(A) } \Poi(m;n) + \P(\Poi(n) \notin [n/2,2n]) \\
&\stepb{\le} \sum_{m=n/2}^{2n} \pth{\frac{\delta}{\Poi(m;n)}}^{1-\kappa}\exp\pth{C_0(A,\kappa)m^{1/3+\kappa}} \Poi(m;n) + \P(\Poi(n) \notin [n/2,2n]) \\
&\stepc{\le} \delta^{1-\kappa}\cdot 2n \exp\pth{C_0(A,\kappa)(2n)^{1/3+\kappa}} + 2\exp\pth{-\frac{n}{8}} \\
&=  \delta^{1-\kappa} \exp\pth{O\pth{n^{1/3+\kappa}}} + 2\exp\pth{-\frac{n}{8}},
\end{align*}
which completes the proof of \eqbr{eq:PML_result}. Here (b) uses the upper bound \eqbr{eq:PML_multinomial} and applies \cite[Theorem 1]{han2021competitive} to the multinomial model with sample size $m$ and $L(G_1, G_2) = 2d(G_1,G_2) = H^2(f_{G_1}, f_{G_2})$; note that the triangle inequality $d(G_1,G_2)\le L(G_1,G_3) + L(G_2,G_3)$ required in \cite[Eq.~(6)]{han2021competitive} is true. For the other steps, (a) uses the decreasing property of $m\mapsto r_m(A)$, and (c) is simply the Chernoff bound (cf. Lemma \ref{lemma:chernoff}). 
\end{proof}

\section{Details on the experiments}
\label{sec:experiment-details}

\subsection{Data processing}
\label{sec:dataproc}



We describe the pre-processing of real datasets used in \ifthenelse{\boolean{arxiv}}{\prettyref{subsec:experiment}}{the Experiments section of the main text}.
\begin{itemize}
    \item Text data: We use Python's built-in string module to strip all punctuations and  convert all upper cases to lower cases.

\item 
2000 census surname dataset:
The spreadsheet containing 
frequently occurring surnames (100 or more times) in the 2010 census can be found as File B in \cite{USCensus2010Surnames}.
The last row corresponding to ``all other names'' is removed.

\item 
2020 Census Detailed Demographic and Housing Characteristics File A (Detailed DHC-A) \cite{DHC}:
The raw Detailed DHC-A contains a large number of overlapping groups. 
For the purpose of our experiment, this dataset is processed as follows:
\begin{itemize}
\item \emph{Drop ``$\dots$ alone'' categories.} We keep only categories labeled ``$\dots$ alone \emph{or in any combination}''. For example, ``Albanian alone'' is nested within ``Albanian alone or in any combination'', so the former is removed. Because the dataset does not reveal how racial identities overlap, this step slightly inflates the total population by 33.8 million (due to those who  identify as having two or more races).
\item \emph{Remove regional aggregates.} Regional summaries such as ``European alone or in any combination'' (which subsumes groups like ``Albanian alone or in any combination'') and ``Polynesian'' (which subsumes ``Native Hawaiian'') are deleted to avoid double counting.
\item \emph{Exclude ethnicity groups.} We focus exclusively on race and therefore discard responses to Question 8 of the 2020 Census (``Is Person 1 of Hispanic, Latino, or Spanish origin?''), which would otherwise overlap with race categories.
\item \emph{Drop zero counts.} Groups with zero population are omitted. 
\end{itemize}

After cleaning we obtain a sample of 1446 groups with fewer overlaps. 

\end{itemize}

\subsection{Conditionally applying NPMLE}
\label{sec:experiment-details-conditional}

As noted in \prettyref{sec:NPMLE-computation}, to compute the NPMLE \prettyref{eq:NPMLE-opt}, each iteration of the F-W algorithm requires finding the maximum of 
the gradient function \prettyref{eq:kkt}, a polynomial of degree equal to the maximal count $N_{\max}$.
In large datasets such as the 2000 census surname data, where many surnames appear more than $100,000$ times even after subsampling, 
this results in numerical instability and slows down computation.

To resolve this issue, we adopt a simple modification by applying the NPMLE estimator  \textit{conditionally}, only to those counts below a certain threshold. Specifically, 
\begin{enumerate}
    \item
    Let $S = \{i: N_i \leq\tau\}$ denote the set of low-count symbols which appear at most $\tau$ times.
    Apply NPMLE to these counts $\{N_i: i \in S\}$ to get an estimated probability distribution
    $\{\widehat q_i: i \in S\}$.
    These provide an estimate of the conditional distribution
    \[
    q_i :=\frac{p_i}{p_S}, \quad p_S := \sum_{i\in S} p_i.
    \]

    \item Estimate the total probability mass $p_S$ by
    \[
    \widehat{p_S} := \frac{N_i}{\sum_{i\in S} N_i}.
    \]
    
    \item Finally, we reweight and combine $\widehat q$ with the empirical distribution to produce the full estimate:
    \[
    \widehat p_i
    := \begin{cases}
        \widehat{p_S} \cdot \widehat q_i & i \in S\\
        \frac{N_i}{\sum_{i\in[k]} N_i} & i \notin S
    \end{cases}
    \]    
\end{enumerate}
For the 2000 census surname dataset (\prettyref{fig:census-surname}), we set the threshold to $\tau=20000$.

Empirically, this conditional estimator works comparably to the full NPMLE which is much more time-consuming to compute.
The intuition is that for high-count symbols, the ``signal-to-noise ratio'' is sufficiently high so that the empirical frequency is highly accurate. 
The real challenge lies with low-count symbols, precisely where the NPMLE is most effective.



\subsection{Omitted figures and tables}
\label{sec:all-figs}
In this section we collect plots and tables of KL risk and regret for the experiments\ifthenelse{\boolean{arxiv}}{on}{of the main text for} synthetic data. Specifically, 
\begin{itemize}
    \item 
Figs.~\ref{fig:SI_full_KLregrets1}--\ref{fig:SI_full_KLregrets2} are the full version of \prettyref{fig:KLregrets} in the
\ifthenelse{\boolean{arxiv}}{Introduction}{main text} for synthetic-data experiments.

\item \prettyref{fig:smooth-full} is the full version of \prettyref{fig:smooth} in the \ifthenelse{\boolean{arxiv}}{Introduction}{main text} on the monotonicity and smoothing effect of estimators.

\item \prettyref{tab:oos} contains the KL risks of the out-of-sample experiment on Shakespearean plays in \prettyref{fig:shakespeare-oos} of the \ifthenelse{\boolean{arxiv}}{Introduction}{main text}, as well as \textit{Lord of the Rings}.
\end{itemize}

\begin{figure}[!ht]
     \centering
     \begin{subfigure}[b]{0.45\columnwidth}
         \centering
         \includegraphics[width=\textwidth]{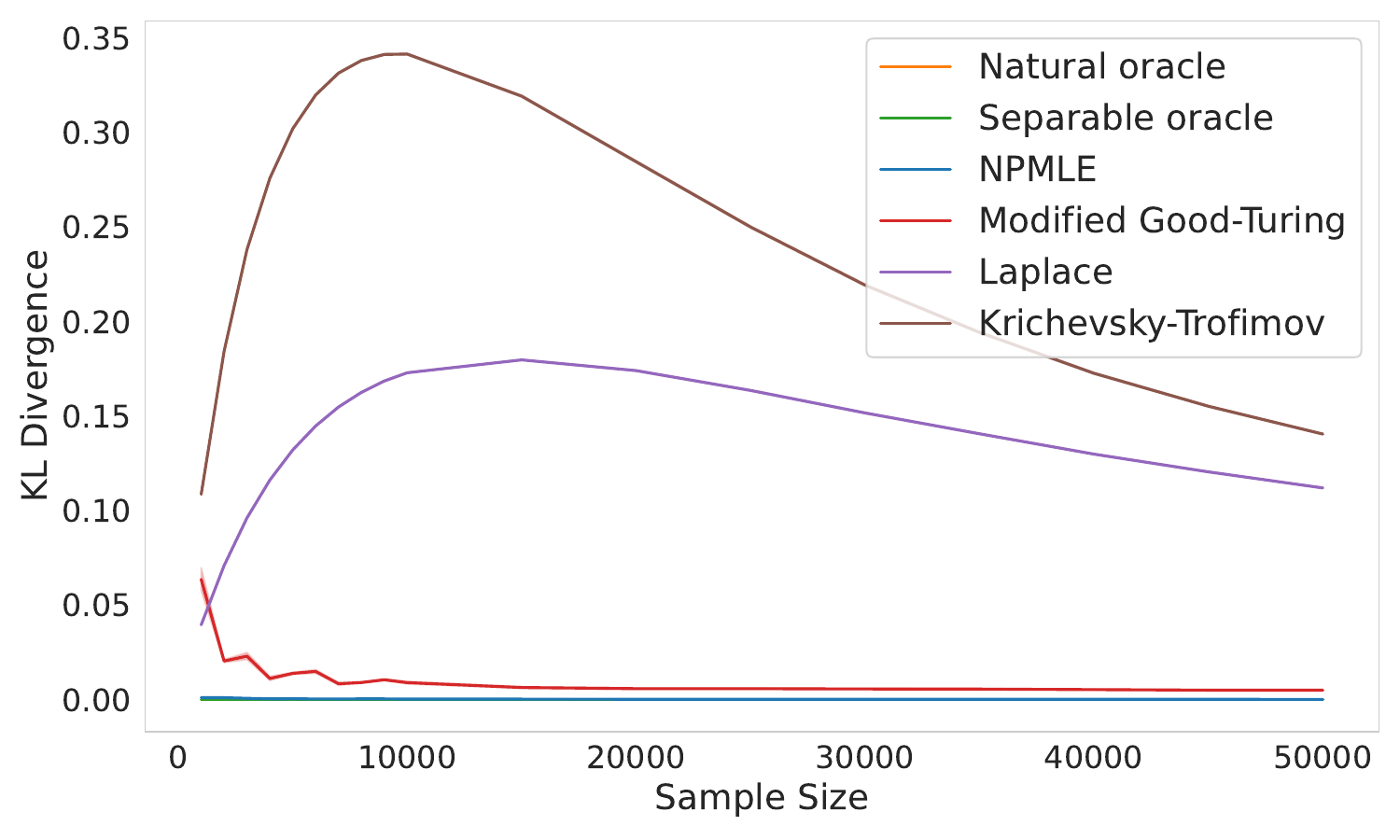}
         \caption{Uniform}
     \end{subfigure}
     \begin{subfigure}[b]{0.45\columnwidth}
         \centering
         \includegraphics[width=\textwidth]{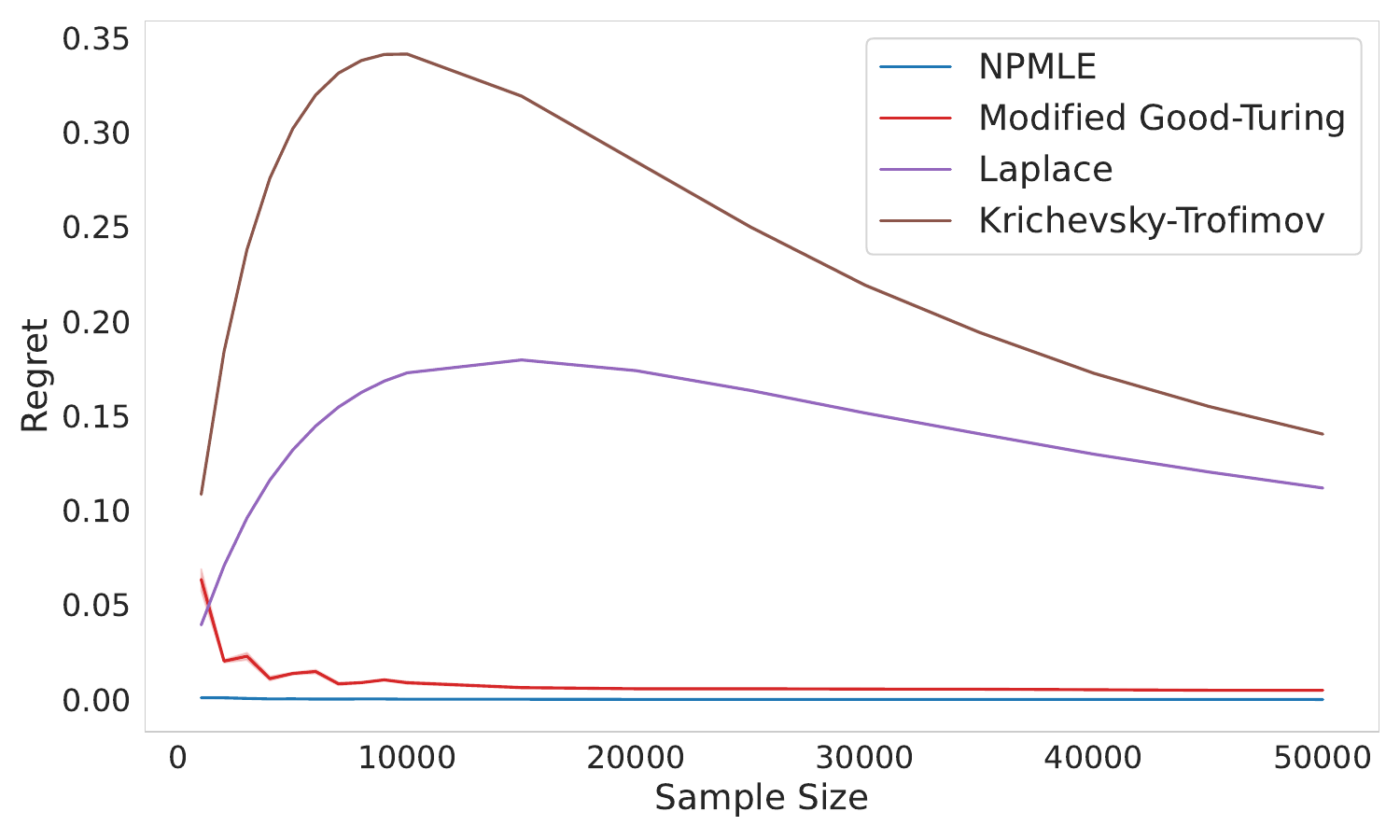}
         \caption{Uniform}
         \label{fig:SI_full_uniform}
     \end{subfigure}

     \begin{subfigure}[b]{0.45\columnwidth}
         \centering
         \includegraphics[width=\textwidth]{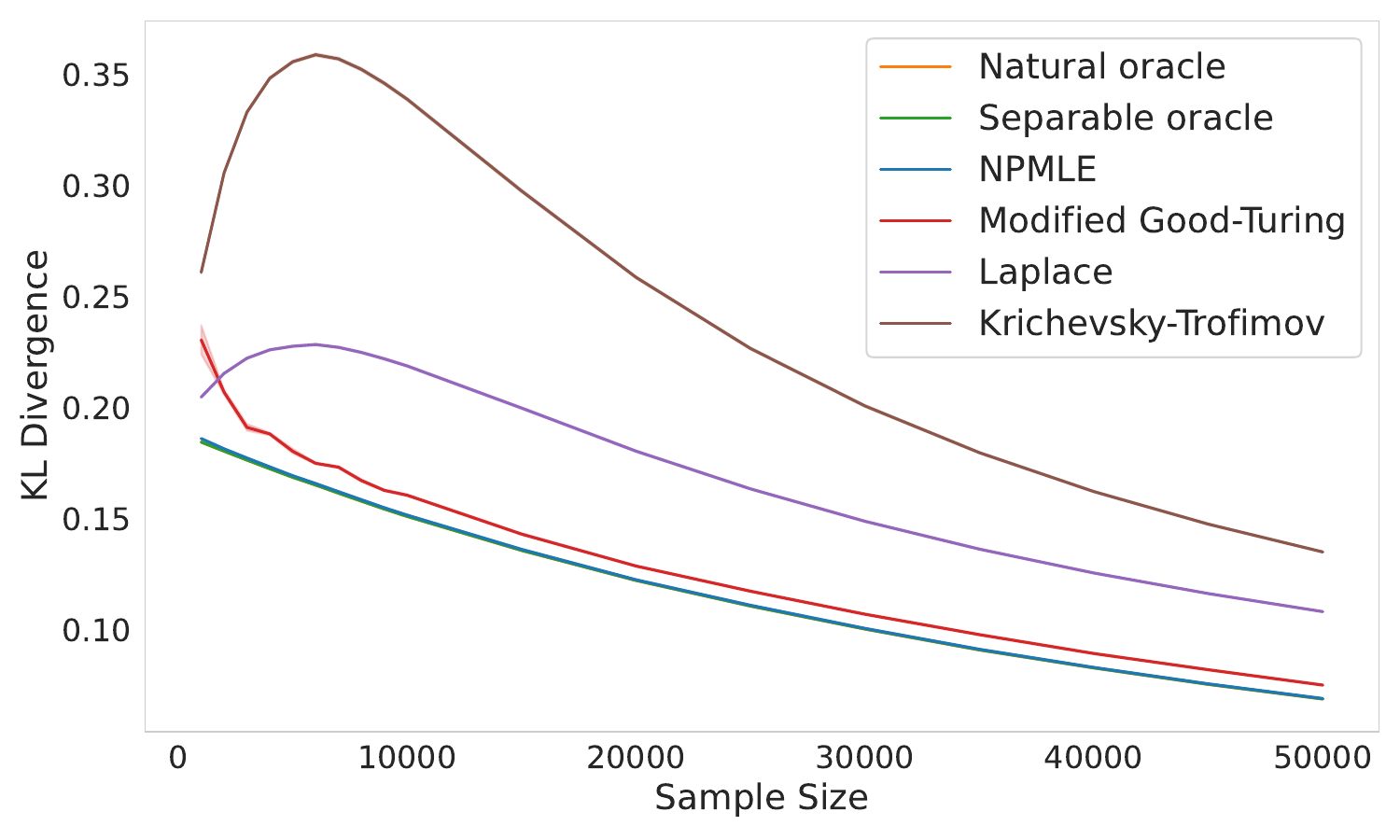}
         \caption{Step.}
     \end{subfigure}
     \begin{subfigure}[b]{0.45\columnwidth}
         \centering
         \includegraphics[width=\textwidth]{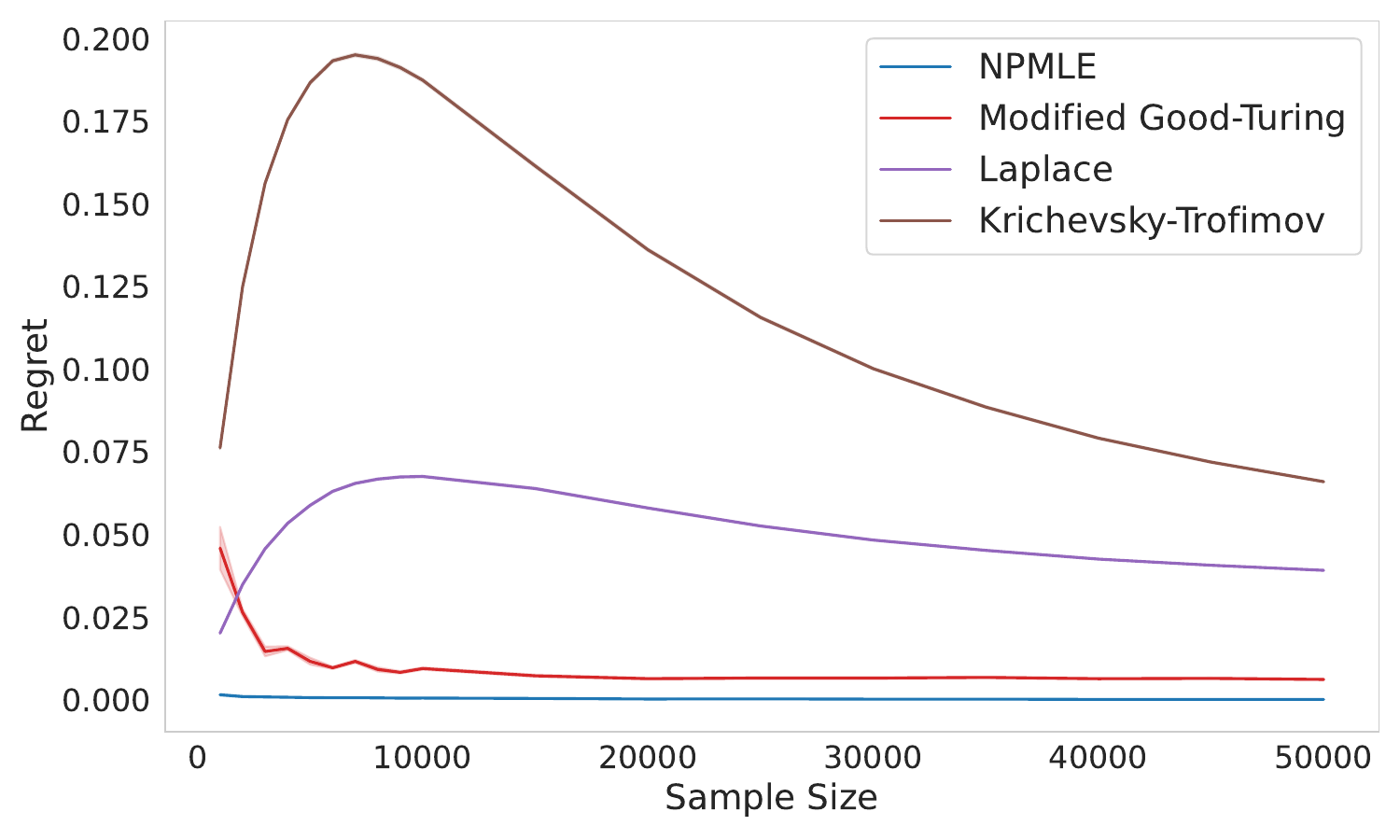}
         \caption{Step.}
         \label{fig:SI_full_step}
     \end{subfigure}

     \begin{subfigure}[b]{0.45\columnwidth}
         \centering
         \includegraphics[width=\textwidth]{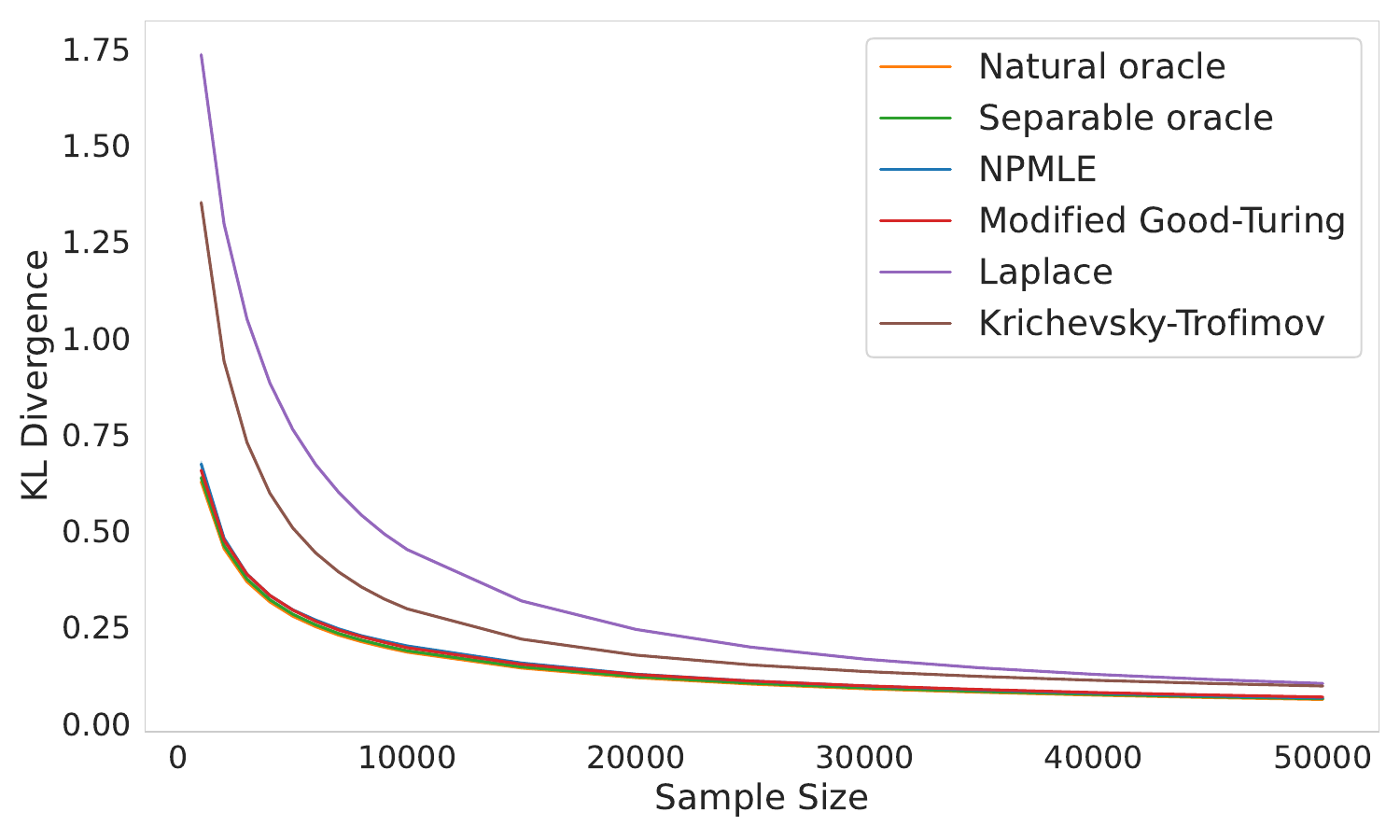}
         \caption{Zipf ($\alpha=1$).}
     \end{subfigure}
     \begin{subfigure}[b]{0.45\columnwidth}
         \centering
         \includegraphics[width=\textwidth]{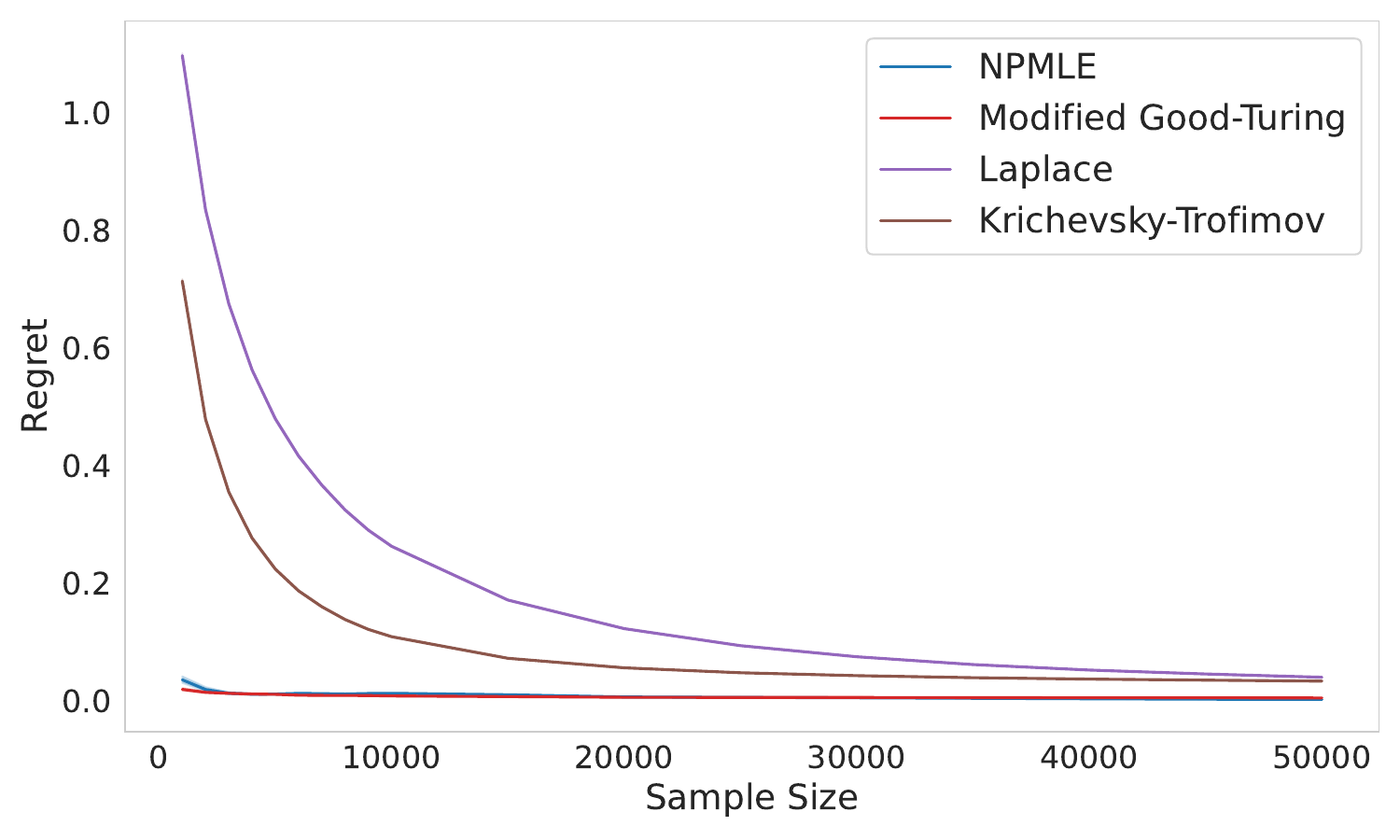}
         \caption{Zipf ($\alpha=1$).}
         \label{fig:SI_full_zipf1}
     \end{subfigure}

     \begin{subfigure}[b]{0.45\columnwidth}
         \centering
         \includegraphics[width=\textwidth]{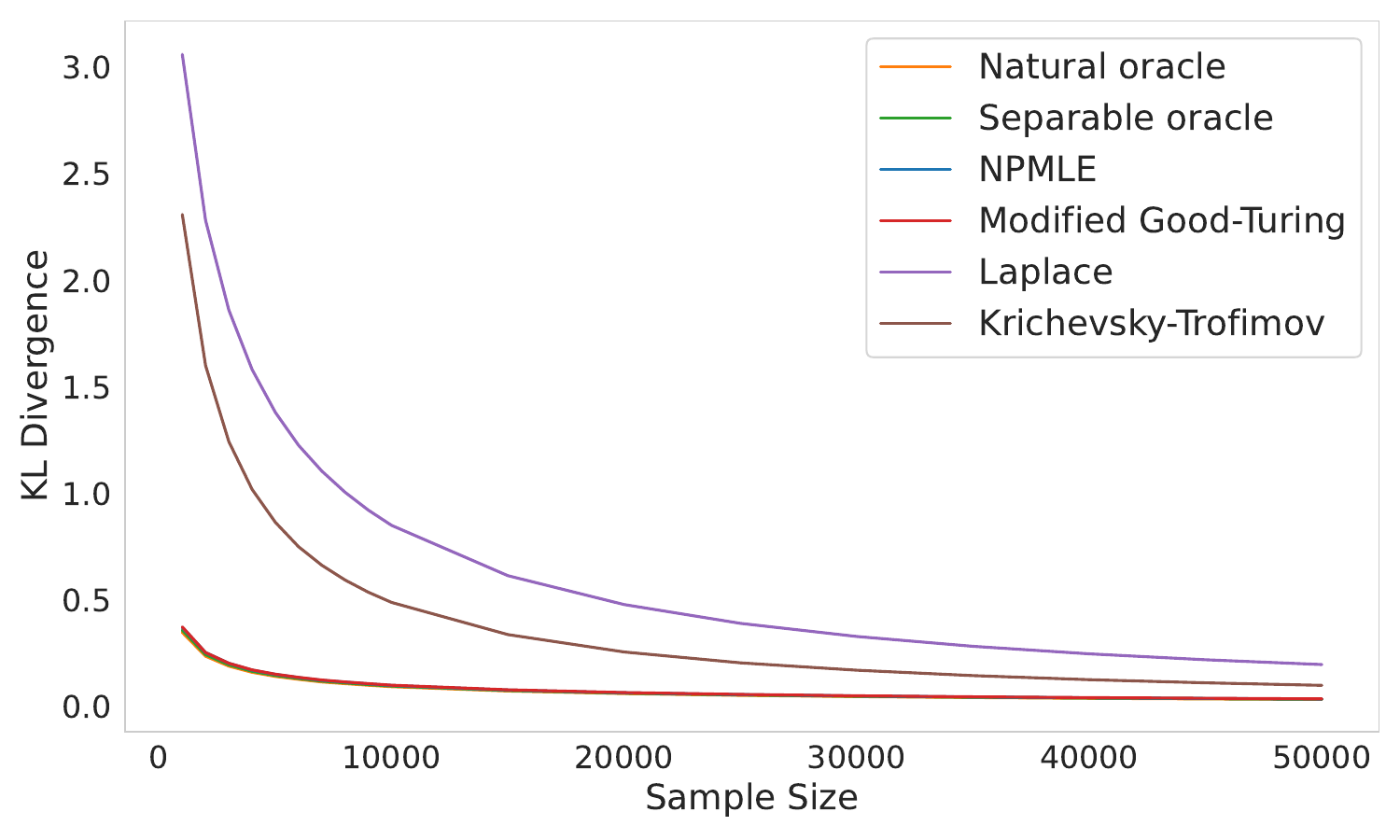}
         \caption{Zipf ($\alpha=1.5$).}
     \end{subfigure}
     \begin{subfigure}[b]{0.45\columnwidth}
         \centering
         \includegraphics[width=\textwidth]{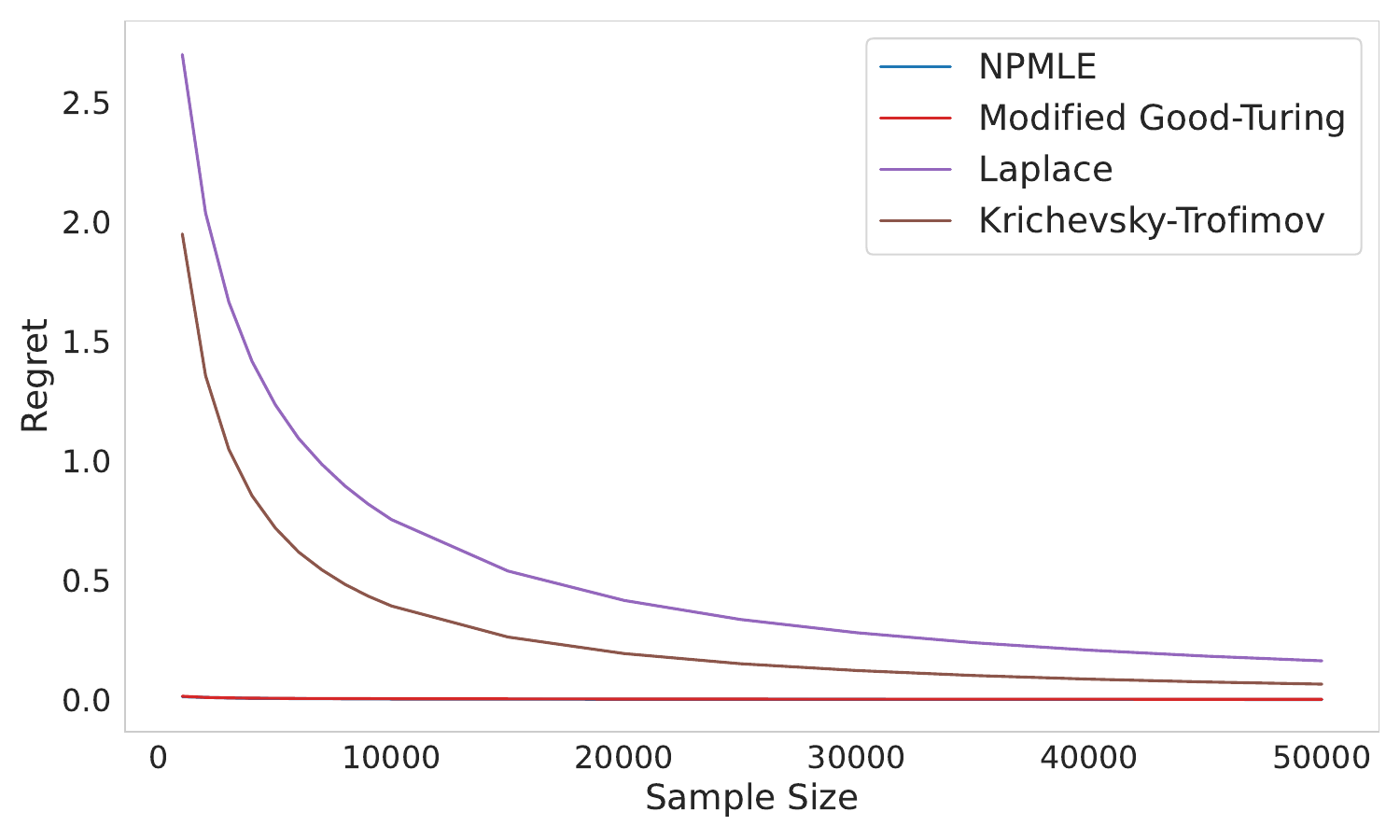}
         \caption{Zipf ($\alpha=1.5$).}
         \label{fig:SI_full_zipf15}
     \end{subfigure}

        \caption{KL risk and regret (over the separable oracle) for various distributions over $k=10000$ elements.}
        \label{fig:SI_full_KLregrets1}
\end{figure}

\begin{figure}[!ht]
     \begin{subfigure}[b]{0.45\columnwidth}
         \centering
         \includegraphics[width=\textwidth]{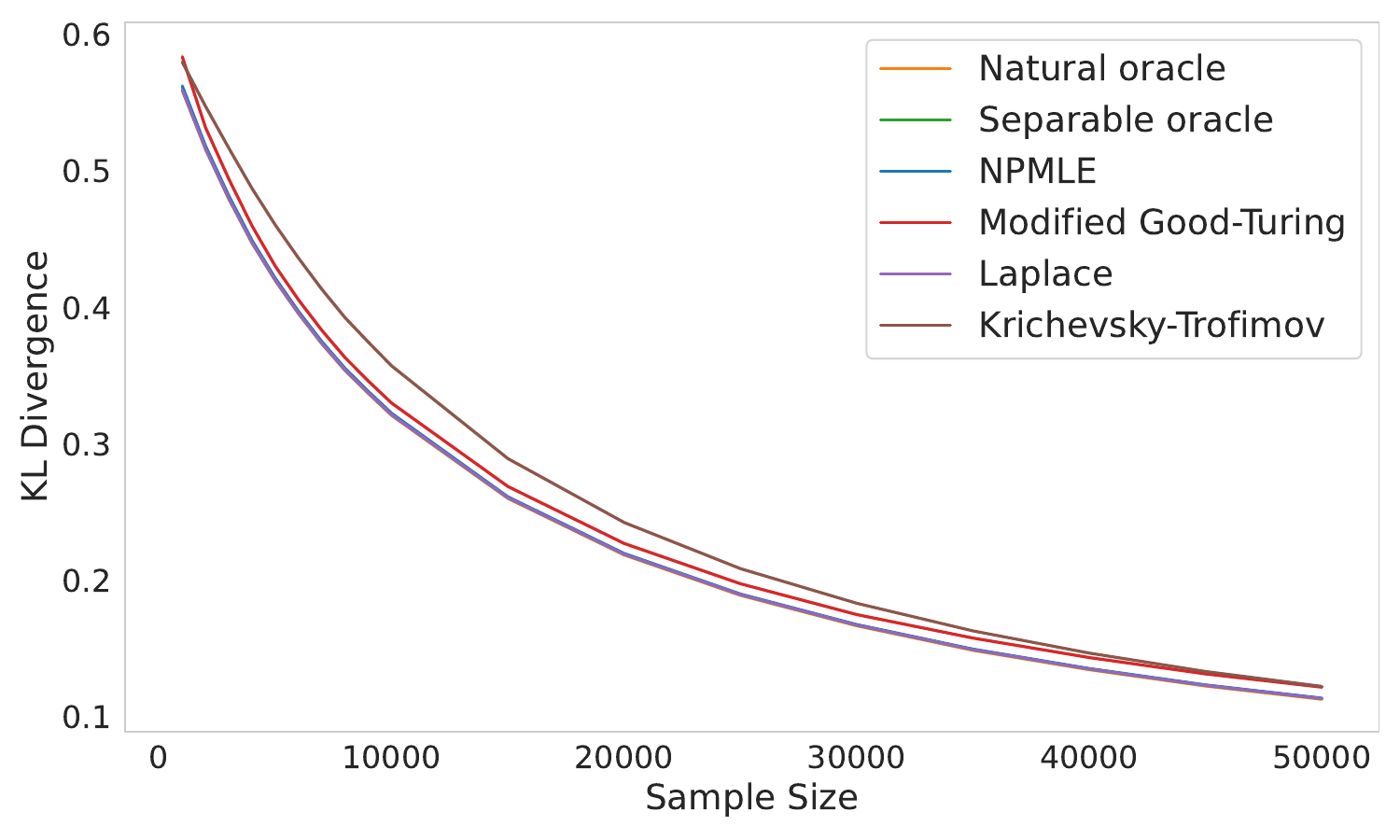}
         \caption{Dirichlet $(c=1)$}
     \end{subfigure}
     \begin{subfigure}[b]{0.45\columnwidth}
         \centering
         \includegraphics[width=\textwidth]{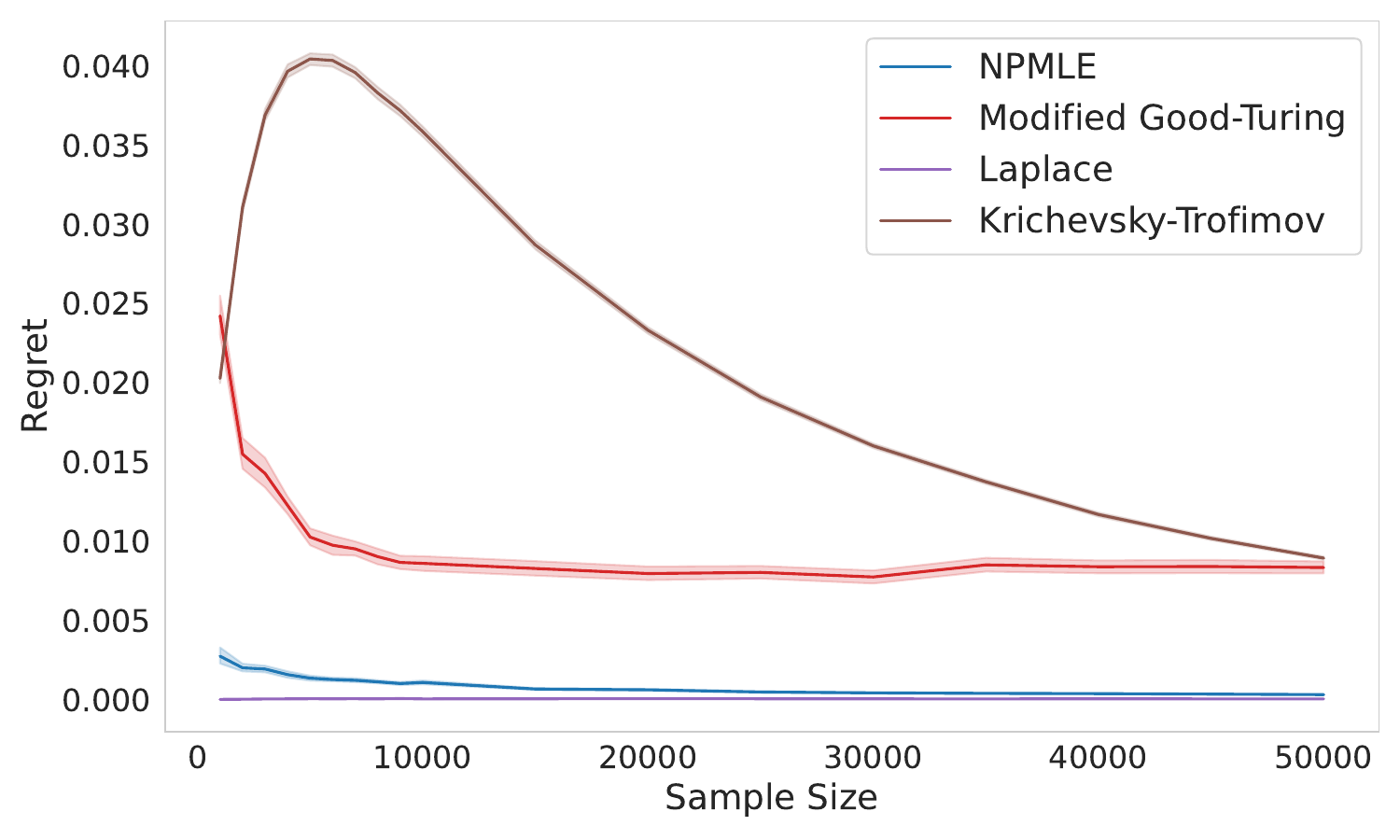}
         \caption{Dirichlet $(c=1)$}
         \label{fig:SI_full_dirichlet1}
     \end{subfigure}
     
     \begin{subfigure}[b]{0.45\columnwidth}
         \centering
         \includegraphics[width=\textwidth]{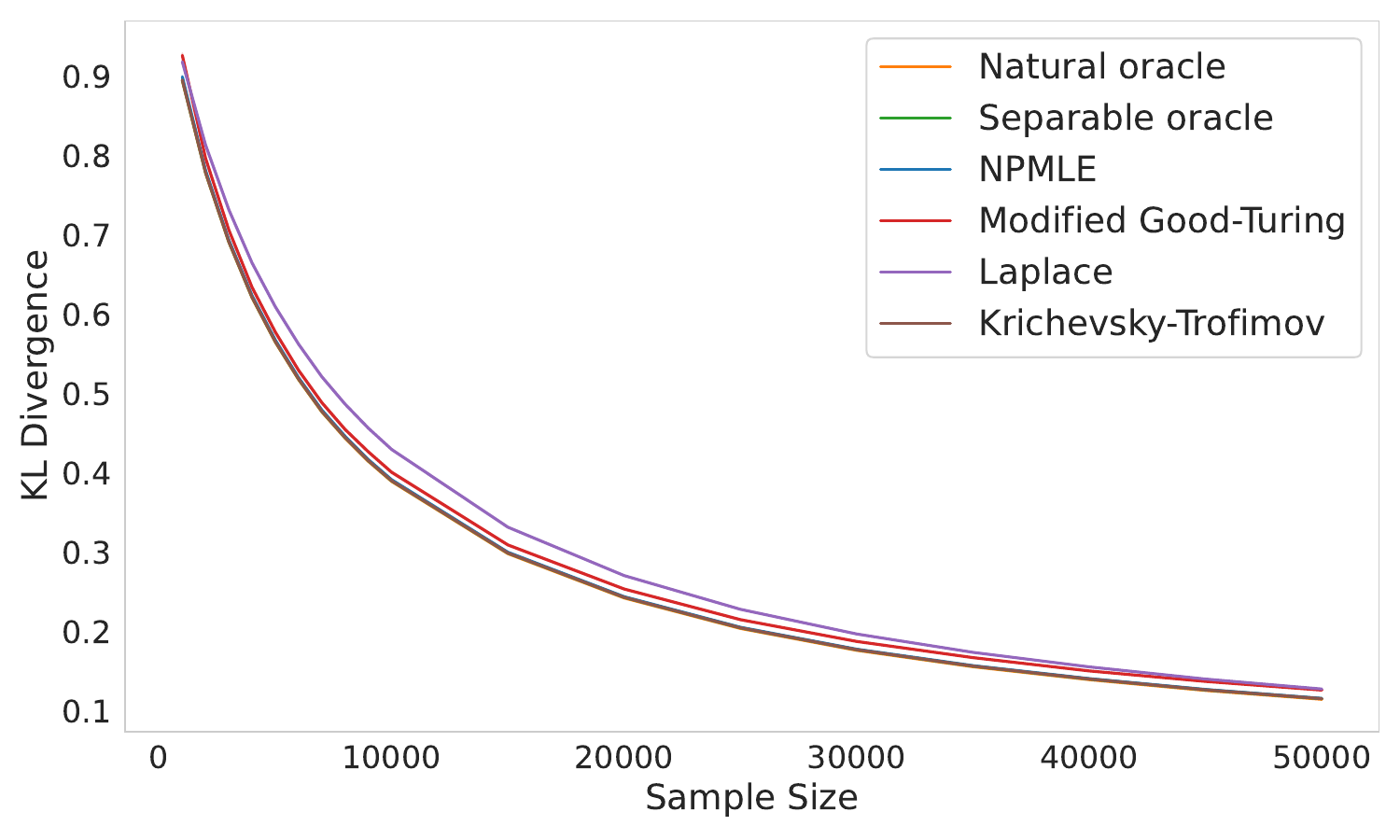}
         \caption{Dirichlet $(c=0.5)$}
     \end{subfigure}
     \begin{subfigure}[b]{0.45\columnwidth}
         \centering
         \includegraphics[width=\textwidth]{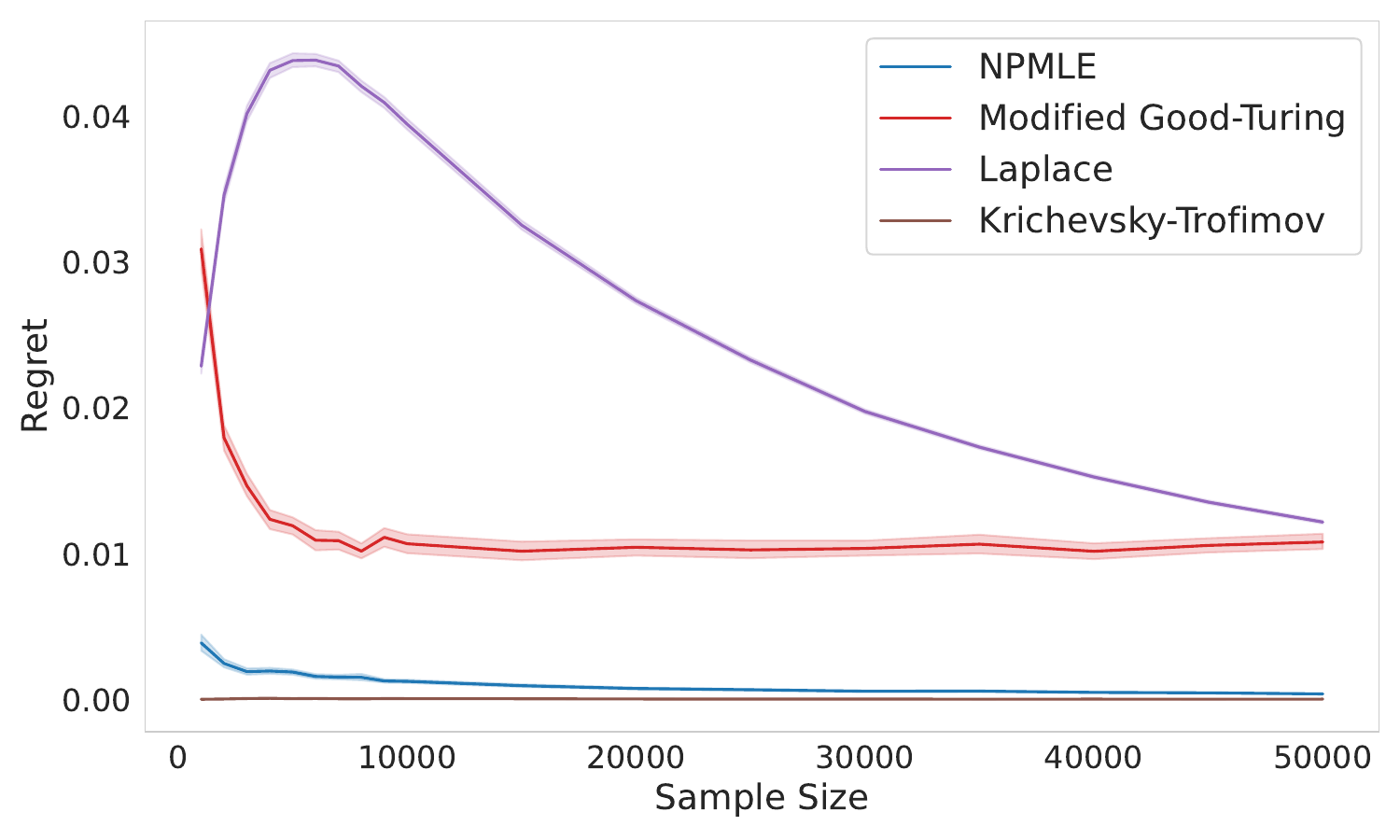}
         \caption{Dirichlet $(c=0.5)$}
         \label{fig:SI_full_dirichlet05}
     \end{subfigure}
     
        \caption{KL risk and regret (over the separable oracle) for various distributions over $k=10000$ elements (continued). 
        Here the Laplace and the Krichevsky estimators are the 
        \textit{exact} Bayes estimators under the Dirichlet priors with parameter $c=1$ and $c=1/2$ respectively and thus achieving zero regret. In both cases, NPMLE attains nearly 
        zero regret, much smaller than that of Good--Turing.
        }
        \label{fig:SI_full_KLregrets2}
\end{figure}

\begin{figure}
\centering
\centering
     \begin{subfigure}[b]{0.8\columnwidth}
         \includegraphics[width=\textwidth]{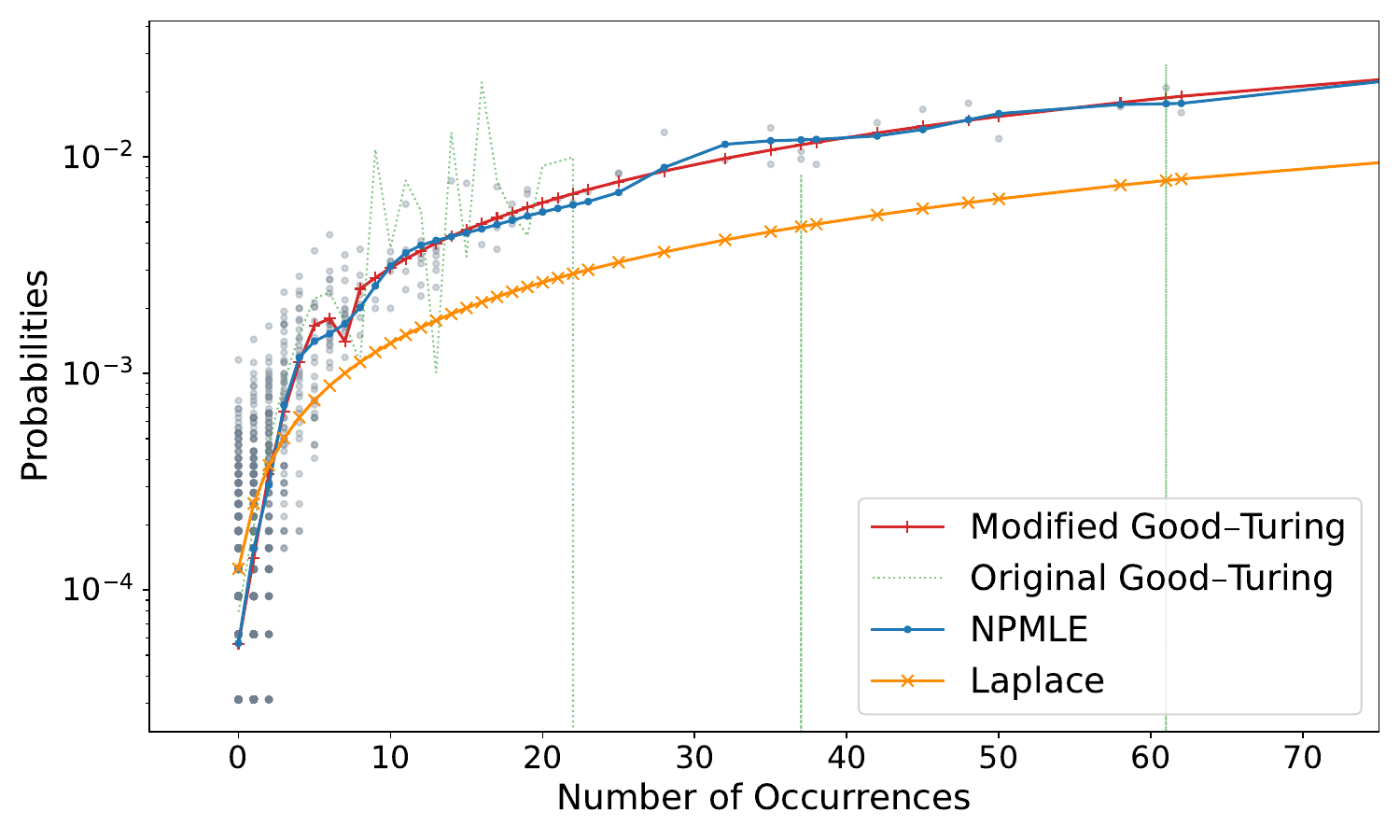}
         \caption{Hamlet, sampling ratio $10\%$.}
         \label{fig:smooth-hamlet}
     \end{subfigure}
     \begin{subfigure}[b]{0.8\columnwidth}
         \includegraphics[width=\textwidth]{figs_final/smoothing_zipf.pdf}
         \caption{Zipf distribution,  $\alpha=1/2, n=15000,
k=10000$.}
         \label{fig:smooth-zipf}
     \end{subfigure}
\caption{Smoothing effect of estimators. Estimated probabilities $\widehat{p}_i$ are plotted in solid against the empirical count $N_i$. Dots in gray are scatter plot of true probabilities $p^\star_i$ versus $N_i$. Unlike the Good--Turing estimators, the estimated probabilities by the NPMLE and Laplace estimators are always positive and monotonically increasing in the empirical count, thanks to their Bayes form.
}
\label{fig:smooth-full}
\end{figure}

\begin{table}[ht]
    \centering    
\begin{tabular}{lccc}
\toprule
\textbf{Text name} & \textbf{Modified Good-Turing} & \textbf{NPMLE} & \textbf{Pretrained Bayes} \\
\midrule
Alls Well That Ends Well & 0.0113 & 0.0060 & 0.0043 \\
Antony and Cleopatra & 0.0127 & 0.0067 & 0.0024 \\
As You Like It & 0.0129 & 0.0064 & 0.0056 \\
A Midsummer Nights Dream & 0.0145 & 0.0073 & 0.0176 \\
Coriolanus & 0.0111 & 0.0064 & 0.0020 \\
Cymbeline & 0.0113 & 0.0065 & 0.0023 \\
Edward III & 0.0126 & 0.0076 & 0.0101 \\
Henry IV Part 1 & 0.0119 & 0.0070 & 0.0033 \\
Henry IV Part 2 & 0.0134 & 0.0058 & 0.0064 \\
Henry V & 0.0103 & 0.0095 & 0.0028 \\
Henry VIII & 0.0111 & 0.0066 & 0.0019 \\
Henry VI Part 1 & 0.0124 & 0.0075 & 0.0077 \\
Henry VI Part 2 & 0.0115 & 0.0076 & 0.0040 \\
Henry VI Part 3 & 0.0125 & 0.0062 & 0.0053 \\
Julius Caesar & 0.0133 & 0.0066 & 0.0177 \\
King John & 0.0157 & 0.0068 & 0.0122 \\
King Lear & 0.0119 & 0.0074 & 0.0027 \\
Loves Labours Lost & 0.0125 & 0.0066 & 0.0070 \\
Macbeth & 0.0157 & 0.0073 & 0.0196 \\
Measure for Measure & 0.0130 & 0.0062 & 0.0054 \\
Much Ado About Nothing & 0.0138 & 0.0055 & 0.0069 \\
Othello & 0.0115 & 0.0057 & 0.0028 \\
Pericles Prince of Tyre & 0.0145 & 0.0074 & 0.0115 \\
Richard II & 0.0120 & 0.0074 & 0.0075 \\
Richard III & 0.0110 & 0.0073 & 0.0029 \\
Romeo and Juliet & 0.0137 & 0.0063 & 0.0067 \\
The Comedy of Errors & 0.0148 & 0.0064 & 0.0199 \\
The Merchant of Venice & 0.0141 & 0.0071 & 0.0069 \\
The Merry Wives of Windsor & 0.0121 & 0.0055 & 0.0039 \\
The Taming of the Shrew & 0.0119 & 0.0061 & 0.0072 \\
The Tempest & 0.0139 & 0.0067 & 0.0156 \\
The Two Gentlemen of Verona & 0.0145 & 0.0060 & 0.0157 \\
The Two Noble Kinsmen & 0.0127 & 0.0069 & 0.0034 \\
The Winter's Tale & 0.0129 & 0.0065 & 0.0025 \\
Timon of Athens & 0.0149 & 0.0060 & 0.0116 \\
Titus Andronicus & 0.0129 & 0.0069 & 0.0096 \\
Troilus and Cressida & 0.0125 & 0.0054 & 0.0095 \\
Twelfth Night & 0.0121 & 0.0061 & 0.0080 \\
Lord of the Rings & 0.0912 & 0.0897 & 0.2701\\
\bottomrule
\end{tabular}
\ifthenelse{\boolean{arxiv}}{\caption{Average KL risks of the out-of-sample experiment in \prettyref{fig:shakespeare-oos} of the Introduction. 
    Each risk is averaged over 200 independent trials.
    The pretrained Bayes estimator is trained on the entirety of \textit{Hamlet}, then applied to each of the other 38 Shakespearean plays, with sampling ratio 20\%, to estimate the word frequency.
    The additional last row shows that the Bayes estimator trained on \textit{Hamlet} does not  generalize well to \textit{Lord of the Rings}.}}{
\caption{Average KL risks of the out-of-sample experiment in \prettyref{fig:shakespeare-oos} of the main text. 
    Each risk is averaged over 200 independent trials.
    The pretrained Bayes estimator is trained on the entirety of \textit{Hamlet}, then applied to each of the other 38 Shakespearean plays, with sampling ratio 20\%, to estimate the word frequency.
    The additional last row shows that the Bayes estimator trained on \textit{Hamlet} does not  generalize well to \textit{Lord of the Rings}.}
    }
    \label{tab:oos}
\end{table}

\clearpage 

\ifthenelse{\boolean{arxiv}}{
\paragraph{Acknowledgements.} Y. Han is supported in part by a fund from Renaissance Philanthropy. J. Niles-Weed is supported in part by the National Science Foundation under Grant No.~DMS-2339829.
Part of the work of Y.~Wu was supported by the National Science Foundation under Grant No.~DMS-1928930, while Y.~Wu was in residence at the Simons Laufer Mathematical Sciences Institute in Berkeley, California, during the Spring 2025 semester.

Y.~Wu is grateful to Y.~Nie and Y.~Polyanskiy for helpful discussions at the onset of project. In particular, the method-of-type argument for bounding  \prettyref{eq:Gaussian_first_term}, previously used for $m=2$ in \cite{nie2023large} for a different problem, was initially suggested by Y.~Polyanskiy.
Y.~Wu thanks Yale Center for Research Computing for providing the computing resources for the experiments carried out in this paper.
}{}

\bibliographystyle{alpha}
\bibliography{mybib,newbib,reports}

	
\end{document}